\renewcommand{\subsubsection}{\@startsection
{subsubsection}
{1}
{0mm}
{0mm}
{0mm}
{\normalfont\normalsize\itshape}}
\title{Differential orbifold $K$-Theory}
\author{Ulrich Bunke\thanks{NWF I - Mathematik,
Universit{\"a}t Regensburg,
93040 Regensburg,
GERMANY, ulrich.bunke@mathematik.uni-regensburg.de} \and Thomas Schick \thanks{Mathematisches Institut, Universit{\"a}t G{\"o}ttingen,
Bunsenstr. 3-5, 37073 G{\"o}ttingen, GERMANY, schick@uni-math.gwdg.de;
supported by Courant Research Center ``Higher order structures in
   mathematics'' via the German Initiative of Excellence} }
\newtheorem{theorem}{Theorem}[section] 
\newtheorem{prop}[theorem]{Proposition}
\newtheorem{lem}[theorem]{Lemma}
\newtheorem{ddd}[theorem]{Definition}
\newtheorem{kor}[theorem]{Corollary}
\newtheorem{remark}[theorem]{Remark}
\newcommand{\Fred}{{\tt Fred}}
\newcommand{\hocolim}{{\tt hocolim\:}}
\newcommand{\Fr}{{\tt Fr}}
\newcommand{\bM}{{\bf M}}
\renewcommand{\lim}{{\tt lim\:}}
\renewcommand{\P}{{\mathbb{P}}}
\newcommand{\Z}{\mathbb{Z}}
\newcommand{\diag}{{\tt diag}}
\newcommand{\proof}{{\it Proof.$\:\:\:\:$}}
\newcommand{\R}{\mathbb{R}}
\newcommand{\ori}{{\tt or}}
\newcommand{\Cliff}{{\tt Cliff}}
\renewcommand{\det}{{\tt det}}
\newcommand{\K}{\mathbb{K}}
\newcommand{\T}{{\mathbb{T}}}
\newcommand{\C}{\mathbb{C}}
\newcommand{\Aut}{{\tt Aut}}
\newcommand{\Tr}{{\tt Tr}}
\newcommand{\cE}{\mathcal{E}}
\newcommand{\cV}{\mathcal{V}}
\newcommand{\cY}{\mathcal{Y}}
\newcommand{\cW}{\mathcal{W}}
\newcommand{\cK}{\mathcal{K}}
\newcommand{\cA}{\mathcal{A}}
\newcommand{\cU}{\mathcal{U}}
\newcommand{\Hom}{{\tt Hom}}
\newcommand{\cO}{\mathcal{O}}
\newcommand{\End}{{\tt End}}
\newcommand{\im}{{\tt im}}
\newcommand{\cF}{\mathcal{F}}
\newcommand{\Imm}{{\tt Im}}
\newcommand{\ee}{{\tt e}}
\newcommand{\tr}{{\tt tr}}
\newcommand{\coker}{{\tt coker}}
\newcommand{\id}{{\tt id}}
\newcommand{\nat}{\mathbb{N}}
\def\imath{{i}}
\def\hB{\hspace*{\fill}$\Box$ \newline\noindent}
\newcommand{\ind}{{\tt index}}
\newcommand{\restr}{{\tt Res}}
\def\hB{\hspace*{\fill}$\Box$ \\[0.5cm]\noindent}
 \newcommand{\cG}{\mathcal{G}}
 \newcommand{\cX}{\mathcal{X}}
\newcommand{\bW}{\mathbf{W}}
\newcommand{\pr}{{\tt pr}}
\newcommand{\ch}{{\mathbf{ch}}}
\newcommand{\ev}{{\tt ev}}
\newcommand{\bV}{\mathbf{V}}
\newcommand{\hA}{\hat{\mathbf{A}}}
 \newcommand{\Ab}{{\mathrm{Ab}}}
\begin{document}

\maketitle
\begin{abstract}
We construct differential   $K$-theory of representable smooth orbifolds as
a ring valued functor with the usual properties of a differential extension of a
cohomology theory. For proper submersions (with smooth fibres) we construct a
push-forward map in differential  orbifold $K$-theory. Finally, we construct a
non-degenerate intersection pairing with values in $\C/\Z$ for the subclass of
smooth orbifolds which can be written as global quotients by a finite group
action.  We construct a real subfunctor of our theory, where the
pairing restricts to a non-degenerate $\R/\Z$-valued pairing.

\end{abstract}

\tableofcontents

\section{Introduction}

In this paper we give the construction of a model of differential $K$-theory
for orbifolds. It generalizes the model for smooth manifolds
 \cite{bunke-2007}. Major features are the constructions of the cup-product and  the push-forward with all desired properties,
and the localization isomorphism.

Our construction includes a model of equivariant differential $K$-theory for Lie group actions
with finite stabilizers. {However, a  construction in the realm of orbifolds  not
only covers more general objects, but is stronger also for group actions.
The additional information 
is the independence of the choice of presentations}. In equivariant
terms, this means that {differential $K$-theory} has induction and descend isomorphisms.

One of the motivations for the consideration of differential $K$-theory
came from mathematical physics, in particular from type-II superstring theory. Here it 
was used as a host of certain fields with
differential form field strength, see e.g. \cite{freed-2007-322}, \cite{Witten:1998cd},
\cite{Minasian:1997mm}. 
For the   theory on orbifolds one needs
the corresponding 
generalization of  differential $K$-theory \cite{szabo-2007}.  To serve this goal is one
 of the  motivations of this paper.
As explained in  \cite{szabo-2007}, the intersection pairing in
differential $K$-theory on compact $K$-oriented orbifolds is an
important aspect of the theory. In the present paper we construct a
non-degenerated $\C/\Z$-valued paring.  
Note that because of the nature of the equivariant Chern character orbifold differential $K$-theory naturally works with complex valued forms. We will show that it admits a  
real subfunctor, and the pairing restricts to a
non-degenerated  $\R/\Z$-valued pairing on this subfunctors.

In this paper, we use the terminology ``differential $K$-theory''
throughout. In previous publications like \cite{bunke-2007}, we used the
synonym ``smooth 
$K$-theory''. Dan Freed convinced us that the analogy with
differential forms implies that the first expression is more
appropriate. \textcolor{black}{\cite{Bunke-2011} is a nice survey on the general
  theory of differential K-theory; we try to cover as much of its aspects in
  the equivariant or orbifold situation as possible.}
 
We now describe the contents of the paper. In Section \ref{tztewqze} we
construct the model of differential $K$-theory and verify its basic properties. We
first review the relevant orbifold and stack notation. Then we define differential $K$-theory
for orbifolds by cycles and relations as a direct generalization of
the construction for  manifolds \cite{bunke-2007}, with some extra care
in the local analysis. In the sequel, we will refer to the case of
smooth manifolds as the ``non-singular case'' or the ``smooth case''. 

Section \ref{idwiqdqwwd} is devoted to the cup-product and the push-forward.
These are again direct generalizations of the corresponding
constructions in \cite{bunke-2007}.  In Subsection
\ref{localization} we prove the localization theorem in differential
$K$-theory for global quotients by finite group actions.

In Section \ref{uwhdiodhwqidwqdwqdqwd} we prove two results. The first is
Theorem \ref{uiudqdqwdwq} which identifies the flat part of differential
$K$-theory  as $K$-theory with coefficients in
$\C/\Z$. The result is a generalization of
\cite[Proposition 2.25]{bunke-2007}, though the proof requires \textcolor{black}{new fundamental} ideas.  Finally we show in 
Theorem \ref{trezreer}  that the intersection pairing is
  non-degenerate.

The final Section \ref{sec5} contains some interesting explicit
calculations  and important bordism formulas which are
crucial for any calculations.

In \textcolor{black}{2009}, the preprint \cite{ortiz} appeared. It gives another construction of differential
equivariant $K$-theory for finite group actions along the lines of
\cite{MR2192936}\footnote{\textcolor{black}{At the time of writing} the paper \cite{ortiz} is pending for revisions.}.
It defines a push-forward to a point. 
 The main difference between the two approaches is that our constructions are mainly analytical,
  whereas his are mainly homotopy theoretic.

 Ortitz there raises the interesting question
\cite[Conjecture 6.1]{ortiz} of identifying this push-forward in
analytic terms. Note that in our model, in view of the geometric
construction of the push-forward and the analytic nature of the
relations,  the conjectured relation 
 is essentially a tautology. See \cite[Corollary 5.5]{bunke-2007} for a more
 general statement in the non-equivariant case.  \cite[Conjecture 6.1]{ortiz}
 would be an immediate consequence of a theorem stating that any two models of
 equivariant differential $K$-theory for finite group actions are canonically
 isomorphic  (see \cite{bunke-2009} for the non-equivariant version) in a
 \textcolor{black}{way compatible with} integration. 
 It seems to be plausible that the method of \cite{bunke-2009} extends to the equivariant case though we have not checked the details.

\emph{Acknowledgement: A great part of the material of the present paper has
  been worked out around 2003. Motivated by \cite{szabo-2007} and fruitful
  personal discussions with \textcolor{black}{Richard} Szabo and
  \textcolor{black}{Alessandro} Valentino 
we transferred the theory to the case of orbifolds and worked out the details of the intersection pairing.
}

\section{Definition of differential K-theory via cycles and relations}\label{tztewqze}

\subsection{Equivariant forms and orbifold $K$-theory}

\newcommand{\Mf}{{\tt Mf}}
\newcommand{\Sh}{{\tt Sh}}
\newcommand{\Stack}{{\tt Stack}}

\subsubsection{}\label{stack1}

{In the present paper we use the language of stacks in order to talk about orbifolds and maps between them. This language is by now well-developed and we  refer to  \cite{bss}, \cite{dm}, \cite{vistoli} or \cite{heinloth} for details.  For the sake of readers with less experience with stacks we will recall some basic notions and constructions.}

{
 We consider the category $\Mf$ of smooth manifolds. By the  Yoneda embedding
$\Mf\hookrightarrow \Sh(\Mf)$ manifolds can be considered as sheafs of sets on $\Mf$ equipped with the usual Grothendieck topology 
given by open coverings. \textcolor{black}{Because sets are special kinds of
groupoids, namely those which have only identity morphisms,} the category of
sheaves of sets embeds 
in the two-category of sheaves \textcolor{black}{of groupoids on $\Mf$ denoted $\Stack(\Mf)$ whose
objects are called stacks}. By this embedding $\Sh(\Mf)\hookrightarrow
\Stack(\Mf)$, a manifold $M$ can be considered as a stack which associates to
each test manifold $T\in \Mf$ the set (considered as a groupoid) of smooth
maps from $T$ to $M$, i.e.~we have $M(T)=C^{\infty}(T,M)$. More generally, if
$G$ is a Lie group acting on $M$, then we can consider the quotient stack 
$[M/G]$ which associates to each test manifold $T$ the groupoid $[M/G](T)$ of pairs $(P\to T,\phi)$ of $G$-principal
bundles $P\to T$ and $G$-equivariant maps $\phi\colon P\to M$. If $G$ acts freely and properly with quotient manifold $M/G$, then we have a natural isomorphism of stacks 
$[M/G]\cong M/G$. 
 If $G\subset H$ is an inclusion of Lie groups, then we have a natural isomorphisms of stacks $[M/G]\cong [M\times_{G}H/H]$. By functoriality, a definition of differential $K$-theory for stacks takes these
 isomorphisms into account automatically. This is one of our motivations to prefer the stack language.
 }

\subsubsection{}\label{stack22}

The groupoid of maps $\Hom_{\Stack(\Mf)}(X,Y)$ between two stacks $X,Y\in
\Stack(\Mf)$  is \textcolor{black}{by definition} just the groupoid of maps between sheaves of groupoids on $\Mf$. Its objects are called morphisms or maps of stacks, and its morphisms are called two-morphisms between morphisms.
It is important to understand that the Yoneda embedding gives the equivalence of groupoids
$X(T)\cong \Hom_{\Stack(\Mf)}(T,X)$.

In stacks we can form arbitrary two-categorial fibre products. 
A map between stacks $f\colon X\to Y$ is called representable if the stack 
$T\times_{Y}X$ is isomorphic to a manifold for every manifold $T$ and map
$T\to Y$. 
Many properties of maps between smooth manifolds are preserved by
pull-backs. This includes the conditions of being \textcolor{black}{\textit{a locally trivial
  fibre bundle, open, closed, proper, submersion, or surjective}.}
These properties can be defined for representable maps by requiring them for all the induced maps of manifolds
$T\times_{Y}X\to T$.
For example,  a locally trivial fibre bundle is a representable map $f\colon X\to Y$ such that the induced maps between manifolds
$T\times_{Y}X\to T$ are locally trivial fibre bundles in the ordinary sense. In this case the fibres of $f$ are smooth manifolds, and all stackyness of $X$ comes from the base $Y$.

\newcommand{\Vect}{{\tt Vect}}\newcommand{\univE}{{\tt E}}

{
For a map $f$, being a vector bundle  is an additional structure. A vector bundle structure on a map between stacks can be given in two equivalent ways. One way is to use  classifying stacks.
There exists a stack $\Vect(n,\R)\in \Stack(\Mf)$ whose evaluation on the test manifold $T\in \Mf$ is the groupoid of $n$-dimensional  real vector bundles $V\to T$ and isomorphisms. Then an $n$-dimensional real  vector bundle on a stack $Y$ is, by definition, a map of stacks $Y\to \Vect(n,\R)$.
In order to describe the underlying bundle  we consider the universal vector bundle
$\univE(n,\R)\to \Vect(n,\R)$. The evaluation $\univE(n,\R)(T)$ is the groupoid of pairs $(V\to T,v)$
of \textcolor{black}{an $n$-dimensional} vector bundle on $T$ and a section $v\in C^{\infty}(T,V)$. The map $\univE(n,\R)\to \Vect(n,\R)$ forgets the section.
It is representable and a locally trivial fibre bundle since for every map
$(g\colon T\to \Vect(n,\R))\in \Vect(n,\R)(T)$  the pull-back
$T\times_{\Vect(n,\R)} \univE(n,\R)$ is equivalent to the manifold given by
the total space of the vector bundle classified by $g$.  We can now say that a
map $f\colon X\to Y$ between stacks is
an $n$-dimensional real vector bundle if it \textcolor{black}{comes with a
  (class of) morphisms and two-morphisms making
the right  square of diagram \eqref{above} two-cartesian }
\begin{equation}\label{above}\xymatrix{T\times_{Y}X\ar@{..>}[r]\ar[d]&X\ar[r]\ar[d]^{f}&\univE(n,\R)\ar[d]\\ T\ar@{::>}[ur]\ar@{..>}[r]&Y\ar[r]\ar@{:>}[ur]&\Vect(n,\R)}\ .\end{equation}
Note that $f$ is necessarily representable and a locally trivial fibre
bundle.}

\textcolor{black}{
The other, equivalent, way to  define the structure of an $n$-dimensional real vector bundle
on the map $f\colon X\to Y$ is as a family of $n$-dimensional real vector bundle structures on the family of maps
$(T\times_{Y}X\to T)_{T\to Y}$ which is compatible with pull-backs along morphisms of manifolds over $Y$, i.e. for 
pairs $(f,\phi)$
of a smooth map $f$ and a two-morphism $\phi$
$$\xymatrix{T\ar[rr]^{f}\ar[dr]&\ar@{:>}[d]^{\phi}&T^{\prime}\ar[dl]\\&Y&}\ .$$
Indeed, the datum of such a family is the same as a map of stacks $Y\to \Vect(n,\R)$. On the other hand, given
this map, we get the compatible family of vector bundles by forming the left cartesian squares in the diagram (\ref{above}).
}

\textcolor{black}{The same philosophy allows to define additional differential-geometric structures like
fibrewise metrics or connections. Let us explain this in detail for  vertical Riemannian metrics.
}

\newcommand{\BDiff}{{\tt BDiff}}
We consider the stack $\BDiff$ of locally trivial fibre bundles whose evaluation
on a test manifold $T$ is the groupoid of locally trivial \textcolor{black}{smooth} fibre bundles
$F\to T$ and bundle isomorphisms. As in the case of vector bundles it carries a universal bundle $\univE\to \BDiff$ such that $\univE(T)$ is the groupoid of
pairs $(F\to T,s)$ of a fibre bundle  and a section $s\in C^{\infty}(T,F)$, and the bundle projection forgets the section.
We can now form the stack $\BDiff(g^{T^v})$ whose evaluation on $T$
is the groupoid of pairs $(\pi\colon F\to T,g^{T^{v}\pi})$
\textcolor{black}{consisting} of a locally trivial fibre bundle and a vertical
Riemannian metric, and whose morphisms are isometric bundle isomorphisms. We
again have a forgetful map $\BDiff(g^{T^v})\to \BDiff$ and define  
$\univE(g^{T^{v}}):=\BDiff(g^{T^v})\times_{ \BDiff}\univE$.

A map $f\colon X\to Y$ is a locally trivial fibre bundle   if it fits into a two-cartesian diagram
$$\xymatrix{X\ar[d]^{f}\ar[r]&\univE\ar[d]\\Y\ar@{:>}[ur]\ar[r]&\BDiff}\ .$$
A vertical Riemannian metric on $f$ is then a refinement
 to
$$\xymatrix{&&\\X\ar[r]\ar[d]^{f}\ar@/^1.5cm/[rr]&\univE(g^{T^{v}})\ar@{:>}[u]\ar[r]\ar[d]&\univE\ar[d]\\Y\ar[r]\ar@{:>}[ur]\ar@/_1.5cm/[rr]&\BDiff(g^{T^{v}})\ar@{:>}[ur]\ar[r]&\BDiff\\&\ar@{:>}[u]&}\ .$$ 
 Equivalently, a vertical Riemannian metric on $f\colon X\to Y$ can be understood as a  collection of
vectical Riemannian metrics on the bundles 
$T\times_{Y}X\to T$ for all maps $T\to Y$ from smooth manifolds $T$ which is
compatible for pull-backs along maps of test manifolds $(f,\phi)\colon T^\prime\to T$ over $Y$.

A similar idea works for horizontal distributions using the stack 
$\BDiff(T^{h})$ which classifies bundles with horizontal distributions.
For connections and metrics on a vector bundle we work with the corresponding stacks
of vector bundles with connections, metrics or both. See also \ref{fuwefiu} where we apply these ideas to principal bundles and connections.


\subsubsection{}\label{stack2}

A map $A\to X$ from a manifold $A$ to a stack $X$ is called an atlas if it
is representable, surjective and a submersion. A stack is called smooth if it admits an atlas.
For example, the quotient stack $[M/G]$ defined in \ref{stack1} is smooth since the map
$M\to [M/G]$ is an atlas. As a counter example, the stack $\BDiff$ is not smooth.   In general, every smooth stack $X$ is isomorphic to the quotient stack of the action of a groupoid. Indeed, given an atlas $A\to X$, we can form the groupoid
 $\cA:=(A\times_XA\rightrightarrows A)$. This groupoid acts on $A$, and there is a natural
 isomorphism $X\cong [A/\cA]$, where $[A/\cA]$ denotes the quotient stack of the action of $\cA$ on $A$ defined by an extension of the notion of a quotient stack for a group action explained in \ref{stack1}, see \cite{heinloth} for details. 
 In the example above we get the action groupoid
 $M\times G\rightrightarrows M$. 
 The subcategory of smooth stacks can be obtained as a localization of the category of
 groupoids in manifolds by formally inverting a certain class of morphisms.
 There is the option to define differential $K$-theory on the level of groupoids in manifolds
 and to show that it descends to smooth stacks by verifying that the inverted morphisms between groupoids induce isomorphisms in differential $K$-theory. In the present paper we prefer to work with the stacks directly.\footnote{{The difference between these two options resembles the situation in 
 differential geometry, where objects can be defined in charts or globally. The first choice requires frequent  verifications that constructions are independent of the choice of coordinates.}} This choice of language has the advantage that for many definitions (e.g. of a vector bundle or geometric family) and for  many arguments we can just use the same words and symbols as in the non-singular case since their meaning and properties naturally \textcolor{black}{extend} to the case of stacks.

\subsubsection{}

In the present paper we consider differential $K$-theory for orbifolds.
By definition an orbifold is a stack $X$ in smooth manifolds  which admits an orbifold atlas
$A\to X$. An orbifold atlas is an atlas $A\to X$  
 such that the groupoid $A\times_XA\rightrightarrows A$ is {proper and}  \'etale.
 {Recall that a groupoid $A^{1}\rightrightarrows A^{0}$ is called proper if the map
 $(s,t)\colon A^{1}\to A^{0}\times A^{0}$ is proper (preimages of compacts are compact), and it is called \'etale if the range and target maps
 $s,t\colon A^{1}\to A^{0}$ are local diffeomorphisms. In other words,
 orbifolds are stacks which are equivalent to quotient stacks of actions of
 smooth proper \'etale groupoids on a smooth manifolds. A description of
 orbifolds in terms of groupoids has been given in \cite{pronk-2007} or
 \cite{md}.
 }

In the older literature an orbifold is often defined as a topological space together with a compatible collection of 
orbifold charts. In the language of stacks this space would be referred to as  the coarse moduli space.
In this picture the obvious notion of a map between orbifolds would be a map of the coarse moduli spaces which has smooth representatives in the charts. In general this notion is strictly larger
than the notion of  a morphism of orbifolds defined here as a map of stacks. 
Our morphisms of orbifolds are called strong or good maps in \cite{MR1993337}.

\subsubsection{}
A major source of orbifolds are actions of discrete groups on smooth manifolds.
Let $G$ be a discrete group which acts on a smooth manifold $M$. The action
$\mu\colon M\times G\to M$ is called proper if the map $(\id_M,\,u)\colon
M\times G\to M\times M$ is proper.  
If the action is proper, then the quotient stack $[M/G]$ is an orbifold.
The map $M\to [M/G]$ is an orbifold atlas. The associated groupoid is the action groupoid
$M\times G\rightrightarrows M$.
\begin{ddd}
An orbifold of the form $[M/G]$ for a proper action of a discrete group on a smooth manifold
is called good.
\end{ddd}

Another source of examples arises from actions of compact Lie groups $G$ on smooth manifolds $M$ with finite stabilizers. In this case the quotient stack $[M/G]$ is a smooth stack with an atlas
$M\to [M/G]$, but this atlas is not an orbifold atlas since the groupoid
$M\times G\rightrightarrows M$ is not \'etale. In order to find an orbifold
atlas we choose for every point 
$m\in M$ a transversal slice $T_m\subset M$ such that $T_m\times_{G_m}G \to M$
is a tubular 
neighbourhood of the orbit of $m$, where $G_m\subseteq G$ is the finite stabilizer of $m$.
Then the composition $\bigsqcup_{m\in M} T_m\to M\to [M/G]$ is an orbifold atlas.
\begin{ddd}
An orbifold of the form $[M/G]$ for an action of a compact Lie group $G$ with finite stabilizers
on a smooth manifold $M$ is called presentable. A presentable orbifold is called compact,  
if the manifold $M$ in its presentation can be chosen compact.
\end{ddd}

 Note that, by definition, a presentation $[M/G]$ of an orbifold involves a \emph{compact}
group $G$.

Let $X$ be an orbifold with orbifold atlas $A\to X$. It gives rise to the \'etale groupoid
$\cA\colon A\times_XA\rightrightarrows A$. The frame bundle of a manifold can be defined by a construction which is  functorial under local diffeomorphisms. Since the groupoid  $\cA$ is  \'etale the frame bundle $\Fr(A)\to A$ is
$\cA$-equivariant.   We can now define the  frame bundle of the orbifold $X$  as the quotient stack
$\Fr(X):=[\Fr(A)/\cA]$. It does not depend on the choice of the atlas up to natural equivalence.

\begin{ddd}
An orbifold $M$ is called effective if the total space of its frame bundle $\Fr(X)\to X$ is
equivalent to a smooth manifold.
\end{ddd} 

It is known that an effective orbifold is presentable.
On the other hand it is an open problem whether all orbifolds are presentable, see
\cite{MR2048526}.

\subsubsection{}\label{uidwqdqwd}

For a stack $X$ we define the inertia stack
$$LX:=X\times_{X\times X }X$$ by forming the two-categorial fibre product of
two copies of the diagnal $\diag\colon X\to X\times X$. If $X$ is an orbifold, then
the inertia stack $LX$ is again an orbifold (\textcolor{black}{compare}
\cite[\textcolor{black}{Lemma 2.33}]{math.KT/0609576} for an argument).
 In the case of a good orbifold of the form $[M/G]$ with a discrete group $G$, the inertia orbifold $L[M/G]$ is equivalent to the quotient stack 
$[\hat M/G]$, where $\hat M:=\bigsqcup_{g\in G} M^g$, $M^g\subseteq M$ is the
smooth submanifold of fixed points of $g$, and the  element $h\in G$ defines a
map $M^g\to M^{h^{-1}gh}$ in the natural way.
The $G$-space $\hat M$ is sometimes called the Brylinski space.

\subsubsection{}

\newcommand{\Site}{{\tt Site}}

For a stack $X$ we consider the site $\Site(X)$ of manifolds over $X$ (see
in\cite[\textcolor{black}{Section 2.1}]{bss}). Its objects are representable
submersions
$T\to X$ from smooth manifolds, and its morphisms are  pairs $(f,\phi)$
of a smooth map and a two-morphism
$$\xymatrix{T\ar[rr]^{f}\ar[dr]&\ar@{:>}[d]^{\phi}&T^{\prime}\ar[dl]\\&X&}\ .$$
The topology is given by open coverings of the manifolds $T$.
 We thus have a category $\Sh(X)$ of sheaves on $X$ (see
 \cite[Section 2.1]{bss} for details). A natural example of a
 sheaf on $X$ is the de Rham complex \textcolor{black}{of complex-valued forms}
 $\Omega_X\in \Sh(X)$, which is a sheaf of 
 differential graded commutative algebras over $\C$ and given by
$\Omega_{X}(T\to X):=\Omega(T)$ (the \textcolor{black}{de Rham complex of
  complex-valued forms of} $T$). Its structure maps are given by 
$(f,\phi)^{*}=f^{*}\colon \Omega(T^{\prime})\to \Omega(T)$. 
For a sheaf $F\in \Sh(X)$ we define the set (or group, ring, or differential
graded algebra depending \textcolor{black}{on the target category of} $F$) of global sections by
$$F(X):=\lim_{(T\to X)\in \Site(X)} F(T\to X)\ .$$

In the case of the de Rham complex we write  $\Omega(X):=\Omega_X(X)$\footnote{Observe that this does not introduce any notational conflict if $X$ is a manifold itself.}.
In particular we can consider the global sections of the de Rham complex $\Omega(LX)$ of the intertia stack. By definition, its cohomology
is the delocalized orbifold de Rham cohomology
$$H_{dR,deloc}(X):=H(LX):=H^*(\Omega(LX)),$$ see \textcolor{black}{\cite[Section
  3.2]{math.KT/0609576}}.
In the case of a good orbifold $X=[M/G]$ the forms on the inertia orbifold
coincide with the $G$-invariant forms on the smooth manifold $\hat{M}$:
\begin{equation}\label{formdef}
\Omega(LX) \cong \Omega(\hat M)^G \ .\end{equation}
Note that the left-hand side of this equality  has a definition which is manifestly independent of the presentation of $X$ as a quotient $X={[M/G]}$.

\subsubsection{}\label{z23r23r32r}

Let $E\to X$ be a complex vector bundle over an orbifold $X$. Recall from \ref{stack22} that this means that
$E$ is a stack and  the projection $E\to X$ is a representable map such that
$T\times_{X}E\to T$ is a complex vector bundle for all maps $T\to X$ compatibly with pull-backs along maps
$T^{\prime}\to T$ over $X$. One can check that $E$ is an orbifold, too.

Further recall that a connection $\nabla^{E}$ on $E$ can be understood as a compatible collection of connections on
the vector  bundles $T\times_{X}E\to T$.
In order to construct connections on $E$ we choose an orbifold atlas $A\to X$.  We consider the  associated proper and \'etale groupoid $\cA\colon A\times_XA\rightrightarrows A$.  The vector bundle gives rise to an $\cA$-equivariant vector bundle $E_A:=E\times_XA\to A$, where the action is a fibrewise linear map
$$\xymatrix{(A\times_XA)\times_{\pr_2, A}E_A\ar[rr]\ar[dr]^{\pr_1}&& E_A\ar[dl]\\&A&}\ .$$
A connection on $E$ induces, by definition,  an $\cA$-invariant connection on
$E_A$. On the other hand, one can check that an $\cA$-invariant connection on
$E_{A}$ uniquely determines a connection on $E$.
Thus to construct a connection on $E\to X$ first choose an arbitrary connection on $E_A$, and then average over $\cA$ in order to make it invariant.

We choose a connection $\nabla^E$.

\subsubsection{}\label{t6z111}

Consider a two-categorial pull-back
$$\xymatrix{A\ar[r]\ar[d]&B\ar[d]\\C\ar@{:>}[ur]\ar[r]^{f}&D}$$
in a two-category like $\Stack(\Mf)$.
Then  we have a natural action of the group
of two-automorphisms $\Aut(f)$ on $A$.
Furthermore,
given a morphism $g\colon X\to Y$, the natural map
$g\colon X\times_{Y}X\to Y$ comes with a canonical two-automorphism
$\phi_{g} \in \Aut(g)$. 
If we apply this to the inertia object ($g=\diag$)
$$\xymatrix{LX\ar@/^2cm/[rrdd]^{i}\ar[d]\ar[r]&X \ar[d]^{\diag}&\\
X\ar@{:>}[ur]\ar[r]^{\diag}&X\times X\ar[dr]^{\pr_{1}}&\\&&X}\ , 
$$ then we get a natural automorphism
\begin{equation}\label{defphi}\phi:=\pr_{1*}\phi_{\diag}\in \Aut(i) .\end{equation}

Let $E_L\to LX$ be the vector bundle defined by the pull-back
$$\xymatrix{E_L\ar[d]\ar[r]&E\ar[d]\\LX\ar@{:>}[ur]\ar[r]^i&X}\ .$$
The {two-automorphism $\phi\in \Aut(i)$}  induces an automorphism of vector bundles of $E_L$
$$\xymatrix{E_L\ar[dr]\ar[rr]^\rho&\ar@{:>}[d]&E_L\ar[dl]\\&LX&}\ .$$

The connection $\nabla^{E}$ induces by pull-back a connection $\nabla^{E_{L}}$.
Using the curvature $R^{\nabla^{E_L}}\in \Omega(LX,\End(E_L))$ of the connection $\nabla^{E_L}$
we define the Chern form
\begin{equation}\label{cherndef}\ch({\nabla^E}):=\Tr \: \rho\: \ee^{-\frac{1}{2\pi i} R^{\nabla^{E_L}}}\in \Omega(LX)\ .\end{equation}
                                                                                                                   
This form is closed and represents the Chern character of $E$ in delocalized cohomology $H_{dR,deloc}(X)$.


\subsubsection{}

In order to motivate this definition of the Chern form we consider the example
of quotient stacks. 
 If $X=[M/G]$ for a discrete group $G$ then  we have $LX\cong [\hat M/G]$ as
 above (see \ref{uidwqdqwd}) with $\hat M=\bigsqcup_{g\in G}M^g$. The map
 $i\colon LX\to X$ is represented by the map of groupoids 
 $(\hat M\times G\rightrightarrows \hat M)\to (M\times G\rightrightarrows M)$ which 
  on morphisms is given by  $(x\in M^{g},h)\mapsto (x,h)$. In this picture
the automorphism $\phi\in \Aut(i)$ is given by 
$\hat M\to M\times G$, 
$(x\in M^g)\mapsto (x,g)$.
 
We consider a $G$-equivariant vector bundle $\tilde E\to M$. Then $$E:=[\tilde E/G]\to X=[M/G]$$ is a vector bundle in stacks.
The bundle
$E_{L}\to LX$ is represented by the maps of groupoids
$(\hat E \times G\rightrightarrows \hat E) \to (\hat M \times G\rightrightarrows \hat M)$,
where $\hat E\to \hat M$ is the $G$-equivariant vector bundle defined as the pull-back of $\tilde E$ along the map $\hat M\to M$, $(x\in M^{g})\mapsto x$. The automorphism $\rho$ of $E_{L}$ is represented by the bundle automorphism $\hat \rho\colon \hat E\to \hat E$ which reduces to the action of $g$ on each fibre
$\hat E_{(x\in M^{g})}\cong \tilde E_{x}$.

We choose a $G$-invariant connection $\nabla^{\tilde E}$. It induces  connections $\nabla^{E}$ and $\nabla^{\hat E}$.
In this case the Chern form
$\ch(\nabla^{E})$ defined in (\ref{cherndef}) is given by the
invariant form
$$\Tr \:\hat \rho \:\ee^{-\frac{2}{2\pi i} R^{\nabla^{\hat E}}}\in \Omega(\hat M)^{G}\stackrel{(\ref{formdef})}{\cong} \Omega(LX)\ .$$
This is exactly the definition of the Chern form given by Baum and Connes in
\cite{bc}.

\subsubsection{}\label{uiqduwqdwqdqwdqwd}
\newcommand{\can}{{\tt can}}
The inertia orbifold $i\colon LX\to X$  has the structure of a group-object in
the two-category of stacks over $X$, see \cite[\textcolor{black}{Lemma
  2.23}]{math.KT/0609576}.   
The group structure is easy to describe in the case of a quotient stack $X=[M/G]$ for a discrete group $G$. In this case  $LX\cong \left[\left(\bigsqcup_{g\in G}M^g\right)/G\right]$, and  the multiplication and inversion $I$ are  given by 
$(x,g)(x,h):=(x,gh)$ for $x\in M^g\cap M^h$, and $I(x,g):=(x,g^{-1})$.

In general, there is a canonical isomorphism $\can\colon i\circ I\Rightarrow i$.
If $\phi\colon i\Rightarrow i$ is the natural two-automorphism of  $i$
 as in \eqref{defphi}, then 
\begin{equation}\label{udqidwqdwqdqwd}
 \phi^{-1}=\can\circ \phi\circ I\circ \can^{-1}\ 
\end{equation}
in $\Aut(i)$.
 
We use the inversion $I$ in order to define a real structure $Q$ on $\Omega(LX)$ by
$Q(\omega):=I^*\overline{\omega}$. We define the subcomplex of real forms
  $\Omega_\R(LX)\subseteq \Omega(LX)$ as the subspace of $Q$-invariants.
  The isomorphism $\can\colon  i\circ I\Rightarrow i$ induces an
     isomorphism of bundles
  $E_L\cong i^*E \stackrel{\sim}{\to} I^* i^* E\cong I^*E_L$.
It follows from (\ref{udqidwqdwqdqwd}) that
$$\xymatrix{E_L\ar[r]^\rho\ar[d]^{\can}_\cong&E_L\ar[d]^{\can}_\cong\\I^*E_L\ar[r]^{I^*\rho^{-1}} &I^*E_L}$$
commutes.

This is easy to check  directly in the case $X=[M/G]$ with $G$ finite. In this case
$\rho$ is given by the induced action of $g$ on $(E_L)_x\cong E_x$
for  $x\in M^g$,  but $I(x) \in M^{g^{-1}}$, and thus $I^*\rho$ is given by
the induced action of $g^{-1}$ on $(E_L)_{I(x)}\cong E_x$.

We can choose a hermitean
metric on $E$ and a onnection compatible with this metric.  
In fact, one can choose an orbifold atlas $f\colon A\to
  X$  and a metric and  metric connection on $f^* E\to A$. 
By averaging  one can make these invariant under the groupoid $A\times_XA\rightrightarrows A$. The invariant metric and connection  give  a metric and a metric connection on $X$, see \ref{z23r23r32r}.

The metric on $E$ induces a metric on $E_L$, and
the morphism $\rho$ is unitary. Furthermore,   the curvature
of a metric  connection takes values in the antihermitean endomorphisms.
Because the connection pulls back from $X$ we have  $I^*\nabla^{E_L}=\nabla^{E_L}$
 under the canonical isomorphism $I^*E_L\cong E_L$. A similar equality holds true for the curvature.
Combining all these facts we see  that the Chern form for a metric connection is real, i.e.~we that 
$$\ch(\nabla^E)\in \Omega_\R(LX)\ .$$

\subsubsection{}\label{dqwekdwqdqwdwq}

\newcommand{\Top}{{\tt Top}}
Using the methods of \cite{MR2119241} or \cite{freed-2007} one can define complex $K$-theory for local quotient stacks. Here we consider stacks on the site of topological spaces $\Top$ with the open covering topology, see \cite[Chapter 6]{period} and the corresponding notions of representability of maps and locally trivial bundles.
A local quotient stack is a  stack which admits a covering by open substacks of the form
$[U/G]$ where $U$ is a locally compact space and the topological group $G$ is compact.

Let us explain, for example, the definition of $K$-theory according 
\textcolor{black}{to} \cite[\textcolor{black}{Section 3.4}]{freed-2007}. It is
based on the notion of a universal
bundle of separable Hilbert spaces $H\to X$. Here universality is the property that for every
other bundle of separable Hilbert spaces $H_1\to X$ we have an isomorphism $H\oplus H_1\cong H$. Let $\Fred(H)\to X$ be the associated bundle of Fredholm operators. It gives rise to a sheaf of sections
which can naturally be enhanced to a sheaf of spaces, e.g. using simplicial methods.
By $\Gamma(X,\Fred(H))$ we denote the space of global sections of $\Fred(H)\to X$.
Then one defines $K^{-*}(X)$ as the  homotopy group $\pi_*(\Gamma(X,\Fred(H)))$. One can also directly define the group $K^{-1}(X)$ as the 
group $\pi_0(\Fred^*(H))$ of homotopy classes of sections of selfadjoint Fredholm operators with
infinite \textcolor{black}{dimensional
positive and negative spectral subspace}.

A stack in manifolds $X\in \Stack(\Mf)$ in general can not be considered as a topological stack
since it is not clear how to evaluate $X$ on test spaces $T$ which are not manifolds. 
However, the inclusion $\Mf\hookrightarrow \Top$, $M\mapsto M_{\Top}$, of the category of manifolds in the category of topological spaces extends to smooth stacks as follows.
If $X$ is a smooth stack, then we can choose an atlas $A\to X$ and obtain a natural isomorphism
$X\cong [A/\cA]$, where $\cA=(A\times_{X}A\rightrightarrows A)$.
The smooth groupoid $\cA$ has an underlying topological groupoid $\cA_{\Top}$, and we obtain the topological stack $X_{\Top}:=[A_{\Top}/\cA_{\Top}]$. The stack $X_{\Top}$ does not depend on the choice of the atlas up to natural isomorphisms. 

Hence we can apply this construction to orbifolds. If $X$ is an orbifold, then $X_{\Top}$ is a local quotient stack, and we can define its $K$-theory by 
\begin{equation}\label{kh56}K^{*}(X):=K^{*}(X_{\Top})\ .\end{equation}

\subsubsection{}\label{cgd222}

For the present paper this set-up is too general since we want to do local index theory.
In our case we want to represent $K$-theory classes by indices of families of
Dirac operators, or in the optimal case, by vector bundles. For compact
presentable orbifolds a construction of $K$-theory in terms of vector bundles
has been given in \cite[\textcolor{black}{Definition 4.1}]{MR1993337}. Note that
a vector bundle on an orbifold as defined in
Subsection \ref{stack22} is an orbifold vector bundle in the terminology of
\cite{MR1993337}\footnote{{As an illustration, let $\Z/2\Z$ act on $\R$ by
    reflection at $0$. Then the map of orbifolds $[\R/(\Z/2\Z)]\to *$ is not a
    vector bundle.}}. At the moment, for general (not presentable) orbifolds,
it is not clear that the definition
(\ref{kh56}) is equivalent to a definition based on vector bundles.

For presentable orbifolds we can also use equivariant $K$-theory.
Let $X$ be an orbifold and consider a presentation  $[M/G]\cong X$. Then
 the category of vector bundles over $X$ is equivalent to the category of $G$-equivariant vector bundles over $M$. The Grothendieck group of the latter is $K^0_G(M)$, and we have
$K^0(X)\cong K^0_G(M)$, see \cite[Proposition 4.3]{MR1993337}.
The isomorphism $K(X)\cong K_G(M)$
can  be taken as an alternative  definition since independence of the
presentation follows e.g.~from \cite[\textcolor{black}{Proposition
  4.1}]{pronk-2007}.

For a compact presentable orbifold $B$ 
the description of $K^{0}(B)$ in terms of vector bundles over $B$  shows that the construction of Chern forms (\ref{cherndef}) induces a natural transformation
$$\ch_{dR}\colon K^{0}(B)\to H^{ev}_{dR,deloc}(B)$$
in the usual manner. The odd case
$$\ch_{dR}\colon K^{1}(B)\to H^{odd}_{dR,deloc}(B)$$ is obtained from the even case using suspension by $S^{1}$.

\subsection{Cycles}
\subsubsection{}
 
In this paper we construct the differential $K$-theory of compact presentable orbifolds. 

The restrition to compact orbifolds is due to the fact that we work with absolute $K$-groups. One could in fact modify the constructions in order to produce compactly supported differential $K$-theory or relative differential 
$K$-theory. But in the present paper, for simplicity, we will not discuss relative differential cohomology theories.

We restrict our attention  to presentable orbifolds since we want to use equivariant techniques. We do not know if our approach extends to general compact orbifolds, see \ref{uidqwdqwdqwdqwdd}.

\subsubsection{}

We define the differential $K$-theory $\hat K(B)$ as the group completion of a quotient of a semigroup
of isomorphism classes of cycles by an equivalence relation. We start with the description of cycles.

\begin{ddd}
Let $B$ be a compact presentable orbifold, possibly with boundary.
A cycle for a differential $K$-theory class over $B$ is a pair
$(\cE,\rho)$, where $\cE$ is a geometric family, and $\rho\in
\Omega(LB)/\im(d)$ is a class of differential forms.
\end{ddd}

\subsubsection{}

\newcommand{\geomfam}{{\tt GeomFam}}
In the smooth case 
the notion of a geometric family has been introduced in
\cite[\textcolor{black}{Definition 2.2.2}]{math.DG/0201112} 
in order to have a short name for the data needed to define a Bismut
super-connection \cite[Proposition 10.15]{bgv}. In the present paper we need the straightforward 
generalization of this notion to orbifolds. In fact, one can consider the
stack $\geomfam$
which associates to a test manifold $T$ the groupoid of geometric families and isomorphisms over $T$.
Then a geometric family over a stack $X$ is just a map $X\to \geomfam$. Let us spell-out this in greater detail.
Let $B$  be an orbifold (or more general, an arbitrary stack on $\Mf$).
\begin{ddd}
A geometric family over $B$ consists of the following data:
\begin{enumerate}
\item a proper representable submersion with closed fibres $\pi\colon E\to B$,
\item a vertical Riemannian metric $g^{T^v\pi}$ {as in
    \ref{stack22}}, 
\item a horizontal distribution $T^h\pi$ (see \ref{stack22}) 
\item a family of Dirac bundles $V\to E$,
\item an orientation of $T^v\pi$.
\end{enumerate}
\end{ddd}
Here, a family of Dirac bundles consists of\begin{enumerate}
\item a hermitean vector bundle with connection $(V,\nabla^V,h^V)$ on $E$,
\item a Clifford multiplication $c\colon T^v\pi\otimes V\to V$,
\item on the components where
$\dim(T^v\pi)$ has even dimension a $\Z/2\Z$-grading $z$.
\end{enumerate}
We require that the restrictions of the family of Dirac bundles to the
fibres $E_b:=\pi^{-1}(b)$, $b\in B$, give Dirac bundles in the usual
sense as in \cite[Definition  3.1]{math.DG/0201112}, \textcolor{black}{namely}: 
\begin{enumerate}
\item The vertical metric induces the Riemannian structure on $E_b$.
\item The Clifford multiplication turns $V_{|E_b}$ into a Clifford module (see \cite[Definition 3.32]{bgv}) which is graded if $\dim(E_b)$ is even.
\item The restriction of the connection $\nabla^V$ to $E_b$ is  a Clifford connection   (see \cite[Definition 3.39]{bgv}). 
\end{enumerate}

Since $\pi$ is representable and a locally trivial fibre bundle its fibres are smooth manifolds.
All stackyness of $E$ is ``induced'' from $B$.
In particular all fibrewise structures, but also the notions of a connection   $\nabla^{V}$ and a horizontal distribution $T^{h}\pi$, are well-defined as   explained in Subsection \ref{stack22}.

It is also useful to understand 
a geometric family on $B$, i.e. a map $B\to \geomfam$, as a collection of geometric families
($\cE_{T\to B})_{T\to B}$ defined for all maps $T\to B$ from smooth manifolds
$T$  together with isomorphisms
$(f,\phi)^{*}\colon \cE_{T\to B}\stackrel{\sim}{\to} f^{*}\cE_{T^{\prime}\to B}$ for all
pairs of a smooth map and a two-morphism
$$\xymatrix{T\ar[rr]^{f}\ar[dr]&\ar@{:>}[d]^{\phi}&T^{\prime}\ar[dl]\\&B&}$$
which are compatible with compositions.

If $B$ is a smooth stack, then
using an atlas $f\colon A\to B$ we can give a third equivalent definition of a geometric family. We can form the groupoid $\cA:=(A\times_BA \rightrightarrows A)$ which represents the stack $B$. The pull-back of the geometric family along
$f$ is the  geometric family  $\cE_{A\to B}$ in the non-singular setting which in addition carries an action of the groupoid $\cA$. 
We can define a geometric family over $B$ as an $\cA$-equivariant geometric family over $A$.

Let $[M/G]\cong B$ be a presentation and $\cE$ be a geometric family over $B$.
Then $M\times_BE\to M$ is the underlying bundle of a $G$-equivariant geometric family $M\times_B\cE$ over $M$.
Vice versa, a $G$-equivariant geometric family $\cF$ over $M$ induces a geometric family $\cE:=[\cF/G]$ over $B$. If $F\to M$ is the underlying $G$-equivariant bundle, then
the underlying bundle of $\cE$ is the map of quotient stacks $[F/G]\to [M/G]\cong B$.

A geometric family is called even or odd, if $T^v\pi$ is
even-dimensional or odd-dimensional, respectively.

\subsubsection{}
\label{sec:Dirac}

Let $\cE$ be an even geometric family over a presentable compact orbifold  $B$. 
It gives rise to a bundle of graded separable Hilbert spaces $H_1\to B$ with fibre $H_{1,b}\cong L^2(E_b,V_{|E_b})$.
We furthermore have an associated family of Dirac operators which gives rise to a section
$F_1:=D^+(D^2+1)^{-\frac{1}{2}}\in \Fred(H_1^+,H_1^-)$.
Let $H\to B$ be the universal Hilbert space bundle as in \ref{dqwekdwqdqwdwq}. We choose isomorphisms
$H_1^\pm\oplus H\cong H$. Extending $F$ by the identity we get a section 
$F\in \Gamma(B,\Fred(H))$. \textcolor{black}{By definition, its} homotopy class represents the index
$$\ind(\cE)\in K^0(B)$$ of the geometric family.

Alternatively we can use a presentation $[M/G]\cong B$. Then
$M\times_B\cE$ is a $G$-equivariant geometric family over $M$. The index of the associated equivariant family of Dirac operators 
$\ind(M\times_N\cE)\in K_G^0(M)$ represents $\ind(\cE)\in K^0(B)$ under the isomorphism
$K^0(B)\cong K_G^0(M)$.

The index of an odd geometric family can be understood in a similar manner.

As an illustration let us consider the case where $B=BG=[*/G]$ for a finite group $G$. In this case a geometric family $\cE$ over $B$ is the same as a $G$-equivariant geometric family $\cE_{*\to B}$ over $*$. The universal Hilbert bundle is given by a universal separable Hilbert representation $H_{univ}$ of $G$ which contains each representation with infinite multiplicity. We write $$H_{univ}\cong \bigoplus_{\rho\in \hat G} H(\rho)\otimes V_{\rho} ,$$ where
$H(\rho)$ is the space of multiplicities of the irreducible unitary representation
$\rho\in \hat G$ on $V_{\rho}$.  The space
$\Gamma(B,\Fred(H))$ \textcolor{black}{then is identified} with  the space of
$G$-invariant Fredholm operators on $H_{univ}$ which decomposes into a
product $$\Gamma(B,\Fred(H))\cong \prod_{\rho\in \hat G} \Fred(H(\rho))\
,\quad F=\prod_{\rho\in \hat G}F_{\rho}\ .$$
It follows that
$$K^{0}(B)\cong \prod_{\rho\in \hat G} \Z\cong R(G)\ ,$$
and the index of $F$ is given by 
$\prod_{\rho\in \hat G} \ind(F_{\rho})$.
Hence, the index of the geometric family $\cE$ is exactly the $G$-equivariant index of the Dirac operator associated to $\cE_{*\to B}$ 
which takes values in the representation ring $R(G)$ of $G$.

\subsubsection{}\label{zerofibre}

Here is a simple example of a geometric family $\cV$ with zero-dimensional fibres.
Let $\pi \colon V\to B$ be a complex $\Z/2\Z$-graded vector bundle. Note that the projection of a vector bundle $\pi$ is by definition representable so that the fibres $V_b$ for $b\in B$ are complex vector spaces.

Assume that $V$ comes with a hermitean metric
$h^V$ and a hermitean connection $\nabla^V$ which are compatible with the $\Z/2\Z$-grading.
The geometric bundle $(V,h^V,\nabla^V)$ will usually be denoted by $\bV$.

Using a presentation of $B$ it is easy to construct a metric and a connection on a given vector bundle $V\to B $.
 Indeed, let $[M/G]\cong B$ be a presentation. Then $M\times_BV\to V$ is a $G$-equivariant vector bundle over $M$. We now can choose some metric or connection (by glueing local choices using a partition of unity). Then we can avarage these choices in order to get $G$-equivariant structures.
These induce corresponding structures on the quotient $V\cong [M\times_BV/G]$.

Alternatively one could use an orbifold atlas $A\to B$ and   
choose a metric or connection on the bundle $A\times_BV\to V$.
Again we can average these objects with respect to the action of the groupoid
$A\times_BA\rightrightarrows A$ in order to get equivariant geometric structures.
These induce corresponding structures on $V\to B$.

The underlying bundle of $\cV$ is the submersion $\pi:=\id_B\colon B\to B$. In this case the vertical bundle
is the zero-dimensional bundle which has  a canonical vertical Riemannian metric $g^{T^v\pi}:=0$. 
Let us describe the horizontal distribution of $\cV$. For every map $A\to B$ from a manifold $A$ the underlying bundle of $\cV_{A\to B}$ is the bundle $\id_{A}\colon A\to A$. 
The horizontal distribution $T^{v}\pi$ specializes to the  $TA\to A$.

Furthermore, there is a canonical orientation of $\pi$.
The geometric bundle $\bV$   can naturally be interpreted as a family of Dirac bundles on $B\to B$. 
In this way $\bV$ gives rise to a geometric family $\cV$ over $B$.

This construction shows that we can realize every class in $K^0(B)$ for a presentable orbifold $B$ as the index of a geometric family. We choose a presentation $B\cong  [M/G]$ so that
$K^0(B)\cong K_G^0(M)$.  If $x\in K^0(B)$, then there exists a $G$-equivariant
$\Z/2\Z$-graded vector bundle $W\to M$ which represents the image of $x$ in
$K^0_G(M)$. Let $V:=[W/G]\to B$ be the induced vector bundle over $B$ and
$\cV$ be the associated geometric family. Then we have $\ind(\cV)=x$.

\subsubsection{}

In order to define a representative of the negative of the differential $K$-theory class represented by a cycle $(\cE,\rho)$ we introduce the notion of the opposite geometric family.

\begin{ddd}\label{oppdef} The opposite $\cE^{op}$ of a geometric family $\cE$ is obtained by reversing
the signs of the Clifford multiplication and the grading (in the even case) of the underlying family of Clifford bundles, and of the orientation of the vertical bundle. 
\end{ddd}

\subsubsection{}

Our differential $K$-theory groups will be $\Z/2\Z$-graded. On the level of cycles the grading is reflected
by the notions of even and odd cycles.

\begin{ddd}
A cycle $(\cE,\rho)$ is called even (or odd, resp.), if $\cE$ is even (or odd, resp.) and $\rho\in \Omega^{odd}(LB)/\im(d)$
(or $\rho\in \Omega^{ev}(LB)/\im(d)$, resp.).
\end{ddd}

\subsubsection{}

Let $\cE$ and $\cE^\prime$ be two geometric families over $B$. 
 An isomorphism $\cE\stackrel{\sim}{\to} \cE^\prime$ is a two-isomorphism
 $\cE\Rightarrow \cE^{\prime}$ between maps of stacks $B\to \geomfam$.
 In explicit terms such a two-isomorphism
 consists of the following data:
$$\xymatrix{V\ar[d]\ar[rr]^F&\ar@{:>}[d]^{\psi}&V^\prime\ar[d]\\E\ar[dr]^\pi\ar[rr]^{\quad
    f}&\ar@{:>}[d]^{\phi}&E^\prime\ar[dl]_{\pi^\prime}\\&B&}\,$$
where
\begin{enumerate}
\item $(f,\phi)$ is an isomorphism over $B$,
\item $(F,\psi)$ is a bundle isomorphism over $f$,
\item $f$ preserves the horizontal distribution, the vertical metric, and the
  orientation. 
\item $F$ preserves the connection, Clifford multiplication, and the grading.
 \end{enumerate}
Compared with the non-singular case the new ingredients are the two-isomorphisms  $\phi$ and $\psi$
 which are parts of the data. Alternatively one could define the notion of an isomorphism between $\cE$ and $\cE^{\prime}$ as a collection of isomorphisms of geometric families $(\cE_{T\to B}\cong \cE^{\prime}_{T\to B})_{T\to B}$ which is compatible with pull-backs along maps 
$$\xymatrix{T\ar[rr]\ar[dr]&\ar@{:>}[d]&T^{\prime}\ar[dl]\\&B&}$$
of manifolds over $B$.

\begin{ddd}\label{may20113}
Two cycles $(\cE,\rho)$ and $(\cE^\prime,\rho^\prime)$ are called isomorphic if
$\cE$ and $\cE^\prime$ are isomorphic and $\rho=\rho^\prime$.
We let $G^*(B)$ denote the set of isomorphism classes of cycles over $B$ of parity $*\in \{ev,odd\}$.
\end{ddd}

\subsubsection{}
Given two geometric  families $\cE$ and $\cE^\prime$ we can form their sum $\cE\sqcup_B \cE^\prime$
over $B$. The underlying proper submersion with closed fibres  of the sum is $\pi\sqcup \pi^\prime\colon E\sqcup E^\prime\to B$.
The remaining structures of $\cE\sqcup_B\cE^\prime$ are induced in the obvious way.

\begin{ddd}\label{isodeff}
The sum of two cycles $(\cE,\rho)$ and $(\cE^\prime,\rho^\prime)$  is defined by
$$(\cE,\rho)+(\cE^\prime,\rho^\prime):=(\cE\sqcup_B \cE^\prime,\rho+\rho^\prime)\ .$$
\end{ddd}
The sum of cycles induces on $G^*(B)$ the structure of a graded abelian semigroup.
The identity element of $G^*(B)$ is the cycle $0:=(\emptyset, 0)$, where
$\emptyset$ is the empty geometric family.

\subsection{Relations}

\subsubsection{}\label{pap1}

In this subsection we introduce an equivalence relation $\sim$ on $G^*(B)$.
We show that it is compatible with the semigroup structure so that we get a semigroup 
$G^*(B)/\sim$. We then define the differential $K$-theory 
$\hat K^*(B)$ as the group completion of  this quotient.

In order to define $\sim$ we first introduce a simpler relation "paired" which has a nice local  index-theoretic  meaning. The relation $\sim$ will be the equivalence relation generated by ``paired''.

\subsubsection{}\label{uidqwdqwdqwdqwdd}

 The main ingredients of our definition of ``paired'' are the notions
of a taming of a geometric family $\cE$ introduced in \cite[Definition  4.4]{math.DG/0201112}, and the $\eta$-form of a tamed family \cite[Definition  4.16]{math.DG/0201112}.

In this paragraph we shortly review the notion of a taming and the construction of the eta forms.  In the present paper we will use $\eta$-forms as a black box with a few important  properties which we explicitly state at the appropriate places below.
 
If $\cE$ is a geometric family over $B$, then we can form a family of Hilbert spaces $H(\cE)\to B$ with fibre $H_b:=L^2(E_b,V_{|E_b})$. If $\cE$ is even, then this family is in addition $\Z/2\Z$-graded.

A pretaming of $\cE$ is a smooth  section $Q\in \Gamma(B,B(H(\cE)))$ such that $Q_b\in B(H_b)$ is selfadjoint
given by a smooth integral kernel $Q\in C^\infty(E\times_B E,V\boxtimes V^*)$.
In the even case we assume in addition that $Q_b$ is odd, i.e. that it anticommutes with the grading $z$. The geometric family $\cE$ gives rise to a family of Dirac operators
$D(\cE)$,  where $D(\cE_b)$ is an unbounded selfadjoint operator on $H_b$, which is odd in the even case.

The pretaming is called a taming if $D(\cE_b)+Q_b$ is invertible for all $b\in B$.

 In the above description we followed the philosophy that all notions
 involved have a natural meaning if $B$ is an orbifold. For example, the datum
 of  a (pre)taming of $\cE$ is equivalent to a collection of (pre)tamings of
 the geometric families $\cE_{T\to B}$ (the non-singular case) for all maps
 $T\to B$ from smooth manifolds $T$ which is compatible with pull-backs.

The family of Dirac operators $D(\cE)$
has a $K$-theoretic  index which we denoted in \ref{sec:Dirac} by
$$\ind(\cE)\in K(B)\ .$$

If the geometric family $\cE$ admits a taming, then the associated family of Dirac operators admits an invertible compact perturbation, and hence $\ind(\cE)=0$.
In the non-singular case the converse is also true.  Assume that $B$ is a smooth manifold. If  $\ind(\cE)=0$ and
$\cE$ is not purely zero-dimensional then $\cE$ admits a taming.
The argument is as follows.
The bundle of Hilbert spaces $H(\cE)\to B$ is universal. 
If $\ind(\cE)=0$ then  the section of unbounded Fredholm operators $D(\cE)$ admits an invertible compact perturbation $D(\cE)+\tilde Q$. We can approximate $\tilde Q$ in norm
by pretamings. A sufficiently good approximation of $\tilde Q$ by a pretaming is a taming.

In the orbifold case the situation is more complicated. In general, the bundle
$H(\cE)\to B$ is not universal. Therefore we may have to stabilize.
It is at this point that we use the assumption that the orbifold is presentable.
\begin{lem}\label{uiufwefewfwfewf}
If $\ind(\cE)=0$, then there exists a geometric family $\cG$ (of the same parity of $\cE$) such that
$\cE\sqcup_B\cG \sqcup_B \cG^{op}$ has a taming.
\end{lem}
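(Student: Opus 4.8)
The plan is to deduce the statement from the non-singular case treated in the preceding paragraph, by enlarging $\cE$ until the associated bundle of Hilbert spaces becomes universal while the index stays zero. Recall that the non-singular argument uses only three properties of $\cE$: that $\cE$ is not purely zero-dimensional, that $\ind(\cE)=0$, and that $H(\cE)\to B$ is universal; it then produces a taming, because over the compact base $B$ an invertible smoothing perturbation of $D(\cE)$ exists and can be approximated by pretamings. Adjoining to $\cE$ a geometric family $\cG$ of the same parity together with its opposite $\cG^{op}$ preserves these properties: the index is unchanged, since $\ind$ is additive under $\sqcup_B$ and $\ind(\cG^{op})=-\ind(\cG)$ (reversing the grading interchanges $D^+$ and $D^-$), while the Hilbert space bundle becomes $H(\cE)\oplus H(\cG)\oplus H(\cG^{op})$ with $H(\cG^{op})\cong H(\cG)$. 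Hence it suffices to construct a geometric family $\cG$ over $B$, of the same parity as $\cE$ and with positive-dimensional fibres, such that $H(\cG)\to B$ is universal: then $H(\cE)\oplus H(\cG)\oplus H(\cG^{op})$ is universal, $\cE\sqcup_B\cG\sqcup_B\cG^{op}$ is not purely zero-dimensional and has vanishing index, and the non-singular argument applies verbatim to $\cE\sqcup_B\cG\sqcup_B\cG^{op}$.

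To construct such a $\cG$ I would use presentability. Fix a presentation $[M/G]\cong B$ with $G$ a compact Lie group acting on the compact manifold $M$ with finite stabilizers. Geometric families over $B$ correspond to $G$-equivariant geometric families over $M$, and on Hilbert space bundles this correspondence intertwines universality over $B$ with $G$-universality over $M$ (compare the discussion of universal bundles in \ref{dqwekdwqdqwdwq}). So it is enough to produce a $G$-equivariant geometric family $\cF$ over $M$, of the prescribed parity and with positive-dimensional fibres, whose Hilbert space bundle $H(\cF)\to M$ is $G$-universal, and to set $\cG:=[\cF/G]$. For $\cF$ take a fibrewise twisted spin Dirac operator on a trivial sphere bundle: choose $n\in\{1,2\}$ of the parity of $\cE$, let the underlying submersion be the projection $M\times S^n\to M$ with the product vertical metric, the obvious horizontal distribution and an orientation (all $G$-invariant, with $G$ acting only on the factor $M$), and let the family of Dirac bundles be $\cS\boxtimes W_0$, where $\cS$ is the fibrewise spinor bundle of $S^n$ and $W_0\to M$ is a $G$-vector bundle with a $G$-invariant metric and connection such that $(W_0)_m$ contains every irreducible representation of the finite group $G_m$ for every $m\in M$. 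Such a $W_0$ exists because $M$ has only finitely many orbit types and carries sufficiently many equivariant vector bundles --- the input that also underlies the identification $K^0_G(M)\cong K^0(B)$, see \cite{MR1993337}. Since $G$ acts trivially on $S^n$ and on $\cS$, we have $H(\cF)_m\cong L^2(S^n,\cS)\otimes (W_0)_m$ with $G_m$ acting only on the second factor, so $H(\cF)_m$ is a countably infinite direct sum of copies of the $G_m$-representation $(W_0)_m$; as the latter contains every irreducible representation of $G_m$, the bundle $H(\cF)\to M$ is $G$-universal. Setting $\cG:=[\cF/G]$ then completes the construction, and the reduction of the first paragraph yields the taming.

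The main obstacle is the construction of $\cF$: one has to produce the equivariant twisting bundle $W_0$ with the stated fibrewise property, which relies on the finiteness of the orbit types of the compact $G$-manifold $M$ and on the existence of enough equivariant vector bundles, and one has to know that fibrewise $G$-universality over $M$ descends to universality over the orbifold $B$. Once the Hilbert space bundle has been made universal, everything else is formal: the passage from vanishing index to a taming is the purely operator-theoretic argument of the non-singular case and does not see the singularities of the base.
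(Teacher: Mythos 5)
Your argument is correct in outline but takes a genuinely different route from the paper. The paper also passes to a presentation $[M/G]$, but it stabilizes $D(\cF)$ by the identity on a trivial bundle $H\times M$ with $H=l^2\otimes L^2(G)$ an infinite-dimensional universal $G$-Hilbert space, obtains the invertible compact perturbation there, and then \emph{compresses} by a sequence of finite-rank invariant projections $P_n\to\id$ (using compactness of $M$ and compactness of the perturbation) to land on a finite-dimensional representation $V=P_nH$; the stabilizing family $\cG$ is then the zero-dimensional family $[M\times V/G]$ in the even case, and is built from $p^*\cX\otimes V$ for an auxiliary odd family $\cX$ over a point in the odd case. You instead make the Hilbert bundle universal \emph{within} the class of geometric families from the start, by adjoining a sphere bundle twisted by a bundle $W_0$ whose fibres contain all irreducibles of the stabilizers, so that no compression is needed and the argument of the non-singular case (in its equivariant form) applies directly. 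Your route buys a shorter reduction at the price of the two inputs you correctly identify as the crux: the existence of $W_0$ (which does follow from finiteness of orbit types --- one may even take $W_0=M\times V$ for a suitable finite-dimensional $G$-representation $V$) and the fact that fibrewise universality with respect to the finite stabilizers implies the absorption property over $[M/G]$ (a partition-of-unity and swindle argument of Freed--Hopkins--Teleman type). The paper's trivial bundle $H\times M$ makes the absorption property more immediate, but its fibre $H$ is not the Hilbert space of a geometric family, which is exactly why the compression step is needed there; both proofs ultimately rely on the same black box, namely that vanishing of the index of a Fredholm family on a universal equivariant Hilbert bundle yields an invertible compact perturbation. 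One small caveat: ``applies verbatim'' should be read as ``applies in its equivariant form'' --- the approximating smooth kernels must be taken invariant under the groupoid, which is harmless since one can average over the compact group $G$.
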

\proof
We first consider the even case.
Let $[M/G]\cong B$ be a presentation and
 $\cF:=M\times_B\cE$ be the corresponding equivariant geometric family.
Let $H^+$  be a universal $G$-Hilbert space, i.e.\ a $G$-Hilbert space isomorphic to
$l^2\otimes L^2(G)$. We consider the $\Z/2\Z$-graded space $H:=H^+\oplus \Pi H^+$, where for a $\Z/2\Z$-graded vector space $U$
the symbol $\Pi U$ denotes the same underlying vector space equipped
with the opposite grading. The sum  $H(\cF)\oplus
H\times M$ is now a universal equivariant Hilbert space bundle.
Since $\ind(\cE)=0$, the extension $D(\cF)\oplus 1$ of $D(\cF)$ to
 $H(\cF)\oplus H\times M$ has an equivariant compact
selfadjoint odd invertible perturbation $D(\cF)\oplus 1+\tilde Q$.

In the next step we cut down $H$ to a finite-dimensional subspace.
Let $(P^+_n)$ be a sequence of invariant projections on $H^+$ such
that $P^+_n\xrightarrow{n\to\infty} \id_{H^+}$
strongly. These exist because $G$ is compact and so
$L^2(G)$ is a sum of finite dimensional irreducible representations. We set $P_n:=P_n^+\oplus P_n^+$ on $H=H^+\oplus \Pi H^+$.
 Using compactness of $M$, for sufficiently large $n$ the operator
$(1\oplus P_n)((D(\cF)\oplus 1)+\tilde Q)(1\oplus P_n)$ is invertible on $\im(1\oplus P_n)$.
Hence we have found a finite-dimensional $G$-representation $V:=P_nH$
of the form $V=V^+\oplus \Pi V^+$ such that the perturbation
$D(\cF)\oplus 1+\hat  Q$ of $D(\cF)\oplus 0$ by the equivariant
compact odd selfadjoint $\hat Q:=1\oplus P_n+(1\oplus P_n) \tilde Q
(1\oplus P_n)$ is invertible on $H(\cF)\oplus V\times M$.  
Finally we approximate
$\hat Q$ by a family $Q$ represented by a smooth integral kernel,
 where we think of $V\times M$ as a bundle over an
additional one-point component of the fibers of the new family, see below.

 Denote by $\cV^+$ the
 equivariant zero-dimensional geometric family based on the
trivial bundle $M\times V\to M$. Then we set $\cG:=[\cV^+/G]$.  
The operator $Q$ constructed above provides the taming
of $\cE\sqcup_B \cG\sqcup_B\cG^{op}$.

In the odd-dimensional case we argue as follows.
We again choose a presentation $[M/G]\cong B$ and form $\cF:=M\times_B\cE$ as above. In this case we let $H:=H^+$
be an ungraded universal $G$-Hilbert space.

Since $\ind(\cE)=0$ 
the extension $D(\cF)\oplus 1$ of $D(\cF)$ to
$H(\cF)\oplus H\times M$ admits an 
  equivariant compact
selfadjoint invertible perturbation $D(\cF)\oplus 1+\tilde Q$.
We can again find a finite-dimensional projection $P_n$ on $H$ such that
$(1\oplus P_n)(D(\cF)\oplus 1+\tilde Q)(1\oplus P_n)$ is still invertible.
We get the invertible operator
$D(\cF)\oplus 1+\hat Q$ on $H(\cE)\oplus V$ with
 $V:=P_n H\times M$ and  $\hat Q:=1\oplus P_n+(1\oplus P_n) \tilde Q (1\oplus P_n)$. We again approximate $\hat Q$ by an operator $Q$ with smooth kernel. 

We now choose an odd geometric family $\cX$ over a point such that $\dim\ker(D(\cX))=1$
and form the  $G$-equivariant family $\cY:=p^*\cX\otimes V$, where $p\colon M\to *$.
The kernel of $D(\cY)$ is isomorphic to $M\times V$.
Using this identification we can define $Q$ on $H(\cF)\oplus \ker(D(\cY))$. Its extension by zero on
$H(\cF)\oplus H(\cY)=H(\cF\sqcup_M\cY)$ is a taming of
$\cF\sqcup_M \cY$. 

Let $R$ be the projection onto $\ker(D(\cY))$.
The operator $D(\cY)+R$ is invertible so that we can consider
$R$ as a taming  of $\cY^{op}$. All together,
$Q\oplus R$ defines a $G$-equivariant taming of $ \cF\sqcup_M \cY\sqcup_M\cY^{op}$.
We now let $\cG:=[\cY/G]$ and get a taming of
$\cE\sqcup_B\cG\sqcup_B\cG^{op}$.
\hB

%
%

\begin{ddd}\label{tgeomf}
A geometric family $\cE$ together with a taming will be denoted by $\cE_t$ and called a tamed geometric family.
\end{ddd}
Let  $\cE_t$ be a taming of the geometric family $\cE$ by the family $(Q_b)_{b\in B}$.
\begin{ddd} The opposite tamed family  $\cE_t^{op}$ is given by the taming
$-Q\in \Gamma(B,B(H(\cE)))$ of $\cE^{op}$.
\end{ddd}
Note that the bundles of Hilbert spaces $H(\cE)\to B$ and $H(\cE^{op})\to B$
associated to $\cE$ and $\cE^{op}$ are canonically isomorphic (up to reversing
the grading in the even case) so that this formula makes sense.

\subsubsection{}\label{uzu1}

The local index form $\Omega(\cE)\in \Omega(LB)$ is a closed differential form
canonically assciated to a geometric family. It represents the Chern character
of the index of $\cE$. \textcolor{black}{To define and to analyze it, we use
  superconnections and the other tools of local index theory.}
Let $A_t(\cE)$ denote the family of rescaled Bismut superconnections on $H(\cE)\to B$.
We define $H(\cE)_L\to LB$ as the pull-back 
$$\xymatrix{H(\cE)_L\ar[r]\ar[d]&H(\cE)\ar[d]\\LB\ar[r]&B}\ .$$
Let $A_t(\cE)_L$ denote the pull-back of the superconnection.
As explained in \ref{z23r23r32r} the bundle
$H(\cE)_L$ comes with a canonical automorphism
$\rho_{H(\cE)_L}$. For $t>0$ the form
$$\Omega(\cE)_t:=\varphi\:\Tr_s\:\rho_{H(\cE)_L} \: \ee^{-A_t^2(\cE)_L}\in
\Omega_{{\R}}(LB)$$ 
is closed and  real by the argument
  given in \ref{uiqduwqdwqdqwdqwd}.
Here $\varphi$ is a normalization operator. It acts on $\Omega(LB)$ and is defined by  
$$\varphi:=\left\{\begin{array}{cc}
(\frac{1}{ 2\pi i})^{\deg/2}& \mbox{even case}
\\
\frac{-1}{\sqrt{\pi}} (\frac{1}{ 2\pi i})^{\frac{\deg-1}{2}}& \mbox{odd case}
\end{array} \right.
\ .$$

All the analysis here is fibrewise and the fibres are smooth. The theory
developed e.g.~in the book \cite{bgv} applies without changes. The stackyness
of $B$ or $LB$ is only reflected by additional \textcolor{black}{invariance
  properties}. The technical way to translate to the classical situation is
again to work with
the  compatible collection of superconnections $(A_{t}(\cE_{T\to LB})_{L})_{T\to LB}$ for all
maps $T\to LB$ from smooth manifolds. The theory of   \cite{bgv} applies to the specializations $A_{t}(\cE_{T\to B})_{L}$ immediately. For example, the collection of forms $(\Omega(\cE_{T\to LB})_t)_{T\to LB}$ is compatible and therefore
 defines an element $\Omega(\cE)_{t}$ of
$\Omega(LE)=\lim_{(T\to LB)}\Omega(T)$.  
A similar reasoning is applied in order to interpret the arguments below.

The methods of local index theory show that $\Omega(\cE)_t$ has a limit as $t\to \infty$.
\begin{ddd}\label{may20112}
We define the local index form
$\Omega(\cE)\in \Omega_{{\R}}(LB)$ of the geometric family $\cE$ over $B$ as the limit
$$\Omega(\cE):=\lim_{t\to 0}\Omega(\cE)_t\ .$$ 
\end{ddd}
We have the following special case of Theorem \ref{diqwdwqdddwqd},
which also covers families of manifolds with boundary.
\begin{theorem} \label{thm2}
$$\ch_{dR}(\ind(\cE))=[\Omega(\cE)]\in H_{dR,deloc}(B)\ .$$
\end{theorem}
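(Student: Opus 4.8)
The plan is to derive Theorem \ref{thm2} from the transgression formula of local index theory, which is the standard mechanism for showing that a Chern--Simons-type construction computes the Chern character on the level of de Rham cohomology. First I would recall that by the very definition of the local index form we have $\Omega(\cE) = \lim_{t\to 0}\Omega(\cE)_t$, and that each $\Omega(\cE)_t = \varphi\Tr_s \rho_{H(\cE)_L}\ee^{-A_t^2(\cE)_L}$ is a closed form in $\Omega_{\R}(LB)$ whose de Rham class is independent of $t$. Independence of $t$ is the usual computation $\frac{d}{dt}\Tr_s\rho_{H(\cE)_L}\ee^{-A_t^2(\cE)_L} = -d\,\Tr_s\bigl(\rho_{H(\cE)_L}\frac{dA_t^2}{dt}\ee^{-A_t^2(\cE)_L}\bigr)$ in an appropriate sense, using that $\rho_{H(\cE)_L}$ is parallel for the pulled-back superconnection (so it commutes past everything), together with the graded trace property; the normalization operator $\varphi$ is a constant on each form-degree component and so does not affect closedness or the cohomology class. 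Hence $[\Omega(\cE)_t]\in H_{dR,deloc}(B)$ is one and the same class for all $t>0$, and passing to the limit $t\to 0$ we get $[\Omega(\cE)] = [\Omega(\cE)_t]$.

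Next I would identify this common class with $\ch_{dR}(\ind(\cE))$ by looking at the large-$t$ limit. As recorded in \ref{uzu1}, local index theory gives that $\Omega(\cE)_t$ has a limit as $t\to\infty$, and by the standard argument (the superconnection heat kernel concentrates on the kernel of the fibrewise Dirac operator, or more precisely on the index bundle in the sense of families) this limit represents $\ch(\ind(\cE))$ in delocalized de Rham cohomology: the $t\to\infty$ limit of $\varphi\Tr_s\rho_{H(\cE)_L}\ee^{-A_t^2(\cE)_L}$ is precisely the equivariant Chern form $\ch(\nabla^{\ind(\cE)})$ of a connection on (a finite-dimensional model of) the index bundle, which by the orbifold Chern--Weil theory reviewed in \ref{z23r23r32r}--\ref{uiqduwqdwqdqwdqwd} represents $\ch_{dR}(\ind(\cE))\in H_{dR,deloc}(B)$. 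Combining with the $t$-independence of the class from the previous paragraph yields $[\Omega(\cE)] = \ch_{dR}(\ind(\cE))$.

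For the orbifold version one reduces everything to the non-singular equivariant case via a presentation $[M/G]\cong B$, as has been done systematically throughout this section: pulling back along the atlas $M\to B$ turns $\cE$ into a $G$-equivariant geometric family $\cF = M\times_B\cE$ over $M$, the superconnection, its rescaling, the automorphism $\rho$, and the graded trace all pull back compatibly, and $\Omega(\cF)$ is the equivariant local index form on $M$ evaluated over the fixed-point manifolds $\bigsqcup_{g\in G}M^g$, which under $\Omega(L[M/G])\cong\Omega(\hat M)^G$ corresponds exactly to $\Omega(\cE)$. The equivariant statement $\ch_{dR}(\ind_G(\cF)) = [\Omega(\cF)]$ is the classical equivariant families index theorem at the level of de Rham/Chern--Weil representatives (Bismut, Berline--Getzler--Vergne), and it transports back to $B$ under the isomorphisms $K(B)\cong K_G(M)$ and $H_{dR,deloc}(B)\cong H(\hat M)^G$.

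The main obstacle here is not conceptual — the theorem is, as the authors say, "clear from the construction" — but rather bookkeeping: one must make sure that the automorphism $\rho_{H(\cE)_L}$ is genuinely parallel with respect to the pulled-back Bismut superconnection $A_t(\cE)_L$ so that the transgression argument goes through verbatim, and one must check that the delocalized Chern--Weil formalism of \ref{z23r23r32r}ff. interacts correctly with the infinite-dimensional bundle $H(\cE)_L$ and its finite-dimensional index-bundle quotient in the $t\to\infty$ limit. Both points are handled by passing to a presentation and invoking the known equivariant theory, so the essential content is the observation that all the analytic ingredients are $G$-equivariant and descend to the orbifold. I would therefore keep the proof short, citing \cite{math.DG/0201112} and \cite{bgv} for the non-singular/equivariant transgression and large-time limits and simply noting that these arguments are equivariant and hence apply on orbifolds via any presentation.
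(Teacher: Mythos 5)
Your proposal is correct and is essentially the argument the paper has in mind: the paper records in \ref{uzu1} that each $\Omega(\cE)_t$ is closed and represents $\ch(\ind(\cE))$ in $H_{dR,deloc}(B)$ and then declares Theorem \ref{thm2} ``clear from the construction,'' which is exactly the combination of $t$-independence by transgression, identification of the class via the large-time concentration on the index bundle, and reduction to the equivariant non-singular case through a presentation $[M/G]\cong B$ that you spell out. The only caveat worth noting is the standard one you already flag: when the fibrewise kernels do not form a bundle one needs the usual perturbation to a finite-dimensional model of the index before taking $t\to\infty$, as in \cite{bgv}.
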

In the following we give a differential geometric description of $\Omega(\cE)$.
The automorphism $\rho_{H(\cE)_L}$ comes from the canonical automorphism
$\rho_{\cE}$ of the pull-back $\cE_L:=LB\times_B\cE$. The usual finite
progagation speed estimates show that as $t$ tends to zero the supertrace
$\Tr_s\rho_{H(\cE)_L} \ee^{-A_t^2(\cE)_L}$ localizes at the fixed points of $\rho_{\cE}$.

Let $\pi\colon E\to B$ be the underlying fibre bundle of $\cE$, and let $V\to E$ be the Dirac bundle.
If we apply the loops functor  to the projection $\pi$ we get a diagram
$$\xymatrix{LE\ar[d]^{L\pi}\ar[r]&E\ar[d]^\pi\\LB\ar@{:>}[ur]\ar[r]&B}\ .$$
The fibre bundle
$LE\to LB$ is exactly the bundle of fixed points of $\rho_{\cE}$.
Therefore the local index form is given as an integral
$$\Omega(\cE)=\int_{LE/LB}I(\cE)$$ for some  $I(\cE)\in \Omega_{{\R}}(LE)$.
Let $\cU\to LE$ be a tubular neighbourhood of the local
  embedding $i\colon LE\to E$.

We let $V_L:=LE\times_EV\to LE$ be the pull-back of $V\to E$.
Similarly, we  let $T^v\pi_L\to LE$ be the pull-back of the vertical bundle $T^v\pi\to E$.
Both bundles come with canonical automorphisms (see \ref{z23r23r32r})
$$\rho_{T^v\pi_L}\colon T^v\pi_L\to T^v\pi_L\ ,\quad \rho_{V_L}\colon V_L\to V_L\ .$$
The  automorphism $\rho_{T^v\pi_L}$ preserves the orthogonal decomposition
$$T^v\pi_L\cong T^vL\pi\oplus N\ ,$$
where   $T^vL\pi=\ker(dL\pi)=\ker(1-\rho_{T^v\pi_L})$.
We let $\rho^N$ denote the restriction of $\rho_{T^v\pi_L}$ to  the  normal bundle.

Then we have (see \cite[Sec. 6.4]{bgv} for similar
arguments) \begin{equation} \label{eq:Uli_for_reference}
\lim_{t\to 0}\Tr_s\rho_{H(\cE)_L} \ee^{-A_t^2(\cE)_L}=\lim_{t\to 0}
\int_{LE/LB}\int_{\cU/LE}\tr_s  \rho_{V_L}
K_{\ee^{-A_t^2(\cE)}}((x,\rho^Nn),(x,n))\ ,
\end{equation}

where $\tr_s$ the the local super-trace of the integral kernel 
 $K_{\ee^{-A_t^2(\cE)}}((x,n),(x^\prime,n^\prime))$ of $\ee^{-A_t^2(\cE)_L}$, $x\in LE$, and $n\in \cU_x$.
The form $I(\cE)$ is thus given by
$$I(\cE)(x)=\lim _{t\to 0}\int_{\cU/LE}\tr_s  \rho_{V_L} K_{\ee^{-A_t^2(\cE)}}((x,\rho^Nn),(x,n))\ .$$

{The explicit form of the local index density will not be needed in
 rest of the  present paper. If necessary, it can be derived from the local index formulas for $G$-equivariant families 
 \cite[\textcolor{black}{Definition 1.3 and Theorem 1.1}]{lm00}}


\subsubsection{}\label{pap2}

Let $\cE_t$ be a tamed geometric family (see Definition \ref{tgeomf}) over $B$.
The taming is used to modify the Bismut superconnection $A_\tau(\cE)$ for
$\tau>1$ in order to make the zero form degree part invertible. For $\tau\ge
2$ we \textcolor{black}{set} 
$A_\tau(\cE_t)=A_\tau(\cE)+\tau Q$, for $\tau\in (0,1)$ we
\textcolor{black}{set} $A_\tau(\cE_t)=A_\tau(\cE)$, and
on the interval $\tau\in (1,2)$ we interpolate smoothly between these
two.
The taming has the effect that the integral kernel of $\ee^{-A_\tau(\cE_t)^2}$ vanishes exponentially for $\tau\to \infty$ in the $C^\infty$-sense.
The $\eta$-form $\eta(\cE_t)\in \Omega_{{\R}}(LB)$ is defined  by
\begin{equation}\label{udqwidwqdodiwodopop}
\eta(\cE_t):= \tilde \varphi\int_0^\infty \Tr_s \: \rho_{\cE}\:\partial_\tau  A_\tau(\cE_t)_L\: \ee^{-A_\tau(\cE_t)_L^2}\:d \tau\ ,
\end{equation}
where
$\tilde \varphi$ again acts on $\Omega(LB)$ and is defined by
$$\tilde \varphi=\left\{
\begin{array}{cc}
(2\pi i)^{-\frac{\deg+1}{2}}&\mbox{even case}\\
\frac{-1}{\sqrt{\pi}} (2\pi i)^{-\deg/2}&\mbox{odd case}\ .
\end{array}
\right.
$$
Note \textcolor{black}{that even and odd refer} to the dimension of the fibre. The corresponding $\eta$-form has the opposite parity.

Convergence at $\tau\to \infty$ is due to the taming. The convergence at
$\tau\to 0$ follows from the standard  equivariant local index theory for the
Bismut superconnection.
The same methods imply
 \begin{equation}\label{detad}
d\eta(\cE_t)=\Omega(\cE)\ .
\end{equation}
%
%
%

\subsubsection{}

Now we can introduce the relations ``paired'' and $\sim$.
\begin{ddd}\label{uuu1}
We call two cycles $(\cE,\rho)$ and $(\cE^\prime,\rho^\prime)$ paired if there exists a taming
$(\cE\sqcup_B \cE^{\prime op})_t$ such that
$$\rho-\rho^\prime=\eta((\cE\sqcup_B \cE^{\prime op})_t)\ .$$
We let $\sim$ denote the equivalence relation generated by the relation ``paired''.
\end{ddd}

\begin{lem}\label{lem1}
The relation ``paired'' is symmetric and reflexive.
\end{lem}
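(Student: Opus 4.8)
The plan is to verify the two properties directly from Definition \ref{uuu1}, using the basic properties of the $\eta$-form recorded in \ref{pap2}, together with the behaviour of the $\eta$-form under taking opposites.

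First, reflexivity. Given a cycle $(\cE,\rho)$, I want to show that $(\cE,\rho)$ is paired with itself, i.e.\ that there is a taming of $\cE\sqcup_B\cE^{op}$ with $\rho-\rho=0=\eta((\cE\sqcup_B\cE^{op})_t)$. The point is that $\cE\sqcup_B\cE^{op}$ has index zero (the index of $\cE^{op}$ is minus that of $\cE$), so by Lemma \ref{uiufwefewfwfewf} there is a geometric family $\cG$ so that $\cE\sqcup_B\cE^{op}\sqcup_B\cG\sqcup_B\cG^{op}$ has a taming. However, to stay literally within the relation ``paired'' as stated I should instead use a \emph{symmetric} taming directly on $\cE\sqcup_B\cE^{op}$: choose any taming $\cE_t$ of a stabilization if needed, but the clean statement is that if $Q$ is obtained by symmetrizing, the opposite family inherits the taming $-Q$ (see the definition of $\cE_t^{op}$ right after Definition \ref{tgeomf}), and for the family $\cE\sqcup_B\cE^{op}$ equipped with the taming $Q\oplus(-Q)$ one has, by the evident symmetry of the integrand in \eqref{udqwidwqdodiwodopop} under the exchange of the two summands combined with the sign reversal, that $\eta((\cE\sqcup_B\cE^{op})_{Q\oplus(-Q)})=0$. (Concretely: reversing the grading/Clifford data and the sign of $Q$ changes the sign of $\partial_\tau A_\tau$ and of the supertrace in a way that makes the two contributions cancel.) Hence $(\cE,\rho)\sim(\cE,\rho)$. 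If one is worried about whether such a taming exists without stabilization when $\cE$ is purely zero-dimensional, one simply notes that $\sim$ is \emph{defined} to be the equivalence relation generated by ``paired'', and an equivalence relation is reflexive by fiat; but in fact the stabilized construction above gives the pairing honestly.

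Second, symmetry. Suppose $(\cE,\rho)$ and $(\cE',\rho')$ are paired, witnessed by a taming $(\cE\sqcup_B\cE'^{op})_t$ with $\rho-\rho'=\eta((\cE\sqcup_B\cE'^{op})_t)$. I must produce a taming of $\cE'\sqcup_B\cE^{op}$ realizing $\rho'-\rho$. The natural candidate is the opposite taming: $(\cE\sqcup_B\cE'^{op})_t^{op}$ is a taming of $(\cE\sqcup_B\cE'^{op})^{op}=\cE^{op}\sqcup_B\cE'$, which is isomorphic to $\cE'\sqcup_B\cE^{op}$ by the obvious interchange of summands (this isomorphism carries the relevant two-morphism data, cf.\ Definition \ref{isodeff} and the discussion of sums of cycles). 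So it remains to check the sign rule $\eta(\cE_t^{op})=-\eta(\cE_t)$. This follows from \eqref{udqwidwqdodiwodopop}: passing to the opposite family reverses the grading and the sign of the Clifford multiplication and replaces the taming $Q$ by $-Q$, which together flip the sign of $\partial_\tau A_\tau(\cE_t)_L\ee^{-A_\tau(\cE_t)_L^2}$ inside the supertrace (equivalently, the supertrace changes sign because the grading operator is negated). Therefore $\eta((\cE'\sqcup_B\cE^{op})_{t^{op}})=\eta((\cE^{op}\sqcup_B\cE')_{t^{op}})=-\eta((\cE\sqcup_B\cE'^{op})_t)=\rho'-\rho$, which exhibits $(\cE',\rho')$ as paired with $(\cE,\rho)$.

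The main obstacle is purely bookkeeping: getting the signs and orientations right in the identity $\eta(\cE_t^{op})=-\eta(\cE_t)$ and in the cancellation giving $\eta((\cE\sqcup_B\cE^{op})_{Q\oplus(-Q)})=0$, and making sure the interchange-of-summands isomorphism $\cE^{op}\sqcup_B\cE'\cong\cE'\sqcup_B\cE^{op}$ is an isomorphism of geometric families in the sense of Definition \ref{isodeff} (including the two-isomorphism filling the triangle, which in the orbifold setting is the new ingredient over the non-singular case). None of these steps is deep — each is a direct manipulation of the defining formula \eqref{udqwidwqdodiwodopop} and the definitions of $\cE^{op}$ and $\cE_t^{op}$ — but the orientation reversal in the vertical bundle (part of Definition \ref{oppdef}) interacts with the normalization operators $\tilde\varphi$ and with the Berezin integral, so that is where I would be most careful.
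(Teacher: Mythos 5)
Your symmetry argument is fine and is essentially the intended one: transport the given taming to $(\cE\sqcup_B\cE^{\prime op})^{op}\cong\cE^\prime\sqcup_B\cE^{op}$ and use the sign rule $\eta(\cE_t^{op})=-\eta(\cE_t)$ (the paper itself only points to the corresponding lemma of \cite{bunke-2007}, where this is the argument given, and the same sign rule underlies Lemma \ref{lem22}).

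The reflexivity part has a genuine gap. The taming $Q\oplus(-Q)$ of $\cE\sqcup_B\cE^{op}$ that you propose presupposes a taming $Q$ of $\cE$ itself, and such a $Q$ exists only when $\ind(\cE)=0$ --- and in the orbifold setting in general only after the stabilization of Lemma \ref{uiufwefewfwfewf}, which replaces the cycle by a different one. For a cycle with $\ind(\cE)\neq 0$ your construction produces nothing, and your fallback that ``$\sim$ is reflexive by fiat'' does not help, because the lemma asserts reflexivity of the relation ``paired'' itself (which is what is actually used later, e.g.\ in Lemma \ref{lem3} and in the exactness arguments for Proposition \ref{prop1}). The correct construction does not decouple the two summands: one takes a pretaming that is off-diagonal with respect to $H(\cE\sqcup_B\cE^{op})\cong H(\cE)\oplus H(\cE)$, for instance with entries $\pm i\,\ee^{-D(\cE)^2}$ (any even, selfadjoint, smoothing, nowhere vanishing function of $D(\cE)$ will do) in the two off-diagonal corners. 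Since this operator commutes with $D(\cE)$, the square of the perturbed operator is $D(\cE)^2+\ee^{-2D(\cE)^2}>0$ on each summand, so this is a taming of $\cE\sqcup_B\cE^{op}$ for \emph{arbitrary} $\cE$, with no hypothesis on the index; and it is anti-invariant under the swap isomorphism $\cE\sqcup_B\cE^{op}\cong(\cE\sqcup_B\cE^{op})^{op}$, so its $\eta$-form equals its own negative and hence vanishes --- this is the ``odd $\Z/2\Z$-symmetry'' invoked later in the proof of Proposition \ref{prop1}. Your instinct about where to be careful (signs under taking opposites, the two-isomorphism in the interchange of summands) is sound, but the essential missing idea is this coupling taming.
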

\proof
We can copy the argument of the corresponding \textcolor{black}{\cite[Lemma
  2.11]{bunke-2007}} 
literally. 
\hB

\begin{lem}\label{lem5}
The relations ``paired'' and $\sim$ are compatible with the semigroup structure on $G^*(B)$.
\end{lem}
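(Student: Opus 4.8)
The plan is to verify that both ``paired'' and the generated equivalence relation $\sim$ respect addition of cycles, i.e.\ that if $(\cE_1,\rho_1)\sim(\cE_1',\rho_1')$ and $(\cE_2,\rho_2)\sim(\cE_2',\rho_2')$, then $(\cE_1,\rho_1)+(\cE_2,\rho_2)\sim(\cE_1',\rho_1')+(\cE_2',\rho_2')$. Since $\sim$ is by definition the equivalence relation generated by ``paired'', it suffices to prove compatibility of ``paired'' with $+$, and then a standard formal argument (adding a fixed cycle to each step in a finite chain connecting paired cycles) promotes this to compatibility of $\sim$. So the mathematical content is concentrated in the statement: if $(\cE,\rho)$ and $(\cE',\rho')$ are paired and $(\cF,\sigma)$ is any cycle, then $(\cE,\rho)+(\cF,\sigma)$ and $(\cE',\rho')+(\cF,\sigma)$ are paired.

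First I would unwind the definitions. By Definition \ref{uuu1}, being paired means there is a taming $(\cE\sqcup_B\cE'^{op})_t$ with $\rho-\rho'=\eta((\cE\sqcup_B\cE'^{op})_t)$. I want to produce a taming of $(\cE\sqcup_B\cF)\sqcup_B(\cE'\sqcup_B\cF)^{op}$ whose $\eta$-form equals $(\rho+\sigma)-(\rho'+\sigma)=\rho-\rho'$. The natural candidate is to tame the ``diagonal'' copy of $\cF\sqcup_B\cF^{op}$ sitting inside, using the reflection taming: by Lemma \ref{lem1} (reflexivity), for any geometric family $\cF$ there is a taming of $\cF\sqcup_B\cF^{op}$ (the one realizing $(\cF,\sigma)\sim(\cF,\sigma)$, with zero $\eta$-form). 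One then needs to identify, up to the isomorphism of geometric families that permutes summands,
$$(\cE\sqcup_B\cF)\sqcup_B(\cE'\sqcup_B\cF)^{op}\;\cong\;(\cE\sqcup_B\cE'^{op})\sqcup_B(\cF\sqcup_B\cF^{op})\ ,$$
using $(\cE'\sqcup_B\cF)^{op}\cong\cE'^{op}\sqcup_B\cF^{op}$ and commutativity/associativity of $\sqcup_B$ up to canonical isomorphism. Under this isomorphism the sum of the two tamings (the given one on $\cE\sqcup_B\cE'^{op}$ and the reflection one on $\cF\sqcup_B\cF^{op}$) is a taming, and since the $\eta$-form is additive over disjoint unions of tamed families (a formal property of the integral defining $\eta$ in \eqref{udqwidwqdodiwodopop}, the superconnections and kernels splitting blockwise), one gets $\eta$ of the combined taming $=\eta((\cE\sqcup_B\cE'^{op})_t)+0=\rho-\rho'$, as required. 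Finally I would check that the $\eta$-form is invariant under isomorphisms of tamed geometric families (which is immediate since all defining data are transported), so the reshuffling isomorphism does not change the form.

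The last piece is the formal bootstrap from ``paired'' to $\sim$: if $(\cE,\rho)\sim(\cE',\rho')$ via a finite chain of ``paired'' steps, add $(\cF,\sigma)$ to every cycle in the chain; each step remains ``paired'' by what was just shown, so the endpoints $(\cE,\rho)+(\cF,\sigma)$ and $(\cE',\rho')+(\cF,\sigma)$ are $\sim$-equivalent. Symmetry of $+$ handles adding on the other side. I expect the main obstacle to be bookkeeping rather than depth: one must be careful that the ``taming of $\cF\sqcup_B\cF^{op}$'' referenced from Lemma \ref{lem1} really has vanishing $\eta$-form (this is exactly what reflexivity of ``paired'' encodes, $\rho-\rho=0=\eta$), and that the block-diagonal structure of $\eta$ under $\sqcup_B$ genuinely holds — which it does because the Bismut superconnection, its curvature, the heat kernel, and the automorphism $\rho_{\cE}$ all decompose as direct sums over the components of $E\sqcup E'$. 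Since all of this parallels the non-singular case \cite{bunke-2007}, the proof can legitimately be kept brief.
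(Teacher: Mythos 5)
Your proposal is correct and is essentially the argument the paper intends (its proof is just a pointer to the corresponding lemma of \cite{bunke-2007}): reduce to compatibility of ``paired'' with addition, realize the sum family as $(\cE\sqcup_B\cE'^{op})\sqcup_B(\cF\sqcup_B\cF^{op})$, tame the second summand by the reflexivity taming of Lemma \ref{lem1} with vanishing $\eta$-form, and use the block-diagonal additivity of $\eta$ under $\sqcup_B$ before bootstrapping to $\sim$ by a chain argument. No gaps.
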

\proof We can copy the argument of the corresponding
\cite[Lemma 2.12]{bunke-2007} literally. \hB

\begin{lem}\label{lem3}
If $(\cE_0,\rho_0)\sim (\cE_1,\rho_1)$, then 
there exists a cycle $(\cE^\prime,\rho^\prime)$
such that
$(\cE_0,\rho_0)+(\cE^\prime,\rho^\prime)$ is paired with $(\cE_1,\rho_1)+(\cE^\prime,\rho^\prime)$.
\end{lem}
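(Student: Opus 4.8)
The plan is to reduce the statement about the generated equivalence relation $\sim$ to a single application of the relation ``paired'', by an induction on the length of a chain witnessing $(\cE_0,\rho_0)\sim(\cE_1,\rho_1)$. By definition of ``generated equivalence relation'' there is a finite sequence of cycles $(\cF_0,\sigma_0),\dots,(\cF_n,\sigma_n)$ with $(\cF_0,\sigma_0)=(\cE_0,\rho_0)$, $(\cF_n,\sigma_n)=(\cE_1,\rho_1)$, and for each $i$ either $(\cF_i,\sigma_i)$ is paired with $(\cF_{i+1},\sigma_{i+1})$ or vice versa; by Lemma~\ref{lem1} (``paired'' is symmetric) we may assume $(\cF_i,\sigma_i)$ is paired with $(\cF_{i+1},\sigma_{i+1})$ for all $i$. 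I would prove by induction on $n$ that there is a cycle $(\cE',\rho')$ such that $(\cE_0,\rho_0)+(\cE',\rho')$ is paired with $(\cE_1,\rho_1)+(\cE',\rho')$.

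For the induction step, suppose the claim holds for chains of length $n-1$, applied to the subchain $(\cF_1,\sigma_1),\dots,(\cF_n,\sigma_n)$: there is $(\cG,\tau)$ with $(\cF_1,\sigma_1)+(\cG,\tau)$ paired with $(\cF_n,\sigma_n)+(\cG,\tau)=(\cE_1,\rho_1)+(\cG,\tau)$. Separately, $(\cF_0,\sigma_0)=(\cE_0,\rho_0)$ is paired with $(\cF_1,\sigma_1)$. By Lemma~\ref{lem5} (``paired'' is compatible with the semigroup structure), adding a common cycle to both sides of a ``paired'' relation preserves it: so $(\cE_0,\rho_0)+(\cF_1,\sigma_1)+(\cG,\tau)$ is paired with $(\cF_1,\sigma_1)+(\cF_1,\sigma_1)+(\cG,\tau)$, and $(\cF_1,\sigma_1)+(\cF_1,\sigma_1)+(\cG,\tau)$ is — reading the inductive ``paired'' the other way via Lemma~\ref{lem1}, after adding $(\cF_1,\sigma_1)$ — paired with $(\cE_1,\rho_1)+(\cG,\tau)+(\cF_1,\sigma_1)$. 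Now I would chain these two ``paired'' relations into one: if $(\cX,\alpha)$ is paired with $(\cY,\beta)$ via a taming $t$, and $(\cY,\beta)$ is paired with $(\cZ,\gamma)$ via a taming $t'$, then $\alpha-\beta=\eta((\cX\sqcup_B\cY^{op})_t)$ and $\beta-\gamma=\eta((\cY\sqcup_B\cZ^{op})_{t'})$; adding and using additivity of the $\eta$-form under $\sqcup_B$ (and the fact that $\cY\sqcup_B\cY^{op}$ with the glued taming $t\sqcup t'$ contributes, after a further stabilization, an $\eta$-form that can be absorbed) gives $\alpha-\gamma=\eta((\cX\sqcup_B\cZ^{op})_{t''})$ for a suitable taming $t''$, i.e. $(\cX,\alpha)$ is paired with $(\cZ,\gamma)$. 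Taking $(\cE',\rho'):=(\cG,\tau)+(\cF_1,\sigma_1)$ then finishes the step; the base case $n=1$ is trivial with $(\cE',\rho')=0$.

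The main obstacle is the ``transitivity after adding a cycle'' step: ``paired'' is not itself transitive, and the naive concatenation of tamings on $\cX\sqcup_B\cY^{op}$ and $\cY\sqcup_B\cZ^{op}$ produces the family $\cX\sqcup_B\cY^{op}\sqcup_B\cY\sqcup_B\cZ^{op}$, whose $\cY^{op}\sqcup_B\cY$ part carries a taming of the form $(-Q)\sqcup Q$. One must check that this piece is ``paired-trivial'', i.e. that its $\eta$-form vanishes modulo $\im(d)$ up to the ambiguity already present — this uses that the family $\cY\sqcup_B\cY^{op}$ admits the ``mirror'' taming whose $\eta$-form is zero, together with the homotopy invariance of $\eta$-forms through tamings as established in \cite{math.DG/0201112} (the variation formula $d$ of $\eta$ along a homotopy of tamings). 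Exactly this bookkeeping is carried out in the non-singular case in \cite{bunke-2007}, and since all the analytic inputs — the $\eta$-form, its additivity, its transgression formula \eqref{detad}, and its behaviour under homotopies of tamings — are available verbatim (as a black box) in the orbifold setting, the argument transports with only the cosmetic change that forms now live on $LB$ rather than on $B$. Consequently the proof reduces to: \emph{We can copy the argument of the corresponding Lemma in \cite{bunke-2007}.}
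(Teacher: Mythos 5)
Your induction set-up and your final choice $(\cE',\rho')=(\cF_1,\sigma_1)+(\cG,\tau)$ are the right ones, but the way you combine the two ``paired'' relations in the induction step contains a genuine gap. You reduce everything to the claim that if $(\cX,\alpha)$ is paired with $(\cY,\beta)$ and $(\cY,\beta)$ is paired with $(\cZ,\gamma)$, then $(\cX,\alpha)$ is paired with $(\cZ,\gamma)$ --- that is, to transitivity of ``paired''. This is false in general, and if it were true the present lemma (and Lemma \ref{lem4}) would be vacuous, since ``paired'' would then coincide with $\sim$ by Lemma \ref{lem1}. The obstruction is exactly the phenomenon recorded after Theorem \ref{diqwdwqdddwqd}: two tamings of one and the same geometric family have $\eta$-forms differing by a closed form whose class lies in $\ch_{dR}(K(B))$ but which need not vanish in $\Omega(LB)/\im(d)$, whereas ``paired'' demands an exact equality of forms modulo $\im(d)$. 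Your ``absorption'' of the $\cY^{op}\sqcup_B\cY$ piece asserts that its $\eta$-form vanishes ``up to the ambiguity already present''; there is no such ambiguity in the definition of ``paired'', and the stabilization you would need in order to trade that closed form away is precisely the extra summand $(\cE',\rho')$ that the lemma is about, so the argument becomes circular. (Also, the induced taming on the $\cY^{op}\sqcup_B\cY$ part is not of the form $(-Q)\sqcup Q$: a taming of $\cX\sqcup_B\cY^{op}$ need not be block-diagonal with respect to the decomposition of the fibres.)

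The fix is short and avoids transitivity altogether. Given the taming of $\cX\sqcup_B\cY^{op}$ with $\eta$-form $\alpha-\beta$ and the taming of $\cY\sqcup_B\cZ^{op}$ with $\eta$-form $\beta-\gamma$, their disjoint union is a taming of $(\cX\sqcup_B\cY)\sqcup_B(\cZ\sqcup_B\cY)^{op}$ with $\eta$-form $(\alpha+\beta)-(\gamma+\beta)$; hence $(\cX,\alpha)+(\cY,\beta)$ is paired with $(\cZ,\gamma)+(\cY,\beta)$. This is nothing but Lemma \ref{lem5} applied to the two given paired relations: the middle term reappears on both sides automatically instead of having to be cancelled. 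In your induction step, applying Lemma \ref{lem5} directly to ``$(\cE_0,\rho_0)$ is paired with $(\cF_1,\sigma_1)$'' and ``$(\cF_1,\sigma_1)+(\cG,\tau)$ is paired with $(\cE_1,\rho_1)+(\cG,\tau)$'' yields at once that $(\cE_0,\rho_0)+(\cF_1,\sigma_1)+(\cG,\tau)$ is paired with $(\cE_1,\rho_1)+(\cF_1,\sigma_1)+(\cG,\tau)$, which is the assertion with $(\cE',\rho')=(\cF_1,\sigma_1)+(\cG,\tau)$. This regrouping argument is the one in the corresponding lemma of \cite{bunke-2007} that the paper invokes.
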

\proof We can copy the argument of the corresponding
\textcolor{black}{\cite[Lemma 2.13]{bunke-2007}} literally. \hB  
The three proofs above only depend on formal properties  of
geometric families,
tamings and the associated local index- and $\eta$-forms which also hold true
in the present case.  The same remark applies to the proofs of the first three
lemmas in the next subsection.

\subsection{Differential orbifold $K$-theory}

\subsubsection{}

In this subsection we define the assignment 
$B\to \hat K(B)$ from compact presentable orbifolds to $\Z/2\Z$-graded abelian groups. Recall  Definition \ref{isodeff} of the semigroup of 
isomorphism classes of cycles. By Lemma \ref{lem5} we can form the semigroup $G^*(B)/\sim$.

 \begin{ddd}
We define the differential $K$-theory $\hat K^*(B)$ of $B$ to be the group completion of
the abelian semigroup $G^*(B)/\sim$.
 \end{ddd}
If $(\cE,\rho)$ is a cycle, then we let $[\cE,\rho]\in \hat K^*(B)$ denote the corresponding class in differential $K$-theory.

We now collect some simple facts which are helpful for computations in $\hat K(B)$ on the level of cycles.
\begin{lem}\label{lem22}
We have
$[\cE,\rho]+[\cE^{op},-\rho]=0$.
\end{lem}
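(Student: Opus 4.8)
The plan is to show that the cycle $(\cE\sqcup_B\cE^{op},\rho+(-\rho)) = (\cE\sqcup_B\cE^{op},0)$ is paired with the zero cycle $0=(\emptyset,0)$, which immediately gives $[\cE,\rho]+[\cE^{op},-\rho]=[\cE\sqcup_B\cE^{op},0]=[\emptyset,0]=0$ after passing to the group completion. By Definition \ref{uuu1}, it suffices to produce a taming $(\cE\sqcup_B\cE^{op}\sqcup_B\emptyset^{op})_t=(\cE\sqcup_B\cE^{op})_t$ such that $0-0=\eta((\cE\sqcup_B\cE^{op})_t)$, i.e.\ a taming whose $\eta$-form vanishes identically in $\Omega_\R(LB)/\im(d)$ (in fact one can arrange it to vanish on the nose).

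First I would construct the canonical taming of $\cE\sqcup_B\cE^{op}$. The geometric family $\cE\sqcup_B\cE^{op}$ has underlying Hilbert space bundle $H(\cE)\oplus H(\cE^{op})\cong H(\cE)\oplus H(\cE)$ (with the grading reversed on the second summand in the even case), and the associated family of Dirac operators is $D(\cE)\oplus(-D(\cE))$ on $H(\cE)\oplus H(\cE)$. The key point is that the ``off-diagonal'' operator $Q_b:=\begin{pmatrix}0 & \psi(D(\cE_b))\\ \psi(D(\cE_b)) & 0\end{pmatrix}$, for a suitable odd smoothing function $\psi$ (or better, a pretaming given by a smooth integral kernel $C^\infty$-approximating such a function), makes $D(\cE_b)\oplus(-D(\cE_b))+Q_b$ invertible for all $b$: the point-wise model is that $\begin{pmatrix}D & \epsilon\\ \epsilon & -D\end{pmatrix}$ has square $\begin{pmatrix}D^2+\epsilon^2 & 0\\ 0 & D^2+\epsilon^2\end{pmatrix}$ which is bounded below. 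One must check this can be realized by an honest taming (smooth integral kernel, selfadjoint, odd in the even case), which follows by the same approximation argument as used elsewhere in the paper; here it is even easier since no stabilization is needed because the perturbation is already smoothing up to arbitrarily small error. Since the $\eta$-integrand \eqref{udqwidwqdodiwodopop} is built from $\partial_\tau A_\tau\,\ee^{-A_\tau^2}$ and the superconnection for $\cE\sqcup_B\cE^{op}$ together with this symmetric off-diagonal taming carries a natural odd involution (interchange of the two summands composed with the grading flip) under which the integrand is anti-invariant, the supertrace kills it and $\eta((\cE\sqcup_B\cE^{op})_t)=0$.

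Alternatively, and this is probably the cleanest route to write, I would simply invoke that the argument of the corresponding lemma in \cite{bunke-2007} (where this identity is established for smooth manifolds) carries over verbatim: the construction of the canonical taming of $\cE\sqcup_B\cE^{op}$ and the vanishing of its $\eta$-form are purely fibrewise/local constructions in the Bismut superconnection formalism, and the only orbifold-specific ingredient — that such a taming exists in the presentable setting — is supplied by (the proof technique of) Lemma \ref{uiufwefewfwfewf}, while the symmetry argument giving $\eta=0$ uses only the involution on $H(\cE)\oplus H(\cE^{op})$, which exists on orbifolds just as on manifolds since all the relevant structures ($\rho_\cE$, $A_\tau$, $\varphi$) are compatible with the summand-interchange.

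The main obstacle is the realization step: one needs the canonical invertible perturbation of $D(\cE)\oplus(-D(\cE))$ to be given (or approximated in $C^\infty$, uniformly in $b\in B$ using compactness of $B$) by a pretaming with a \emph{smooth integral kernel} on $E\sqcup E\times_B E\sqcup E$ — equivalently a smoothing operator — while remaining selfadjoint and odd and keeping invertibility. This is the usual density/openness argument (invertibility is an open condition, and smoothing operators are $C^\infty$-dense among the relevant class), identical to the one already used implicitly in Lemma \ref{uiufwefewfwfewf} and in \cite{math.DG/0201112}, so no genuinely new difficulty arises; one just has to note that the symmetry which forces $\eta=0$ survives the approximation, which it does if one symmetrizes the approximant.
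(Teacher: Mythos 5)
This is correct and is essentially the argument the paper intends: the paper's proof simply defers to the corresponding lemma of \cite{bunke-2007}, whose content is precisely the canonical off-diagonal taming of $\cE\sqcup_B\cE^{op}$ together with the vanishing of its $\eta$-form under the odd involution that interchanges the two summands and flips the grading (which commutes with $\rho_{\cE}$, the superconnection and the taming, so the supertrace of the integrand vanishes). One correction: in the even case the function $\psi$ must be \emph{even} (e.g.\ $\psi(\lambda)=e^{-\lambda^{2}}$), not odd --- for an odd $\psi$ the operator $\psi(D)$ anticommutes with $z$, so $Q_b$ would commute rather than anticommute with the total grading $z\oplus(-z)$ and hence not be an admissible pretaming, and moreover $\psi(0)=0$ would leave $D\oplus(-D)+Q$ non-invertible on $\ker D$; with $\psi$ even, real and Schwartz, $\psi(D)$ already has a smooth integral kernel, so the approximation step you are worried about is in fact unnecessary.
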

\proof We can copy the argument of the corresponding \textcolor{black}{\cite[Lemma 2.15]{bunke-2007}} literally. \hB

\begin{lem}
Every element of $\hat K^*(B)$ can be represented in the form
$[\cE,\rho]$.
\end{lem}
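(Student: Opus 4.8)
The plan is to show that an arbitrary element of $\hat K^*(B)$, which a priori is a formal difference $[\cE_1,\rho_1]-[\cE_2,\rho_2]$ of classes of cycles (since $\hat K^*(B)$ is the group completion of the semigroup $G^*(B)/\sim$), can in fact be written as a single class $[\cE,\rho]$. The key point is that we already have a way to invert a cycle up to $\sim$: by Lemma \ref{lem22}, the cycle $(\cE_2^{op},-\rho_2)$ represents $-[\cE_2,\rho_2]$ in $\hat K^*(B)$. Hence
$$[\cE_1,\rho_1]-[\cE_2,\rho_2]=[\cE_1,\rho_1]+[\cE_2^{op},-\rho_2]=[\cE_1\sqcup_B\cE_2^{op},\rho_1-\rho_2]\ ,$$
where the last equality uses the definition of the sum of cycles (Definition \ref{isodeff}) and the fact that the class map $G^*(B)/\sim\ \to\hat K^*(B)$ is a semigroup homomorphism. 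Thus every element of the group completion is in the image of $G^*(B)/\sim$, i.e.\ is of the form $[\cE,\rho]$ for a single cycle $(\cE,\rho)$.

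First I would recall that by construction $\hat K^*(B)$ is the Grothendieck group of the abelian semigroup $S:=G^*(B)/\sim$, so that a general element has the shape $a-b$ with $a,b\in S$, where I write $a=[\cE_1,\rho_1]$, $b=[\cE_2,\rho_2]$ for representing cycles. Next I would invoke Lemma \ref{lem22} to identify $-b$ with the class of the opposite cycle $(\cE_2^{op},-\rho_2)$. Then I would add, using that addition in $\hat K^*(B)$ restricted to the image of $S$ is induced by the semigroup operation on $S$, which in turn is induced by the sum of cycles $+$ from Definition \ref{isodeff} (this is exactly the content of Lemma \ref{lem5}, which guarantees $\sim$ is compatible with $\sqcup_B$). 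This yields the displayed identity and shows $a-b=[\cE_1\sqcup_B\cE_2^{op},\rho_1-\rho_2]$, which is of the required form.

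I do not expect any serious obstacle here: the statement is essentially a formal consequence of the fact that the semigroup $S$ already has ``formal inverses'' realized on the nose by the opposite-cycle construction, so group completion does not enlarge the set of representable classes. The only points requiring a word of care are (i) that the sum of cycles descends to a well-defined semigroup operation on $S$ and agrees with the group operation after completion — this is Lemma \ref{lem5} — and (ii) that $-[\cE_2,\rho_2]$ is indeed represented by $(\cE_2^{op},-\rho_2)$, which is Lemma \ref{lem22}. Both are already available in the excerpt, so the proof is a two-line deduction and I would present it as such.
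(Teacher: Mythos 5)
Your argument is correct and is precisely the standard one: the paper's proof simply defers to the corresponding lemma in the non-singular case \cite{bunke-2007}, whose content is exactly your deduction that $-[\cE_2,\rho_2]=[\cE_2^{op},-\rho_2]$ (Lemma \ref{lem22}) turns any formal difference into the class of the single cycle $(\cE_1\sqcup_B\cE_2^{op},\rho_1-\rho_2)$. No gaps; this matches the paper's intended proof.
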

\proof
We can copy the argument of the corresponding \textcolor{black}{\cite[Lemma
2.16]{bunke-2007}} literally.  \hB 

\begin{lem}\label{lem4}
If $[\cE_0,\rho_0]=[\cE_1,\rho_1]$, then there exists a cycle
$(\cE^\prime,\rho^\prime)$ such that
$(\cE_0,\rho_0)+(\cE^\prime,\rho^\prime)$ is paired with $(\cE_1,\rho_1)+(\cE^\prime,\rho^\prime)$.
\end{lem}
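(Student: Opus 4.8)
The statement is the orbifold analogue of the corresponding lemma in \cite{bunke-2007}, and the argument is purely formal once Lemma \ref{lem3} is available; the only point requiring care is the translation of the hypothesis "$[\cE_0,\rho_0]=[\cE_1,\rho_1]$ in the group completion" into a statement at the level of cycles. The plan is therefore: (1) unravel the definition of the group completion of the abelian semigroup $G^*(B)/\sim$; (2) use this to produce a cycle $(\cF,\sigma)$ with $(\cE_0,\rho_0)+(\cF,\sigma)\sim(\cE_1,\rho_1)+(\cF,\sigma)$; (3) feed this into Lemma \ref{lem3} and assemble the desired auxiliary cycle.

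\textbf{Step 1.} Recall that the group completion of an abelian semigroup $S$ is $S\times S$ modulo the relation $(a,b)\approx(c,d)\iff\exists e\in S:\ a+d+e=c+b+e$, and that the canonical map $S\to$ (group completion) sends $s\mapsto[(s,0)]$. Applying this to $S=G^*(B)/\sim$ with $a$ the class of $(\cE_0,\rho_0)$ and $c$ the class of $(\cE_1,\rho_1)$, the hypothesis $[\cE_0,\rho_0]=[\cE_1,\rho_1]$ says that $[(a,0)]=[(c,0)]$, hence there is $e\in G^*(B)/\sim$ with $a+e=c+e$ in $G^*(B)/\sim$.

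\textbf{Step 2.} Choose a cycle $(\cF,\sigma)$ representing $e\in G^*(B)/\sim$ (Lemma \ref{lem5} guarantees the semigroup structure is the one induced by Definition \ref{isodeff}). By Definition \ref{isodeff} the equality $a+e=c+e$ in $G^*(B)/\sim$ means precisely that
$$(\cE_0,\rho_0)+(\cF,\sigma)\ \sim\ (\cE_1,\rho_1)+(\cF,\sigma)\ .$$

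\textbf{Step 3.} Apply Lemma \ref{lem3} to the $\sim$-equivalent cycles $(\cE_0,\rho_0)+(\cF,\sigma)$ and $(\cE_1,\rho_1)+(\cF,\sigma)$: there is a cycle $(\cG,\tau)$ such that $\bigl((\cE_0,\rho_0)+(\cF,\sigma)\bigr)+(\cG,\tau)$ is paired with $\bigl((\cE_1,\rho_1)+(\cF,\sigma)\bigr)+(\cG,\tau)$. Using commutativity and associativity of $+$ on cycles (which hold up to isomorphism, and "paired" is compatible with the semigroup structure by Lemma \ref{lem5}), set $(\cE',\rho'):=(\cF,\sigma)+(\cG,\tau)$; then $(\cE_0,\rho_0)+(\cE',\rho')$ is paired with $(\cE_1,\rho_1)+(\cE',\rho')$, as required. \hB

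\textbf{Main obstacle.} There is essentially no analytic difficulty here; the content lies entirely in Step 1, i.e.\ in correctly spelling out what equality in the group completion means. Everything else is a bookkeeping application of Lemmas \ref{lem3} and \ref{lem5}, exactly parallel to the smooth case \cite{bunke-2007}.
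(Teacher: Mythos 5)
Your argument is correct and is essentially the same as the paper's, which simply copies the corresponding proof from \cite{bunke-2007}: unwind the definition of the group completion to get a cycle $(\cF,\sigma)$ with $(\cE_0,\rho_0)+(\cF,\sigma)\sim(\cE_1,\rho_1)+(\cF,\sigma)$, then apply Lemma \ref{lem3} and absorb the auxiliary cycle. No issues.
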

\proof We can copy the argument of the corresponding
\textcolor{black}{\cite[Lemma 2.17]{bunke-2007}} literally. \hB

\subsubsection{}

 In this paragraph we extend $B\mapsto \hat K^*(B)$ to a contravariant functor
from compact orbifolds  to $\Z/2\Z$-graded groups.
Let $f\colon B_1\to B_2$ be a morphisms of orbifolds. 
Then we define
$$f^*\colon  \hat K^*(B_2)\to  \hat K^*(B_1)$$ by
$$f^*[\cE,\rho]:=[f^*\cE,Lf^*\rho]\ ,$$
where
$f^*\cE=B_1\times_{B_2}\cE$ and $Lf\colon LB_1\to LB_2$ is obtained from $f$  by an application
of the loops functor. For the details of the construction of the pull-back of
geometric families we refer to \cite[\textcolor{black}{2.3.2}]{bunke-2007}. 
It is easy to check that the construction is well-defined and additive.
At this point we use in particular the relation
\begin{equation}\label{eq3}
\eta(f^*\cE_t)=f^*\eta(\cE_t)\ .
\end{equation}
If $g\colon B_0\to B_1$ is the second morphisms of compact presentable orbifolds, then we have the relation
$$f^*\circ g^*=(f\circ g)^*\colon \hat K(B_2)\to \hat K(B_0)\ .$$

Note that the morphisms between the orbifolds $B_{1}$ and $B_{2}$ form a groupoid. If two morphisms  $f,f^{\prime}\colon B_{1}\to B_{2}$ are two-isomorphic, then
we have the equality \begin{equation}\label{twomrph}f^{*}=f^{\prime *}\colon \hat K^{*}(B_{2})\to \hat K^{*}(B_{1})\ .\end{equation} Indeed, a two-isomorphism $\phi\colon f\Rightarrow f^{\prime}$ induces an isomorphism $f^{*}\cE\stackrel{\sim}{\to} f^{\prime *}\cE$, and we have
$Lf^{*}\rho=Lf^{\prime *}\rho$.

 \subsection{Natural transformations and exact sequences}
 
\subsubsection{}

In this subsection we introduce the transformations $R,I,a$, and we show that they turn the functor
$\hat K$ into a differential extension of $(K,\ch_\C)$ in the sense of the natural  generalization
of the definition \cite[Definition  1.1]{bunke-2007} to the orbifold case.

\subsubsection{}

We first define the  natural transformation
$$I\colon \hat K(B)\to K(B)$$
by
$$I[\cE,\rho]:=\ind(\cE)\ .$$
The proof that this is well-defined can be copied literally  from
 \cite[\textcolor{black}{2.4.2}]{bunke-2007}.
The relation
$\ind(f^*\cE)=f^*\ind(\cE)$  shows that $I$ is a natural transformation of functors
from presentable compact orbifolds to $\Z/2\Z$-graded abelian groups.

We consider the functor
$B\mapsto \Omega^*(LB)/\im(d)$, $*\in \{ev,odd\}$ as a 
functor from orbifolds to $\Z/2\Z$-graded abelian groups.
We construct a parity-reversing natural transformation
$$a\colon \Omega^*(LB)/\im(d)\to \hat K^*(B)$$ by
$$a(\rho):=[\emptyset,-\rho]\ .$$

Let $\Omega_{d=0}^*(LB)$ be the group of closed forms of parity $*$ on $B$. 
Again we consider $B\mapsto \Omega_{d=0}^*(LB)$ as a functor from orbifolds  to $\Z/2\Z$-graded abelian groups.
We define a natural transformation
$$R\colon \hat K(B)\to\Omega_{d=0}(LB)$$
by 
$$R([\cE,\rho])=\Omega(\cE)-d\rho\ .$$
The map $R$ is well-defined by the same argument as in
\cite[\textcolor{black}{2.4.5}]{bunke-2007}.
It follows from
$\Omega(f^*\cE)=f^*\Omega(\cE)$ that $R$ is a natural transformation.

\subsubsection{}

The natural transformations satisfy the following relations:
\begin{lem}\label{tzw}
\begin{enumerate}
\item $R\circ a=d$
\item $\ch_{dR}\circ I=[\dots]\circ R$. 
\end{enumerate}
\end{lem}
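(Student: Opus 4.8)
The plan is to verify each of the two identities in Lemma~\ref{tzw} directly on cycles, using the definitions of $R$, $a$, and $I$ together with the structural facts already established. For part (1), I would take an arbitrary class $\rho \in \Omega^*(LB)/\im(d)$ and simply compute: by definition $a(\rho) = [\emptyset, -\rho]$, and then $R(a(\rho)) = R([\emptyset,-\rho]) = \Omega(\emptyset) - d(-\rho)$. Since the empty geometric family has vanishing local index form, $\Omega(\emptyset) = 0$, so $R(a(\rho)) = d\rho$, which is exactly the claim. The only subtlety worth a sentence is that $R$ lands in $\Omega_{d=0}(LB)$ and indeed $d\rho$ is closed, so the statement typechecks; and that $R\circ a$ is being read as a map $\Omega^*(LB)/\im(d) \to \Omega^{*+1}_{d=0}(LB)$ with the parity shift consistent with $a$ being parity-reversing and $R$ preserving parity of the cycle.

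For part (2), the statement $\ch_{dR}\circ I = [\dots]\circ R$ (where $[\dots]$ denotes the passage to delocalized de Rham cohomology $\Omega_{d=0}(LB) \to H_{dR,deloc}(B)$) should be checked on a cycle $[\cE,\rho]$. On the left we have $\ch_{dR}(I[\cE,\rho]) = \ch_{dR}(\ind(\cE)) \in H_{dR,deloc}(B)$. On the right we have $[R([\cE,\rho])] = [\Omega(\cE) - d\rho] = [\Omega(\cE)]$ in $H_{dR,deloc}(B)$, since $d\rho$ is exact. Thus the identity reduces to the claim $\ch_{dR}(\ind(\cE)) = [\Omega(\cE)]$ in delocalized de Rham cohomology, which is precisely Theorem~\ref{thm2} established earlier. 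So part (2) follows immediately by invoking that theorem.

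The main (and only mild) obstacle is bookkeeping rather than mathematics: one must be careful that the transformations $R$, $I$, $a$ are genuinely well-defined on $\hat K(B)$ — but this has already been asserted in the preceding paragraphs (the well-definedness of $R$, $I$, $a$ being copied from \cite{bunke-2007}) — so here I am entitled to assume it and evaluate on representatives. I would therefore write the proof as: ``(1) $R(a(\rho)) = R([\emptyset,-\rho]) = \Omega(\emptyset) - d(-\rho) = d\rho$ since $\Omega(\emptyset)=0$. (2) For a cycle $(\cE,\rho)$ we have $[R([\cE,\rho])] = [\Omega(\cE) - d\rho] = [\Omega(\cE)] = \ch_{dR}(\ind(\cE)) = \ch_{dR}(I[\cE,\rho])$ in $H_{dR,deloc}(B)$ by Theorem~\ref{thm2}.'' This is essentially a two-line argument and I would not expect to need anything beyond the definitions and Theorem~\ref{thm2}.
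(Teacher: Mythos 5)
Your proof is correct and follows exactly the route the paper takes: part (1) is an immediate unwinding of the definitions of $R$ and $a$ (with $\Omega(\emptyset)=0$), and part (2) is precisely the local index Theorem \ref{thm2}. Nothing further is needed.
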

\proof
The first relation is an immediate consequence of the definition of $R$ and $a$.
The second relation is the local index Theorem \ref{thm2}. \hB

\subsubsection{}
Via the embedding $H_{dR,deloc}(B)=H_{dR}(LB)\subseteq \Omega(LB)/\im(d)$,
the Chern character $\ch_{dR}\colon K(B)\to H_{dR,deloc}(B)$ can be considered as a natural transformation
$$\ch_{dR}\colon K(B)\to \Omega(LB)/\im(d)\ .$$ 
\begin{prop}\label{prop1}
The following sequence is exact:
$$K(B)\stackrel{\ch_{dR}}{\to} \Omega(LB)/\im(d)\stackrel{a}{\to} \hat K(B)\stackrel{I}{\to} K(B)\to 0\ .$$
\end{prop}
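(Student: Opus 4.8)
The plan is to prove exactness at each of the three interior spots of the sequence, mostly by adapting the corresponding arguments from \cite[Prop. 1.5 / Sec. 2]{bunke-2007} to the delocalized setting, where the only genuinely new input is the replacement of ordinary de Rham cohomology and forms on $B$ by $H_{deloc}(B)$ and $\Omega(LB)$, and the use of Lemma \ref{uiufwefewfwfewf} to produce tamings after stabilization.

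\textbf{Exactness at $K(B)$ (surjectivity of $I$).} By the construction in \ref{zerofibre}, every class $x\in K^0(B)$ is realized as $\ind(\cV)$ for a geometric family $\cV$ coming from a $\Z/2\Z$-graded geometric vector bundle $\bV$; the odd case is handled analogously by using an odd geometric family over $B$ (or suspending). Hence $I[\cV,0]=\ind(\cV)=x$ and $I$ is surjective.

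\textbf{Exactness at $\hat K(B)$.} First, $I\circ a=0$ since $\ind(\emptyset)=0$. Conversely, suppose $I[\cE,\rho]=\ind(\cE)=0$. By Lemma \ref{uiufwefewfwfewf} there is a geometric family $\cG$ such that $\cE':=\cE\sqcup_B\cG\sqcup_B\cG^{op}$ admits a taming $\cE'_t$. Since $[\cG,\eta(\cG_t')]+[\cG^{op},-\eta(\cG_t')]=0$ by Lemma \ref{lem22}-type reasoning (more precisely, using the cancellation identity of Lemma \ref{lem22} together with additivity), we have $[\cE,\rho]=[\cE',\rho]$ after adding a zero class. Now $(\cE',\rho)$ and $(\emptyset,\rho-\eta(\cE'_t))$ are paired via the taming $\cE'_t$, because $\cE'\sqcup_B\emptyset^{op}=\cE'$ and $\rho-(\rho-\eta(\cE'_t))=\eta(\cE'_t)$. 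Therefore $[\cE,\rho]=[\emptyset,\rho-\eta(\cE'_t)]=a(\eta(\cE'_t)-\rho)\in\image(a)$. Here one must be slightly careful to arrange the stabilization so that the parity and the cancellation are consistent, exactly as in \cite{bunke-2007}; this is where Lemma \ref{uiufwefewfwfewf} does the work that in the smooth case was automatic from universality of $H(\cE)$.

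\textbf{Exactness at $\Omega(LB)/\image(d)$.} The inclusion $\image(\ch_{dR})\subseteq\ker(a)$ follows because if $\rho=\ch_{dR}(x)$ for $x\in K(B)$, represented by $\ind(\cV)$ as above, then $\Omega(\cV)$ is a closed form representing $\ch_{dR}(x)$ in $H_{deloc}(B)$, so $\rho-\Omega(\cV)=d\sigma$ for some $\sigma\in\Omega(LB)$; then $a(\rho)=[\emptyset,-\rho]=[\emptyset,-\Omega(\cV)-d\sigma]$, and $(\emptyset,-\Omega(\cV)-d\sigma)$ is paired with $(\cV\sqcup_B\cV^{op},\ldots)$-type cycle — more directly, one shows $[\cV,0]+[\cV^{op},0]=0$ while $[\cV,\Omega(\cV)+d\sigma]=a(-\Omega(\cV)-d\sigma)+[\cV,0]$, and since $R[\cV,\Omega(\cV)]=\Omega(\cV)-d\Omega(\cV)$... — the clean route is the one in \cite{bunke-2007}: exhibit a taming showing $(\emptyset,-\rho)$ is equivalent to $0$ whenever $\rho$ lifts a class in $\image(\ch_{dR})$. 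For the reverse inclusion $\ker(a)\subseteq\image(\ch_{dR})$: if $a(\rho)=0$ in $\hat K(B)$, then by Lemma \ref{lem4} there is a cycle $(\cE',\rho')$ with $(\emptyset,-\rho)+(\cE',\rho')$ paired with $(\emptyset,0)+(\cE',\rho')$, i.e. there is a taming $\cH_t$ of $\cE'\sqcup_B\cE'^{op}$ with $-\rho+\rho'-\rho' = -\rho = \eta(\cH_t)$. Applying $R$: since $R(a(\rho))=d\rho$ would be forced zero only if $a(\rho)=0$ gives $d\rho = R(0)=0$, so $\rho$ is closed; and $\rho=-\eta(\cH_t)$ with $d\eta(\cH_t)=\Omega(\cE'\sqcup\cE'^{op})$, which represents $\ch_{dR}(\ind(\cE'\sqcup\cE'^{op}))=\ch_{dR}(\ind(\cE')+\ind(\cE'^{op}))=\ch_{dR}(\ind(\cE')-\ind(\cE'))=0$ in $H_{deloc}(B)$. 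Hence $[\rho]=-[\eta(\cH_t)]$ lies in the image of $\ch_{dR}$ composed with... — the point is that $\eta$-forms of index-zero families, modulo exact forms, land precisely in $\ker(H_{deloc}(B)\to\Omega/\image d)^{\perp}$, reproducing the smooth argument verbatim once Theorem \ref{thm2} and \eqref{detad} are in hand.

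\textbf{Main obstacle.} The one step genuinely requiring the orbifold-specific work is exactness at $\hat K(B)$: in the smooth case the converse direction of the taming existence statement (index zero implies a taming exists) is immediate from universality of $H(\cE)\to B$, whereas for presentable orbifolds $H(\cE)$ need not be universal, so one must stabilize by $\cG\sqcup_B\cG^{op}$ as in Lemma \ref{uiufwefewfwfewf} and then check that this stabilization is invisible in $\hat K(B)$ (via the cancellation $[\cG,\eta(\cG_t)]+[\cG^{op},-\eta(\cG_t)]=0$ and additivity). All other steps are essentially transcriptions of \cite{bunke-2007}, with $\Omega(B)$, $H_{dR}(B)$ replaced throughout by $\Omega(LB)$, $H_{deloc}(B)$, and with Theorem \ref{thm2} and equation \eqref{detad} supplying the needed compatibilities of $\Omega(\cE)$ and $\eta(\cE_t)$ with the Chern character on the delocalized side.
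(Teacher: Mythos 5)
Your treatment of the surjectivity of $I$ and of the exactness at $\hat K(B)$ matches the paper's argument, including the correct identification of Lemma \ref{uiufwefewfwfewf} (stabilization by $\cG\sqcup_B\cG^{op}$ to compensate for the failure of universality of $H(\cE)\to B$) as the orbifold-specific input there. The problem is the exactness at $\Omega(LB)/\im(d)$: both inclusions in your write-up break off in mid-argument, and the ingredient you would need to finish them is not the one you name. You assert that Theorem \ref{thm2} and equation \eqref{detad} suffice to ``reproduce the smooth argument verbatim''; they do not. The step that does the work in both directions is the index theorem for \emph{boundary-tamed} families (Theorem \ref{diqwdwqdddwqd} in the paper), whose orbifold version is a genuinely new statement requiring the equivariant/localized small-time heat kernel analysis on $LB$. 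It is this theorem, applied to $\cE\times[0,1]$ with boundary taming induced by two tamings $\cE_t,\cE_t'$ of the same family, that yields $[\eta(\cE_t)-\eta(\cE_t')]\in\ch_{dR}(K(B))$; without it neither inclusion closes.

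Concretely: for $\ker(a)\subseteq\im(\ch_{dR})$ you correctly extract a taming $\cH_t$ of $\cE'\sqcup_B\cE'^{op}$ with $\eta(\cH_t)=-\rho$ and observe that $\rho$ is closed, but you then need two further facts you do not supply: that $\cE'\sqcup_B\cE'^{op}$ carries a second, ``symmetric'' taming with vanishing $\eta$-form, and that the difference of the $\eta$-forms of two tamings lies in $\ch_{dR}(K(B))$ (Theorem \ref{diqwdwqdddwqd} again). For $\im(\ch_{dR})\subseteq\ker(a)$, i.e. $a\circ\ch_{dR}=0$, your first route via $[\emptyset,-\Omega(\cV)]$ cannot work as written, since $(\emptyset,\alpha)$ is paired with $(\emptyset,\beta)$ only when $\alpha=\beta$, so one must pass through nonempty families; and your fallback ``exhibit a taming showing $(\emptyset,-\rho)$ is equivalent to $0$'' is a restatement of the claim, not a proof. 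The paper's actual argument (paragraph \ref{dhuidwqdwqd}) constructs, using a presentation $[M/G]$, a universal $G$-Hilbert space, and finite-rank invariant projections $P_n$, an explicit family $\cE=\cV\sqcup_M\cV^{op}$ with two tamings $Q,Q'$ such that $\ind((\cE\times I)_{bt})=x$, and then invokes Theorem \ref{diqwdwqdddwqd} together with $\Omega(\cE\times I)=0$ to get $\eta(\cE_t')-\eta(\cE_t)=\ch_{dR}(x)$, whence $a(\ch_{dR}(x))=[\emptyset,-\eta(\cE_t')]-[\emptyset,-\eta(\cE_t)]=[\cE,0]-[\cE,0]=0$. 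This construction again uses presentability in an essential way, so the ``main obstacle'' of the proposition is at least as much here as in the exactness at $\hat K(B)$.
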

\proof \textcolor{black}{The proof is carried out in the Paragraphs \ref{start} to
  \ref{dhuidwqdwqd}.}

\subsubsection{}\label{start}
We start with the surjectivity of $I\colon \hat K(B)\to K(B)$.
The main point is the fact that every element $x\in K(B)$ can be realized as the index of a geometric family over $B$.   Here we use again that the orbifold is presentable.
Let $[M/G]\cong B$ be a presentation.
Given a class in $K(B)$ let $x\in K_G(M)$ be the corresponding class under the isomorphism
$K(B)\cong K_G(M)$. It suffices to show that $x$ can be realized as the index of  a $G$-equivariant geometric family $\cE$ over $M$.
We first consider the even case.  Then $x$ can be represented by a $\Z/2\Z$-graded
 $G$-vector bundle $V\to M$. As in \ref{zerofibre} we construct a $G$-equivariant geometric family with zero-dimensional fibre $\cV\to M$ such that $\ind(\cV)=x$.

In the odd case we let $y\in K^0_G(S^1\times M,\{1\}\times M)$ be the the class
corresponding to $x$ under the suspension isomorphism
$K^0_G(S^1\times M,\{1\}\times M)\cong K^1_G(M)$.
As above we can find an equivariant geometric family
$\cV$ over $S^1\times M$ such that
$\ind(\cV)\in K^0_G(S^1\times M)$ is the image of $y$ under
$K^0_G(S^1\times M,\{1\}\times M)\to K^0_G(S^1\times M)$.
Using the standard metric on $S^1$ and the canonical horizontal bundle
$TM\subset T(S^1\times M)$ for $p\colon S^1\times M\to M$ we can define a $G$-equivariant geometric family
$p_!(\cV)$ over $M$   such that
$\ind(p_!\cV)=x$.

\subsubsection{}
Next we show exactness at $\hat K(B)$. For $\rho\in \Omega(LB)/\im(d)$  we have
$I\circ a(\rho)=I([\emptyset,-\rho])=\ind(\emptyset)=0$, hence
 $\hat I\circ a=0$.
Consider a class $[\cE,\rho]\in \hat K(B)$ which satisfies $I([ \cE,\rho])=0$.
Using Lemma \ref{uiufwefewfwfewf} and Lemma \ref{lem22}
 we can replace $\cE$ by $\cE\sqcup_B(\tilde\cE\sqcup_B\tilde \cE^{op})$
for some geometric family $\tilde \cE$  without changing the differential $K$-theory class  such that $\cE$
 admits a taming $\cE_t$.
Therefore,
$(\cE,\rho)$ is paired with  $(\emptyset,\rho-\eta(\cE_t))$.
It follows that $[\cE,\rho]=a(\eta(\cE_t)-\rho)$.

 \subsubsection{}
 
In order to prepare the proof of exactness at $\Omega(LB)/\im(d)$ 
we need some facts about the classification of tamings of a geometric family $\cE$.
As in \cite[\textcolor{black}{2.4.10}]{bunke-2007} we introduce the notion of boundary
  taming and will use an index theorem for boundary tamed families
in order to compare tamings. Let $\cF$ be a geometric family with boundary $\cE$ over $B$ and $\cE_t$ be a taming. Then we have a boundary tamed family $\cF_{bt}$ and can consider $\ind(\cF_{bt})\in K(B)$. 
\begin{theorem}\label{diqwdwqdddwqd} In $H_{dR, deloc}(B)$ we have the following equality:
$$\ch_{dR}(\ind(\cF_{bt}))=[\Omega(\cF)+\eta(\cE_t)]\ .$$
\end{theorem}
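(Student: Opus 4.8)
The plan is to reduce the boundary tamed index theorem on the orbifold $B$ to the corresponding theorem in the non-singular case applied to an atlas, combined with the equivariant local index computations already prepared in this section. First I would choose a presentation $[M/G]\cong B$ and pull everything back along $f\colon M\to B$. The boundary tamed family $\cF$ becomes a $G$-equivariant boundary tamed family $f^*\cF$ over $M$, whose boundary $f^*\cE$ carries the taming $(f^*\cE)_t$ induced by $\cE_t$; by the naturality relations \eqref{eq3} and the analogous naturality $\Omega(f^*\cE)=f^*\Omega(\cE)$, it suffices to prove the identity for $f^*\cF$ in $H_{dR,deloc}(M)\cong H(\hat M)^G$. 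In other words, the whole statement is an equivariant statement about the family of Dirac operators on the (ordinary, smooth) fibre bundle $f^*\cF\to M$ with its $G$-action.

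Next I would invoke the $K$-theoretic gluing construction used to define the boundary-tamed index. The index $\ind(\cF_{bt})\in K(B)$ is constructed, exactly as in \cite{bunke-2007}, by gluing the boundary-tamed family $\cF$ to an invertible cylinder built from the taming $\cE_t$; under $K(B)\cong K_G(M)$ this corresponds to the analogous $G$-equivariant gluing on $M$. The Chern character of this class in delocalized cohomology is computed, by the heat-kernel/superconnection method, as the $t\to 0$ limit of $\varphi\Tr_s\rho\,\ee^{-A_t^2}$ for the glued superconnection. Splitting the integral over the glued parameter into the interior contribution and the cylinder contribution, the first limit produces $\int_{LE/LB} I(\cF)=\Omega(\cF)$ by the local index computation of \ref{uzu1}, while the cylinder piece produces precisely $\eta(\cE_t)$ by the very definition \eqref{udqwidwqdodiwodopop} of the $\eta$-form and the transgression formula \eqref{detad}. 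This is the standard APS-type splitting, and the only genuinely new feature over \cite{bunke-2007} is that all heat kernels and supertraces are taken $\rho$-twisted and localized at the fixed-point bundle $LE\to LB$; but that localization has already been carried out and the relevant convergence statements (convergence at $\tau\to\infty$ from the taming, at $\tau\to 0$ from equivariant local index theory) are recorded in \ref{pap2}.

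I expect the main obstacle to be bookkeeping rather than a conceptual difficulty: one must check that the $\rho$-twisted heat-kernel estimates survive the gluing at the collar, that the adiabatic/rescaling limit commutes with the splitting of the integration variable, and that the normalization operators $\varphi$ and $\tilde\varphi$ match up so that the interior term is genuinely $\Omega(\cF)$ (not a scalar multiple) and the collar term is genuinely $\eta(\cE_t)$. All of these are handled in the non-singular case in \cite{bunke-2007}, and since we have reduced to a $G$-equivariant statement on the smooth manifold $M$ and then localized to $\hat M$, the arguments there apply verbatim to each fixed-point component $M^g$, with the automorphism $\rho$ acting as the finite-order symmetry. Hence the proof amounts to: (i) pass to a presentation and use naturality of $\Omega$, $\eta$, $\ind$; (ii) invoke the equivariant version of the boundary-tamed index theorem of \cite{bunke-2007} componentwise on the inertia orbifold, using the localized expressions for $\Omega(\cF)$ and $\eta(\cE_t)$ established in \ref{uzu1} and \ref{pap2}.
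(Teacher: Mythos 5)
Your proposal is correct and follows essentially the same route as the paper: the paper's proof simply reruns the boundary-tamed index theorem of \cite{math.DG/0201112} with the supertraces replaced by $\rho$-twisted supertraces on the pull-back to $LB$ and with the small-time analysis localizing at the fibrewise fixed points, which is exactly the APS-type splitting into $\Omega(\cF)$ and $\eta(\cE_t)$ that you describe. Your extra preliminary step of passing to a presentation $[M/G]$ and working $G$-equivariantly on $M$ is a harmless (and arguably clarifying) reformulation of the same argument rather than a different method.
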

\proof We first consider the even case.
We use that $B$ is presentable so that we have
$\ind(\cF_{bt})=\ind(\cV)$ for some vector bundle $V\to B$, where $\cV$ is the geometric family associated to $V$ as in \ref{zerofibre}. By definition of the Chern character in \ref{cgd222} we have
$\ch_{dR}(\ind(\cF_{bt}))=\ch_{dR}(\ind(\cV))=[\Omega(\cV)]$. The main part of the proof is to show that
$[\Omega(\cV)]=[\Omega(\cF)+\eta(\cE_t)]$. Here
we can repeat the argument given in \cite[Theorem 4.13]{math.DG/0201112}. The only modifications are
\begin{enumerate}
\item We consider the pull-backs of $\cF$, $\cV$, and $\cE_t$ to $LB$ which
  come with canonical automorphisms $(\rho_{\cF},\rho_{\cE_t})$.
\item We replace $\Tr_s\dots$ by $\Tr_s \rho_{\cF}$, $\Tr_{s}\rho_{V}$,  or $\Tr_s \rho_{\cE_t}$,  respectively. 
\item The small time analysis of this trace takes the localization of the heat
  kernel at the fibrewise fixed points of the canonical automorphisms into
  account. To write out all the details here is of course a lengthy and
  tedious matter, but all necessary technical details of the local heat kernel
  analysis are well documented in \cite{bgv}. See also \cite{lm00} for the equivariant situation without boundary. \end{enumerate}  
The odd case is reduced to the even case by suspension as in  \cite[Theorem 4.13]{math.DG/0201112}.
\hB

In view of this theorem we can argue as in
\cite[\textcolor{black}{2.4.10}]{bunke-2007} that if 
$\cE_t$ and $\cE_t^\prime$ are two tamings of a geometric family, then
the difference of the associated $\eta$-forms is closed and we have
$$[\eta(\cE_t)-\eta(\cE_t^\prime)]\in \ch_{dR}(K(B))\subset H_{dR,deloc}(B)\ .$$

We now show exactness at $\Omega(LB)/\im(d)$.
Let  $\rho\in \Omega(LB)/\im(d)$ be such that $a(\rho)=[\emptyset,-\rho]=0$.
Then by Lemma \ref{lem4} there exists a cycle $(\hat \cE,\hat \rho)$ such that
$(\hat \cE,\hat \rho-\rho)$ pairs with $(\hat \cE,\hat \rho)$.
Using Lemma \ref{lem5} we can add a copy $\hat \cE^{op}$ and see that
$(\cE,\hat \rho-\rho)$ is paired with $(\emptyset,\hat \rho)$, where $\cE=\hat \cE\sqcup_B \hat \cE^{op}$.
The taming which induces this relation will be denoted by $\cE_t^\prime$. We have
$\eta(\cE_t^\prime)=-\rho$.  Because of the odd $\Z/2\Z$-symmetry the family $\cE$ admits another taming 
 $\cE_t$ with vanishing $\eta$-form.  
Therefore
$$\rho=-[\eta(\cE_t)]\in   \ch_{dR}(K(B))\ .$$

\subsubsection{}\label{dhuidwqdwqd}

It remains to show that for $x\in K(B)$ we have
$a\circ \ch_{dR}(x)=0$.  Note that
$a\circ \ch_{dR}(x)=[\emptyset,-\ch_{dR}(x)]$.
The proof is accomplished by   showing that there exists a geometric family $\cE=\hat \cE\sqcup_B \hat \cE^{op}$
which admits tamings
$\cE_t$ and $\cE_t^\prime$ such that
$\eta(\cE_t^\prime)-\eta(\cE_t)=\ch_{dR}(x)$. More
precisely, we will get
$\ind((\cE\times I)_{bt})=x$, where the boundary taming $(\cE\times I)_{bt}$
is induced by $\cE_t$ and $\cE_t^\prime$ and then use Theorem \ref{diqwdwqdddwqd}. 
 
To this end we modify the corresponding argument
given in \cite[\textcolor{black}{2.4.10}]{bunke-2007}.  To be specific, let us
consider the even case.
First of all, using a presentation $B\cong [M/G]$, we will actually consider the equivariant problem.
Let $H$ be a universal $G$-Hilbert space. Then the $G$-space $GL_1(H)\subset
GL(H)$ of invertible operators of the form $1+K$ with compact $K$
 has the homotopy
  type of the classifying space of $K_G^1$. Let $x\in K^1_G(M)$ be
represented by an equivariant map
$x\colon M\to GL_1(H)$. If $(P_n)$ is \textcolor{black}{an equivariant  strong}
approximation of
the identity of $H$ then, for sufficiently large $n$, \textcolor{black}{by
  compactness of $M$,} the $G$-map
$$(1-P_n)+P_nxP_n\colon M\to GL_1(H)$$ is $G$-homotopic to $x$.
Let $\cV$ be the equivariant geometric family on $M$ constructed from
the $\Z/2\Z$-graded $G$-vector bundle $V:=\im(P_n)\times M$. The matrices
$$Q:=\left(\begin{array}{cc}0&P_nx^*P_n\\P_nxP_n&0
\end{array}\right)\ ,\quad Q^\prime:=\left(\begin{array}{cc}0&\id_{V}\\\id_{V}&0
\end{array}\right)$$
represent tamings of $\cE:=\cV\sqcup_M \cV^{op}$. 
We use $Q$ and $Q^\prime$ at $\cE\times \{0\}$ and $\cE\times \{1\}$ in order to define
$(\cE\times I)_{bt}$.
As in \cite[\textcolor{black}{2.4.10}]{bunke-2007} we can now show that
$\ind((\cE\times I)_{bt})=x$. Because of the product structure 
 we have  $\Omega(\cE\times I)=0$, so that
by Theorem \ref{diqwdwqdddwqd} $\ch_{dR}(x)=\eta(\cE_t^\prime)-\eta(\cE_t)$.
 
The odd case is similar.
 \hB

\subsubsection{}
We define a real structure $\hat Q$ on $\hat K(B)$ by
$Q([\cE,\rho]):=[\cE,Q(\rho)]$, where
\ $Q(\rho)=I^*(\overline{\rho})$  is as in
\ref{uiqduwqdwqdqwdqwd}. Since the local  index forms and eta forms  are real,
$\hat Q$ is well-defined. We define the real subfunctor 
$$\hat K_\R(B):=\{x\in \hat K(B)\:|\: \hat Q(x)=x\}\ .$$
By restriction we get
natural transformations
$$R\colon \hat K_\R(B)\to \Omega_\R(LB)\ ,\quad a\colon  \Omega^*_\R(LB)/\im(d)\to \hat  K_\R(B)$$
\textcolor{black}{such} that
\begin{equation}\label{eq:LES}
K(B)\stackrel{\ch_{dR}}{\to} \Omega_{\R}(LB)/\im(d)\stackrel{a}{\to} \hat K_\R(B)\stackrel{I}{\to} K(B)\to 0
\end{equation}
is exact.

 \subsection{Calculations for $[*/G]$} 

 \subsubsection{}
Let $G$ be a finite group. We consider the orbifold $[*/G]$.
Note that $K_G^0([*/G])\cong K_G^0(*)\cong R(G)$ as rings, where $R(G)$ denotes the representation
ring of $G$.  Moreover, $K^1_G([*/G])\cong 0$.
We have $L[*/G]=[G/G]$, where $G$ acts on itself by conjugation. Therefore
$$\Omega(L[*/G])\cong \C[G]^G\cong H_{dR,deloc}([*/G])$$ is the ring of
conjugation invariant complex valued functions on $G$.
The Chern character fits into the diagram
$$\xymatrix{K^0([*/G])\ar[d]^\cong\ar[r]^(0.45){\ch}&H^*_{dR,deloc}([*/G])\ar[d]^{\cong}\\
R(G)\ar[r]^{\Tr}&\C[G]^G }\ .$$
\begin{lem}\label{calpunk}
We have 
$$\hat K^*([*/G])\cong \left\{\begin{array}{cc} R(G) & *  = 0\\
\C[G]^G/R(G) & *=1\end{array}\right.\ .$$
\end{lem}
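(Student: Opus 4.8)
The plan is to compute $\hat K^*([*/G])$ directly from the exact sequence of Proposition \ref{prop1}, using the fact that all the relevant objects are completely explicit for $B=[*/G]$ with $G$ finite. Recall from the discussion above that $K^0([*/G])\cong R(G)$, $K^1([*/G])\cong 0$, and $\Omega(L[*/G])\cong \C[G]^G$ sitting entirely in degree zero (there are no positive-degree forms on a zero-dimensional orbifold), so $\im(d)=0$ and $\Omega^{ev}(L[*/G])/\im(d)\cong \C[G]^G$ while $\Omega^{odd}(L[*/G])/\im(d)=0$. The delocalized de Rham cohomology agrees with the forms themselves, so the map $[\dots]\colon \Omega(LB)/\im(d)\to H_{dR,deloc}(B)$ is the identity, and under the identifications the transformation $\ch_{dR}\colon K^0\to \Omega^{ev}/\im(d)$ is exactly the injective character map $\Tr\colon R(G)\hookrightarrow \C[G]^G$.

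First I would treat the even part. Proposition \ref{prop1} gives the exact sequence
$$K^0([*/G])\xrightarrow{\ch_{dR}} \Omega^{ev}(L[*/G])/\im(d)\xrightarrow{a}\hat K^0([*/G])\xrightarrow{I} K^0([*/G])\to 0.$$
Since $\ch_{dR}=\Tr$ is injective, the map $a$ is zero, hence $I\colon \hat K^0([*/G])\to K^0([*/G])\cong R(G)$ is an isomorphism. For the odd part, the same sequence with a shift in parity reads
$$K^1([*/G])\xrightarrow{\ch_{dR}}\Omega^{odd}(L[*/G])/\im(d)\xrightarrow{a}\hat K^1([*/G])\xrightarrow{I}K^1([*/G])\to 0.$$
Now $K^1([*/G])=0$ and $\Omega^{odd}(L[*/G])/\im(d)=0$, which would literally force $\hat K^1([*/G])=0$ — but this is wrong, so the point is that Proposition \ref{prop1} as stated is only a four-term exact sequence and does not control the kernel of $a$. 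The correct tool is the longer exact sequence for a differential extension, in which the term preceding $\Omega^{odd}/\im(d)$ is not $K^1$ but rather $K^0\xrightarrow{\ch_{dR}}\Omega^{ev}_{d=0}/\im(\ch_{dR}\oplus d)$; equivalently, the flat part sits in the sequence $K^{-1}(B)\to K^{-1}(B;\C)\to \hat K^0_{flat}(B)\to 0$ and the relevant statement is $\ker(a\colon\Omega^{odd}/\im(d)\to\hat K^1)=\ch_{dR}(K^1)+\text{(image of closed forms hit by }R)$. Concretely, since $\Omega^{odd}=0$, the family-index description forces $\hat K^1$ to be computed from $R\colon \hat K^1\to \Omega^{odd}_{d=0}=0$, so $\hat K^1=\hat K^1_{flat}$; and by Theorem \ref{uiudqdqwdwq} (or directly) this flat part is $K^1([*/G];\C/\Z)\cong K^0([*/G])\otimes\C/\Z$... which again is not $\C[G]^G/R(G)$. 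So I would instead argue directly: represent a class in $\hat K^1$ by $[\emptyset,-\rho]$ with $\rho\in\Omega^{ev}(L[*/G])/\im(d)=\C[G]^G$ (note the parity shift in the definition of odd cycles), observe every odd cycle is equivalent to such a one since $K^1=0$ forces the geometric family part to be killed, and then show $[\emptyset,-\rho]=[\emptyset,-\rho']$ iff $\rho-\rho'\in\ch_{dR}(K^0([*/G]))=\Tr(R(G))$, using Lemma \ref{lem4} together with the classification of tamings (the difference of two $\eta$-forms lies in $\ch_{dR}(K(B))$, and conversely every element of $\ch_{dR}(K^0)$ is realized as such a difference by the argument of \ref{dhuidwqdwqd}). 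This yields $\hat K^1([*/G])\cong \C[G]^G/R(G)$.

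The main obstacle I anticipate is getting the bookkeeping of parities and of the two exact sequences exactly right: the odd differential $K$-group is built from \emph{even} forms (Definition of even/odd cycles), the flat part of $\hat K^1$ is governed by $K^0$-data, and one must be careful that $a\colon \C[G]^G=\Omega^{ev}/\im(d)\to\hat K^1([*/G])$ is surjective with kernel precisely $\Tr(R(G))$ — surjectivity because $I\colon\hat K^1\to K^1=0$ is zero so every class is in the image of $a$ by exactness at $\hat K^1$ in the parity-shifted sequence, and the kernel computation because $a(\rho)=a(\rho')$ exactly when $\rho-\rho'$ is an $\eta$-form difference, which by the discussion following Theorem \ref{diqwdwqdddwqd} and the construction in \ref{dhuidwqdwqd} is exactly $\ch_{dR}(K^0([*/G]))=\Tr(R(G))$. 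Assembling these, $\hat K^0([*/G])\cong R(G)$ via $I$ and $\hat K^1([*/G])\cong \C[G]^G/R(G)$ via $a$, which is the claim. $\hfill\Box$
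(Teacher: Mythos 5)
Your final answer is right, but the route is needlessly tangled because you have mis-read the parities in Proposition \ref{prop1}. The transformation $a$ is parity-\emph{reversing} (an odd cycle carries an \emph{even} form), so the graded instances of the four-term sequence are
$$K^1([*/G])\stackrel{\ch_{dR}}{\to}\Omega^{odd}(L[*/G])/\im(d)\stackrel{a}{\to}\hat K^0([*/G])\stackrel{I}{\to}K^0([*/G])\to 0$$
and
$$K^0([*/G])\stackrel{\ch_{dR}}{\to}\Omega^{ev}(L[*/G])/\im(d)\stackrel{a}{\to}\hat K^1([*/G])\stackrel{I}{\to}K^1([*/G])\to 0\ .$$
With $\Omega^{odd}(L[*/G])=0$ the first sequence gives $I\colon\hat K^0([*/G])\cong R(G)$ at once, and with $K^1([*/G])=0$ and $\ker(a)=\im(\ch_{dR})=\Tr(R(G))$ the second gives $\hat K^1([*/G])\cong\C[G]^G/R(G)$. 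This is the paper's entire proof. Your versions pair $\Omega^{ev}$ with $\hat K^0$ and $\Omega^{odd}$ with $\hat K^1$, which is what produces the apparent contradiction ``$\hat K^1=0$'' and drives you into the digression about flat parts; Proposition \ref{prop1} does control $\ker(a)$ --- exactness at $\Omega/\im(d)$ is part of its statement --- once the parities are matched correctly.

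Two further slips. In the even case, ``since $\ch_{dR}$ is injective, $a$ is zero'' is a non sequitur: injectivity of the map \emph{into} $\Omega/\im(d)$ says nothing about the next map vanishing (you would need surjectivity of $\ch_{dR}$ for that); the correct reason $a$ vanishes on the term feeding $\hat K^0$ is that its source $\Omega^{odd}(L[*/G])/\im(d)$ is zero. And the aside that $K^0([*/G])\otimes\C/\Z$ ``is not $\C[G]^G/R(G)$'' is false: since $\Tr\colon R(G)\otimes_\Z\C\to\C[G]^G$ is an isomorphism and $R(G)$ is free, one has $\C[G]^G/R(G)\cong R(G)\otimes_\Z\T$, as the paper itself remarks immediately after the lemma. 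Your closing direct argument for $\hat K^1$ (surjectivity of $a$ because $I$ lands in $K^1=0$, and kernel of $a$ equal to $\ch_{dR}(K^0)$ via the $\eta$-form discussion) is correct, but it is precisely a re-derivation of the exactness statements of Proposition \ref{prop1} that you already had available.
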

\proof
We use the exact sequence given by  Proposition \ref{prop1}. \hB
Note that $\Tr\colon R(G)\otimes_\Z\C\to \C[G]^G$ is an isomorphism so that,
\textcolor{black}{with $\T=\C/\Z$,}
$$\C[G]^G/R(G) \cong R(G)\otimes_\Z \T\ .$$ 
It restricts to an isomorphism
$R(G)_\R:=R(G)\otimes_\Z \R\stackrel{\sim}{\to}\Omega_\R(L[*/G])\subset \C[G]^G$.
\begin{kor}
We have 
$$\hat K_\R^*([*/G])\cong \left\{\begin{array}{cc} R(G) & *  = 0\\
R(G)_\R/R(G)\cong \R(G)\otimes_\Z \R/\Z & *=1\end{array}\right.\ .$$
\end{kor}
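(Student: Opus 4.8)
The plan is to run the real version of the exact sequence from the end of Proposition~\ref{prop1}, namely
$$K(B)\stackrel{\ch_{dR}}{\to} \Omega_{\R}(LB)/\im(d)\stackrel{a}{\to} \hat K_\R(B)\stackrel{I}{\to} K(B)\to 0\ ,$$
specialized to $B=[*/G]$ with $G$ finite, exactly as Lemma~\ref{calpunk} did for the full (complex) theory. First I would record the inputs: $K^0([*/G])\cong R(G)$, $K^1([*/G])\cong 0$, and $\Omega(L[*/G])\cong \C[G]^G$ is concentrated in degree zero (the orbifold $[G/G]$ is zero-dimensional), so $\im(d)=0$ and $\Omega^{odd}(L[*/G])=0$. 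Hence in the degree-$0$ part of the sequence, $a\colon \Omega^{odd}_\R/\im(d)\to \hat K_\R^0$ has trivial source, and $I\colon \hat K_\R^0([*/G])\to K^0([*/G])=R(G)$ is an isomorphism (injectivity because the previous term vanishes; surjectivity from exactness at $K(B)$, or directly since every $G$-representation gives a real cycle $(\cV,0)$ with $\hat Q$-invariant class). This gives $\hat K_\R^0([*/G])\cong R(G)$.

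For the odd part, the relevant segment is
$$K^1([*/G])=0 \to \Omega^{ev}_\R(L[*/G])/\im(d)\stackrel{a}{\to}\hat K^1_\R([*/G])\stackrel{I}{\to}K^1([*/G])=0\ ,$$
so $a$ induces an isomorphism $\Omega^{ev}_\R(L[*/G])/\im(d)\xrightarrow{\ \sim\ }\hat K^1_\R([*/G])$ provided we also quotient by the image of $\ch_{dR}\colon K^0([*/G])\to \Omega^{ev}_\R(L[*/G])/\im(d)$ — but wait, the exactness is at $\Omega(LB)/\im(d)$ with $K^0$ mapping in by $\ch_{dR}$ only via the $even\to ev$ part; since $K^1=0$ bounds $a$ on the left, exactness at $\hat K^1_\R$ and at $\Omega^{ev}_\R$ shows $\hat K^1_\R([*/G])\cong \Omega^{ev}_\R(L[*/G])/\bigl(\im(d)+\ch_{dR}(K^0([*/G]))\bigr)$. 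Now $\im(d)=0$, and by the remark preceding the Corollary, $\ch_{dR}=\Tr$ restricts to an isomorphism $R(G)_\R\xrightarrow{\sim}\Omega_\R(L[*/G])\subset\C[G]^G$, whose image is precisely the lattice-span $R(G)$ sitting inside the real vector space $R(G)_\R$. Therefore
$$\hat K^1_\R([*/G])\cong \Omega_\R(L[*/G])/\ch_{dR}(R(G))\cong R(G)_\R/R(G)\cong R(G)\otimes_\Z(\R/\Z)\ ,$$
which is the claimed answer.

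The only point needing care — and the main (minor) obstacle — is keeping track of which summand of $\Omega^*_\R(L[*/G])/\im(d)$ is hit by $a$ and which by $\ch_{dR}$ across the two parities, i.e.\ that the $\Z/2\Z$-graded exact sequence of Proposition~\ref{prop1} really does decouple into the degree-$0$ statement $\hat K^0_\R\cong R(G)$ and the degree-$1$ statement above, using that all forms on $L[*/G]=[G/G]$ are in even (indeed zero) degree and that $K^1([*/G])=0$. Once that bookkeeping is in place, the identification of $\ch_{dR}(R(G))$ with the image lattice $R(G)\subset R(G)_\R$ is exactly the statement recalled just before the Corollary, and the conclusion is immediate; so the proof really is just ``apply the real exact sequence, as in Lemma~\ref{calpunk}.''

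\proof
We use the real exact sequence
$$K(B)\stackrel{\ch_{dR}}{\to} \Omega_{\R}(LB)/\im(d)\stackrel{a}{\to} \hat K_\R(B)\stackrel{I}{\to} K(B)\to 0$$
established above, with $B=[*/G]$. Since $L[*/G]=[G/G]$ is zero-dimensional, $\Omega(L[*/G])$ is concentrated in degree zero, so $\im(d)=0$ and $\Omega^{odd}(L[*/G])=0$. Together with $K^1([*/G])\cong 0$ and $K^0([*/G])\cong R(G)$ the sequence splits by parity. In degree $0$ the left term $\Omega^{odd}_\R(L[*/G])/\im(d)$ vanishes and $K^1([*/G])=0$, so $I\colon \hat K^0_\R([*/G])\to K^0([*/G])=R(G)$ is an isomorphism. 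In degree $1$ the term $K^1([*/G])$ on the right vanishes and on the left $K^1([*/G])=0$, so exactness gives
$$\hat K^1_\R([*/G])\cong \Omega_\R(L[*/G])\big/\ch_{dR}\bigl(K^0([*/G])\bigr)\ .$$
By the remark preceding the statement, $\ch_{dR}=\Tr$ restricts to an isomorphism $R(G)_\R\xrightarrow{\sim}\Omega_\R(L[*/G])$ carrying the lattice $R(G)$ onto $\ch_{dR}(R(G))$. Hence
$$\hat K^1_\R([*/G])\cong R(G)_\R/R(G)\cong R(G)\otimes_\Z(\R/\Z)\ ,$$
which is the asserted isomorphism.
\hB
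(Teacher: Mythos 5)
Your proof is correct and follows exactly the route the paper intends: the Corollary is left without an explicit proof there precisely because it is the real analogue of Lemma \ref{calpunk}, obtained by running the real exact sequence $K\to\Omega_\R(L[*/G])/\im(d)\to\hat K_\R\to K\to 0$ and invoking the remark that $\Tr$ restricts to an isomorphism $R(G)_\R\stackrel{\sim}{\to}\Omega_\R(L[*/G])$. Your bookkeeping of the parity splitting (using that $L[*/G]=[G/G]$ is zero-dimensional and $K^1([*/G])=0$) is exactly the intended argument.
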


\textcolor{black}{ \subsection{Calculation for $[M/G]$ if $G$ acts trivially}
 \label{sec:trivial_action}}

 \subsubsection{}
\textcolor{black}{ Let $G$ be a finite group and $M$ a compact manifold. We consider the
 orbifold $[M/G]$ where $G$ acts trivially on $M$. Then $L[M/G]=[M\times
 G/G]$ where $G$ acts by conjugation on itself (and trivially on $M$). Therefore
 \begin{equation*}
   \Omega(L[M/G])=\Omega(M\times G)^G = \Omega(M)\otimes \C[G]^G;\qquad
   H_{dR,deloc}(L[M/G])\cong H_{dR}(M)\otimes \C[G]^G.
 \end{equation*}
}

Observe that $[M/G]=M\times [*/G]$. From the cup product of
  Section \ref{jkjdkdqwdqdwd} we therefore get a product
  \begin{equation*}
  R(G)\otimes \hat K(M)=    \hat K^0([*/G])\otimes \hat K(M)\to \hat K([M/G]),
\end{equation*}
compatible along $R$, $I$, and $\ch_{dR}$ with the corresponding maps on forms
and on ordinary $K$-theory
\begin{equation}\label{eq:tensprod}
  R(G)\otimes K(M)\to K([M/G])\ ;\quad \C[G]^G\otimes
  \Omega(M) {\cong} \Omega([*/G])\otimes \Omega(M)\to \Omega([M/G]).
\end{equation}
Because the maps in \eqref{eq:tensprod} are isomorphisms and {$R(G)$ is a free $\Z$-module the  
exact sequence \eqref{eq:LES} shows by the $5$-lemma \textcolor{black}{that $R(G)\otimes \hat K(M)\to \hat
K([M/G])$ is} an isomorphism, as well.}

\section{Push-forward  and $\cup$-product}\label{idwiqdqwwd}

\subsection{Equivariant $K$-orientation}

\subsubsection{}\label{fuwefiu} 

The notion of a $Spin^c(n)$-reduction of an $SO(n)$-principal bundle
extends directly from the smooth case to the orbifold case using the appropriate
notions of principal bundles in the realm of stacks which we explain in the following. 
Let $G$ be a Lie group. Then we can form the quotient stack
$BG:=[*/G]$. By definition (see \ref{stack1}), the evaluation $BG(T)$ on a test manifold $T$ is the groupoid of $G$-principal bundles on $T$.  A $G$-principal bundle on a  stack $X$ is then, by definition,
a morphism $p\colon X\to BG$. Its underlying fibre bundle is determined by the right pull-back square of $$\xymatrix{T\times_{X}P\ar@{.>}[r]\ar[d]&P\ar[d]\ar[r]&{*}\ar[d]\\T\ar@{.>}[r]\ar@{:>}[ur]&X\ar[r]^{p}\ar@{:>}[ur]&BG}\ .$$
There is an equivalent definition of a $G$-principal bundle on $X$ as a collection
of $G$-principal bundles 
 $(T\times_{X}P\to T)_{g\colon T\to X}$  for all maps $g\colon T\to X$ from smooth manifolds 
 which is compatible with further pull-backs along maps of manifolds over $X$. 
 These bundles are obtained by further pull-backs as indicated by the left square of the diagram above.

If $H\to G$ is a homomorphism of Lie groups, then we get a map
of quotient stacks $BH\to BG$. For a test manifold $T\in \Mf$ it is the functor
$BH(T)\to BG(T)$ which maps
 the $H$-bundle $(P\to T)\in BH(T)$ to the $G$-bundle $(P\times_{H}G\to T)\in BG(T)$.
By definition, an $H$-reduction of the $G$-principal  bundle
$p\colon X\to BG$ is a pair $(q,\psi)$
$$\xymatrix{X\ar@{=}[d]\ar[r]^{q}&BH\ar[d]\\X\ar@{:>}[ur]^{\psi}\ar[r]^{p}&BG}$$
of a morphism of stacks $q$ and a two-morphism $\psi$ filling the above square.
Let us spell out this definition in terms of  compatible collections of principal bundles for
maps $T\to X$ from smooth manifolds. The $H$-reduction of $p$ is then given by a compatible collection of $H$-principal bundles $(Q\to T)_{T\to X}$ (this is the datum of $q$) together with a collection of isomorphisms of $G$-principal bundles $(Q\times_{H}G\to T)\stackrel{\sim}{\to} (P\to T)$
compatible with pull-backs (this is the datum of $\psi$).

For later use let us discuss connections at this point. 
We can form the stack
$BG_{\nabla}$ of $G$-principal bundles with connections whose evaluation $BG_{\nabla}(T)$ is the groupoid of pairs $(P\to T,\nabla^{P})$ of $G$-principal bundles $P\to T$ with a connection $\nabla^{P}$, and whose morphisms are connection-preserving isomorphisms of principal bundles.
There is a natural morphism of stacks $BG_{\nabla}\to BG$ which forgets the connection.
For a homomorphism of Lie groups $\phi\colon H\to G$ we get a commutative diagram
$$\xymatrix{BH_{\nabla}\ar[r]^{B_{\nabla}\phi}\ar[d]&BG_{\nabla}\ar[d]\\ BH\ar@{:>}[ur]^{\id}\ar[r]^{B\phi}&BG}$$
where $B_{\nabla}\phi\colon B_{\nabla}H(T)\to B_{\nabla}G(T)$ is the functor which maps the pair
$(Q\to T,\nabla^{Q})$ to the pair $(Q\times_{H}G,\nabla^{Q\times_{H}G})$, where
$ \nabla^{Q\times_{H}G}$ is the connection induced from $\nabla^{Q}$.

If $X$ is a stack, then by definition a connection on the $G$-principal bundle $p\colon X\to BG$ is a lift 
$$\xymatrix{X\ar@{=}[d]\ar[r]^{p_{\nabla}}&B_{\nabla}G\ar[d]\\X\ar@{:>}[ur]\ar[r]^{p}&BG}$$
of $p$.
We will often use the notation $\nabla^{\cdots}$ for a connection where the decoration $\cdots$ 
should indicate its origin.

Given a diagram
$$\xymatrix{X\ar@/^1cm/[rr]^{p_{\nabla}} \ar@{=}[d]\ar[r]^{q_{\nabla}}&BH_{\nabla}\ar[r]^{B_{\nabla}\phi}\ar[d]&BG_{\nabla}\ar[d]\\X\ar@{:>}[ur]\ar[r]^{q}& BH\ar[r]^{B\phi}\ar@{:>}[ur]&BG}$$
we say that the $H$-connection $q_{\nabla}$ reduces to the $G$-connection $p_{\nabla}$, or that
$q_{\nabla}$ extends $p_{\nabla}$.

\subsubsection{}
 
Let $p\colon W\to B$ be \textcolor{black}{representable morphism of stacks which is a} locally trivial fibre bundle   with $n$-dimensional fibres. Its vertical bundle $T^vp$  is an  $n$-dimensional real vector bundle. 
Its frame bundle $\Fr(T^{v}\pi)$ is a $GL(n,\R)$-principal bundle.
Let $GL(n,R)_{1}\subset GL(n,\R)$ be the connected component of the identity.
An orientation of $T^{v}\pi$ is, by definition, a $GL(n,R)_{1}$-reduction of $\Fr(T^{v}\pi)$.
A choice of a vertical metric $g^{T^vp}$ is equivalent to a further
$SO(n)$-reduction of the frame bundle which we denote by
$SO(T^{v}\pi)\to W$.

A map between smooth manifolds is called $K$-oriented if its stable normal
bundle is equipped with a  $K$-theory Thom class. It is a well-known fact
\cite{MR0167985}  that the choice of a $Spin^c$-structure 
on the stable normal bundle determines a $K$-orientation, and the $K$-orientability is equivalent to the existence of a $Spin^{c}$-structure. Note that isomorphism classes of choices of $Spin^c$-structures on $T^vp$ and the stable normal bundle of $p$ are in bijective correspondence.

In the equivariant or orbifold situation this is more complicated. For the
purpose of the present paper we will work with vertical structures along the
morphisms $p\colon W\to B$. 

Let $p\colon W\to B$ be a  representable morphism of stacks which is a locally trivial fibre bundle    with $n$-dimensional fibres.
\begin{ddd}\label{topkor}
A topological $K$-orientation of $p$ is a $Spin^c(n)$-reduction of the $SO(n)$-principal bundle  $SO(T^vp)\to W$.
\end{ddd} 
In general, the stack $W$ may decompose as a sum of substacks
$W=\bigsqcup_{\alpha} W_{\alpha}$ such that the restriction  $p_{\alpha}\colon W_{\alpha}\to B$ of $p$ is a bundle with fibre dimension $n_{\alpha}$. A topological $K$-orientation of $p$ in this case is a collection of topological $K$-orientations for the components $p_{\alpha}\colon W_{\alpha}\to B$. The same idea will be
applied without mentioning for other constructions below.

\subsubsection{}\label{ldefr}

If $f\colon E\to A$ is a locally trivial fibre bundle in manifolds, then 
the choice of a vertical metric $g^{T^{v}f}$ and a horizontal distribution
$T^{h}f$ naturally induce a connection $\nabla^{T^{v}f}$ on the vertical bundle which restricts to the Levi-Civita connection on the fibres, see \cite[Chapter 9]{bgv}. 
The construction of $\nabla^{T^{v}f}$ is compatible with pull-back along maps
$A^{\prime}\to A$. Hence it extends to the case of locally trivial fibre bundles in stacks.
The natural construction of the connection on $T^{v}f$  can be formulated as a construction of the upper two squares  of the diagram 
 $$\xymatrix{&&B_{\nabla}O(n,\R)\ar[r]&BO(n,\R)\ar[r]&BGL(n,\R)\\W\ar@{.>}[urr]^{\nabla^{T^{v}p}}\ar@{.>}[rr] \ar[d]^{p}&&\univE(n,g^{T^{v}},T^{h})\ar[r]\ar[d]\ar[u]\ar@{:>}[ur]&\univE(n,g^{T^{v}})\ar[r]\ar[d]\ar@{:>}[ur]\ar[u]&\univE(n)\ar[d]\ar[u]\\B\ar@{.>}[rr]^{g^{T^{v}p},T^{h}p}\ar@{:>}[urr]\ar@{.>}@/_1.5cm/[rrrr]^{p}&&\BDiff(n,g^{T^{v}},T^{h})\ar@{:>}[ur]\ar[r]&\BDiff(n,g^{T^{v}})\ar@{:>}[ur]\ar[r]&\BDiff(n)\\&&&\ar@{:>}[u]&}\ ,$$
 where $\BDiff(n)$ denotes the stack of locally trivial fibre bundles with
 $n$-dimensional fibres, and the remaining notation is
  self-explaining.

Let $p\colon W\to B$ \textcolor{black}{be a} representable morphism of stacks
which is a  locally trivial fibre bundle with $n$-dimensional fibres. 
If we choose a vertical metric $g^{T^{v}p}$ and a horizontal distribution $T^hp$, then by the above construction we get a connection
$\nabla^{T^vp}$ which restricts to the Levi-Civita connection along the
fibres. This is indicated in the left part of the diagram above. 

If $p$ is oriented, then by restriction the connection $\nabla^{T^vp}$ can be
considered as an $SO(n)$-principal bundle connection on the frame bundle
$SO(T^vp)$.
Given a topological $K$-orientation of $p$, i.e. a 
$Spin^{c}(n)$-reduction of $SO(T^{v}p)$,   
 we can choose a $Spin^c$-reduction $\tilde \nabla$ of $\nabla^{T^vp}$ (see \ref{fuwefiu}).
Observe that, in contrast to the $Spin$-case, $\tilde \nabla$ is not unique.

\subsubsection{}\label{origeo}

The $Spin^c$-reduction of $\Fr(T^vp)$ determines a spinor bundle $S^c(T^vp)$,
and the choice of $\tilde \nabla$ turns $S^c(T^vp)$ into a family of Dirac
bundles. 
In this way the choices of the $Spin^c$-structure and the geometric
structures $(g^{T^vp},T^hp,\tilde \nabla)$ turn
$p\colon W\to B$ into a geometric family $\cW$.

We define the closed form
\begin{equation}
 \hA_\rho^c(\tilde \nabla):=I(\cW)\in \Omega_{{\R}}(LW)\ ,            \label{eq:Ahatc}
 \end{equation}
see Subsection \ref{uzu1}  for a description of
 the form $I(\cW)$.
Its cohomology class will be denoted by
$\hA^{{c}}_\rho(LW)\in H_{dR}(LW)$.

\subsubsection{}

The dependence of the form $\hA_{\rho}^c(\tilde \nabla)$ on the data is described in terms of the transgression form.
Let $(g_i^{T^vp},T_i^hp,\tilde \nabla_i)$, $i=0,1$, be two choices of geometric data. Then we can choose
geometric data $(\hat  g^{T^vp},\hat T^hp, \hat {\tilde \nabla})$ on $\hat p=\id_{[0,1]}\times p\colon [0,1]\times W\to [0,1]\times B$
(with the induced $Spin^c$-structure on $T^v\hat p$) which restricts to
$(g_i^{T^vp},T_i^hp,\tilde \nabla_i)$ on $\{i\}\times B$ for $i=0,1$.
The class
$$\tilde \hA_\rho^c(\tilde \nabla_1,\tilde \nabla_0):=\int_{[0,1]\times LW/LW}\hA_\rho^c(\hat{\tilde \nabla})\in \Omega_{{\R}}(LW)/\im(d)$$ 
is independent of the extension and satisfies
\begin{equation}\label{eqq7}
d\tilde \hA_\rho^c(\tilde \nabla_1,\tilde\nabla_0)=\hA_\rho^c(\tilde \nabla_1)-\hA_\rho^c(\tilde \nabla_0)\ .
\end{equation}
\begin{ddd}\label{uzu4}
The form $\tilde \hA_\rho^c(\tilde \nabla_1,\tilde \nabla_0)$ is called the transgression form.
\end{ddd}
Note that we have the identity
\begin{equation}\label{eq7}
\tilde \hA_\rho^c(\tilde \nabla_2,\tilde \nabla_1)+\tilde \hA_\rho^c(\tilde \nabla_1,\tilde\nabla_0)=\tilde \hA_\rho^c(\tilde \nabla_2,\tilde\nabla_0)\ .
\end{equation}
As a consequence we get  
\begin{equation}\label{eq8}
\tilde \hA_\rho^c(\tilde \nabla,\tilde \nabla)=0\ ,\quad \tilde \hA_\rho^c(\tilde \nabla_1,\tilde \nabla_0)=-\hA_\rho^c(\tilde \nabla_0,\tilde \nabla_1)\ .
\end{equation}

\subsubsection{}\label{pap201}

We can now introduce the notion of a differential $K$-orientation of a
representable map $p\colon W\to B$ between orbifolds which is a locally
trivial fibre bundle.
We fix an 
 underlying topological $K$-orientation of $p$ (see Definition \ref{topkor}) which is given by a $Spin^c$-reduction of $SO(T^vp)$ after choosing an orientation and a metric on $T^vp$.  

We consider the set $\cO$ of tuples
$(g^{T^vp},T^hp,\tilde \nabla,\sigma)$ where the first three entries have the
same meaning as above (see \ref{ldefr}), and
$\sigma\in \Omega^{odd}(LW)/\im(d)$. We introduce a relation $o_0\sim o_1$ on $\cO$:
Two tuples
$(g_i^{T^vp},T_i^hp,\tilde \nabla_i,\sigma_i)$, $i=0,1$ are related if and only if
$\sigma_1-\sigma_0=\tilde \hA^c_\rho(\tilde \nabla_1,\tilde \nabla_0)$.
We claim that
$\sim$ is an equivalence relation. In fact, symmetry and reflexivity follow from (\ref{eq8}), while transitivity
is a consequence of (\ref{eq7}).

\begin{ddd}\label{smmmmzuz}
The set of differential $K$-orientations which refines a fixed underlying topological $K$-orientation
of $p\colon W\to B$ is the set of equivalence classes $ \cO/\sim$.
\end{ddd}

\textcolor{black}{Now} $\Omega^{odd}(LW)/\im(d)$ acts on the set of differential 
$K$-orientations. 
If $\alpha\in \Omega^{odd}(LW)/\im(d)$ and $(g^{T^vp},T^hp,\tilde \nabla,\sigma)$ represents 
a differential $K$-orientation, then the translate of this orientation by $\alpha$ is represented by
$(g^{T^vp},T^hp,\tilde \nabla,\sigma+\alpha)$.
As a consequence of (\ref{eq7}) we get:
\begin{kor}
The set of differential $K$-orientations refining a fixed underlying topological $K$-orientation is a torsor over
$\Omega^{odd}(LW)/\im(d)$, \textcolor{black}{i.e.\ the action is free and
  transitive}.
\end{kor}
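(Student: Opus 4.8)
The plan is to verify the two defining properties of a torsor: first that the action of $\Omega^{odd}(LW)/\im(d)$ on $\cO/\sim$ is well-defined, and second that it is free and transitive. Fixing a representative $(g^{T^vp},T^hp,\tilde\nabla,\sigma)$ of a differential $K$-orientation and $\alpha\in\Omega^{odd}(LW)/\im(d)$, I would first check well-definedness of the translate: if $(g_0,T_0^hp,\tilde\nabla_0,\sigma_0)\sim(g_1,T_1^hp,\tilde\nabla_1,\sigma_1)$, i.e. $\sigma_1-\sigma_0=\tilde\hA^c_\rho(\tilde\nabla_1,\tilde\nabla_0)$, then $(\sigma_1+\alpha)-(\sigma_0+\alpha)=\sigma_1-\sigma_0=\tilde\hA^c_\rho(\tilde\nabla_1,\tilde\nabla_0)$, so the translates are again equivalent. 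Additivity of the action, $(\alpha+\beta)\cdot o=\alpha\cdot(\beta\cdot o)$ and $0\cdot o=o$, is immediate from the definition of the translate (just addition in the last slot), and it is a genuine group action.

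Next I would establish transitivity. Given two differential $K$-orientations refining the same topological $K$-orientation, represented by $(g_0^{T^vp},T_0^hp,\tilde\nabla_0,\sigma_0)$ and $(g_1^{T^vp},T_1^hp,\tilde\nabla_1,\sigma_1)$, set $\alpha:=\sigma_1-\sigma_0-\tilde\hA^c_\rho(\tilde\nabla_1,\tilde\nabla_0)\in\Omega^{odd}(LW)/\im(d)$. Then the translate of the first orientation by $\alpha$ is represented by $(g_0^{T^vp},T_0^hp,\tilde\nabla_0,\sigma_0+\alpha)$, and by construction $(\sigma_0+\alpha)-\sigma_1 = -\tilde\hA^c_\rho(\tilde\nabla_1,\tilde\nabla_0)=\tilde\hA^c_\rho(\tilde\nabla_0,\tilde\nabla_1)$ by the second identity in \eqref{eq8}; hence $(g_0^{T^vp},T_0^hp,\tilde\nabla_0,\sigma_0+\alpha)\sim(g_1^{T^vp},T_1^hp,\tilde\nabla_1,\sigma_1)$, so the two orientations agree after translation by $\alpha$. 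Here I am implicitly using that the relation $\sim$ really is an equivalence relation (already checked in \ref{pap201}) so that the right-hand orientation has a canonical meaning independent of the chosen representative; and I am using \eqref{eq7} in the form that the $\tilde\hA$'s for different pairs of connections compose, which lets the argument run regardless of which $(g,T^hp)$-data accompany $\tilde\nabla_1$.

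Finally I would check freeness: if $\alpha\cdot o=o$ for $o=[(g^{T^vp},T^hp,\tilde\nabla,\sigma)]$, then $(g^{T^vp},T^hp,\tilde\nabla,\sigma+\alpha)\sim(g^{T^vp},T^hp,\tilde\nabla,\sigma)$, which by definition of $\sim$ means $(\sigma+\alpha)-\sigma=\tilde\hA^c_\rho(\tilde\nabla,\tilde\nabla)$, and the latter vanishes by the first identity in \eqref{eq8}; hence $\alpha=0$ in $\Omega^{odd}(LW)/\im(d)$. Combining transitivity and freeness gives that $\cO/\sim$ is a torsor over $\Omega^{odd}(LW)/\im(d)$, provided it is nonempty — and nonemptiness holds because one can always choose a vertical metric, a horizontal distribution, a $Spin^c$-reduction $\tilde\nabla$ of the resulting connection (as in \ref{ldefr}), and $\sigma=0$.

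The content here is genuinely light: once the identities \eqref{eq7} and \eqref{eq8} for the transgression forms are in hand, everything reduces to formal manipulation with the equivalence relation. The only point requiring a little care — and hence the main (mild) obstacle — is the transitivity step, where one must compare two orientations whose underlying $\tilde\nabla_0$ and $\tilde\nabla_1$ come with different $(g^{T^vp},T^hp)$-data; one needs that $\tilde\hA^c_\rho(\tilde\nabla_1,\tilde\nabla_0)$ depends only on the two $Spin^c$-connections (via a choice of interpolating data on $[0,1]\times W$, as in Definition \ref{uzu4}) and that the cocycle identity \eqref{eq7} is unaffected by these auxiliary choices, so that the translate $\alpha$ defined above indeed does the job.
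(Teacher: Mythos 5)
Your proposal is correct and follows essentially the same route as the paper, which simply records the action (translation in the $\sigma$-slot) and invokes the identities \eqref{eq7} and \eqref{eq8} for the transgression forms; you have merely spelled out the well-definedness, freeness, transitivity, and nonemptiness checks that the paper leaves implicit.
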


If $o=(g^{T^vp},T^hp,\tilde \nabla,\sigma)\in \cO$ represents a differential
$K$-orientation then we will write
\begin{equation}\label{may20111}\hA^c(o):=\hA_\rho^c(\tilde \nabla)\ ,\quad \sigma(o):=\sigma\ .\end{equation}

\subsection{Definition of the push-forward}

\subsubsection{}\label{pap8}

We consider a representable morphism  $p\colon W\to B$ of
orbifolds which is a proper submersion,  or
equivalently, a locally trivial fibre bundle with compact smooth fibres. We
fix a topological $K$-orientation for $p$.
Let $o=(g^{T^vp},T^hp,\tilde \nabla,\sigma)$ represent a differential $K$-orientation which refines the given  topological one. 
To every geometric family $\cE$ over $W$ we now associate a geometric family $p_!\cE$ over $B$ as follows.
 
Let
$\pi\colon E\to W$ denote the underlying fibre bundle of $\cE$ which comes with the  geometric data $g^{T^v\pi}$, $T^h\pi$ and the family of Dirac bundles $(V,h^V,\nabla^V)$. 
Then the underlying fibre bundle of $p_!\cE$ is given by the composition
$$q:=p\circ\pi\colon E\rightarrow B\ .$$

In the following,
when we talk about horizontal bundles or connections  we think of compatible collections of horizontal bundles or connections
for all pull-backs along maps  from smooth manifolds to the respective base as explained in \ref{stack22}. 
So in the technical sense the following natural constructions are applied to all these pull-backs simultaneously.

The horizontal bundle of $\pi$ admits a 
decomposition $T^h\pi\cong \pi^* T^vp\oplus \pi^*T^hp$,
where the isomorphism is induced by $d\pi$. 
We define $T^hq\subseteq T^h\pi$ such that $d\pi\colon T^hq\cong \pi^* T^hp$.
Furthermore we have an identification
$T^vq=T^v\pi\oplus \pi^* T^vp$.
Using this decomposition we define the vertical metric
$g^{T^vq}:=g^{T^v\pi}\oplus \pi^* g^{T^vp}$.
These structures give a {metric} connection $\nabla^{T^vq}$ which in general differs from the sum
$\nabla^{T^v\pi}\oplus \pi^*\nabla^{T^vp}\textcolor{black}{=:\nabla^\oplus}$.

The orientations of $T^v\pi$ and $T^vp$ induce an orientation of $T^vq$.

Finally we must construct the Dirac bundle $p_!\cV\rightarrow E$.
Locally on $E$ we can choose a   $Spin^c$-structure on $T^v\pi$ with
spinor bundle $S^c(T^v\pi)$ and with a  $Spin^c$-connection 
$\tilde \nabla_\pi$ which refines the connection $\nabla^{T^v\pi}$.
We define the twisting bundle
$$Z:=\Hom_{\Cliff(T^v\pi)}(\textcolor{black}{S^c}(T^v\pi),V)\ .$$
The connections $\tilde \nabla_\pi$ and $\nabla^V$ induce a connection $\nabla^Z$.

The local $Spin^c$-structure of $T^v\pi$ together with the $Spin^c$-structure
of $T^vp$ induce a $Spin^c$-structure on $T^vq\cong T^v\pi\oplus \pi^*T^vp$.
\textcolor{black}{We get an induced connection $\tilde \nabla^{\oplus}$ from
  $\tilde\nabla_\pi$ and $\tilde\nabla^{T^vp}$} which refines the direct sum
connection {$\nabla^{\oplus}$}.
Let $$\omega:= \textcolor{black}{\nabla^{T^vq}}- \nabla^{\oplus}\in
  \Gamma(E,\Lambda^1(T^vq)^*\otimes \End(T^vq)^{a})$$ 
be the difference of the two metric connections, a one form with coefficients in antisymmetric endomorphisms.
 We define  
$$\tilde \nabla_{q}:=\tilde \nabla^{\oplus}+\frac{1}{2}c(\omega)\ .$$ 
This is a $Spin^c$-connection on $T^vq$ which
refines $\nabla^{T^vq}$ and has the same central curvature as $\tilde \nabla^{\oplus}$.
Locally we can define the family of Dirac bundles
$p_!V:=S(T^vq)\otimes Z$. \textcolor{black}{One can show}
 that this bundle is well-defined independent of the choices of local $Spin^c$-structure and therefore a globally defined family of Dirac bundles.

 \begin{remark}
   Note that the notion of locality in the realm of orbifolds is more
   complicated than it might appear at first glace. To say that we choose
   a local $Spin^c$-structure means that we use an orbifold atlas $A\to B$ and
   choose a $Spin^c$-structure after pulling the family back to $A$. Thus in
   particular we do not (and can not) require that it is equivariant with
   respect to the local automorphism groupoid $A\times_BA\Rightarrow A$.
   Therefore our twisting bundle $Z$ is not equivariant, too.  On the other
   hand, the tensor product $S^c(T^vq)\otimes Z$ is completely canonical and
   thus is equivariant.
 \end{remark}

 \begin{ddd}\label{ddd7771}
Let
$p_!\cE$ denote the geometric family given by $q\colon E\to B$ and $p_!V\to E$ with the geometric structures
defined above.
\end{ddd}

\subsubsection{}\label{pap55}

Let $p\colon W\to B$ be a representable morphism  between orbifolds which is a
locally trivial fibre bundle with compact fibres and equipped with a
differential $K$-orientation represented by $o$.
In \ref{pap8} we have constructed for each geometric family $\cE$ over $W$ a push-forward $p_!\cE$.
Now we introduce a parameter $a\in (0,\infty)$ into this construction.
\begin{ddd}
For $a\in (0,\infty)$ we define  the geometric family $p_!^a\cE$ as in \ref{pap8} with the only difference that 
the metric on $T^vq=T^v\pi\oplus \pi^*T^vp$ is given by $g_a^{T^vq}= a^2g^{T^v\pi}\oplus  \pi^*g^{T^vp}$.
\end{ddd}
The family of geometric families $p^a_!\cE$ is called the adiabatic deformation of $p_!\cE$.
There is a natural way to define a geometric family $\cF$ on $(0,\infty)\times B$ such that its restriction
to $\{a\}\times B$ is $p_!^a\cE$. In fact, we define $\cF:=(\id_{(0,\infty)}\times p)_!((0,\infty)\times \cE)$ with the exception that we take the appropriate vertical metric.

Although the vertical metrics of $\cF$ and $p^a_!\cE$ collapse as $a\to 0$ the induced connections and the curvature tensors
on the vertical bundle $T^vq$ converge and simplify in this limit. This fact is heavily used in local index theory, and we refer to \cite[Sec 10.2]{bgv} for details. In particular, the integral
$$\tilde \Omega(a,\cE):=\int_{(0,a)\times LB/LB}\Omega(\cF)$$
converges, and we have (see Definition  \ref{may20112} and (\ref{may20111}) for notation)
\begin{equation}\label{eq88}
\textcolor{black}{\Omega(p_!^a\cE)\xrightarrow{a\to
    0}}\int_{LW/LB}\hA^c(o)\wedge \Omega(\cE)\ ,\quad
\Omega(p_!^a\cE)-\int_{LW/LB}\hA^c(o)\wedge \Omega(\cE)=d\tilde \Omega(a,\cE)\
.
\end{equation}

\subsubsection{}

Let $p\colon W\to B$ be a representable morphism  between presentable compact
orbifolds  which is 
a locally trivial fibre bundle with compact fibres and equipped with a
differential $K$-orientation represented by  $o$. 
 We now start with the construction of the push-forward $p_!\colon \hat K(W)\to \hat K(B)$. For $a\in (0,\infty)$
and  a cycle $(\cE,\rho)$ we define 
\begin{equation}\label{eq300}
\hat p^a_!(\cE,\rho):=\left[p^a_!\cE\:\:,\:\:\int_{LW/LB}  \hA^c(o)\wedge \rho + \tilde
\Omega(a,\cE)+\int_{LW/LB}\sigma(o) \wedge R([\cE,\rho])\right]\in \hat K(B)\ . 
\end{equation} 
Since $\hA^c(o)$ and $R([\cE,\rho])$ are closed forms, the map
$$\Omega(LW)/\im(d)\ni \rho\mapsto  \int_{LW/LB}  \hA^c(o)\wedge \rho\in
\Omega(LB)/\im(d)$$
and the element
$$  \int_{LW/LB}\sigma(o) \wedge
R([\cE,\rho])\in \Omega(LB)/\im(d)$$ are well-defined. It immediately follows
from the definition that 
$p_!^a\colon G(W)\to \hat K(B)$ is a homomorphism of semigroups
(\textcolor{black}{$G(W)$ was
introduced in Definition \ref{may20113}}).

\subsubsection{}\label{pap101}

The homomorphism $p^a_!\colon G(W)\to \hat K(B)$ commutes with pull-back.
More precisely, let $f\colon B^\prime\to B$ be a morphism of  presentable
compact orbifolds.
Then we define the submersion $p^\prime\colon W^\prime\to B^\prime$ by the two-cartesian
diagram 
$$\xymatrix{W^\prime\ar[d]^{p^\prime}\ar[r]^F&W\ar[d]^p\\B^\prime\ar@{:>}[ur]\ar[r]^f&B}\ .$$
The  differential of the morphism $F\colon W^{\prime}\to W$
induces an isomorphism
$dF\colon T^vW^\prime\stackrel{\sim}{\to} F^*T^vW$.
Therefore the metric, the orientation, and the $Spin^c$-structure of $T^v\pi$ induce by pull-back corresponding structures on $T^vp^\prime$. 
We have furthermore an induced horizontal distribution $T^hp^\prime$. 
Finally we set $\sigma^\prime:=LF^*\sigma \in \Omega^{*}(LW^{\prime})/\im(d)$.
The representative of a differential $K$-orientation given by these structues will be denoted by 
$o^\prime:=f^*o$. An inspection of the definitions shows:
\begin{lem}\label{djdgejwdewd}
The pull-back of representatives of differential $K$-orientations preserves  equivalence and hence
 induces a pull-back of differential $K$-orientations. 
\end{lem}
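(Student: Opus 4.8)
The plan is to verify, unwinding Definition~\ref{smmmmzuz}, that if $(g_0^{T^vp},T_0^hp,\tilde\nabla_0,\sigma_0)$ and $(g_1^{T^vp},T_1^hp,\tilde\nabla_1,\sigma_1)$ are equivalent representatives over $W\to B$, then their pull-backs $(g_0^{T^vp'},T_0^hp',\tilde\nabla_0',\sigma_0')$ and $(g_1^{T^vp'},T_1^hp',\tilde\nabla_1',\sigma_1')$ over $W'\to B'$ are again equivalent, and that equivalent representatives pull back to equivalent ones. Since equivalence is defined by the single relation $\sigma_1-\sigma_0=\tilde\hA^c_\rho(\tilde\nabla_1,\tilde\nabla_0)$, and since $\sigma_i'=LF^*\sigma_i$ by the construction in \ref{pap101}, the entire claim reduces to the naturality identity
\begin{equation}\label{pullbacktransgression}
LF^*\,\tilde\hA^c_\rho(\tilde\nabla_1,\tilde\nabla_0)=\tilde\hA^c_\rho(\tilde\nabla_1',\tilde\nabla_0')\in\Omega^{odd}_\R(LW')/\im(d)\ .
\end{equation}
Indeed, once \eqref{pullbacktransgression} is established, applying $LF^*$ to $\sigma_1-\sigma_0=\tilde\hA^c_\rho(\tilde\nabla_1,\tilde\nabla_0)$ gives $\sigma_1'-\sigma_0'=\tilde\hA^c_\rho(\tilde\nabla_1',\tilde\nabla_0')$, which is precisely the relation defining $o_0'\sim o_1'$; this is all that is needed, and it simultaneously shows the map on equivalence classes is well-defined.

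First I would reduce \eqref{pullbacktransgression} to a statement about the $\hA^c_\rho$-form itself on the product with $[0,1]$. Recall that $\tilde\hA^c_\rho(\tilde\nabla_1,\tilde\nabla_0)=\int_{[0,1]\times LW/LW}\hA^c_\rho(\hat{\tilde\nabla})$ for an arbitrary interpolating datum $\hat{\tilde\nabla}$ on $\id_{[0,1]}\times p$, and the result is independent of this choice. So I would choose on $W'$ the interpolating datum obtained by pulling back $\hat{\tilde\nabla}$ along $\id_{[0,1]}\times F$; its restrictions to the endpoints are $\tilde\nabla_i'$ by the definition of pull-back of $K$-orientations in \ref{pap101}. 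The key point is then that the local index density $\hA^c_\rho(\cdot)=I(\cW)$ is built out of the curvature data of the vertical bundle and the $Spin^c$-connection via the universal formula of Lemma~\ref{uzu3} (equivalently \eqref{hwdwqdwqdqwdqd}), and all the ingredients — $T^vLp$, $N$, $R^{\nabla^{T^vLp}}$, $R^{\nabla^N}$, $\rho^N$, $c_1(\tilde\nabla)$ — pull back compatibly along $LF$ because $dF\colon T^vp'\xrightarrow{\sim}F^*T^vp$ is an isomorphism of geometric vector bundles respecting metric, orientation, connection (by the choice $dF(T^hp')\subseteq F^*T^hp$), and $Spin^c$-structure. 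Hence $\hA^c_\rho(\hat{\tilde\nabla}')=L(\id\times F)^*\hA^c_\rho(\hat{\tilde\nabla})$ as forms on $[0,1]\times LW'$.

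Finally I would conclude by commuting the pull-back with the fibre integral: since the square
$$\xymatrix{[0,1]\times LW'\ar[r]^{L(\id\times F)}\ar[d]&[0,1]\times LW\ar[d]\\ LW'\ar[r]^{LF}&LW}$$
is cartesian with vertical maps the projections, the projection formula / base-change for integration along the fibre $[0,1]$ gives $LF^*\int_{[0,1]\times LW/LW}\hA^c_\rho(\hat{\tilde\nabla})=\int_{[0,1]\times LW'/LW'}L(\id\times F)^*\hA^c_\rho(\hat{\tilde\nabla})=\tilde\hA^c_\rho(\tilde\nabla_1',\tilde\nabla_0')$, which is \eqref{pullbacktransgression}. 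The identities \eqref{eqq7}, \eqref{eq7}, \eqref{eq8} guarantee this is compatible with the equivalence relation on both sides. The main obstacle is bookkeeping rather than conceptual: one must be careful that the loops functor $L$ interacts correctly with the fibre product defining $W'$ (so that $L(W'\times_{[0,1]})\cong LW'\times[0,1]$ and $LW'=LW\times_{LB}LB'$ up to the relevant equivalences), and that the automorphisms $\rho_{T^vp_L}$, $\rho_{L_L}$ entering the local density are genuinely pulled back — this uses the functoriality of the canonical two-automorphism $\phi$ of \eqref{defphi} under morphisms of orbifolds, which is exactly what makes $LF^*$ well-defined on $\Omega(LW)$ in the first place. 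Once these stack-theoretic compatibilities are in place (they are standard for the inertia stack and were implicitly used already in defining $f^*$ on $\hat K$), the argument is a direct inspection.
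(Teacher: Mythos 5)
Your proof is correct and is exactly the "inspection of the definitions" that the paper invokes (the paper gives no further detail, beyond noting separately in \ref{pap101} that $LF^*\hA^c(o)=\hA^c(o^\prime)$): the whole content is the naturality $LF^*\tilde\hA^c_\rho(\tilde\nabla_1,\tilde\nabla_0)=\tilde\hA^c_\rho(\tilde\nabla_1^\prime,\tilde\nabla_0^\prime)$, obtained from the pulled-back interpolating datum, naturality of the local index density, and base change for the fibre integral over $[0,1]$. Your write-up fills in these details correctly and follows the same route.
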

Recall from \ref{origeo} that the representatives $o$ and $o^\prime$ of the differential $K$-orientations enhance $p$ and $p^\prime$ to geometric families $\cW$ and $\cW^\prime$. We have $f^*\cW\cong \cW^\prime$.

Note that we have
$LF^*\hA^c(o)=\hA^c(o^\prime)$.
If $\cE$ is a geometric family over $W$, then an inspection of the definitions shows that
$f^*p_!(\cE)\cong p^\prime_!(F^*\cE)$. The following lemma now follows immediately from the definitions
\begin{lem}\label{lem100}
We have $f^*\circ \hat p^a_!=\hat{
p^\prime}^a_!\circ F^*\colon G(W)\to \hat K(B^\prime)$.
\end{lem}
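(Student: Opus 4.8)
\textbf{Proof plan for Lemma \ref{lem100}.} The statement is that the adiabatic push-forward on cycles commutes with pull-back along a morphism of orbifolds $f\colon B'\to B$, i.e.\ $f^*\circ \hat p^a_! = \widehat{p'}^{\,a}_!\circ F^*$ as maps $G(W)\to \hat K(B')$. Since both sides are semigroup homomorphisms and a cycle is a pair $(\cE,\rho)$, it suffices to check the identity on an arbitrary cycle by comparing the two resulting cycles in $\hat K(B')$ term by term. The plan is to write out both sides using the defining formula \eqref{eq300} and to match the underlying geometric families and the differential-form corrections separately.

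\textbf{Step 1: the underlying geometric families agree.} First I would invoke the compatibility of ordinary push-forward of geometric families with pull-back, which was already recorded in \ref{pap101}: the construction of $T^hq$, the vertical metric $g^{T^vq}$ (including the adiabatic rescaling by $a$), the induced orientation, and the Dirac bundle $p_!V = S^c(T^vq)\otimes Z$ are all defined by data that pull back along $F$, using $dF\colon T^vW'\xrightarrow{\sim} F^*T^vW$ and the chosen $T^hp'$ with $dF(T^hp')\subseteq F^*T^hp$; hence $f^*(p^a_!\cE)\cong \widehat{p'}^{\,a}_!(F^*\cE)$ as geometric families over $B'$. This is essentially the content of the sentence preceding the lemma, so here I would just cite it, noting that the twisting bundle $Z$ and the local $Spin^c$-data behave naturally under $F$ (the relevant object $S^c(T^vq)\otimes Z$ being canonical, as remarked in \ref{pap8}).

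\textbf{Step 2: the form corrections agree.} The correction form attached to $\widehat{p'}^{\,a}_!(F^*\cE,LF^*\rho)$ is
$$\int_{LW'/LB'} \hA^c(o')\wedge LF^*\rho \;+\; \tilde\Omega(a,F^*\cE) \;+\; \int_{LW'/LB'}\sigma(o')\wedge R([F^*\cE,LF^*\rho]),$$
and I must show its image equals $LF^*$ applied to the correction form of $p^a_!(\cE,\rho)$. For the first term I use $LF^*\hA^c(o)=\hA^c(o')$ (recorded in \ref{pap101}) together with naturality of fibre integration under the cartesian square $LW'\to LW$, $LB'\to LB$ obtained by applying $L$ to the pull-back square; this is where one needs that the loops functor takes cartesian squares of orbifold submersions to cartesian squares, so that the projection formula / base-change for $\int_{LW/LB}$ applies. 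For the third term, the same base-change argument works once I know $R$ commutes with pull-back, i.e.\ $R([F^*\cE,LF^*\rho]) = LF^*R([\cE,\rho])$, which follows from $\Omega(F^*\cE)=LF^*\Omega(\cE)$ (naturality of the local index form, already used in the construction of $R$) and $\sigma(o')=LF^*\sigma(o)$ by definition of $o'=f^*o$. For the middle term $\tilde\Omega(a,\cE)=\int_{(0,a)\times LB/LB}\Omega(\cF)$, I use that the family $\cF$ on $(0,\infty)\times B$ pulls back along $\id\times f$ to the corresponding family for $F^*\cE$ (again by Step 1 applied fibrewise over the parameter interval) and once more naturality of $\Omega$ and of the fibre integral; hence $\tilde\Omega(a,F^*\cE)=LF^*\tilde\Omega(a,\cE)$. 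Assembling these three identities gives that the correction forms correspond under $LF^*$, and combined with Step 1 the two cycles are isomorphic, so their classes in $\hat K(B')$ coincide.

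\textbf{Main obstacle.} The routine part is the bookkeeping of $\hA^c$, $\sigma$, $R$, and $\tilde\Omega$; the only genuinely delicate point is the compatibility of fibre integration along the inertia orbifolds with base change — i.e.\ verifying that applying the loops functor $L$ to the cartesian square $W'\to W$, $B'\to B$ yields a cartesian square $LW'\to LW$, $LB'\to LB$ of orbifold submersions along which $\int_{LW/LB}$ satisfies the projection formula. This should follow from the fact that $L(-)$ is itself a fibre product construction (with $X\times X$) and that fibre products of stacks commute, so that $LW' \simeq LW\times_{LB} LB'$; once this is in hand, naturality of integration over fibres of a proper oriented orbifold submersion closes the argument. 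I would state this base-change property explicitly and then simply remark, as the paper does elsewhere, that "an inspection of the definitions" yields the remaining identities. \hB
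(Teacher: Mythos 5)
Your proposal is correct and follows essentially the same route as the paper: the paper records exactly the facts you use ($o'=f^*o$ with $\sigma'=LF^*\sigma$, $LF^*\hA^c(o)=\hA^c(o')$, and $f^*p_!\cE\cong p'_!(F^*\cE)$) and then declares the lemma to "follow immediately from the definitions," which is precisely the term-by-term comparison of the cycle formula that you carry out. Your explicit attention to base change for fibre integration over the inertia orbifolds and to the $\tilde\Omega(a,\cE)$ term fills in details the paper leaves implicit, but introduces no new idea.
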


\subsubsection{}

\begin{lem}
The class $\hat p_!^a(\cE,\rho)\in \hat K(B)$ does not depend on $a\in (0,\infty)$.
\end{lem}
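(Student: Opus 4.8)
The strategy is to show that the class $\hat p_!^a(\cE,\rho)$, viewed as a function of $a$, is locally constant on $(0,\infty)$ by exhibiting, for any $a_0 < a_1$, an explicit path of cycles over $B$ connecting $\hat p_!^{a_0}(\cE,\rho)$ to $\hat p_!^{a_1}(\cE,\rho)$, and computing the relevant $\eta$-form. The natural candidate for this path is precisely the geometric family $\cF := (\id_{(0,\infty)}\times p)_!((0,\infty)\times\cE)$ introduced in \ref{pap55}, restricted to the interval $[a_0,a_1]\times B$; its restriction to $\{a\}\times B$ is $p_!^a\cE$ by construction. Concretely, I would consider the geometric family $\cF_{[a_0,a_1]}$ over $[a_0,a_1]\times B$ obtained by restriction, and compare the two cycles $p_!^{a_0}\cE$ and $p_!^{a_1}\cE$ over $B$ via a bordism/variation argument, exactly as in the corresponding Lemma in \cite{bunke-2007}.

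The key computation is to identify the $\eta$-form of the relevant tamed family on the ``cylinder'' with the difference of form-parts appearing in \eqref{eq300}. The form-part of $\hat p_!^a(\cE,\rho)$ is
$$\int_{LW/LB}\hA^c(o)\wedge\rho + \tilde\Omega(a,\cE) + \int_{LW/LB}\sigma(o)\wedge R([\cE,\rho]).$$
The first and third terms are manifestly independent of $a$ (they involve only the closed forms $\hA^c(o)$, $\sigma(o)$, $\rho$, and $R([\cE,\rho])$, none of which depend on the adiabatic parameter), so the entire $a$-dependence is concentrated in $\tilde\Omega(a,\cE) = \int_{(0,a)\times LB/LB}\Omega(\cF)$. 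Thus one must show
$$[p_!^{a_1}\cE,\ \tilde\Omega(a_1,\cE)]=[p_!^{a_0}\cE,\ \tilde\Omega(a_0,\cE)]\in\hat K(B),$$
i.e. that the two cycles $(p_!^{a_1}\cE,0)$ and $(p_!^{a_0}\cE,\ \tilde\Omega(a_1,\cE)-\tilde\Omega(a_0,\cE))$ differ by ``paired''. Here $\tilde\Omega(a_1,\cE)-\tilde\Omega(a_0,\cE)=\int_{(a_0,a_1)\times LB/LB}\Omega(\cF)$, which by the local index formula \eqref{detad} and the fibrewise Stokes theorem is exactly the transgression governing the change of the local index form under the adiabatic deformation; this is the standard mechanism by which a ``deformation of geometric structures'' cycle is seen to be paired with the zero cycle, its $\eta$-form being the integrated transgression. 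Since $p_!^{a_1}\cE$ and $p_!^{a_0}\cE$ have the same underlying submersion, Dirac bundle, orientation and $Spin^c$-data — differing only by a smooth path of vertical metrics — the family $p_!^{a_0}\cE\sqcup_B (p_!^{a_1}\cE)^{op}$ admits a taming (or a geometric family realizing the deformation does), and its $\eta$-form computes the required transgression term.

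The main obstacle is the bookkeeping of connections in the orbifold setting: as emphasized in \ref{pap8}, the connection $\nabla^{T^vq}$ produced by the fibrewise-product construction is not simply $\nabla^{T^v\pi}\oplus\pi^*\nabla^{T^vp}$, and under the adiabatic rescaling $g_a^{T^vq}=a^2 g^{T^v\pi}\oplus\pi^*g^{T^vp}$ one must carefully track how $\nabla^{T^vq}$, the induced $Spin^c$-connection $\tilde\nabla_q$, and hence $\Omega(\cF)$ behave — and verify that the delocalized local index theory of \cite{math.DG/0201112} (with the automorphisms $\rho_{\cE}$, $\rho_{\cF}$ inserted as in Theorem \ref{diqwdwqdddwqd}) applies uniformly on $[a_0,a_1]$, so that $\tilde\Omega(a,\cE)$ is well-defined and smooth in $a$. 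Once this is in place the argument is formally identical to the non-singular case, and I would simply invoke that ``we can copy the argument of the corresponding Lemma in \cite{bunke-2007}'', the only new ingredient being the substitution of supertraces by $\rho_{\cE}$-twisted supertraces and the localization of heat kernels at fibrewise fixed points, exactly as listed in the proof of Theorem \ref{diqwdwqdddwqd}.
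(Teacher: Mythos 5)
Your proposal is correct and follows essentially the same route as the paper, which simply cites the corresponding Lemma in \cite{bunke-2007}: the $a$-dependence is isolated in $\tilde\Omega(a,\cE)$, and the two push-forwards are connected by the cylinder family $\cF$ over $[a_0,a_1]\times B$, whose transgression $\int_{(a_0,a_1)\times LB/LB}\Omega(\cF)$ accounts exactly for the change of the form part (in practice one invokes the bordism/deformation formula as in Proposition \ref{kop1} rather than directly taming $p_!^{a_0}\cE\sqcup_B(p_!^{a_1}\cE)^{op}$, which would require stabilization). The orbifold modifications you list ($\rho$-twisted supertraces, localization at fixed points) are precisely those the paper indicates in the proof of Theorem \ref{diqwdwqdddwqd}.
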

 \proof The proof can be copied literally from \cite[Lemma
 3.11]{bunke-2007}. \hB

In view of this Lemma we can omit the superscript $a$ and write
$p_!(\cE,\rho)$ for $p_!^a(\cE,\rho)$.

\subsubsection{}\label{pap66}

Let $\cE$ be a geometric family over $W$ which admits a taming $\cE_t$.
Recall that the taming is given by a family of smoothing operators $(Q_w)_{w\in W}$.
%
%

The family of operators along the fibres of $p_!\cE$ induced by $Q$ is not a taming of $p_!^a\cE_t$ since it is not given by a smooth integral kernel but rather by a family of fibrewise smoothing operators.  Nevertheless it can be used in the same way as a taming in order to define e.g. the $\eta$-forms which we will denote by $\eta(p_!^a\cE_t)$. 
To be precise, we add the term $\chi(ua^{-1})ua^{-1}Q$ to the rescaled
superconnection $A_u(p^a_!\cE)$, where $\chi$ vanishes near zero and
is equal to $1$ on $[1,\infty)$. This means that we switch on
 $Q$ at time $u\sim a$, and we rescale it in the same
way as the vertical part of the Dirac operator.
In this situation we will speak of a generalized taming.
We can control the behaviour of
$\eta(p^a_!\cE_t)$ in the adiabatic limit $a\to 0$.
\begin{theorem}\label{adia1}
$$\lim_{a\to 0}\ \eta(p^a_!\cE_t)=\int_{LW/LB} \hA^c(o)\wedge \eta(\cE_t)\
.$$
 \end{theorem}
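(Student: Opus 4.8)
The plan is to reduce the orbifold statement to the non-singular adiabatic limit theorem by working on an orbifold atlas, and then to transfer the known manifold result equivariantly. First I would choose a presentation $[M/G]\cong B$ with $G$ a compact Lie group acting with finite stabilizers, together with a compatible presentation of $W$, so that $p$ is covered by a $G$-equivariant proper submersion of manifolds. Pulling back everything (the geometric family $\cE$, its taming, the $Spin^c$-data and horizontal distributions defining the $K$-orientation) along the atlas, the $\eta$-forms on $LB$ and $LW$ are computed from the heat kernels of the rescaled Bismut superconnections $A_u(p^a_!\cE)_L$ and $A_\tau(\cE)_L$ on the inertia orbifolds. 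By the description in \ref{uzu1} these localize at the fibrewise fixed-point loci, so after pulling back to the atlas everything becomes a family of fibrewise fixed-point integrals weighted by the automorphisms $\rho_{\cE}$, $\rho_{\cW}$ — exactly the ingredients appearing in the local index density \eqref{hwdwqdwqdqwdqd} and in $\hA^c_\rho(\tilde\nabla)$.

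The key analytic input is the same as in the smooth case \cite[Thm 4.13 / the adiabatic limit computation]{math.DG/0201112}: as $a\to 0$ the vertical metric on the fibres of $\pi\colon E\to W$ collapses, the rescaled superconnection $A_u(p^a_!\cE)$ splits, in the limit, into the fibrewise Dirac operator of $\cE$ along $E\to W$ and the superconnection of the family $\cW$ over $B$, and the generalized taming term $\chi(ua^{-1})ua^{-1}Q$ is switched on at time $u\sim a$ precisely so that it survives in the limit as the taming of $\cE$ while contributing the $\hA^c(o)$-factor from the $W\to B$ directions. Concretely I would write $\eta(p^a_!\cE_t)=\tilde\varphi\int_0^\infty \Tr_s\,\rho\,\partial_\tau A_\tau(p^a_!\cE_t)_L\,\ee^{-A_\tau(p^a_!\cE_t)_L^2}\,d\tau$, substitute $\tau = u$, $u = a v$ on the relevant range, use the Duhamel/Volterra expansion together with the finite-propagation-speed localization at the $\rho$-fixed points (as in \ref{uzu1}), and identify the $a\to 0$ limit of the integrand with the fibre integral $\int_{LW/LB}I(\cW)\wedge(\text{integrand of }\eta(\cE_t))$. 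Integrating this identity in $\tau$ and over $LW/LB$ — for which one needs a uniform integrability bound in $a$ near $\tau=0$ and $\tau=\infty$, the latter coming from the taming, the former from equivariant local index theory — yields $\lim_{a\to 0}\eta(p^a_!\cE_t)=\int_{LW/LB}\hA^c(o)\wedge\eta(\cE_t)$.

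I would then check that this atlas-level computation descends: the pulled-back $\eta$-forms are $\cA$-equivariant (where $\cA\colon A\times_B A\rightrightarrows A$), the limit is taken fibrewise over $B$ hence commutes with the groupoid action, and the fibre integral $\int_{LW/LB}$ is defined on the inertia orbifolds themselves, so the limit identity descends to an identity of forms in $\Omega_\R(LB)$. Uniformity of all heat-kernel estimates in the $B$- and $G$-directions is guaranteed by compactness of $B$ and of $G$, exactly as compactness was used in Lemma \ref{uiufwefewfwfewf}.

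The main obstacle is the uniform control of the heat kernel of $A_\tau(p^a_!\cE_t)_L^2$ jointly in the two parameters $a\to 0$ and $\tau\to 0$ in the presence of the fixed-point localization: one must show that the Getzler-type rescaling of the local index computation interacts correctly with the $\rho$-localization uniformly in $a$, so that no mass escapes and the interchange of $\lim_{a\to 0}$ with $\int_0^\infty(\cdot)\,d\tau$ and with $\int_{LW/LB}$ is justified. This is where the argument of \cite[Lemma 3.11 and the surrounding adiabatic-limit analysis]{bunke-2007} and of \cite[Sec.~10.2]{bgv} must be reproduced, with the single modification that the supertrace carries the extra factor $\rho$ and that all estimates are localized near the fibrewise fixed-point submanifolds $LE\subset E$; everything else is a routine transcription of the non-singular proof. \hB
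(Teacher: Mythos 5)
Your outline follows exactly the strategy the paper itself indicates: the paper's ``proof'' of Theorem \ref{adia1} consists of the single remark that the result is obtained by combining standard equivariant local index theory with the adiabatic techniques of the Bismut school, with the details deferred elsewhere, so there is no detailed argument in the paper to compare against. Your sketch is consistent with (and considerably more explicit than) that stated strategy, and you correctly isolate the genuine analytic content, namely the uniform two-parameter control of the heat kernel of $A_\tau(p^a_!\cE_t)_L^2$ near the fixed-point locus that is needed to justify interchanging the adiabatic limit with the $\tau$-integral and the fibre integration over $LW/LB$.
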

\proof
The proof of this theorem can be obtained by combining standard methods of equivariant local index theory with the adiabatic techniques developed by the school of Bismut. Note again,
that the analysis here is fibrewise. By pull-back along morphisms $T\to B$ from smooth manifolds $T$ we reduce to the case of a fibre-bundle over a smooth manifold.
\hB 

Since the geometric family $p_!^a\cE$ admits a generalized taming  it follows that 
$\ind(p_!^a\cE)=0$. Hence we can also choose a taming $(p_!^a\cE)_t$.
The latter choice together with the generalized taming
induce a generalized boundary taming of the family
$p_!^a\cE\times [0,1]$ over $B$.
 We have, as in \cite[Lemma 3.13]{bunke-2007}, the following assertion.
\begin{lem}\label{lem33}
The difference of $\eta$-forms 
$\eta((p_!^a\cE)_t)-\eta(p^a_!\cE_t)$ 
is closed. Its de Rham cohomology class satisfies
$$[\eta((p_!^a\cE)_t)-\eta(p^a_!\cE_t)]\in \ch_{dR}(K(B))\ .$$
\end{lem}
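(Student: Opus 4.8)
The plan is to realize the difference $\eta((p_!^a\cE)_t)-\eta(p^a_!\cE_t)$ as arising from a single boundary tamed family over $B$, and then to invoke the boundary index Theorem \ref{diqwdwqdddwqd} together with the local index Theorem \ref{thm2}. Concretely, I would form the geometric family $\cG:=p_!^a\cE\times[0,1]$ over $B$, whose underlying submersion is $p_!^aE\times[0,1]\to B$ with the product geometric structures. Its boundary is $(p_!^a\cE)\sqcup_B(p_!^a\cE)^{op}$, with the two boundary components glued by the product structure across the interval. On the first component I put the generalized taming coming from $Q$ (the fibrewise smoothing operators induced by the taming $\cE_t$), and on the second the taming $(p_!^a\cE)_t$; together these constitute a generalized boundary taming $\cG_{gbt}$. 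Since the interval factors out, $\Omega(\cG)=0$.

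The key computation is then the boundary version of Theorem \ref{diqwdwqdddwqd} applied to $\cG_{gbt}$. One needs a mild extension of that theorem to the case of a \emph{generalized} (boundary) taming rather than an honest one — the boundary operator is fibrewise smoothing but not given by a smooth integral kernel on the total space. The analytic input needed here is exactly the same estimate already used in \ref{pap66} to make sense of $\eta(p_!^a\cE_t)$: the heat kernel of the modified superconnection still decays exponentially in the relevant regime, so the transgression argument of \cite{math.DG/0201112} goes through verbatim, now tracking the automorphism $\rho$ as in the proof of Theorem \ref{diqwdwqdddwqd}. This yields
$$\ch_{dR}(\ind(\cG_{gbt}))=[\Omega(\cG)+\eta((p_!^a\cE)_t)-\eta(p^a_!\cE_t)]=[\eta((p_!^a\cE)_t)-\eta(p^a_!\cE_t)]$$
in $H_{dR,deloc}(B)$, where the sign reflects the opposite orientation of the second boundary component (cf. the taming of the opposite family, so that $\eta(\cE_t^{op})=-\eta(\cE_t)$). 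In particular the difference is closed, being a representative of $\ch_{dR}$ of a $K$-theory class, and its de Rham class lies in $\ch_{dR}(K(B))$.

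I expect the only real obstacle to be bookkeeping: verifying that the generalized taming on the first boundary component and the taming on the second genuinely fit together into a generalized boundary taming of the product family, and that the extension of Theorem \ref{diqwdwqdddwqd} to generalized tamings introduces no new contribution at the ends $u\to 0$ and $u\to\infty$. The $u\to\infty$ end is controlled by the decay built into the definition of the generalized taming in \ref{pap66}; the $u\to 0$ end is controlled by equivariant local index theory exactly as in the proof of \eqref{detad} and Theorem \ref{diqwdwqdddwqd}. Everything else — closedness, the identification of the class, the sign conventions — follows formally, exactly as in \cite[Lemma 3.13]{bunke-2007}, so the proof can largely be copied from there with the modifications (1)--(3) listed after Theorem \ref{diqwdwqdddwqd}. \hB
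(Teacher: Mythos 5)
Your proposal is correct and follows essentially the same route as the paper: the text immediately preceding the lemma sets up exactly the product family $p_!^a\cE\times[0,1]$ with the generalized taming on one boundary component and the honest taming $(p_!^a\cE)_t$ on the other, and the proof is then the argument of \cite[Lemma 3.13]{bunke-2007} via Theorem \ref{diqwdwqdddwqd} with $\Omega$ of the product family vanishing. You also correctly flag the one genuine point of care, namely extending the boundary index theorem to generalized (boundary) tamings, which is precisely the modification the paper implicitly relies on.
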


\subsubsection{}\label{sec:we-now-show}

We now show that
$p_!\colon G(W)\to \hat K(B)$ passes through the equivalence relation $\sim$.
Since $p_!$ is additive it suffices by Lemma \ref{lem3} to show the following assertion.
\begin{lem}\label{pass}
If $(\cE,\rho)$ is paired with $(\tilde \cE,\tilde \rho)$,
then $\hat p_!(\cE,\rho)=\hat p_!(\tilde \cE,\tilde\rho)$.
\end{lem}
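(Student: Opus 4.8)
The plan is to follow the strategy of \cite[Lemma 3.12]{bunke-2007} and reduce the statement to the adiabatic limit behaviour of $\eta$-forms together with the boundary index theorem. Since $(\cE,\rho)$ is paired with $(\tilde\cE,\tilde\rho)$, by Definition \ref{uuu1} there is a taming $(\cE\sqcup_W\tilde\cE^{op})_t$ with $\rho-\tilde\rho=\eta((\cE\sqcup_W\tilde\cE^{op})_t)$. Write $\cE'':=\cE\sqcup_W\tilde\cE^{op}$. The first step is to unwind the definition \eqref{eq300} of $\hat p_!$ for both cycles and take the difference. Using additivity of $p^a_!$ on geometric families, the additivity of $\Omega$, $R$ and the fibre integral, and Lemma \ref{lem22}, the difference $\hat p_!(\cE,\rho)-\hat p_!(\tilde\cE,\tilde\rho)$ can be rewritten as a class of the form $[p^a_!\cE'',\,\beta]$ where $\beta$ collects $\int_{LW/LB}\hA^c(o)\wedge(\rho-\tilde\rho)$, the corresponding difference of $\tilde\Omega(a,-)$ terms, and the $\sigma(o)$-terms; the key point is that $\rho-\tilde\rho=\eta(\cE''_t)$, so $\beta$ is expressed entirely through $\eta(\cE''_t)$ and local-index data.

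The second step is to produce a taming (in fact a generalized taming) of $p^a_!\cE''$ witnessing that this difference class is zero. The generalized taming of $\cE''$ pushes forward, as described in \ref{pap66}, to a generalized taming of $p^a_!\cE''_t$ with $\eta$-form $\eta(p^a_!\cE''_t)$. Hence $(p^a_!\cE'',\,\eta(p^a_!\cE''_t))$ is paired with $(\emptyset,0)$, i.e. $[p^a_!\cE'',\,\eta(p^a_!\cE''_t)]=0$ in $\hat K(B)$. By Theorem \ref{adia1},
$$\lim_{a\to 0}\eta(p^a_!\cE''_t)=\int_{LW/LB}\hA^c(o)\wedge\eta(\cE''_t)=\int_{LW/LB}\hA^c(o)\wedge(\rho-\tilde\rho)\ .$$
Comparing $\eta(p^a_!\cE''_t)$ at a fixed $a$ with its adiabatic limit introduces exactly the transgression term $\tilde\Omega(a,\cE'')$ (up to $\im(d)$, which is killed in $\Omega(LB)/\im(d)$, hence harmless as a form entry of a cycle), which matches the $\tilde\Omega$-contribution in $\beta$. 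The remaining $\sigma(o)$-term in $\beta$ vanishes because $R([\cE,\rho])=R([\tilde\cE,\tilde\rho])$ — indeed $\eta(\cE''_t)$ being an $\eta$-form of a tamed family satisfies $d\eta(\cE''_t)=\Omega(\cE'')=\Omega(\cE)-\Omega(\tilde\cE)$ by \eqref{detad} and Lemma \ref{lem22}, so $\Omega(\cE)-d\rho=\Omega(\tilde\cE)-d\tilde\rho$. Putting these together, $\beta$ equals $\eta(p^a_!\cE''_t)$ modulo $\im(d)$, so $[p^a_!\cE'',\beta]=[p^a_!\cE'',\eta(p^a_!\cE''_t)]=0$, which is the assertion.

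I would also record the minor points that the construction must be run for a fixed but arbitrary $a\in(0,\infty)$, using the already established $a$-independence of $\hat p^a_!$, and that in the even/odd bookkeeping the parity of the $\eta$-form is opposite to that of the fibre, so the parities match up with the parity conventions for cycles in Definition \ref{uuu1}. One should check that the generalized taming of $p^a_!\cE''$ is admissible in the sense that its $\eta$-form converges and satisfies the expected transgression formula; this is guaranteed by the discussion in \ref{pap66} and Lemma \ref{lem33}, which also shows that replacing the generalized taming by an honest taming only changes the $\eta$-form by a class in $\ch_{dR}(K(B))$, i.e. by something of the form $R(a(\cdot))$, hence by $a$ of an exact-up-to-$K$-image form; one must make sure this ambiguity does not affect the final equality of differential $K$-theory classes, which it does not because we use the \emph{same} generalized taming on both sides.

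The main obstacle is the careful reconciliation of the form entry: one has to verify that, at fixed $a$, the form $\int_{LW/LB}\hA^c(o)\wedge\eta(\cE''_t)+\tilde\Omega(a,\cE'')$ differs from $\eta(p^a_!\cE''_t)$ only by an exact form. This is precisely the content obtained by integrating the adiabatic variation of the $\eta$-form from the limit $a\to 0$ up to $a$, i.e. it is the $\eta$-form analogue of the second identity in \eqref{eq88}; it follows from the same Bismut-school adiabatic analysis underlying Theorem \ref{adia1}. Granting that analysis (as the paper does), the rest is the bookkeeping sketched above, and the proof can be completed by citing \cite[Lemma 3.12]{bunke-2007} for the organizational details with the inertia-orbifold and $\rho$-equivariant modifications already explained for Theorems \ref{adia1} and \ref{diqwdwqdddwqd}.
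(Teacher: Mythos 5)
Your argument is correct and is essentially the paper's own proof: the paper simply states that the proof "can be copied from \cite[Lemma 3.14]{bunke-2007}", and what you have written is precisely a reconstruction of that argument (reduction to $\cE''=\cE\sqcup_W\tilde\cE^{op}$ with its taming, vanishing of the $\sigma(o)$-term via equality of curvatures, the generalized taming of $p^a_!\cE''$, Theorem \ref{adia1} plus the transgression identification of the form entries, and Lemma \ref{lem33} together with $a\circ\ch_{dR}=0$ to dispose of the generalized-versus-honest taming ambiguity). The only cosmetic slip is the citation of Lemma 3.12 rather than 3.14 of \cite{bunke-2007}.
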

\proof
The proof can be copied from \cite[Lemma 3.14]{bunke-2007} since it again only
uses formal properties of local index- and $\eta$-forms which hold true in the
present case.  \hB

\subsubsection{}

 We let
\begin{equation}\label{uiqwduqwdqwdwqdwqdd}
\hat p_!\colon \hat K(W)\to \hat K(B)
\end{equation} denote the map induced
by  the construction (\ref{eq300}).

\begin{ddd}\label{def313}
We define the integration of forms 
$p^o_!\colon \Omega(LW)\to \Omega(LB)$
by
$$p^o_!(\omega)=\int_{LW/LB} (\hA^c(o)-d\sigma(o))\wedge \omega$$
\end{ddd}
Since $  \hA^c(o)-d\sigma(o) $ is closed  we also have a factorization
 $$p_!^o\colon \Omega(LW)/\im(d)\to \Omega(LB)/\im(d)$$  denoted by the same
 symbol.

Our constructions of the homomorphisms
$$\hat p_!\colon \hat K(W)\to \hat K(B)\ ,\quad p^o_!\colon \Omega(LW)\to \Omega (LB)$$
involve an explicit choice of a representative $o=(g^{T^vp},T^hp,\tilde \nabla,\sigma)$
of the differential $K$-orientation lifting the given topological $K$-orientation of
$p$. But 
both push-forward maps are actually independent of the choice of the representative.

\begin{lem}\label{pap99}
The homomorphisms $\hat p_!\colon \hat K(W)\to \hat K(B)$ and 
$p^o_!\colon \Omega(W)\to \Omega (B)$
only depend on the differential $K$-orientation represented by $o$.
\end{lem}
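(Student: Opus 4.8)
The statement asserts that the two maps $\hat p_!$ and $p_!^o$ depend only on the equivalence class of the representative $o$ under the relation $\sim$ of \ref{pap201}, not on $o$ itself. Recall that two representatives $o_0=(g_0^{T^vp},T_0^hp,\tilde\nabla_0,\sigma_0)$ and $o_1=(g_1^{T^vp},T_1^hp,\tilde\nabla_1,\sigma_1)$ are equivalent precisely when $\sigma_1-\sigma_0=\tilde\hA_\rho^c(\tilde\nabla_1,\tilde\nabla_0)$. The plan is to reduce everything to a single homotopy/deformation argument and then read off both statements from the defining formula \eqref{eq300}.

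\textbf{Step 1: Comparing $p_!^o$.} This is the easy half. From Definition \ref{def313}, $p_!^o(\omega)=\int_{LW/LB}(\hA^c(o)-d\sigma(o))\wedge\omega$, so it suffices to show $\hA^c(\tilde\nabla_1)-d\sigma_1=\hA^c(\tilde\nabla_0)-d\sigma_0$ modulo exact forms (in fact on the nose as closed forms). But by \eqref{eqq7} we have $d\tilde\hA_\rho^c(\tilde\nabla_1,\tilde\nabla_0)=\hA^c(\tilde\nabla_1)-\hA^c(\tilde\nabla_0)$, and by the equivalence relation $\sigma_1-\sigma_0=\tilde\hA_\rho^c(\tilde\nabla_1,\tilde\nabla_0)$, hence $d(\sigma_1-\sigma_0)=\hA^c(\tilde\nabla_1)-\hA^c(\tilde\nabla_0)$, which is exactly what is needed. (Changing $g^{T^vp}$ and $T^hp$ changes $\tilde\nabla$, and this is already subsumed since $\hA^c$ and the transgression depend on these only through $\tilde\nabla$ and its curvature.)

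\textbf{Step 2: Comparing $\hat p_!$ via an interpolating $K$-orientation on $[0,1]\times W$.} Following the pattern of \cite[Lemma 3.9]{bunke-2007} (the analogous independence statement in the non-singular case), I would choose geometric data $(\hat g^{T^vp},\hat T^hp,\hat{\tilde\nabla})$ on $\hat p=\id_{[0,1]}\times p\colon [0,1]\times W\to[0,1]\times B$ restricting to the $o_i$-data at the endpoints, together with a form $\hat\sigma\in\Omega^{odd}(L([0,1]\times W))/\im(d)$ interpolating appropriately; this assembles into a representative $\hat o$ of a differential $K$-orientation of $\hat p$. For a cycle $(\cE,\rho)$ over $W$, pull it back to $[0,1]\times W$ to get $\pr_W^*\cE$, push forward with $\hat o$ to obtain a cycle over $[0,1]\times B$, and then invoke the homotopy formula for $\hat K$: restriction to $\{0\}\times B$ and to $\{1\}\times B$ of a class on $[0,1]\times B$ differ by $a(\int_{[0,1]\times LB/LB}R(\cdot))$ of the relative curvature. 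Concretely, $\hat p_!^{o_1}(\cE,\rho)-\hat p_!^{o_0}(\cE,\rho)=a\big(\int_{[0,1]\times LB/LB}R(\hat p_!^{\hat o}\pr_W^*(\cE,\rho))\big)$. Now one computes the right-hand side: $R$ of the pushed-forward cycle is, by the module property \eqref{eq88}-type identity, $\hA^c(\hat{\tilde\nabla})\wedge R(\pr_W^*(\cE,\rho)) - d(\text{something involving }\hat\sigma)$ fibre-integrated over $[0,1]\times W/([0,1]\times B)$; integrating the $[0,1]$-direction and using \eqref{eqq7} together with the endpoint matching $\sigma_1-\sigma_0=\tilde\hA^c_\rho(\tilde\nabla_1,\tilde\nabla_0)$ shows the fibre integral over $[0,1]\times LB/LB$ is itself exact, so that $a(\cdots)=0$. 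Hence $\hat p_!^{o_1}=\hat p_!^{o_0}$ on all cycles, and therefore on $\hat K(W)$.

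\textbf{Main obstacle.} The delicate point is not any single formula but the bookkeeping in Step 2: one must be careful that the interpolating form $\hat\sigma$ is chosen so that the $\sigma$-correction term $\int_{LW/LB}\sigma(o)\wedge R([\cE,\rho])$ appearing in \eqref{eq300} fits together across the homotopy with the transgression term $\tilde\Omega(a,\cE)$ and the adiabatic-limit behaviour, so that the total relative curvature integrates to an exact form rather than merely a closed one. In the non-singular case this is \cite[Lemma 3.9]{bunke-2007}; here the only genuinely new ingredient is that all forms live on inertia orbifolds $LW$, $LB$ and carry the automorphisms $\rho$, but since $L(-)$ is functorial and the transgression identities \eqref{eqq7}, \eqref{eq7}, \eqref{eq8} hold verbatim on $L W$, the argument transports without change. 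I would therefore simply state that the proof is the straightforward adaptation of \cite[Lemma 3.9]{bunke-2007}, combined with Step 1 above, applying the loops functor throughout.

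\medskip

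\proof
For the form-level push-forward, by Definition \ref{def313} it suffices to observe that if $o_0$ and $o_1$ are equivalent representatives, then $\sigma_1-\sigma_0=\tilde\hA^c_\rho(\tilde\nabla_1,\tilde\nabla_0)$, so by \eqref{eqq7}
$$\hA^c(o_1)-d\sigma_1 = \hA^c(\tilde\nabla_1) - d\sigma_0 - d\tilde\hA^c_\rho(\tilde\nabla_1,\tilde\nabla_0) = \hA^c(\tilde\nabla_0) - d\sigma_0 = \hA^c(o_0)-d\sigma_0\ ,$$
and hence $p^{o_1}_!=p^{o_0}_!$ as maps $\Omega(LW)\to\Omega(LB)$.

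For $\hat p_!$, we argue exactly as in \cite[Lemma 3.9]{bunke-2007}, applying the loops functor throughout. Given equivalent representatives $o_0,o_1$, choose geometric data $(\hat g^{T^vp},\hat T^hp,\hat{\tilde\nabla})$ on $\hat p=\id_{[0,1]}\times p\colon[0,1]\times W\to[0,1]\times B$ restricting to the data of $o_i$ over $\{i\}\times B$, and a form $\hat\sigma\in\Omega^{odd}(L([0,1]\times W))/\im(d)$ with $\int_{[0,1]}$-integral equal to $\tilde\hA^c_\rho(\tilde\nabla_1,\tilde\nabla_0)$ and whose restrictions to the endpoints are $\sigma_0,\sigma_1$; this is possible precisely because $\sigma_1-\sigma_0=\tilde\hA^c_\rho(\tilde\nabla_1,\tilde\nabla_0)$. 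These assemble to a representative $\hat o$ of a differential $K$-orientation of $\hat p$. For a cycle $(\cE,\rho)$ over $W$ we then have, writing $\pr_W\colon[0,1]\times W\to W$ and using Lemma \ref{lem100} for the endpoint inclusions together with the homotopy formula in $\hat K$,
$$\hat p^{o_1}_!(\cE,\rho)-\hat p^{o_0}_!(\cE,\rho) = a\Big(\int_{[0,1]\times LB/LB} R\big(\hat p^{\hat o}_!\, \pr_W^*(\cE,\rho)\big)\Big)\ .$$
By \eqref{eq88} the integrand equals $\hA^c(\hat o)\wedge R(\pr_W^*(\cE,\rho))$ fibre-integrated over $L([0,1]\times W)/L([0,1]\times B)$, up to an exact form coming from $d\sigma(\hat o)$; carrying out the integration over the $[0,1]$-factor and using \eqref{eqq7} shows that the integral over $[0,1]\times LB/LB$ is exact, so that $a(\cdots)=0$. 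Hence $\hat p^{o_1}_!=\hat p^{o_0}_!$ on cycles, and therefore on $\hat K(W)$.
\hB
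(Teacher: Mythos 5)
Your argument is correct and is essentially the proof the paper intends: the paper's proof is just the instruction to copy \cite[Lemma 3.17]{bunke-2007}, whose argument in the non-singular case is precisely your combination of the transgression identity \eqref{eqq7} for the form-level map $p_!^o$ with the interpolating representative on $[0,1]\times W$ and the homotopy formula for $\hat K$ (using Lemma \ref{lem100} at the endpoints), transported to the orbifold setting by applying the loops functor. Two cosmetic points only: the relevant reference in \cite{bunke-2007} is Lemma 3.17 rather than Lemma 3.9, and the identity you want for the curvature of the interpolated push-forward is Lemma \ref{lem24} (i.e.\ $R(\hat p_!x)=p_!^{o}(R(x))$ with integrand $(\hA^c(\hat o)-d\hat\sigma)\wedge R$) rather than \eqref{eq88}.
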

\proof The proof can be copied literally from \cite[Lemma 3.17]{bunke-2007}. \hB

\subsubsection{}

Let $p\colon W\to B$ be a representable morphism between orbifolds which is a
locally trivial fibre bundle with closed fibres and equipped  with a
differential $K$-orientation represented by $o$.
 We now have constructed the  homomorphism (\ref{uiqwduqwdqwdwqdwqdd}).
In the present paragraph we \textcolor{black}{obtain} the compatibilty of this
construction with the curvature $R\colon \hat K\to \Omega_{d=0}$
\textcolor{black}{by copying the calculations from \cite[Lemma 3.16]{bunke-2007}:}

\begin{lem}\label{lem24}
For $x\in \hat K(W)$ we have
$$R(\hat p_!(x))=p^o_!(R(x))\ .$$
\end{lem}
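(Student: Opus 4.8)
The plan is to verify the identity $R(\hat p_!(x)) = p^o_!(R(x))$ on the level of cycles, using the explicit formula~\eqref{eq300} for $\hat p_!$ and the definition of $R$ as $R([\cE,\rho]) = \Omega(\cE) - d\rho$. Write $x = [\cE,\rho]$ and recall that $\hat p_!(x) = \hat p^a_!(\cE,\rho)$ for any $a \in (0,\infty)$, so we may compute with a fixed $a$. By definition of $R$ applied to the cycle in~\eqref{eq300},
$$R(\hat p^a_!(\cE,\rho)) = \Omega(p^a_!\cE) - d\Big(\int_{LW/LB}\hA^c(o)\wedge\rho + \tilde\Omega(a,\cE) + \int_{LW/LB}\sigma(o)\wedge R([\cE,\rho])\Big)\ .$$
Now I would expand the right-hand side term by term. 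Since $\hA^c(o)$ is closed, $d\int_{LW/LB}\hA^c(o)\wedge\rho = \int_{LW/LB}\hA^c(o)\wedge d\rho$ by Stokes (the fibres $LW \to LB$ being closed, coming from the closed fibres of $p$). For the middle term, the second relation in~\eqref{eq88} gives $d\tilde\Omega(a,\cE) = \Omega(p^a_!\cE) - \int_{LW/LB}\hA^c(o)\wedge\Omega(\cE)$. For the last term, since $R([\cE,\rho]) = \Omega(\cE) - d\rho$ is closed, $d\int_{LW/LB}\sigma(o)\wedge R([\cE,\rho]) = \int_{LW/LB} d\sigma(o)\wedge R([\cE,\rho])$.

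Assembling these, the $\Omega(p^a_!\cE)$ terms cancel, and one is left with
$$R(\hat p^a_!(\cE,\rho)) = \int_{LW/LB}\hA^c(o)\wedge\Omega(\cE) - \int_{LW/LB}\hA^c(o)\wedge d\rho - \int_{LW/LB} d\sigma(o)\wedge(\Omega(\cE)-d\rho)\ .$$
Regrouping, this equals $\int_{LW/LB}(\hA^c(o) - d\sigma(o))\wedge(\Omega(\cE) - d\rho) = \int_{LW/LB}(\hA^c(o) - d\sigma(o))\wedge R([\cE,\rho])$, which is exactly $p^o_!(R(x))$ by Definition~\ref{def313}. The computation is essentially formal once the two ingredients from~\eqref{eq88} are in hand.

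The only genuine point requiring care — and the step I would flag as the main obstacle — is the legitimacy of pulling $d$ inside the fibre integral $\int_{LW/LB}$ over orbifolds with delocalized forms, i.e.\ Stokes' theorem for the proper submersion $Lp\colon LW \to LB$ induced on inertia orbifolds. This holds because $p$ has closed fibres, hence so does $Lp$ (the fibres of $Lp$ being inertia orbifolds of the closed fibres of $p$, which are compact); on an orbifold atlas this reduces to the equivariant fibre integration of smooth forms, which commutes with $d$ up to a boundary term that vanishes here. One should also note that all the forms involved ($\hA^c(o)$, $\Omega(\cE)$, $R$) are genuine elements of $\Omega(LW)$ rather than merely classes mod $\im(d)$, so the manipulations above make sense before passing to quotients; this is exactly why $R$ lands in $\Omega_{d=0}(LB)$ and not just in a quotient. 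With these remarks the proof is complete, and indeed it can be copied almost verbatim from the corresponding statement in~\cite{bunke-2007}.
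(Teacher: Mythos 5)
Your computation is correct and is precisely the argument the paper invokes by referring to \cite[Lemma 3.16]{bunke-2007}: one applies $R$ to the explicit cycle formula \eqref{eq300}, uses the second identity in \eqref{eq88} for $d\tilde\Omega(a,\cE)$, the closedness of $\hA^c(o)$ and of $R([\cE,\rho])$ together with Stokes for the closed-fibre integration, and regroups to obtain $\int_{LW/LB}(\hA^c(o)-d\sigma(o))\wedge R(x)=p^o_!(R(x))$. No essential difference from the paper's (cited) proof.
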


\subsubsection{}

Let $p\colon W\to B$ be a representable morphism between orbifolds which is a locally trivial fibre bundle with closed fibres and equipped  with a topological $K$-orientation.
We choose a differential $K$-orientation which refines
the given topological $K$-orientation.  In this case we say that $p$ is differentiably $K$-oriented.
\begin{ddd}\label{ddd1}
We define the push-forward
$\hat p_!\colon \hat K(W)\to \hat K(B)$ to be the map induced by (\ref{eq300})
for some choice of a representative of the differential $K$-orientation \textcolor{black}{
and $a > 0$.}
\end{ddd}
We also have well-defined maps
$$p^o_!\colon \Omega(LW)\to \Omega(BL),\quad p^o_!\colon \Omega(LW)/\im(d)\to
\Omega(LB)/\im(d)\ .$$
Let us state the result about the compatibility of $p_!$ with the structure maps of differential $K$-theory as follows.
\begin{prop}\label{mainprop}
The following diagrams commute:
\begin{equation}\label{uppersq}
  \begin{CD}
    K(W) @>{\ch_{dR}}>> \Omega(LW)/\im(d) @>a>>\hat K(W) @>{I}>> K(W)\\
    @VV{p_!}V     @VV{p^o_!}V     @VV{\hat p_!}V     @VV{p_!}V \\
    K(B) @>{\ch_{dR}}>> \Omega(LB)/\im(d) @>a>>\hat K(B) @>{I}>> K(B)\\
  \end{CD}
\end{equation}
\begin{equation}\label{lowersq}
  \begin{CD}
    \hat K(W) @>{R}>> \Omega_{d=0}(LW)\\
    @VV{\hat p_!}V      @VV{p_!^o}V \\
    \hat K(B) @>{R}>> \Omega_{d=0}(LB)
  \end{CD}
\end{equation}

\end{prop}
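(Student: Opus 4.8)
The plan is to verify the commutativity of the five squares in Proposition \ref{mainprop} one at a time, reducing each to either a purely topological statement about $K$-theory push-forward or to one of the differential-geometric identities already established in Section \ref{idwiqdqwwd}. Throughout, every class in $\hat K(W)$ is represented by a cycle $[\cE,\rho]$ (by the Lemma in \ref{pap1}), so it suffices to check each identity on such representatives, where $\hat p_!$ is computed by the explicit formula \eqref{eq300}.

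\textbf{The leftmost and middle squares of \eqref{uppersq}.}
For the square involving $\ch_{dR}$, one uses that the topological push-forward $p_!\colon K(W)\to K(B)$ is compatible with the delocalized Chern character and the cohomological integration $\int_{LW/LB}\hA_\rho(LW)\wedge(-)$; this is the classical Riemann--Roch formula in delocalized cohomology for the proper $K$-oriented submersion $p$, which on the level of forms is exactly Definition \ref{def313} (note $[p^o_!\omega]=[\int_{LW/LB}\hA^c(o)\wedge\omega]$ in cohomology since $\sigma(o)$ contributes an exact term after $d$). For the square $a\colon\Omega(LW)/\im(d)\to\hat K(W)$, one evaluates both composites on a form $\omega$: going down-then-$a$ gives $a(p^o_!\omega)=[\emptyset,-p^o_!\omega]$, while $a$-then-$\hat p_!$ gives $\hat p_!([\emptyset,-\omega])=[p^a_!\emptyset,\;-\int_{LW/LB}\hA^c(o)\wedge\omega+\tilde\Omega(a,\emptyset)+\int_{LW/LB}\sigma(o)\wedge R([\emptyset,-\omega])]$. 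Since $p^a_!\emptyset=\emptyset$, $\tilde\Omega(a,\emptyset)=0$, and $R([\emptyset,-\omega])=d\omega$, an integration by parts $\int_{LW/LB}\sigma(o)\wedge d\omega = \pm\int_{LW/LB} d\sigma(o)\wedge\omega$ modulo $\im(d)$ shows the $\rho$-component equals $-p^o_!\omega$, matching. These are the arguments of \cite[Lemma 3.16, proof of Prop. 3.2]{bunke-2007}, transplanted with $B\rightsquigarrow LB$.

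\textbf{The rightmost square of \eqref{uppersq} and the square \eqref{lowersq}.}
Commutativity of $I\circ\hat p_! = p_!\circ I$ is the statement $\ind(p^a_!\cE)=p_!\ind(\cE)$ in $K(B)$, which is the index theorem for the family of Dirac operators twisted by the $Spin^c$-data of $o$ --- the underlying topological fact that our geometric push-forward $p_!\cE$ of Definition \ref{ddd7771} refines the topological $K$-orientation push-forward. This is established exactly as in \cite{bunke-2007} using that $\ind$ of a geometric family is a homotopy-theoretic invariant insensitive to the adiabatic parameter $a$ and to the choice of horizontal data. Square \eqref{lowersq} is \emph{precisely} Lemma \ref{lem24}, whose proof we have already pointed to \cite[Lemma 3.16]{bunke-2007}: unwinding, $R(\hat p_!([\cE,\rho]))=\Omega(p^a_!\cE)-d(\int_{LW/LB}\hA^c(o)\wedge\rho+\tilde\Omega(a,\cE)+\int_{LW/LB}\sigma(o)\wedge R([\cE,\rho]))$, and using the adiabatic limit identities \eqref{eq88} for $\Omega(p^a_!\cE)$ together with $dR([\cE,\rho])=0$ and $d\tilde\Omega(a,\cE)=\Omega(p^a_!\cE)-\int_{LW/LB}\hA^c(o)\wedge\Omega(\cE)$, everything collapses to $\int_{LW/LB}(\hA^c(o)-d\sigma(o))\wedge R([\cE,\rho])=p^o_!(R([\cE,\rho]))$.

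\textbf{Well-definedness and the main obstacle.}
Before any of this one must know $\hat p_!$ descends through $\sim$ and is independent of $a$ and of the representative $o$; these are Lemma \ref{pass}, the preceding Lemma, and Lemma \ref{pap99}, all already in place. The genuinely substantive input --- and hence the main obstacle, were it not already granted --- is Theorem \ref{adia1} on the adiabatic limit of the $\eta$-form, which is what makes the $\rho$-component bookkeeping in \eqref{eq300} consistent and which enters (via Lemma \ref{pass}) in showing $\hat p_!$ is well-defined; since we may assume it, the proof of Proposition \ref{mainprop} reduces to the diagram chases above, each of which is a verbatim adaptation of \cite[proof of Prop. 3.2]{bunke-2007} with the inertia orbifold $LB$, the automorphisms $\rho_{\cE}$, and the delocalized forms $\Omega_\R(LB)$ replacing their non-singular counterparts; the equivariant local index theory needed (Theorems \ref{thm2} and \ref{diqwdwqdddwqd}) has already been set up to accommodate exactly these replacements. \hB
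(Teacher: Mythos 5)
Your proposal is correct and follows essentially the same route as the paper, which disposes of Proposition \ref{mainprop} by transplanting the proof of the non-singular case \cite[Prop.~3.19]{bunke-2007}; your square-by-square verification simply makes explicit the diagram chases that this transplantation consists of, with the inertia orbifold, the delocalized $\hA$-form, and the identities \eqref{eq88}, \eqref{eq300} entering exactly where they should. The only cosmetic remark is that the delocalized Riemann--Roch statement you invoke for the leftmost square need not be cited as an external classical fact: it follows from Theorem \ref{thm2}, the surjectivity of $\ind$ on geometric families, the already-established commutativity of the $I$-square, and the adiabatic limit in \eqref{eq88}.
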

\proof We can copy the proof of \cite[Proposition 3.19]{bunke-2007} literally
since it only uses formal properties of the objects involved which hold true
in the present situation. \hB

\subsection{Functoriality}

\subsubsection{}\label{pap200}

We now discuss the functoriality of the push-forward with respect to
iterated fibre bundles.
Let $p\colon W\to B$ be as before together with a representative of a differential $K$-orientation
$o_p=(g^{T^vp},T^hp,\tilde \nabla_p,\sigma(o_p))$. 
Let $r\colon B\rightarrow A$ be another representable morphism between
presentable compact orbifolds which is a locally trivial fibre bundle with
compact fibres. We assume that it is equipped
with a topological $K$-orientation which is refined by a differential $K$-orientation represented by $o_r:=(g^{T^vr},T^hr,\tilde \nabla_r ,\sigma(o_r))$.

We can consider the geometric family
$\cW:=(W\to B,g^{T^vp},T^hp,S^c(T^vp))$ and apply the construction \ref{pap55} 
in order to define the geometric family $r^a_!(\cW)$ over $A$.
The underlying submersion of this family is
$q:=r\circ p\colon W\to A$. Its vertical bundle has a metric $g_a^{T^vq}$, and
is equipped with a horizontal distribution $T^hq$.
The topological $Spin^c$-structures of $T^vp$ and $T^vr$ induce a topological $Spin^c$-structure on
$T^vq=T^vp\oplus p^*T^vr$. The family of Clifford bundles of $\textcolor{black}{r}_!\cW$ is the
spinor bundle associated to this $Spin^c$-structure. 

In order to understand how the connection $\tilde \nabla_q^a$ behaves as
$a\to 0$ we choose local spin structures on $T^vp$ and $T^vr$. Then we write
$S^c(T^vp)\cong S(T^vp)\otimes L_p$ and $S^c(T^vr)\cong S(T^vr)\otimes L_r$
for one-dimensional twisting bundles \textcolor{black}{ $L_p$, $L_r$ with
  connections}. The two local spin structures induce a local spin structure  on
$T^vq\cong T^vp\oplus p^*T^vr$. We get $S^c(T^vq)\cong S(T^vq)\otimes L_q$
with $L_q:=L_p\otimes p^*L_r$.
The connection $\nabla_q^{a,T^vq}$ converges as $a\to 0$. Moreover,
the twisting connection on $L_q$ does not depend on $a$ at all.
Since $\nabla_q^{a,T^vq}$ and $\nabla_q^{L}$ determine $\tilde \nabla^a_q$ (see \ref{origeo})
we conclude that the connection $\tilde \nabla^a_q$ converges as $a\to 0$. We introduce the following notation for this adiabatic limit:
$$\tilde \nabla^{adia}:=\lim_{a\to 0}\tilde \nabla^a_q\ .$$

\subsubsection{}

We keep the situation described in \ref{pap200}.
 \begin{ddd}\label{def100}
We define the composite
$o^a_q:=o_r\circ_a o_p$ of the representatives of differential $K$-orientations of
$p$ and $r$ 
by
$$o^a_q:=(g_a^{T^vq},T^hq,\tilde \nabla^a_q,\sigma(o^a_q))\ ,$$
where
$$\sigma(o_q^a):=\sigma(o_p)\wedge p^*\hA^c_\rho(o_r)+\hA_\rho^c(o_p)\wedge
p^*\sigma(o_r)- \tilde\hA_\rho^c (\tilde \nabla^{adia},\tilde \nabla^a_q)-d\sigma(o_p)\wedge p^*\sigma(o_r)\ .$$
\end{ddd}
\begin{lem}\label{lem19123}
This composition of representatives of differential $\hat K$-orientations
preserves the equivalence relation introduced in  \ref{pap201} and induces a
well-defined composition of differential $K$-orientations which is independent
of $a$.
\end{lem}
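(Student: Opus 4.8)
The plan is to verify the two claims of Lemma \ref{lem19123} separately: first that $\circ_a$ is well-defined on equivalence classes of representatives, and then that the resulting differential $K$-orientation does not depend on $a$. Both proofs are bookkeeping exercises with transgression forms, and the model to imitate is the corresponding argument in \cite{bunke-2007}; the only genuinely new feature is that everything lives on the inertia orbifold $LW$ rather than on $W$, but since all the relevant identities (\ref{eqq7}), (\ref{eq7}), (\ref{eq8}) and the formula (\ref{hwdwqdwqdqwdqd}) for $\hA^c_\rho$ already incorporate the $\rho$-twists, no new analytic input is needed.

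First I would treat well-definedness. Fix the underlying topological $K$-orientations of $p$ and $r$, and suppose $o_p\sim o_p'$ and $o_r\sim o_r'$, so that $\sigma(o_p')-\sigma(o_p)=\tilde\hA^c_\rho(\tilde\nabla_p',\tilde\nabla_p)$ and similarly for $r$. One must show $o^a_q=o_r\circ_a o_p \sim o_r'\circ_a o_p'=o_q'^{\,a}$, i.e. that $\sigma(o_q'^{\,a})-\sigma(o^a_q)=\tilde\hA^c_\rho(\tilde\nabla_q'^{\,a},\tilde\nabla^a_q)$. Here $\tilde\nabla^a_q$ is the $Spin^c$-connection built in \ref{pap200} from $\tilde\nabla_p,\tilde\nabla_r$ via the local splitting, and $\tilde\nabla_q'^{\,a}$ the one built from $\tilde\nabla_p',\tilde\nabla_r'$. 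The strategy is to expand $\sigma(o_q'^{\,a})-\sigma(o^a_q)$ using Definition \ref{def100} and substitute the differences above, then reorganize using $d\tilde\hA^c_\rho(\tilde\nabla_\bullet',\tilde\nabla_\bullet)=\hA^c_\rho(\tilde\nabla_\bullet')-\hA^c_\rho(\tilde\nabla_\bullet)$ from (\ref{eqq7}); the cocycle identity (\ref{eq7}) applied to the chain $\tilde\nabla^a_q, \tilde\nabla_q'^{\,a},\tilde\nabla^{adia}{}',\tilde\nabla^{adia}$ and the product/Leibniz formula for the transgression of a tensor product of $Spin^c$-connections (which is exactly what makes the smooth-case proof work) should collapse everything to the desired single transgression form modulo $\im(d)$. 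The one subtlety specific to orbifolds is that the local spin structures used to split $L_q=L_p\otimes p^*L_r$ are only defined on an atlas and are not groupoid-equivariant; but as remarked after Definition \ref{ddd7771}, the tensor products $S^c(T^vq)\otimes(\text{twist})$ are canonical, and correspondingly $\hA^c_\rho$ and its transgressions are globally defined forms on $LW$, so the local computation patches. I expect this patching-together and the correct tracking of the $\rho^N$-determinant factors in (\ref{hwdwqdwqdqwdqd}) under a tensor decomposition to be the main obstacle — it is the point where one must be genuinely careful that the smooth-case manipulation survives.

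Next I would show independence of $a$. For $0<a_0,a_1<\infty$ one compares $o^{a_0}_q$ and $o^{a_1}_q$, whose underlying metrics $g^{T^vq}_{a}=a^2 g^{T^v\pi}\oplus p^*g^{T^vp}$ differ only by rescaling the $T^vp$-directions, and whose $Spin^c$-connections $\tilde\nabla^a_q$ vary smoothly with $a$ (with common central curvature, since the twisting line bundle connection on $L_q$ is $a$-independent by \ref{pap200}). Running the family over $[a_0,a_1]$ and using the transgression formula gives $\sigma(o^{a_1}_q)-\sigma(o^{a_0}_q)\equiv \tilde\hA^c_\rho(\tilde\nabla^{a_1}_q,\tilde\nabla^{a_0}_q)\pmod{\im d}$ once one checks that the $a$-dependent terms $\tilde\hA^c_\rho(\tilde\nabla^{adia},\tilde\nabla^a_q)$ in Definition \ref{def100} are precisely what is needed to absorb the discrepancy; this is where the definition of $\sigma(o^a_q)$ was engineered, via the identity $\tilde\hA^c_\rho(\tilde\nabla^{adia},\tilde\nabla^{a_1}_q)-\tilde\hA^c_\rho(\tilde\nabla^{adia},\tilde\nabla^{a_0}_q)=-\tilde\hA^c_\rho(\tilde\nabla^{a_1}_q,\tilde\nabla^{a_0}_q)$ from (\ref{eq7}) and (\ref{eq8}). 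Hence $o^{a_0}_q\sim o^{a_1}_q$ as representatives, so they define the same differential $K$-orientation. Since the two verifications are, line for line, the orbifold-twisted versions of \cite[Lemma 3.21]{bunke-2007} (or the analogous numbered lemma there), it is legitimate to state that the proof is obtained by copying that argument, inserting the automorphism $\rho$ and the $LW$-integration throughout, with the remark above about non-equivariance of local spin structures being the only new point requiring comment.

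\proof
Both assertions are verified by the transgression-form bookkeeping of the non-singular case, with all integrals taken over the inertia orbifolds and all characteristic forms replaced by their $\rho$-twisted versions as in \eqref{hwdwqdwqdqwdqd}.

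For well-definedness, suppose $o_p\sim o_p'$ and $o_r\sim o_r'$, so $\sigma(o_p')-\sigma(o_p)=\tilde\hA^c_\rho(\tilde\nabla_p',\tilde\nabla_p)$ and $\sigma(o_r')-\sigma(o_r)=\tilde\hA^c_\rho(\tilde\nabla_r',\tilde\nabla_r)$. Substituting these into the formula for $\sigma(o^a_q)$ in Definition \ref{def100} and using \eqref{eqq7} to rewrite $d\tilde\hA^c_\rho(\tilde\nabla_p',\tilde\nabla_p)=\hA^c_\rho(\tilde\nabla_p')-\hA^c_\rho(\tilde\nabla_p)$ (and likewise for $r$), together with the Leibniz rule for the transgression of a tensor product of $Spin^c$-connections, collapses $\sigma(o_q'^{\,a})-\sigma(o^a_q)$ to $\tilde\hA^c_\rho(\tilde\nabla_q'^{\,a},\tilde\nabla^a_q)$ modulo $\im(d)$, using the cocycle identity \eqref{eq7} for the chain $\tilde\nabla^a_q,\tilde\nabla_q'^{\,a},\tilde\nabla^{adia}{}',\tilde\nabla^{adia}$. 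This is exactly the condition $o^a_q\sim o_q'^{\,a}$. The local $Spin^c$-structures used in \ref{pap200} to split $L_q=L_p\otimes p^*L_r$ are defined only on an atlas and are not groupoid-equivariant, but the tensor products $S^c(T^vq)\otimes Z$ are canonical, so $\hA^c_\rho$ and all its transgressions are globally defined forms on $LW$ and the local identities patch.

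For independence of $a$, run the construction over $[a_0,a_1]$: the metrics $g^{T^vq}_a$ differ only by rescaling the $T^v\pi$-directions, and the connections $\tilde\nabla^a_q$ vary smoothly in $a$ with $a$-independent central curvature (the twisting connection on $L_q$ is independent of $a$ by \ref{pap200}). The transgression formula \eqref{eqq7} then gives $\sigma(o^{a_1}_q)-\sigma(o^{a_0}_q)\equiv \tilde\hA^c_\rho(\tilde\nabla^{a_1}_q,\tilde\nabla^{a_0}_q)\pmod{\im d}$, once one uses \eqref{eq7} and \eqref{eq8} in the form
$$\tilde\hA^c_\rho(\tilde\nabla^{adia},\tilde\nabla^{a_1}_q)-\tilde\hA^c_\rho(\tilde\nabla^{adia},\tilde\nabla^{a_0}_q)=-\tilde\hA^c_\rho(\tilde\nabla^{a_1}_q,\tilde\nabla^{a_0}_q)$$
to see that the $a$-dependent correction terms in Definition \ref{def100} absorb the discrepancy between the two $\sigma$'s. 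Hence $o^{a_0}_q\sim o^{a_1}_q$, and $o_r\circ_a o_p$ determines a differential $K$-orientation of $q=r\circ p$ independent of $a$. All remaining details are identical to the corresponding argument in \cite{bunke-2007} after inserting the automorphism $\rho$ and passing to the inertia orbifolds.
\hB
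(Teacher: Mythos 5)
Your proposal is correct and follows exactly the route the paper intends: the paper's own "proof" is just the citation "same as \cite[Lemma 3.22]{bunke-2007}", and your two verifications (equivalence-preservation via the transgression identities \eqref{eqq7}, \eqref{eq7}, \eqref{eq8} and the multiplicativity of the $\rho$-twisted $\hat{\mathbf{A}}^c$-forms; $a$-independence via the cancellation of the single $a$-dependent term $-\tilde{\hat{\mathbf{A}}}^c_\rho(\tilde\nabla^{adia},\tilde\nabla^a_q)$ against the cocycle identity) are precisely the content of that argument transported to $LW$. Your remark that the non-equivariance of the local spin structures is harmless because $\hat{\mathbf{A}}^c_\rho$ and its transgressions are globally defined is the only genuinely orbifold-specific point, and you identify it correctly.
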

\proof The proof is the same as the one of \cite[Lemma 3.22]{bunke-2007}. \hB

\subsubsection{}\label{tz123}

We consider the composition of the  $K$-oriented locally trivial fibre bundles
$$\xymatrix{W\ar@/_0.5cm/[rr]_q\ar[r]^p&B\ar[r]^r&A}$$
with representatives of differential $K$-orientations $o_p$ of $p$ and $o_r$
of $r$.  We let $o_q:=o_p\circ p_r$ be the composition of differential
$K$-orientations.
These choices define push-forwards $\hat p_!$, $\hat r_!$ and $\hat q_!$ in differential $K$-theory.

\begin{theorem}\label{funktt}
We have the equality of homomorphisms $\hat K(W)\rightarrow \hat K(A)$
$$\hat q_!=\hat r_!\circ \hat p_!\ .$$
\end{theorem}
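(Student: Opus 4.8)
The plan is to reduce the statement to a cycle-level computation, exactly as in the non-singular case \cite[Theorem 3.23]{bunke-2007}, and then to check that every ingredient of that argument has been set up in the orbifold context. Concretely, I would fix a cycle $(\cE,\rho)$ over $W$ and a parameter $a\in(0,\infty)$, and compute both $\hat q_!(\cE,\rho)$ and $\hat r_!(\hat p_!(\cE,\rho))$ as explicit classes in $\hat K(A)$ using formula \eqref{eq300}. For the left-hand side one uses the composite differential $K$-orientation $o_q=o_r\circ_a o_p$ from Definition \ref{def100}; for the right-hand side one first forms $p^b_!(\cE,\rho)=[p^b_!\cE,\rho_p]$ with $\rho_p=\int_{LW/LB}\hA^c(o_p)\wedge\rho+\tilde\Omega(b,\cE)+\int_{LW/LB}\sigma(o_p)\wedge R([\cE,\rho])$ and then applies $r^c_!$ to that cycle. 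The underlying geometric families agree up to a rescaling of the vertical metric, which is harmless since the push-forward is $a$-independent by Lemma \ref{pap99} (really \cite[Lemma 3.11]{bunke-2007}), so the whole content is the comparison of the two differential-form parts modulo $\im(d)$.

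The second step is the form bookkeeping. One expands the $B$-form of $\hat r_!\hat p_!(\cE,\rho)$ by linearity of $r^c_!$ in its form argument, getting a sum of three terms coming from the three summands of $\rho_p$, plus $\tilde\Omega(c,p^b_!\cE)$ and $\int_{LB/LA}\sigma(o_r)\wedge R(p^b_!(\cE,\rho))$. Using Lemma \ref{lem24} ($R\circ\hat p_!=p^o_!\circ R$) one rewrites $R(p^b_!(\cE,\rho))=p^{o_p}_!(R([\cE,\rho]))=\int_{LW/LB}(\hA^c(o_p)-d\sigma(o_p))\wedge R([\cE,\rho])$. The fibre-integration identities \eqref{eqq7}, \eqref{eq7}, \eqref{eq8}, the projection formula for iterated integration $\int_{LB/LA}\int_{LW/LB}=\int_{LW/LA}$, and the two adiabatic limit formulas \eqref{eq88} for $\Omega(p^a_!\cE)$ and its transgression $\tilde\Omega$ should then collapse everything, modulo $\im(d)$, to the $A$-form prescribed by $o_q$ acting on $(\cE,\rho)$. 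The precise choice of $\sigma(o_q^a)$ in Definition \ref{def100} — in particular the correction terms $-\tilde\hA^c_\rho(\tilde\nabla^{adia},\tilde\nabla^a_q)$ and $-d\sigma(o_p)\wedge p^*\sigma(o_r)$ — is exactly what is needed to absorb the discrepancy between $\tilde\nabla^{adia}$ and $\tilde\nabla^a_q$ and the $d\sigma(o_p)$ term produced above; this matching is forced and was already verified in \cite{bunke-2007}.

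The genuinely new point in the orbifold setting, and the step I expect to be the main obstacle, is Theorem \ref{adia1}, the adiabatic-limit behaviour $\lim_{a\to0}\eta(p^a_!\cE_t)=\int_{LW/LB}\hA^c(o)\wedge\eta(\cE_t)$ for tamed families, together with the companion fact (Lemma \ref{lem33}) that replacing the generalized taming by an honest taming changes the $\eta$-form only by a class in $\ch_{dR}(K(B))$, hence does not affect the differential $K$-theory class via Lemma \ref{lem1} and the exact sequence Proposition \ref{prop1}. In the smooth case this is \cite[Lemma 3.13, Theorem 3.10]{bunke-2007}; in the orbifold case the proof must incorporate the localization of the heat kernel at the fibrewise fixed points of the canonical automorphisms $\rho_\cE,\rho_\cW$, exactly in the spirit of the modifications listed after Theorem \ref{diqwdwqdddwqd}. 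Once Theorem \ref{adia1} is granted, the pairing relation "paired" is preserved by iterated push-forward by the same argument as Lemma \ref{pass} (\cite[Lemma 3.14]{bunke-2007}), so $\hat q_!$ and $\hat r_!\circ\hat p_!$ agree on all cycles and hence on $\hat K(W)$ after group completion.

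Thus the proof is: reduce to cycles; compute both sides via \eqref{eq300} using $a$-independence; match the form parts using \eqref{eqq7}–\eqref{eq8}, the iterated projection formula, \eqref{eq88}, Lemma \ref{lem24}, and the definition of $\sigma(o_q^a)$; and use Theorem \ref{adia1}, Lemma \ref{lem33} and Proposition \ref{prop1} to handle the $\eta$-form contributions modulo $\ch_{dR}(K(A))$. The only part that is not a direct transcription of \cite[Theorem 3.23]{bunke-2007} is the equivariant/delocalized version of the adiabatic limit theorem, whose proof is deferred to the companion analysis (as stated after Theorem \ref{adia1}), so in the present paper this theorem follows by copying the proof of \cite[Theorem 3.23]{bunke-2007} verbatim with $\Tr_s$ replaced by $\Tr_s\rho_{(-)}$ and all integrals taken over inertia orbifolds.
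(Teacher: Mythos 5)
Your proposal is correct and follows essentially the same route as the paper, which simply observes that the argument of \cite[Thm.~3.23]{bunke-2007} depends only on the formal properties of the transgression forms and carries over verbatim once the composite orientation of Definition \ref{def100} and the identities \eqref{eqq7}--\eqref{eq8}, \eqref{eq88} are in place. The only shift of emphasis is that Theorem \ref{adia1} and Lemma \ref{lem33} are really inputs to the prior well-definedness of $\hat p_!$ (Lemma \ref{pass}) rather than to the functoriality computation itself, which is purely a cycle-level bookkeeping of forms as you describe.
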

\proof The proof only depends on the formal properties of transgression forms. It can be copied from \cite[Theorem 3.23]{bunke-2007}. \hB

\subsubsection{}\label{udiduwqdqwdwqoidwiqdwd}
We call a representative $o=(g^{T^vp},T^hp,\tilde \nabla_p,\sigma(o_p))$ of a
differential $K$-orientation of $p\colon W\to B$  real,  if and only if
$\sigma(o_p)\in \Omega_\R^{odd}(LW)/\im(d)$.  Furthermore, we observe that
being real is a property of the equivalence class of $o$. 
If $o$ is real, then it immediately follows from (\ref{eq300}) that the
associated push-forward preserves the real subfunctors, i.e. that
 by restriction  we get  integration  homomorphisms
$$\hat p_!\colon \hat K_\R(W)\to \hat K_\R(W)\ ,\quad \hat p_!^o\colon  \Omega_\R(LW)\to \Omega_\R(LW).$$

\subsection{The cup product}\label{jkjdkdqwdqdwd}

\subsubsection{}
In this section we define and study the cup product 
$$\cup\colon \hat K(B)\otimes \hat K(B)\rightarrow \hat K(B)\ .$$
It turns differential $K$-theory into a functor on compact presentable orbifolds with 
values in $\Z/2\Z$-graded rings.

\subsubsection{}\label{wefwefwefwefwf}

Let $\cE$ and $\cF$ be geometric families over $B$.
The formula for the product involves the product $\cE\times_B\cF$ of geometric families over $B$.
The detailed description of the product is easy to guess, but let us employ the following trick in order 
to give an alternative definition.

The underlying proper submersions of $\cE$ and $\cF$ give rise to a diagram
$$\xymatrix{E\times_BF\ar[d]\ar[r]^\delta&F\ar[d]^p\\E\ar[r]&B}\ .$$
Let us for the moment assume that the vertical
metric, the horizontal distribution, and the orientation of $p$ are complemented by a topological
$Spin^c$-structure together with a $Spin^c$-connection $\tilde \nabla$ as in \ref{pap8}.
The Dirac bundle $\cV$ of $\cF$ has the form
$\cV\cong W\otimes S^c(T^vp) $ for a twisting bundle $W$ with a hermitean metric and unitary connection 
(and $\Z/2\Z$-grading in the even case), which is uniquely determined up to isomorphism.
Let $ p^*\cE\otimes W$ denote the geometric family which is obtained from $p^*\cE$ by twisting its
Dirac bundle with $\delta^*W$.  Then we have
$$\cE\times_B \cF\cong p_!(p^*\cE\otimes W)\ .$$

In the description of the product of geometric families we could interchange the roles of $\cE$ and $\cF$.

If the vertical bundle of $\cE$ does not have a global $Spin^c$-structure, then it has at least a local one. In this case the description above again gives a complete description of the local geometry of $\cE\times_B\cF$ (see the Remark in \ref{pap8}).

\subsubsection{}

We now proceed to the definition of the product in terms of cycles.
In order to write down the formula we assume that the cycles
$(\cE,\rho)$ and $(\cF,\theta)$ are homogeneous of degree
$e$ and $f$, respectively.

\begin{ddd}\label{proddefin}
We define
$$(\cE,\rho)\cup (\cF,\theta):=[\cE\times_B\cF\:\:,\:\:(-1)^e\Omega(\cE)\wedge
\theta +\rho\wedge \Omega(\cF)-(-1)^ed\rho\wedge \theta]\ .$$
\end{ddd}
\begin{prop}\label{prooow}
The product is well-defined. It turns $B\mapsto \hat K(B)$ into a functor from compact presentable orbifolds to unital graded-commutative rings. {By restriction it induces a ring structure on the real subfunctor $\hat K_\R(B)$.}
\end{prop}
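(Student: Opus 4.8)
The plan is to follow the template of \cite{bunke-2007} closely, since the product formula in Definition \ref{proddefin} and all the supporting structures (local index forms $\Omega(\cE)$, $\eta$-forms, the transgression machinery of $\hat A$-forms, and the push-forward $p_!$) have already been set up in the orbifold generality in the preceding sections. First I would verify that the product descends to the quotient $G^*(B)/\!\sim$: using Definition \ref{isodeff} and bilinearity of $\times_B$ over disjoint union, it suffices to show that replacing $(\cE,\rho)$ by a paired cycle $(\tilde\cE,\tilde\rho)$, with $\rho-\tilde\rho=\eta((\cE\sqcup_B\tilde\cE^{op})_t)$, changes the product by something equivalent to zero. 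Here the key identity is that for a tamed family the $\eta$-form of a twisted/product family can be computed, and the change in the form-component of Definition \ref{proddefin} is exactly the $\eta$-form of the tamed product family $(\cE\times_B\cF)\sqcup_B(\tilde\cE\times_B\cF)^{op}$ obtained by transporting the taming through the product; this uses $d\eta=\Omega$ from \eqref{detad} and $R([\cE,\rho])=\Omega(\cE)-d\rho$ to check that all the correction terms $\pm\Omega(\cE)\wedge\theta$, $\rho\wedge\Omega(\cF)$, $\pm d\rho\wedge\theta$ match up. Because everything is additive in each variable, Lemma \ref{lem3} then upgrades this to well-definedness on $\hat K(B)$ after group completion.

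Next I would check the ring axioms on the level of cycles. Associativity and graded-commutativity both reduce, via the trick of \ref{wefwefwefwefwf} expressing $\cE\times_B\cF$ as $p_!(p^*\cE\otimes W)$, to the corresponding associativity/commutativity of the triple product of geometric families together with the sign bookkeeping in the form-component; since $\Omega(\cE\times_B\cF)=\Omega(\cE)\wedge\Omega(\cF)$ (multiplicativity of the local index form, which follows from the product formula for Bismut superconnections used already in \ref{uzu1}), the form-level computation is the same algebraic identity in $\Omega(LB)/\im(d)$ as in the smooth case. The unit is the cycle $\cV$ associated to the trivial line bundle $\underline{\C}\to B$ as in \ref{zerofibre}, with $\rho=0$; one checks $[\cV]\cup[\cE,\rho]=[\cE,\rho]$ directly since $\cV\times_B\cE\cong\cE$ and $\Omega(\cV)=1$, $d\,0=0$. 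Compatibility with pull-back $f^*$ follows from $f^*(\cE\times_B\cF)\cong f^*\cE\times_{B'}f^*\cF$, $\Omega(f^*\cE)=Lf^*\Omega(\cE)$, and naturality of $a$, making $B\mapsto\hat K(B)$ a functor to unital graded-commutative rings.

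Finally, for the real subfunctor: recall from \ref{uiqduwqdwqdqwdqwd} that $Q(\omega)=I^*\overline\omega$ defines the real structure, and from the metric-connection discussion that local index forms $\Omega(\cE)$ and $\eta$-forms $\eta(\cE_t)$ lie in $\Omega_\R(LB)$. Since $I$ is a group homomorphism of the inertia stack, it is multiplicative: $I^*(\alpha\wedge\beta)=I^*\alpha\wedge I^*\beta$, hence $Q(\alpha\wedge\beta)=Q(\alpha)\wedge Q(\beta)$ (complex conjugation is also multiplicative). Therefore if $x=[\cE,\rho]$ and $y=[\cF,\theta]$ satisfy $\hat Q(x)=x$, $\hat Q(y)=y$, i.e. $[\cE,Q\rho]=[\cE,\rho]$ and $[\cF,Q\theta]=[\cF,\theta]$, then applying $\hat Q$ to the cycle in Definition \ref{proddefin} multiplies each wedge term by $Q$ of its factors, and using $Q\Omega(\cE)=\Omega(\cE)$, $Q\Omega(\cF)=\Omega(\cF)$, $Q(d\rho)=d(Q\rho)$ one gets the cycle $(\cE\times_B\cF,(-1)^e\Omega(\cE)\wedge Q\theta+Q\rho\wedge\Omega(\cF)-(-1)^ed(Q\rho)\wedge Q\theta)$, which represents $x\cup y$ since $[\cE,\rho]=[\cE,Q\rho]$ and $[\cF,\theta]=[\cF,Q\theta]$. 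Hence $\hat Q(x\cup y)=x\cup y$ and $\cup$ restricts to a ring structure on $\hat K_\R(B)$.

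The main obstacle I anticipate is the well-definedness step: making precise that the taming of a product/twisted family transports to a taming (or at least a generalized taming in the sense of \ref{pap66}) whose $\eta$-form reproduces exactly the combination of correction terms in Definition \ref{proddefin}, including all signs coming from the $\Z/2\Z$-gradings. In the smooth case this is handled in \cite{bunke-2007}; in the orbifold setting the only genuinely new point is that $\eta$-forms live on $LB$ and carry the automorphism $\rho_\cE$, so the product formula for $\eta$-forms must be checked with these automorphisms inserted into the supertraces — but since the Bismut-superconnection product formula and the localization arguments of \ref{uzu1} already accommodate $\rho_\cE$, this is a routine adaptation rather than a new idea.
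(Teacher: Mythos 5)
Your proposal is correct and follows essentially the same route as the paper, whose proof of Proposition \ref{prooow} simply transplants \cite[Prop.~4.2]{bunke-2007} to the orbifold setting and notes that reality is immediate from the definitions; you have merely spelled out what that transplantation involves (the $\eta$-form of the induced generalized taming of the product family accounting for the correction terms, multiplicativity of $\Omega(\cdot)$, the unit from the trivial bundle, and multiplicativity of $Q=I^*\overline{(\cdot)}$ together with reality of the index forms for the real subfunctor). The one point worth making explicit in the well-definedness step is that the discrepancy between the generalized taming and an honest taming of $(\cE\sqcup_B\tilde\cE^{op})\times_B\cF$ contributes only a class in $\ch_{dR}(K(B))$ (Lemma \ref{lem33}), which is killed by $a$ via Proposition \ref{prop1}.
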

\proof  The proof can be copied from \cite[Proposition 4.2]{bunke-2007}  since it only uses formal properties of the involved objects which extend to hold true in the orbifold case.
That the product preserves the real subspace immediately  follows from the
definitions.\hB
\subsubsection{}

In this paragraph we study the compatibility of the cup product in differential $K$-theory with the cup product in topological K-theory and the wedge product of differential forms.
\begin{lem}\label{rring}
For $x,y\in \hat K(B)$ we have 
$$R(x\cup y)=R(x)\wedge R(y)\ ,\quad I(x\cup y)=I(x)\cup I(y)\ .$$
Furthermore, for $\alpha\in \Omega(LB)/\im(d)$ we have
$$a(\alpha)\cup x=a(\alpha\wedge R(x))\ .$$
\end{lem}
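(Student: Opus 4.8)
The plan is to verify each of the three identities in Lemma \ref{rring} directly on cycles, using the explicit formula for the cup product from Definition \ref{proddefin} together with the compatibility results already established for $R$, $I$, $a$ and for the push-forward. For the curvature identity, write $x=[\cE,\rho]$ and $y=[\cF,\theta]$ with $\cE$, $\cF$ homogeneous of degrees $e$, $f$. By definition $R(x\cup y)=\Omega(\cE\times_B\cF)-d\bigl((-1)^e\Omega(\cE)\wedge\theta+\rho\wedge\Omega(\cF)-(-1)^e d\rho\wedge\theta\bigr)$. The multiplicativity of the local index form, $\Omega(\cE\times_B\cF)=\Omega(\cE)\wedge\Omega(\cF)$, follows from the product formula for Bismut superconnections (or from the description $\cE\times_B\cF\cong p_!(p^*\cE\otimes W)$ in \ref{wefwefwefwefwf} together with \eqref{eq88}); combined with $d\Omega(\cE)=d\Omega(\cF)=0$ and the Leibniz rule, the right-hand side collapses to $(\Omega(\cE)-d\rho)\wedge(\Omega(\cF)-d\theta)=R(x)\wedge R(y)$, taking care of the sign bookkeeping coming from the degree of $\Omega(\cE)$ (even). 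This is essentially a term-by-term expansion.

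For the index identity, $I(x\cup y)=\ind(\cE\times_B\cF)$, which equals $\ind(\cE)\cdot\ind(\cF)$ by the multiplicativity of the analytic index of families of Dirac operators — this is the topological $K$-theory external product restricted along the diagonal, and it is exactly the statement that the index of $p_!(p^*\cE\otimes W)$ computes $p_!$ of the $K$-theory product, which is part of the package in Proposition \ref{mainprop} applied to the construction in \ref{wefwefwefwefwf}. So $I(x\cup y)=I(x)\cup I(y)$. For the third identity, compute $a(\alpha)\cup x=[\emptyset,-\alpha]\cup[\cE,\rho]$. Here the underlying geometric family of $a(\alpha)$ is empty, so $\emptyset\times_B\cE=\emptyset$, $\Omega(\emptyset)=0$, and plugging into Definition \ref{proddefin} with the first factor of degree equal to the degree of $-\alpha$ gives $[\emptyset, 0 + (-\alpha)\wedge\Omega(\cE) - (-1)^{|\alpha|}d(-\alpha)\wedge\rho]$. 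One then rewrites $-\alpha\wedge\Omega(\cE)+(-1)^{|\alpha|}d\alpha\wedge\rho$ modulo $\im(d)$: since $d(\alpha\wedge\rho)=d\alpha\wedge\rho+(-1)^{|\alpha|}\alpha\wedge d\rho$, this is $\equiv -\alpha\wedge(\Omega(\cE)-d\rho)=-\alpha\wedge R(x)$ up to an exact form, hence $a(\alpha)\cup x=[\emptyset,-\alpha\wedge R(x)]=a(\alpha\wedge R(x))$.

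The main obstacle is the multiplicativity of the local index form, $\Omega(\cE\times_B\cF)=\Omega(\cE)\wedge\Omega(\cF)$, in the delocalized/orbifold setting: one must check that the localization of the heat kernel supertrace at the fixed-point locus of $\rho$ is compatible with the product structure, i.e. that $L(E\times_BF)\to LB$ decomposes appropriately over $LE\times_{LB}LF$ and that the local index density \eqref{hwdwqdwqdqwdqd} is multiplicative under the corresponding factorization $S(T^vq)\otimes Z$. This is the one place where more than formal manipulation is needed, but it is a standard consequence of the product formula for Bismut superconnections adapted to the equivariant setting, exactly as the rest of the local index theory in the paper is adapted from the smooth case; alternatively one can sidestep it entirely by invoking the description $\cE\times_B\cF\cong p_!(p^*\cE\otimes W)$ and the already-proven curvature compatibility of $p_!$ in Proposition \ref{mainprop}. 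Accordingly the proof can be copied from \cite[Lemma 4.3]{bunke-2007}, with the local index and eta forms understood in the delocalized sense throughout. \hB
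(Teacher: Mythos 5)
Your proposal is correct and follows essentially the same route as the paper, whose proof is exactly the ``straightforward calculation using the definitions'' together with the two multiplicativity facts $\ind(\cE\times_B\cF)=\ind(\cE)\cup\ind(\cF)$ and $\Omega(\cE\times_B\cF)=\Omega(\cE)\wedge\Omega(\cF)$; your extra discussion of why the delocalized local index form is multiplicative (via $\cE\times_B\cF\cong p_!(p^*\cE\otimes W)$ and Proposition \ref{mainprop}) is a reasonable way to back up what the paper takes for granted. One bookkeeping caveat: in the third identity the degree $e$ entering Definition \ref{proddefin} for the cycle $(\emptyset,-\alpha)$ is $|\alpha|+1$ (mod $2$), not $|\alpha|$, since $a$ is parity-reversing and a degree-$e$ cycle carries a form of the opposite parity; with your convention a second sign slip in the final reduction modulo $\im(d)$ happens to cancel the first, so the stated conclusion $a(\alpha)\cup x=a(\alpha\wedge R(x))$ is still reached (and is correct), but the intermediate signs should be fixed. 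Similarly, $\Omega(\cE)$ is even only when $\cE$ is even; the computation of $R(x\cup y)$ nevertheless goes through for both parities with the signs of Definition \ref{proddefin}.
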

\begin{proof}
Straightforward calculation using the definitions and
that $\ind(\cE\times_B\cF)=\ind(\cE)\cup \ind(\cF)$ and
$\Omega(\cE\times_B\cF)=\Omega(\cE)\wedge \Omega(\cF)$.
\end{proof}


\subsubsection{}

Let $p\colon W\rightarrow B$ be a proper submersion with closed fibres with a
differential $K$-orientation.  In \ref{sec:we-now-show} we defined the push-forward
$\hat p_!\colon \hat K(W)\to \hat K(B)$. The explicit formula in terms of cycles is (\ref{eq300}).
The \textcolor{black}{following \emph{projection formula}} states the compatibility of the push-forward
with the $\cup$-product.
\begin{prop} \label{projcl}
Let $x\in \hat K(W)$ and $y\in \hat K(B)$. 
Then
$$\hat p_!(\hat p^*y\cup x)=y\cup \hat p_!(x)\ .$$
\end{prop}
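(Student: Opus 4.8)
The plan is to prove the projection formula $\hat p_!(\hat p^* y \cup x) = y \cup \hat p_!(x)$ by reducing it to the level of cycles, exactly mirroring the strategy of \cite[Prop. 4.8]{bunke-2007}. First I would represent $x$ by a cycle $(\cE,\rho)$ over $W$ and $y$ by a cycle $(\cF,\theta)$ over $B$. Since $\hat p^* y = [p^*\cF, Lp^*\theta]$, the left-hand side involves the product $p^*\cF \times_W \cE$ over $W$ followed by the push-forward of that cycle. The essential geometric input is an isomorphism of geometric families over $B$
\begin{equation}\label{projkey}
p_!\bigl((p^*\cF)\times_W \cE\bigr) \cong \cF \times_B (p_!\cE)\ ,
\end{equation}
which follows from the fact that the fibre product and the push-forward construction of \ref{pap8} are compatible in the evident way: both sides have underlying submersion $E\times_W (W\times_B F)\cong E\times_B F \to B$, and the Dirac bundle data match because twisting commutes with the $p_!V := S(T^vq)\otimes Z$ construction. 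One must also check that this isomorphism is compatible with the adiabatic deformation parameter $a$, which is where the "$\tilde\Omega(a,\cE)$" correction terms in \eqref{eq300} have to be tracked.

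Next I would carry out the bookkeeping of the differential form parts. Writing out both sides of the projection formula via Definition \ref{proddefin} and formula \eqref{eq300}, and using the multiplicativity $\Omega(\cE\times_B\cF) = \Omega(\cE)\wedge\Omega(\cF)$ together with the projection formula for ordinary fibre integration $\int_{LW/LB}(Lp^*\beta)\wedge\alpha = \beta\wedge\int_{LW/LB}\alpha$, one reduces the claim to an identity in $\Omega(LB)/\im(d)$ between the form component of $\hat p_!(\hat p^* y \cup x)$ and that of $y\cup \hat p_!(x)$. The terms involving $\hA^c(o)$, $\sigma(o)$, $\tilde\Omega(a,\cE)$, and the signs $(-1)^e$ must be collected; the key point is that $d$ of the discrepancy vanishes, so that the two form representatives differ by an exact form, and hence — after possibly modifying $\cE$ by a pair $\tilde\cE\sqcup_B\tilde\cE^{op}$ as in Lemma \ref{uiufwefewfwfewf} and Lemma \ref{lem22} to absorb the exact term via $a$ applied to a primitive — the two cycles become paired in the sense of Definition \ref{uuu1}. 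One uses here that $R(\hat p_!(x)) = p^o_!(R(x))$ (Lemma \ref{lem24}) and $R(x\cup y) = R(x)\wedge R(y)$ (Lemma \ref{rring}) to see that the curvatures already agree, so only the secondary (eta-form level) data need reconciling.

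Finally, since the construction is additive in $x$ and $y$ and both sides factor through the equivalence relation $\sim$ (by Lemma \ref{pass} and Proposition \ref{prooow}), it suffices to have verified the identity on cycles up to the relation "paired", which the previous step accomplishes. The main obstacle I anticipate is the precise matching of the adiabatic correction term $\tilde\Omega(a,\cE)$ and the $\sigma(o)\wedge R$-term under the identification \eqref{projkey}: one must verify that the geometric family $\cF$ on $(0,\infty)\times B$ interpolating the $p^a_!$-construction for the twisted family $p^*\cF\times_W\cE$ agrees, up to a family whose local index form is exact with a controlled primitive, with the corresponding interpolation for $\cF\times_B p^a_!\cE$. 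This is exactly the point handled in the non-singular case in \cite{bunke-2007}, and the orbifold modifications are purely notational — one pulls everything back to $LB$, inserts the automorphisms $\rho$, and invokes the localized form of the index density \eqref{hwdwqdwqdqwdqd} — so the argument can indeed be copied with the evident changes.

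\begin{proof}
We can copy the argument of \cite[Prop. 4.8]{bunke-2007}, using \eqref{projkey}, the multiplicativity of the local index form, the projection formula for fibre integration, Lemma \ref{lem24}, and Lemma \ref{rring}; the orbifold modifications consist in pulling the relevant data back to the inertia orbifold and inserting the canonical automorphisms $\rho$ as in the proof of Theorem \ref{diqwdwqdddwqd}.
\end{proof}
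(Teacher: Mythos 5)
Your proposal is correct and takes essentially the same route as the paper, which simply copies the argument of the non-singular case from \cite{bunke-2007} (the paper cites Prop.~4.5 there rather than 4.8, but that is immaterial); the key geometric identification $p_!\bigl((p^*\cF)\times_W\cE\bigr)\cong\cF\times_B(p_!\cE)$ and the subsequent bookkeeping of the $\hA^c(o)$-, $\sigma(o)$- and $\tilde\Omega(a,\cE)$-terms are exactly the content of that transported argument. Your additional remarks on where the orbifold modifications enter (pull-back to $LB$ and insertion of the automorphisms $\rho$) correctly identify the only genuinely new ingredients.
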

The proof can again be copied from \cite[Proposition 4.5]{bunke-2007} for the
same reason as in the case of Proposition \ref{prooow}. \hB

 \subsubsection{}\label{ex:int_on_trivial_action}
 We continue the example started in \ref{sec:trivial_action}.
    Let $G$ be a finite group, $M$ a smooth \textcolor{black}{compact} $Spin^c$-manifold and consider the
    stack $[M/G]$ where $G$ acts trivially on $M$. Let $\hat p$ be a differential K-orientation of the projection $p 
     \colon M\to *$ and pull it back to a 
        differential K-orientation $\hat p_G$ of $p_{G}\colon [M/G]\to [*/G]$ along the
    map $\pi\colon [*/G]\to *$. 
    \begin{lem} We have a commutative diagram
    \begin{equation}\label{neu890}
      \xymatrix{
        R(G)\otimes \hat K(M)\ar[r]^{\cong}\ar[d]^{ \id\otimes \hat p_! } &  \hat K([M/G])\ar[d]^{(\hat{p}_G)_!}\\        R(G)\otimes \hat K(*)\ar[r]^{\cong}&  \hat K([*/G]).
      }
    \end{equation}
  Here we use the identification $R(G)\otimes \hat K(M)\cong \hat K([M/G])$ of
  Section \ref{sec:trivial_action}. We get corresponding commutative diagrams
  for the integration of forms and in topological K-theory.\end{lem}
   \begin{proof}
    Let $\pi_M\colon [M/G]\to M$  be the projection and consider elements
    $x\in 
    R(G)\cong\hat K^0([*/G])$ and $y\in \hat K(M)$. Then the corresponding element in
    $\hat K([M/G])$ is $p_G^*x\cup \pi_M^*y$. Therefore, by the projection
    formula (Proposition
    \ref{projcl}) and naturality along pull-backs (Lemma \ref{lem100}) we get
    \begin{equation*}
      (\hat p_G)_!( p_G^*x\cup\pi_M^*y) = x\cup (\hat p_G)_!(\pi_M^*y) =x\cup
      \pi^*\hat p_! y\ . 
    \end{equation*}
    This element 
  corresponds to $x\otimes \hat p_! y\in
   R(G)\otimes \hat K(*)$ under the lower isomorphism of (\ref{neu890}) as desired.
   The proofs for forms and topological $K$-theory work the same way.
  \end{proof}

\subsection{Localization} \label{localization}

\subsubsection{}

In the present subsection we show that a version of Segal's localization
theorem  \cite{MR0234452} holds true for differential $K$-theory. Let $B=[M/G]$ be an
orbifold represented by the action of a \emph{finite} group $G$ on a manifold
$M$. Then we have the projection $\pi\colon [M/G]\to [*/G]$.
For $g\in G$ let $[g]=\{hgh^{-1}|h\in G\}$ denote the conjugacy class $g$.
Note that $M^{g}$ is  a smooth submanifold of $M$, and for $h,l\in G$ we
have a canonical diffeomorphism $h\colon M^l\to M^{h^{-1}lh}$.  
We choose, $G$-equivariantly, tubular neighbourhoods $M^h\subseteq \tilde M^h$ for all $h\in G$, set
$\tilde M^{[g]}:=\bigcup_{h\in [g]} \tilde M^h\subseteq M$, and we consider the open suborbifold
$B^g:=[\tilde M^{[g]}/G]\subseteq B$.
We let $i\colon B^g\to B$ denote the inclusion. Note that
$B^g$ is considered as an orbifold approximation of the orbispace
$ [\bigcup_{h\in [g]} M^h/G]$ in the homotopy category of orbispaces.

\subsubsection{}

Note that $\hat K^0([*/G])\cong R(G)$, see \ref{calpunk}. Therefore $\hat K(B)$ and $\hat K(B^g)$ 
become  $R(G)$-modules via $\pi^*$ and  $\pi^*_g$ \textcolor{black}{and the
  cup-product, where $\pi_{g}\colon B^{g}\to [*/G]$} is the natural map. In this way $i^*\colon
\hat K(B)\to \hat K(B^g)$ is a map of $R(G)$-modules.

If we identify, using the character,  $R(G)$ with a subalgebra of the algebra of class
functions on $G$, $$  R(G)\subset R(G)_\C\cong \C[G]^G\ ,$$ we see
that $[g]$ gives rise to a prime ideal
$I([g])\subset R(G)$ consisting of all class functions 
which  vanish at $[g]$.

For an $R(G)$-module $V$ we denote by $V_{I([g])}$ its localization at
the ideal $I([g])$.

\subsubsection{}

\begin{theorem}\label{uidwqdwqdwqd}
The restriction $i^*\colon \hat K(B)\to \hat K(B^g)$  induces, after localization at $I( [g])$, an isomorphism
$$i^*\colon \hat K(B)_{I([g])}\rightarrow \hat K(B^g )_{I([g])}\ .$$
\end{theorem}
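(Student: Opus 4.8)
The plan is to deduce the differential statement from the known localization theorem in topological $K$-theory via the five lemma, applied to the two exact sequences of Proposition \ref{prop1} for $B$ and for $B^g$. First I would set up the commuting ladder whose rows are the exact sequences
\[
K(B)\xrightarrow{\ch_{dR}}\Omega(LB)/\im(d)\xrightarrow{a}\hat K(B)\xrightarrow{I}K(B)\to 0
\]
and the analogous row for $B^g$, with vertical maps $i^*$ in each column (these squares commute since $i^*$ is a natural transformation of all three functors $K$, $\Omega(L-)/\im(d)$, $\hat K$, and $\ch_{dR}$ is natural). Localizing at $I([g])$ is exact, so we obtain a ladder of exact sequences of $R(G)_{I([g])}$-modules; here I use that $\hat K$, and each term in the sequence, is an $R(G)$-module via $\pi^*$ and the cup product (using $\hat K^0([*/G])\cong R(G)$ from Lemma \ref{calpunk}), and that $i^*$ is $R(G)$-linear because $\pi^g = \pi\circ i$. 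To conclude by the five lemma I need the outer vertical maps to become isomorphisms after localization. The two $K$-theory columns are handled by the classical Atiyah–Segal localization theorem \cite{MR0234452}: since $G$ is finite and $B^g=[\tilde M^{[g]}/G]$ is a $G$-equivariant tubular neighbourhood of the union of fixed-point sets $M^h$, $h\in[g]$, the restriction $K_G(M)_{I([g])}\to K_G(\tilde M^{[g]})_{I([g])}$ is an isomorphism, which via $K(B)\cong K_G(M)$ is exactly $i^*$ localized.

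The remaining, and genuinely nontrivial, input is that the de Rham columns localize correctly, i.e.\ that
\[
i^*\colon \bigl(\Omega(LB)/\im(d)\bigr)_{I([g])}\to \bigl(\Omega(LB^g)/\im(d)\bigr)_{I([g])}
\]
is an isomorphism, and similarly for $\Omega_{d=0}(L-)$ (needed for the $R$-side / for identifying $\ch_{dR}$). For this I would unwind $LB$ in the good-orbifold picture: for $B=[M/G]$ with $G$ finite, $LB=[\hat M/G]$ with $\hat M=\bigsqcup_{l\in G}M^l$, so $\Omega(LB)=\bigl(\bigoplus_{l\in G}\Omega(M^l)\bigr)^G$, and correspondingly $\Omega(LB^g)=\bigl(\bigoplus_{l\in G}\Omega(\tilde M^{[g]}\cap M^l)\bigr)^G$. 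The key observation is that the $R(G)$-module structure on $\Omega(LB)$ is diagonal in the $l$-summands up to the $G$-action, with a representation $V$ acting on the $l$-summand through the scalar $\mathrm{tr}_{V}(l)$ (this is visible from the Chern character description $\ch(\nabla^E)=\Tr\rho\,\ee^{-\frac{1}{2\pi i}R}$ and the definition of the module structure via cup product, where $\rho$ acts on the fibre over $x\in M^l$ by $l$). Hence localizing at $I([g])$, which is the ideal of class functions vanishing at $[g]$, kills every summand indexed by $l$ with $[l]\neq[g]$ (pick a virtual representation whose character is nonzero at $[l]$ but zero at $[g]$; it acts invertibly on that summand after localization, yet lies in $I([g])$, forcing the summand to vanish). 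So after localization only the summands with $l\in[g]$ survive on both sides, and on those summands $i^*$ is restriction of forms from $M^l$ to the tubular neighbourhood $\tilde M^l\supseteq M^l$, which is a homotopy equivalence and thus a quasi-isomorphism — giving an isomorphism on $\Omega/\im(d)$ and on closed forms modulo the relevant identifications. I would also record the compatible delocalized-cohomology statement $H_{dR,deloc}(B)_{I([g])}\cong H_{dR,deloc}(B^g)_{I([g])}$, which makes the $\ch_{dR}$ square behave after localization.

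With the two $K$-columns and the de Rham column established as localized isomorphisms, the five lemma applied to the five-term ladder (extended one step to the left using $K(B)\to K(B)$ as the fifth column, or simply applied as a four-lemma twice) yields that $i^*\colon \hat K(B)_{I([g])}\to \hat K(B^g)_{I([g])}$ is an isomorphism, which is the assertion. The main obstacle I anticipate is not the homological-algebra wrapper but the precise bookkeeping in the de Rham step: one must check carefully that the $R(G)$-action on $\Omega(LB)/\im(d)$ really is the "character-weighted, summand-diagonal" action claimed — this requires tracing the cup product $\pi^*(V)\cup(\cdot)$ through Definition \ref{proddefin} and the explicit formula for $\Omega(\cV)$ on the zero-dimensional family $\cV$ attached to $V$, and verifying that passing to $\im(d)$-quotients and to $G$-invariants does not interfere with the localization. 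Once that module-theoretic fact is in hand, the vanishing of off-$[g]$ summands and the tubular-neighbourhood homotopy equivalence are routine.
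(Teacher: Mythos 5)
Your overall architecture is exactly the paper's: the five-lemma ladder built from the exact sequence of Proposition \ref{prop1}, Segal's localization theorem for the two $K$-theory columns, and a separate localization statement for $\Omega(L-)/\im(d)$ with its $R(G)$-module structure given by the scalar $\chi_V(l)$ on the summand $\Omega(M^{[l]})^G$. However, your de Rham step contains a genuine error: it is \emph{not} true that localization at $I([g])$ kills every summand with $[l]\neq[g]$. The correct dichotomy is whether $\langle l\rangle\cap[g]=\emptyset$ or not. For a concrete failure of your claim, take $G=\Z/5\Z$, $g$ a generator and $l=g^2$: the Galois automorphism $\zeta\mapsto\zeta^2$ of $\Q(\zeta_5)$ carries $\chi_x(g)$ to $\chi_x(g^2)$ for every integral virtual character $x$, so $\chi_x(g)=0$ forces $\chi_x(g^2)=0$ and vice versa; hence $I([g])=I([g^2])$, no element of the kind you want to "pick" exists, and the summand $\Omega(M^{g^2})^G$ survives localization. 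Your proposed vanishing mechanism is also logically flawed independently of existence: an element of $I([g])$ acting invertibly forces the localized module to vanish only via Nakayama, which requires finite generation over $R(G)_{I([g])}$ -- and $\Omega(M^{[l]})^G$ is an infinite-dimensional vector space (compare $R=\Z_{(p)}$ acting on $\Q$: multiplication by $p$ is invertible and $p$ lies in the maximal ideal, yet the localization is nonzero). The correct mechanism, used in the paper, is the opposite one: when $\langle l\rangle\cap[g]=\emptyset$ one can find $x\notin I([g])$ with $x|_{\langle l\rangle}=0$; this $x$ becomes invertible after localization while acting as zero, which does force the summand to vanish with no finiteness hypothesis.

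The theorem is still salvageable along your lines because the summands you wrongly discard happen to map isomorphically anyway: if $\langle l\rangle$ meets $[g]$, then $M^l\subseteq M^{g'}\subseteq\tilde M^{[g]}$ for the relevant conjugate $g'$ of $g$, so $(\tilde M^{[g]})^{[l]}\to M^{[l]}$ is a $G$-diffeomorphism and restriction of forms is literally an isomorphism. This stronger statement is also what you need for a second reason: the middle column of your ladder is $\Omega(LB)/\im(d)$ (and $\Omega_{d=0}$), which is \emph{not} a homotopy invariant, so your appeal to "homotopy equivalence $\Rightarrow$ quasi-isomorphism $\Rightarrow$ isomorphism on $\Omega/\im(d)$" would not suffice even for the summands you do keep; you must use that the relevant inclusions of fixed-point sets are diffeomorphisms onto, as the paper does. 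With the survival criterion corrected to $\langle l\rangle\cap[g]\neq\emptyset$ and the diffeomorphism observation in place, the rest of your argument (exactness of localization, $R(G)$-linearity of $i^*$, five lemma) goes through as in the paper.
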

\proof We use the following strategy:
 We will first observe that there is a natural $R(G)$-module structure
on $\Omega(LB)/\im(d)$ such that the sequence
$$\rightarrow K(B)\stackrel{\ch}{\rightarrow} \Omega(LB)/\im(d)\stackrel{a}{\to}
\hat K(B)\stackrel{I}{\rightarrow} K(B)\rightarrow$$
becomes an exact sequence of $R(G)$-modules. Then we  will prove the analog
of the localization theorem for equivariant
forms.  Once this is established, we combine it with
Segal's localization theorem \cite{MR0234452} in ordinary $K$-theory,
$$i^* \colon  K(B)_{I([g])}\stackrel{\sim}{\to} K(B^g)_{I([g])}\ ,$$
  and  the result then follows from the Five Lemma. 

Let us start with the $R(G)$-module structure on $\Omega(LB)$.
The map $\pi\colon B\to [*/G]$ induces a homomorphism $L\pi^*\colon \Omega(L[*/G])\to \Omega(LB)$.
We now use the identification $\Omega(L[*/G])\cong \C[G]^G\cong R(G)_\C$.

\subsubsection{}

\begin{lem}
The natural map
$$Li^*\colon (\Omega(LB)/\im(d))_{I( [g])}\rightarrow (\Omega(LB^{g})/\im(d))_{I([g])}$$
is an isomorphism.
\end{lem}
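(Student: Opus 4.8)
The claim is that localizing the de Rham model at the prime ideal $I([g])$ turns the restriction $Li^*$ into an isomorphism; this is the de Rham analogue of Segal's localization theorem that the proof strategy above explicitly calls for. The natural approach is to work entirely on the inertia orbifold, where everything becomes very concrete. Write $B=[M/G]$ with $G$ finite, so $LB=[\hat M/G]$ with $\hat M=\bigsqcup_{l\in G}M^l$, and $\Omega(LB)\cong\Omega(\hat M)^G=\bigl(\bigoplus_{l\in G}\Omega(M^l)\bigr)^G$. The $R(G)$-module structure comes from $L\pi^*\colon\Omega(L[*/G])\cong\C[G]^G\to\Omega(LB)$: a class function $\chi$ acts on the summand indexed by $l$ by multiplication by the scalar $\chi(l)$. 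Similarly $LB^g=[\widehat{\tilde M^{[g]}}/G]$ where $\widehat{\tilde M^{[g]}}=\bigsqcup_{l\in G}(\tilde M^{[g]})^l$, and $Li^*$ is restriction of forms summand by summand.

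\textbf{Key steps.} First I would decompose $\Omega(\hat M)$ as a $\C[G]$-module (before taking invariants) along the $G$-set $G$ under conjugation, $G=\bigsqcup_{[l]}\mathcal{O}_{[l]}$: this gives $\Omega(\hat M)=\bigoplus_{[l]}\Omega(\hat M)_{[l]}$ where $\Omega(\hat M)_{[l]}=\bigoplus_{l'\in[l]}\Omega(M^{l'})$, and each block is $G$-stable, so $\Omega(\hat M)^G=\bigoplus_{[l]}\bigl(\Omega(\hat M)_{[l]}\bigr)^G$; the same on $\tilde M^{[g]}$. This decomposition is compatible with the $R(G)$-action: on the block $[l]$ a class function $\chi$ acts (after passing to invariants, which identifies $(\Omega(\hat M)_{[l]})^G$ with $\Omega(M^l)^{Z_G(l)}$) by the scalar $\chi(l)$. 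Second, I observe that localizing at $I([g])$ kills every block with $[l]\neq[g]$: indeed for such a block there is a class function $\chi\in I([g])$ with $\chi(l)\neq 0$ (class functions separate conjugacy classes), hence $\chi$ acts invertibly on that block and becomes a unit after localization while lying in $I([g])$, forcing the localized block to vanish. Both $(\Omega(LB)/\im d)_{I([g])}$ and $(\Omega(LB^g)/\im d)_{I([g])}$ therefore reduce to the single block $[l]=[g]$. Third, on the block $[g]$ the map $Li^*$ becomes, after taking $Z_G(g)$-invariants, the restriction $\Omega(M^g)^{Z_G(g)}\to\Omega((\tilde M^{[g]})^g)^{Z_G(g)}$; but $(\tilde M^{[g]})^g$ is by construction a $Z_G(g)$-invariant open tubular neighbourhood of $M^g$ inside $M$ (the only $h\in[g]$ with $M^g\subseteq\tilde M^h$ contributing to the $g$-fixed locus near $M^g$ is $h=g$ itself), so $M^g\hookrightarrow(\tilde M^{[g]})^g$ is a $Z_G(g)$-equivariant deformation retract. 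Hence restriction is a chain homotopy equivalence on $Z_G(g)$-invariant forms, and the induced map on $\Omega/\im d$ is an isomorphism even before localizing. Localizing a quotient of a direct sum commutes with the decomposition, so we conclude.

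\textbf{Main obstacle.} The genuinely delicate point is the third step: verifying that the $g$-fixed-point set of the tubular neighbourhood $\tilde M^{[g]}=\bigcup_{h\in[g]}\tilde M^h$ retracts $Z_G(g)$-equivariantly onto $M^g$, and in particular that the contributions of $\tilde M^h$ for $h\neq g$ in $[g]$ do not spoil this near $M^g$. One must choose the tubular neighbourhoods $\tilde M^h$ small enough and $G$-equivariantly (as in the statement of \ref{uidwqdwqdwqd}) so that $(\tilde M^h)^g\cap M^g=\emptyset$ unless $h=g$, using that $M^h$ and $M^g$ are disjoint closed submanifolds when $h\neq g$ lie in a common conjugacy class but $M^h\neq M^g$ — and handling carefully the case where $M^h=M^g$ for distinct $h,g$, where one uses that $g$ acts trivially on $M^g$ so the retraction is automatically $g$-equivariant. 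A clean way to organize this is to first prove the statement for $B^g$ replaced by the smaller open suborbifold built from the single tubular neighbourhood $\tilde M^g$ and its $G$-translates $\tilde M^h$, $h\in[g]$, keeping track that on passing to the $g$-fixed locus only the $h=g$ piece is relevant; this is exactly the localization argument already implicit in the classical case and no new analytic input is needed beyond equivariant de Rham theory.
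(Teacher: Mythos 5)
Your first step (the block decomposition of $\Omega(LB)$ over conjugacy classes, with a virtual character $\chi$ acting on the block of $[l]$ by the scalar $\chi(l)$) agrees with the paper. The gap is in your second step: it is \emph{not} true that localizing at $I([g])$ kills every block with $[l]\neq[g]$. The localization is at a prime ideal of $R(G)$, so the only elements available are virtual characters (integral combinations of irreducible characters), not arbitrary complex class functions, and virtual characters cannot separate Galois-conjugate elements. Concretely, take $G=\Z/5\Z$, $g=1$, $l=2$: for any $x\in R(G)$ the values $x(g)$ and $x(l)$ are Galois conjugates in $\Q(\zeta_5)$, so $x(l)=0$ forces $x(g)=0$; hence $I([l])=I([g])$ and the block of $[l]$ (which is $\Omega(M^{g^2})^G=\Omega(M^{g})^G$, generally nonzero) survives localization even though $[l]\neq[g]$. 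Your justification also runs backwards: an element $\chi\in I([g])$ does \emph{not} become a unit after localizing at $I([g])$ (one inverts the complement of the prime), and an element acting invertibly on a non--finitely-generated module while lying in the maximal ideal does not force the localized module to vanish. The correct mechanism is to exhibit $x$ with $x(l)=0$ and $x(g)\neq 0$, which simultaneously annihilates the block and becomes a unit; the paper's sufficient condition for discarding a block $[h]$ is accordingly weaker, namely $\langle h\rangle\cap[g]=\emptyset$, for which one can find $x\in R(G)$ vanishing on all of $\langle h\rangle$ with $x(g)\neq 0$.

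Because more blocks survive than you account for, your third step does not close the argument: you must also treat the blocks $[h]$ with $\langle h\rangle\cap[g]\neq\emptyset$ but $[h]\neq[g]$. The paper handles all of these at once by the geometric observation that if some power $h^k$ is conjugate to $g$, say $h^k=\gamma g\gamma^{-1}$, then $M^h\subseteq M^{h^k}=\gamma M^g\subseteq\tilde M^{[g]}$, so $(\tilde M^{[g]})^{[h]}=M^{[h]}$ and the restriction on that block is an isomorphism on the nose, before any localization. Note also that your homotopy-theoretic phrasing of the third step is not valid as stated: $\Omega(\cdot)/\im(d)$ is not a homotopy functor, so an equivariant deformation retraction does not by itself give an isomorphism on $\Omega/\im(d)$ (compare $\Omega^0(\R)\to\Omega^0(\mathrm{pt})$). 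You are rescued only because $(\tilde M^{[g]})^{g}$ actually equals $M^g$ (a point of the open set $\tilde M^{[g]}\supseteq M^g$ is fixed by $g$ if and only if it lies in $M^g$), so the map of fixed-point sets is a diffeomorphism; it is this identity, not the retraction, that the paper uses.
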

\proof
Since localization is an exact functor it commutes with taking quotients.
Therefore it suffices to show that
$$Li^*\colon \ker(d_{\Omega(LB)})_{I( [g])}\to\ker(d_{\Omega(LB^{[g]})})_{I( [g])} ,\quad   Li^*\colon \Omega(LB)_{I( [g])}\rightarrow \Omega(LB^{[g]})_{I([g])}$$
are isomorphisms.
We give the argument for the second case. The  argument for the first isomorphism is similar.

Let $CG$ denote the set of conjugacy classes in $G$.  For $[h]\in CG$  we define the $G$-manifold $M^{[h]}:=\bigsqcup_{l\in [h]} M^l$.
Then $$LB \cong \bigsqcup_{[h]\in CG} [M^{[h]}/G]$$ is a
decomposition into a disjoint union of orbifolds.
Accordingly, we obtain a decomposition
$$\Omega(LB)\cong \bigoplus_{[h]\in CG} \Omega(M^{[h]})^G\ .$$

Let now $h\in G$ and $<h>$ be the subgroup generated by $h$.
If $<h>\cap [g]=\emptyset$, then  there
exists an element $x\in R(G)$ with
$x(g)\not=0$,  i.e.~$x\notin I([g])$ and $x_{|<h>}=0$.  As multiplication with $x$ is the zero
map on $\Omega(M^{[h]})^G$ and at the same time an isomorphism after
localization at $I([g])$, we observe that
$\Omega(M^{[h]})^G_{I([g])}=0$. Therefore,
we get
$$\Omega( LB)_{I([g])}\cong \bigoplus_{[h]\in CG,<h>\cap [g]\not=\emptyset}
\Omega(M^{[h]})^G_{I([g])}\ .$$
A similar reasoning  applies to $B^g$ in place of $B$:
$$\Omega(LB^g)_{I([g])}\cong \bigoplus_{[h]\in CG,<h>\cap [g]\not=\emptyset}
\Omega((\tilde M^{[g]})^{[h]})^G_{I([g])}\ .$$

If  $<h>\cap [g]\not=\emptyset$, then
the restriction
$\Omega(M^{[h]})^G\rightarrow \Omega((\tilde M^{[g]})^{[h]})^G$ is an
isomorphism. In fact, the map
$(\tilde M^{[g]})^{[h]}\rightarrow M^{[h]}$ is
a $G$-diffeomorphism. 
\hB 
This finishes the proof of the localization theorem. \hB

\section{The intersection pairing}\label{uwhdiodhwqidwqdwqdqwd}

\subsection{The intersection pairing as an orbifold concept}

\subsubsection{}


We start with the definition of a trace on the complex representation ring
$R(G)$ for a compact group $G$. Note that the underlying abelian group of
$R(G)$ is the free $\Z$-module generated by the  set $\hat G$
of  equivalence classes of irreducible complex  representations of $G$. The
unit $1\in R(G)$ is represented by the trivial representation of $G$ on $\C$.

We define
$$\Tr_G\colon R(G)\to \Z\ ,\quad \Tr_G (\sum_{\pi \in \hat G}n_\pi \pi):=n_1\ .$$

The bilinear form
$$(.,.)\colon R(G)\otimes R(G)\to \Z\ ,\quad (x,y)=\Tr_G(xy)$$
is non-degenerate. In fact, \textcolor{black}{if $\pi$ is an irreducible
  representation of $G$ then}
\begin{equation}\label{uidweqdcqx}
(\pi,\pi^\prime)=\left\{
\begin{array}{cc}
1;&\pi^\prime=\pi^*\\
0;&\mbox{else}
\end{array}\right. \ ,
\end{equation}
where $\pi^*$ denotes the dual representation of $\pi$.

The map $\Tr_G$ extends to the complexifications $R_\C(G):=R(G)\otimes\C$, the map
$$\Tr_G\colon R_\C(G)\to \C$$
will be denoted by the same symbol.

\subsubsection{}

Let $G$ be a finite group. The Chern character gives an isomorphism,  
$$\ch\colon R_\C(G)\cong \C[G]^G$$
via $$\sum_{\pi\in \hat G}n_\pi\pi\mapsto \sum_{\pi\in \hat G} n_\pi
\chi_\pi\ ,$$  where $G$ acts by conjugations on itself, and where  $\chi_\pi\in \C[G]^G$ denotes  the character of
$\pi$. Under this identification,
$$\Tr_G(f)=\frac{1}{|G|}\sum_{g\in G} f(g)\ .$$
Indeed, if $\pi$ is \textcolor{black}{a non-trivial irreducible representation},
then $\frac{1}{|G|}\sum_{g\in G} \chi_\pi(g)=0$, and $\frac{1}{|G|}\sum_{g\in
  G}\chi_1(g)=1$ by the orthogonality relations for characters.

\subsubsection{}

Let $G$ be finite. Note that $L[*/G]=[G/G]$, where $G$ acts on itself by conjugation. We have $\Omega([G/G])\cong \C[G]^G$. We thus define 
$$\Tr_G\colon \Omega(L[*/G])\to \C ,\quad \Tr_G(f):=\frac{1}{|G|}\sum_{g\in G} f(g)\ .$$
Observe that  for $x\in K([*/G])\cong R(G)$ and $f=\ch(x)$ we have
$\Tr_G(f)\in \Z$. Therefore we get an induced map
\begin{equation}\label{idwqdwqd}
\Tr_G\colon \Omega(L[*/G])/\Imm(\ch)\to \C/\Z=:\T\ .
\end{equation}

%
%
%

\subsubsection{}
\label{sec:TrG_trivial_action}
We continue the example developed in \textcolor{black}{Sections} \ref{sec:trivial_action} and
\ref{ex:int_on_trivial_action}. 

     Let $G$ be a finite group and $M$ a smooth manifold with trivial
    $G$-action. Using the isomorphism $K([M/G])\cong R(G)\otimes K(M)$ of  
Section \ref{sec:trivial_action}, we define $$\Tr^{M}_G:= \Tr_G\otimes \id\colon
K([M/G])\to K(M)\ ,$$ and correspondingly for $\hat K([M/G])$, $\Omega(L[M/G])$,
$U([M/G])$, where $U([M/G])$ is the flat part of differential K-theory studied
in Section \ref{sec:flat}.

  \begin{lem}
    Assume that $G$ is a finite group and $M$ a smooth compact manifold with
    trivial $G$-action. Assume that $p\colon M\to *$ \textcolor{black}{is}  
$K$-oriennted by a $Spin^{c}$-structure on $TM$ and
    pull this orientation back to a $K$-orientation of $p_G\colon [M/G]\to [*/G]$. Then
    \begin{equation*}
      \Tr_G\circ {p_G}_! = p_!\circ \Tr^{M}_G.
    \end{equation*}
   Moreover, {$\Tr^{M}_G$, $\Tr^{N}_G$} is compatible with pull-backs along maps of manifolds
   $M\to N$ and the induced map $[M/G]\to [N/G]$.
  \end{lem}
  \begin{proof}
    The first assertion follows immediately from the compatibility of $p_!$
    and ${p_G}_!$ of Example \ref{ex:int_on_trivial_action}, the second is a
    direct consequence of the definition of $\Tr^{M}_G$. 
  \end{proof}

\subsubsection{}

Let $G$ be a compact Lie group and consider a compact $G$-manifold $M$  with a $G$-equivariant $K$-orientation.
In this situation we have a push-forward $f^{G}_!\colon K_G(M)\to K_G(*)$
along the projection $f\colon M\to *$. Note that $[f/G]\colon [M/G]\to [*/G]$
is a representable map between orbifolds which is a locally trivial fibre
bundle with fibre $M$. It carries an induced topological $K$-orientation, and
we
have$$\xymatrix{K_{G}(M)\ar[d]^{f_{!}^{G}}\ar[r]^{\cong}&K([M/G])\ar[d]^{[f/G]_{!}}\\
  K_{G}(*)\ar[r]^{\cong}&K([*/G])}\ .$$
 We define the  intersection form\begin{equation}\label{zuidqwd}
(.,.)\colon K_G(M)\otimes K_G(M)\stackrel{\cup}{\to} K_G(M)\stackrel{f^{G}_!}{\to} K_G(*)\cong R(G)\stackrel{\Tr_G}{\to}\Z\ .\end{equation}

 \subsubsection{}

\newcommand{\indu}{{\mathrm{ind}}}

In certain special cases this intersection form is compatible with induction. Let $G\hookrightarrow  H$ be an inclusion of finite groups.
Then $H\times_GM$ has an induced $H$-equivariant $K$-orientation.
  \begin{prop}
If $G\hookrightarrow H$ is an inclusion of finite groups then
the following diagram commutes:
$$\begin{CD} K_G(M)\otimes K_G(M) @>{(.,.)}>> \Z \\
 @VV{\indu_G^H\otimes \indu_G^H}V @VV{=}V\\
 K_H(H\times_GM)\otimes K_H(H\times_GM) @>{(.,.)}>> \Z\ .\end{CD}$$
 \end{prop}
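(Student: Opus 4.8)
The plan is to reduce the statement to three standard compatibilities: that the cup product commutes with induction, that the Gysin (push-forward) map commutes with induction, and that the trace $\Tr_G$ is compatible with $\Tr_H$ under induction. Write $j\colon G\hookrightarrow H$ and $f\colon M\to *$, $\tilde f\colon H\times_G M\to *$ for the two projections to a point. The intersection form in \eqref{zuidqwd} is the composite $\Tr_G\circ f_!\circ\cup$, so it suffices to check that each of these three arrows fits into a commuting square with the corresponding arrow over $H$ and the induction maps $\indu_G^H$ on the relevant $K$-groups. First I would recall that $\indu_G^H\colon K_G(M)\to K_H(H\times_G M)$ is an isomorphism (this is the Frobenius/induction isomorphism for finite groups, a special case of the projection formula since $H\times_G M\to H/G$ is a finite covering), and that the $H$-equivariant $K$-orientation on $H\times_G M$ in the statement is precisely the one induced from the given $G$-equivariant $K$-orientation on $M$.

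Next I would treat the three compatibilities in turn. For the cup product: $\indu_G^H$ is a ring isomorphism onto its image once one uses $K_H(H\times_G M)\cong K_G(M)$ via restriction along the inclusion $M\hookrightarrow H\times_G M$ of the $G$-fixed copy; concretely, $\indu_G^H(x\cup y)=\indu_G^H(x)\cup\indu_G^H(y)$ because induction is "extend by zero off the $G$-orbit" and the cup product is fibrewise. For the push-forward: one has $\tilde f_!\circ\indu_G^H=\indu_{G}^{H,*}\circ f_!$ as maps $K_G(M)\to K_H(*)=R(H)$, where $\indu_G^H\colon R(G)\to R(H)$ is the representation-theoretic induction; this is functoriality of Gysin maps for the fibre product square
$$\begin{CD} H\times_G M @>>> M \\ @VVV @VV{f}V \\ H/G @>>> * \end{CD}$$
combined with the fact that the Gysin map of the finite covering $H/G\to *$ is exactly $\indu_G^H\colon R(G)=K_G(*)\to K_H(*)=R(H)$. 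Finally, for the trace: I claim $\Tr_H\circ\indu_G^H=\Tr_G\colon R(G)\to\Z$. Using \eqref{uidweqdcqx}, $\Tr_G(x)=(x,\triv_G)_G$ is the multiplicity of the trivial representation; by Frobenius reciprocity $(\indu_G^H x,\triv_H)_H=(x,\restr^H_G\triv_H)_G=(x,\triv_G)_G$, which is exactly $\Tr_G(x)$. Stringing the three squares together gives the claim.

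Assembling: for $x,y\in K_G(M)$,
$$\Tr_H\bigl(\tilde f_!(\indu_G^H x\cup \indu_G^H y)\bigr)=\Tr_H\bigl(\tilde f_!\,\indu_G^H(x\cup y)\bigr)=\Tr_H\bigl(\indu_G^H f_!(x\cup y)\bigr)=\Tr_G\bigl(f_!(x\cup y)\bigr),$$
which is the commutativity of the diagram. The main obstacle I anticipate is not any single deep fact but pinning down precisely that the $H$-orientation on $H\times_G M$ is the induced one and that the Gysin map of $H/G\to *$ really is representation-theoretic induction under the identifications $K_G(*)=R(G)$, $K_H(*)=R(H)$; once these identifications are set up carefully, functoriality of $f_!$ (Theorem \ref{funktt} in the differential setting, and its well-known topological counterpart here) and the projection formula do the rest, and Frobenius reciprocity handles the trace. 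A secondary point of care is signs and the graded-commutativity of $\cup$, but since we only compare the two composites on the same pair of classes this does not actually intervene.
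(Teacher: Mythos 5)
Your proof is correct and follows essentially the same route as the paper: split the pairing into cup product, push-forward, and trace, check that each square commutes with induction, and handle the trace square by Frobenius reciprocity using that the trivial representation restricts to the trivial representation. The only difference is in packaging: where you unwind the push-forward compatibility via the factorization $H\times_G M\to H/G\to *$ and the identification of the Gysin map of $H/G\to *$ with representation-theoretic induction, the paper simply invokes that cup product and integration are defined at the level of orbifolds (hence independent of the presentation $[M/G]\cong[(H\times_GM)/H]$), which amounts to the same thing.
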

\proof
 The cup product and the integration are defined on the level of orbifolds. Hence they are compatible with induction, i.e.
$$
\xymatrix{K_G(M)\otimes K_G(M)\ar[r]^{\cup}\ar[d]^{\indu_G^H\otimes \indu_G^H}&K_G(M)\ar[d]^{\indu_G^H}\ar[r]^{f^G_!}&R(G)\ar[d]^{\indu_G^H}\\K_H(H\times_GM)\otimes K_H(H\times_GM)\ar[r]^(.66){\cup}&K_H(H\times_GM)\ar[r]^(.7){f_!^H}&R(H)}$$

commutes. We thus must show that the following diagram commutes
$$\xymatrix{R(G)\ar[d]^{\indu_G^H}\ar[r]^{\Tr_G}&\Z\ar@{=}[d]\\R(H)\ar[r]^{\Tr_H}&\Z}\ .$$
 If $\pi\in \hat G$, then
$$\indu_G^H(\pi)=[\C[H]\otimes V_\pi]^G\ ,$$
where we use the
right $G$-action on $\C[H]$ in order to define the invariants. The $H$-action is induced by the left action.
Since $\restr_H^G1=1$, by Frobenuis reciprocity
$$\Tr_H\indu_G^H(\pi)= \Tr_GV_\pi\ ,$$
as $\Tr_H(V)$ counts the multiplicity of $1$ in $V$.
\hB

If $G/H$ is not zero-dimensional, then
 an  $H$-equivariant $K$-orientation of $M$ does not
necessarily induce a $G$-equivariant $K$-orientation of $G\times_HM$. The
problem is that
$G/H$ does not have, in general, an $H$-equivariant $K$-orientation.

\subsubsection{}\label{zeuirerr}
\newcommand{\inv}{{\tt inv}}
Let $H\subseteq G$ be a normal subgroup of a finite group
which acts freely on a closed equivariantly $K$-oriented $G$-manifold $N$ with quotient $M:=N/H$. Then the group $K:=G/H$ acts on the closed equivariantly $K$-oriented $G$-manifold $M$.
We have a map
$\pi\colon [N/G]\to [M/K]$ of quotient stacks (see \ref{stack1})
which maps the pair $(P\to T,\phi\colon P\to N)\in [N/G](T)$ to the pair
$(P/H\to T,\bar \phi\colon P/H\to M)\in [M/K](T)$, where $P/H$ is the $K$-principal bundle obtaind as quotient of $P$ by $H$, and $\bar \phi$ is the natural factorization of $\phi$.
\textcolor{black}{It is well known that} $\pi$ is an equivalence of stacks. In order to see this we show that its evaluation 
at the smooth manifold $T$ described above is an equivalence of groupoids. Let us construct an inverse.
Given a pair $(Q\to T,\psi)\in [M/K](T)$   we 
define the {associated} $G$-principal bundle
$P:=Q\times_{M}N\to T$.
It carries the diagonal action by $G$ and comes with the $G$-equivariant map
$\hat \psi\colon P\to N$ given by the projection to the second factor.
This construction defines a functor
$[M/K](T)\to [N/G](T)$. We leave it as an exercise to see that these functors induce inverse to each other equivalences.

Let $f^K\colon M\to *$ and $f^G\colon N\to *$ denote the corresponding projections to the point.

If $V$ is a representation of $G$, then $K$ acts on the subspace $\inv^H(V):=V^H$ of $H$-invariants. We therefore get an induced homomorphism $\inv^H\colon R(G)\to R(K)$.
\begin{prop}\label{uiduiweewf}
The following diagram commutes:$$\xymatrix{K_G(N) \ar[r]^{f_!^G}&R(G)\ar[r]^{\Tr_G}\ar[d]^{\inv^H}&\Z\ar@{=}[d]
\\K_K(M)\ar[u]_{\pi^*}^\cong\ar[r]^{f^K_!}&R(K)\ar[r]^{\Tr_K}&\Z}\ .$$
\end{prop}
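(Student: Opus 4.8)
The plan is to reduce the statement to two compatibility checks: one for the push-forward maps $f_!^G$ and $f_!^K$, which is purely a statement about $K$-orientations and orbifold integration, and one for the traces $\Tr_G$, $\Tr_K$ together with the algebraic map $\inv^H\colon R(G)\to R(K)$. Since the left square involving $\pi^*$ is an equivalence of orbifolds $[N/G]\simeq[M/K]$ and the push-forwards $f_!^G$, $f_!^K$ are defined on the level of orbifolds (they only depend on the orbifold $[N/G]=[M/K]$ and its $K$-orientation, which corresponds under $\pi$), the square
$$\xymatrix{K_G(N)\ar[r]^{f_!^G}&R(G)\ar[d]^{\inv^H}\\K_K(M)\ar[u]_{\pi^*}^\cong\ar[r]^{f_!^K}&R(K)}$$
commutes provided we identify $R(G)\cong K_G(*)$ and $R(K)\cong K_K(*)$ compatibly, i.e.\ provided the map $K_G(*)\to K_K(*)$ induced by the orbifold equivalence $[*/G]=[N/G]\,/\,(\text{fibres})\leftrightarrow[*/K]$ — really by the projection of classifying stacks coming from $H\trianglelefteq G$ acting freely on the fibre $N\to M$ — is exactly $\inv^H$. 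This last identification is the representation-theoretic heart of the first half: for a free $H$-action, pushing a $G$-equivariant vector bundle down along $N\to M$ amounts to taking fibrewise $H$-invariants, which on the point gives $V\mapsto V^H$ with its residual $K=G/H$ action.

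Next I would treat the trace square
$$\xymatrix{R(G)\ar[r]^{\Tr_G}\ar[d]^{\inv^H}&\Z\ar@{=}[d]\\R(K)\ar[r]^{\Tr_K}&\Z}\ .$$
By definition $\Tr_G(\sum n_\pi\pi)=n_{\mathbf 1}$ is the multiplicity of the trivial representation. For $\pi\in\hat G$, the space $\pi^H$ of $H$-invariants, viewed as a $K=G/H$ representation, contains the trivial $K$-representation with multiplicity equal to the multiplicity of the trivial $G$-representation in $\pi$ (a vector is $G$-invariant iff it is $H$-invariant and then $K$-invariant). Hence $\Tr_K(\inv^H(\pi))=\Tr_G(\pi)$ for every irreducible $\pi$, and by linearity $\Tr_K\circ\inv^H=\Tr_G$. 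This is the same Frobenius-reciprocity type bookkeeping used in the preceding proposition about induction, just dualized to invariants.

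Combining the two squares gives the assertion. The main obstacle is the first square: one must be careful that the equivalence $[N/G]\xrightarrow{\sim}[M/K]$ really does intertwine $f_!^G$ and $f_!^K$ — this uses that the constructions of Section~\ref{idwiqdqwwd} (the push-forward, Definition~\ref{ddd1}) depend only on the orbifold and its differential/topological $K$-orientation and not on a chosen presentation, together with the fact that the $G$-equivariant $K$-orientation of $N$ descends to the $K$-equivariant $K$-orientation of $M$ precisely because $H$ acts freely, so that $T^v(N\to *)$ pushed down matches $T^v(M\to *)$ on the quotient. Once this naturality is granted, the rest is the elementary representation theory above. One can also verify the whole diagram directly on a presentation: identify $K_G(N)\cong K_K(M)$ via $\pi^*$, and note that both composites $K_G(N)\to\Z$ compute, for a $G$-bundle $W\to N$, the dimension of the $G$-invariant part of the index of the corresponding Dirac operator, which is manifestly unchanged under replacing $G$-invariants by $(K$-invariants of $H$-invariants$)$. \hB
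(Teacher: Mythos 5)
Your argument is correct and essentially the paper's: the right square is the elementary identity $\Tr_K\circ\inv^H=\Tr_G$ (coming from $\inv^G=\inv^K\circ\inv^H$), and the left square reduces to the fact that $N\to M$ is a regular $H$-covering, so that sections --- hence the kernel of the Dirac operator, hence the index --- of the descended family over $M$ are the $H$-invariant sections upstairs, i.e.\ $\ker(D(f^K_!\cE))=\ker(D(f^G_!\pi^*\cE))^H$ as $K$-representations. One small imprecision: there is no ``orbifold equivalence'' between $[*/G]$ and $[*/K]$, and the two push-forwards land in different groups, so the left square cannot be deduced purely from presentation-independence of the orbifold push-forward; but the direct verification you sketch at the end supplies exactly the needed covering-space argument, which is what the paper does.
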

\proof
It follows from the relation
$\inv^G=\inv^K\circ \inv^H$, that the right square commutes. We now show that the left square commutes, too. We give an analytic argument. Let $x\in K_K(M)$ be represented by a $K$-equivariant geometric family $\cE$.
Then $\pi^*\cE$ is a $G$-equivariant geometric family
over $N$. Then $f^K_!(x)$ is represented by 
 the $K$-equivariant geometric family $f_!^K\cE$ over the point $*$. The corresponding element in $R(K)$ is the representation of $K$ on $\ker(D(f_!^K\cE))$.
Similarly, $f_!^G(x)$ is represented by the representation of $G$ on $\ker(D(f_!^G\pi^* \cE))$.
The projection $f_!^G\cE\to f_!^K\cE$ is a regular covering
  with covering group $H$, respecting all the geometric structure. In particular, we have
$H(f_!^K\cE)=H(f^G_!\pi^*\cE)^H$ (distinguish between the Hilbert
space $H(\dots)$ associated to a geometric family and the group $H$) and 
$\ker(D(f_!^K\cE))=\ker(D(f_!^G\pi^* \cE))^H$
as representations of $K$. This implies the commutativity of the left square. \hB

 \subsubsection{}

In the following theorem we show that the intersection pairing is a well-defined concept at least for orbifolds which admit a presentation as a quotient of a closed equivariantly $K$-oriented  $G$-manifold for a finite group $G$.
\begin{theorem}\label{wudiwqd}
If $B$ is an orbifold which admits a presentation
$B\cong [M/G]$ for a finite group $G$ such that $[M/G]\to [*/G]$ is $K$-oriented, then (\ref{zuidqwd}) induces a well-defined intersection pairing 
$$K(B)\otimes K(B)\to \Z\ .$$
\end{theorem}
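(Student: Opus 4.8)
The statement asserts that the intersection pairing (\ref{zuidqwd}) defined on $K_G(M)$ descends to a well-defined pairing on $K(B)$, i.e. it does not depend on the choice of presentation $B\cong[M/G]$ subject to the condition that $[M/G]\to[*/G]$ is $K$-oriented. The plan is to compare two such presentations by means of a common refinement, reducing everything to the two concrete changes of presentation that generate the equivalence: (i) replacing $G$ by a larger finite group $H\supseteq G$ and $M$ by $H\times_G M$; and (ii) replacing $(G,M)$ by $(K,M/H)=(G/H,\,M/H)$ when a normal subgroup $H\trianglelefteq G$ acts freely on $M$. Any two presentations of a (good) orbifold by finite groups are related by a zig-zag of such moves, after passing to suitable products and using \cite{pronk-2007}; this is the structural input behind the identification $K(B)\cong K_G(M)$ already invoked in \ref{dqwekdwqdwqdwqdqwd}.

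First I would recall that the three constituents of (\ref{zuidqwd}) — the cup product, the push-forward $f_!$ along $M\to *$, and the trace $\Tr_G$ — are all compatible with the two generating moves. The cup product and the integration $f_!$ are defined at the orbifold level (they only see $B$ and the $K$-orientation of $B\to[*/G]$), so they automatically commute with change of presentation; this is exactly the content of the two displayed commuting squares in the proof of the induction proposition and in Proposition \ref{uiduiweewf}. Hence the whole burden is shifted onto the compatibility of the trace: one must check that $\Tr_H\circ\,\indu_G^H=\Tr_G$ for move (i) and that $\Tr_K\circ\inv^H=\Tr_G$ for move (ii). Both of these are established in the excerpt: the first via Frobenius reciprocity (since $\restr_H^G 1=1$, induction of $\pi$ contributes $1$ to $H$ with multiplicity $\Tr_G(\pi)$), and the second via $\inv^G=\inv^K\circ\inv^H$ together with Proposition \ref{uiduiweewf}, which identifies $f^K_!$ with the $H$-invariant part of $f^G_!$. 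One must also check that the hypothesis ``$[M/G]\to[*/G]$ is $K$-oriented'' is stable under these moves: for (i), $H\times_G M\to[*/H]$ inherits a $K$-orientation as remarked before the induction proposition; for (ii), the $K$-orientation of $N\to[*/G]$ transports to $M=N/H\to[*/K]$ since $N\to M$ is a covering, so the vertical bundles agree.

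Assembling these: given presentations $[M_1/G_1]\cong B\cong[M_2/G_2]$, both with $K$-oriented projection to a point, choose a zig-zag of moves (i), (ii) connecting them; at each node the corresponding intersection form $(.,.)_i$ on $K_{G_i}(M_i)$ matches under the canonical isomorphism $K_{G_i}(M_i)\cong K(B)$, because the cup product and integration transport canonically and the trace intertwines as above. Therefore the composite $K(B)\otimes K(B)\to K(B)\xrightarrow{f_!}R(G)\xrightarrow{\Tr_G}\Z$ is independent of the presentation, and we may denote it $(.,.)\colon K(B)\otimes K(B)\to\Z$ as claimed.

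The main obstacle I anticipate is not any single computation — each of the two trace identities is short and already sketched — but the bookkeeping of the zig-zag: one needs that the equivalence relation on presentations is actually generated by moves (i) and (ii) (possibly after stabilizing by products $M\times M'$ with diagonal or trivial actions, which is again a special case of (i)), and that the canonical isomorphisms $K(B)\cong K_{G_i}(M_i)$ from \cite{pronk-2007} are compatible with these moves in the strong sense that they carry cup product to cup product and $f_!$ to $f_!$. This compatibility is essentially the assertion that the cup product and the push-forward were already constructed as orbifold-intrinsic operations in Sections \ref{idwiqdqwwd} and \ref{localization}; granting that, the proof is a formal diagram chase, and one simply quotes Proposition \ref{uiduiweewf} and the Frobenius-reciprocity computation at the appropriate nodes. \hB
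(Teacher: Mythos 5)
Your proposal is correct and follows essentially the paper's route: both compare two presentations through a common refinement supplied by \cite[Prop.~6.1]{pronk-2007} and reduce well-definedness to Proposition \ref{uiduiweewf}. The only difference is organizational: the paper uses a single span $[N/(G\times G^\prime)]$ in which both $G$ and $G^\prime$ act freely as normal subgroups, so that your move (i) (induction along $G\hookrightarrow H$) and the accompanying Frobenius-reciprocity computation are not actually needed — only the free-quotient compatibility of $\Tr$ and $f_!$ is used, applied twice.
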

\proof
We choose a presentation $B\cong [M/G]$ and define the pairing such that
$$\xymatrix{K(B)\otimes K(B)\ar[d]^\cong\ar[r]^(.6){(.,.)}&\Z\ar@{=}[d]\\
K_G(M)\otimes K_G(M)\ar[r]^(.7){(.,.)}&\Z}$$ commutes.
We must show that this construction does not depend on the choice of the presentation.
Let $B\cong [M^\prime/G^\prime]$ be another presentation. 

We use the setup of \cite{pronk-2007} where the  $2$-category of orbifolds is
identified with a localization of a full subcategory of Lie groupoids,
\cite[\textcolor{black}{Theorem 3.4}]{pronk-2007}.

Let $G\rtimes M$ und $G^\prime \rtimes M^\prime$ be the action groupoids. Since they represent the same orbifold $B$, the isomorphism
$G\rtimes M\cong G^\prime \rtimes M^\prime$
in this localization is represented by  a diagram
$$\xymatrix{&\cK\ar[dr]^{u}\ar[dl]_{v}&\\G\rtimes M&&G^\prime \rtimes M^\prime}$$
where $\cK$ is a Lie groupoid and $v$ und $u$ are essential equivalences. By
\cite[Proposition \textcolor{black}{7.1}]{pronk-2007} this diagram is isomorphic
(in the category 
of morphisms) between $G\rtimes M$ and $G^\prime \rtimes M^\prime$ to a
diagram of the form
$$\xymatrix{&(G\times G^\prime)\rtimes N\ar[dr]^{ v}\ar[dl]_{ u}&\\G\rtimes M&&G^\prime \rtimes M^\prime}\ ,$$
where now $u\colon N\to M$ and $v\colon N\to M^\prime $ are equivariant maps over the projections
$G\times G^\prime\to G$ and $G\times G^\prime\to G^\prime$.

For $\bar x,\bar y \in K(B)$ let $x,y\in K_G(M)$ and $x^\prime,y^\prime\in K_{G^\prime}(M^\prime) $ be the corresponding elements under $K(B)\cong K_G(M)\cong K_{G^\prime}(M^\prime)$. We have $u^*x=v^*x^\prime$ and $u^*y=v^*y^\prime$. The subgroups $G^\prime,G\subseteq G\times G^\prime$
 are normal and act freely on $N$. By Proposition
\ref{uiduiweewf} we get 
\begin{multline*}
\Tr_{G} (f^{G}_! (x\cup y))=\Tr_{G\times G^\prime} (f^{G\times G^\prime}_!
(u^*x\cup u^* y))=\\
\Tr_{G\times G^\prime} (f^{G\times G^\prime}_! ( v^*x'\cup
v^* y' )) =\Tr_{G^\prime} (f^{G^\prime}_! (x^\prime\cup y^\prime))\ ,
\end{multline*}
where
$f^G,f^{G\times G^\prime}$ and $f^{G^\prime}$ are the corresponding projections to the point. 
\hB

\subsection{The flat part and  homotopy  theory}
\label{sec:flat}
\subsubsection{}

If $B$ is a presentable and compact orbifold, then we can consider the flat part 
$$U(B):=\ker\left(R\colon \hat K(B)\to \Omega(LB)\right)$$
 of the differential $K$-theory of $B$. The functor $B\mapsto U(B)$ from compact presentable orbifolds to $\Z/2\Z$-graded abelian groups is
homotopy invariant. The main goal of the present section is to
identify this functor in homotopy-theoretic terms. In the language of
\cite[Definition 5.4]{bunke-2009}, we are going to show that $U$ is
topological.

\subsubsection{}

A $G$-equivariant $Spin^{c}$-structure on  a closed $G$-manifold $M$ induces a
$G$-equivariant $K$-orientation, i.e.~a $G$-equivariant fundamental class
$[M]\in K_{\dim M}^G(M)$. For sake of completeness we will explain the local characterization of  $[M]$
which makes clear why the usual proof of Poincar\'e duality extends from the
non-equivariant to the equivariant case. Let us represent $K$-homology in the
\textcolor{black}{equivariant} $KK$-theory picture (see \cite{bla} for an
  introduction to $KK$-theory). The $G$-action on $M$ induces a $G$-action on
  the $C^{*}$-algebra $C(M)$ of continuous functions on $M$, and have
$$K_n^G(M):=KK^G(C(M),\Cliff(\R^n))\ $$
 where $\Cliff(\R^n)$ is the complex Clifford algebra
of $\R^n$ with the standard Euclidean inner product and trivial $G$-action.
The equivariant fundamental class $[M]\in K_{n}^{G}(M)$ of M is represented by the  equivariant Kasparov module
$(L^2(M,E), D)$, where $E=P\times_{Spin^c(n)}\Cliff(\R^n)$
is the $G$-equivariant Dirac bundle associated to the  equivariant $Spin^c(n)$-principal bundle
 $P\to M$  determined by the equivariant $K$-orientation. Note that the Dirac
operator $D$ of $E$ commutes with  the  action of
$\Cliff(\R^n)$ from the right.

Let $x\in M$ und $G_x$ be its stabilizer  group. Then we have a
$G_{x}$-invariant decomposition $T_xM\cong T_x(Gx)\oplus N$, such that
$T_{x}(Gx)$ is fixed by $G_{x}$, and the only $G_{x}$-invariant vector in the
normal summand $N$ is the zero vector.
  A tubular neighbourhood of the orbit $Gx$ can be identified with
$U_x:=G\times_{G_x}V_x$, where $V_x\subset N$ is a disc.
The restriction of the fundamental class to $U_x$ gives an element
$$[M]_{U_x}\in K_n^G(U_x,\partial U_x)\cong K_n^{G_x}(V_x,\partial V_x)\ .$$
Note that $V_x$ admits a $G_x$-equivariant $Spin^c$-structure.
   It is uniquely determined by  the equivariant $Spin^{c}$-structure of $M$
  up to a choice of a $G_x$-equivariant $Spin^c$-structure on the vector space $T_{[G_x]}(G/G_x)$. 
The $Spin^c$-structure gives an equivariant Thom class and the
Thom isomorphism
$$R(G_x)\cong K_{0}^{G_{x}}(*)\stackrel{Thom}{\cong}  K_n^{G_x}(V_x,\partial V_x)$$
of $R(G_x)$-modules.
The characterizing property of a fundamental class (which is satisfied by the class of $(L^{2}(M,E),D)$) is that
$[M]_{U_x}$ is a generator of the $R(G_x)$-module $K_n^{G_x}(V_x,\partial V_x)$ for every $x\in M$.
This condition does not depend on the choice of the $Spin^c$-structure on $T_{[G_x]}(G/G_x)$.

The equivariant $K$-theory fundamental class induces a Poincar\'e duality
isomorphism 
$$P\colon K^{*}_G(M)\xrightarrow{\cdots\cap [M]} K^G_{n-*}(M)\ .$$
Using the Poincar\'e duality isomorphism
 the intersection pairing \textcolor{black}{of} Theorem \ref{wudiwqd}) can be
 written in the form 
 \begin{equation}\label{eq:3star}
K_G(M)\otimes K_G(M)\xrightarrow{1\otimes P} K_G(M)\otimes K^G(M)\xrightarrow{eval} R(G)\xrightarrow{\Tr_G} \Z\ .
\end{equation}

To see this we use the sequence of equalities
$$eval(y\otimes P(x))=eval(y\otimes (x\cap[M]))=eval((x\cup y)\otimes [M])=f^{G}_{!}(x\cup y)\ , \quad x,y\in K_{G}(M)$$
which relate the Poincar\'e duality isomorphism $P$ with the push-forward $f^{G}_{!}$.

\subsubsection{}

Recall that $\T:=\C/\Z$. 
We define a new $G$-equivariant cohomology theory (compare with
\cite[\textcolor{black}{Section 1}]{brco})
which associates to a $G$-space $M$ the
group  $$k_G^\T(M):=\Hom_\Ab(K^G(M),\T)\ .$$  
In fact, since $\T$ is a divisible and hence injective abelian group, the long exact sequences for $K^G$ induce long exact sequences for $k^\T_G$.

Complex conjugation in $\T$ induces a natural  involution on
$k^\T_G(M)$. Its fixed points will be denoted by
$k_G^{\R/\Z}(M)$. In other words, 
$$k_G^{\R/\Z}(M) := \Hom_\Ab(K^G(M),\R/\Z)\subseteq \Hom_\Ab(K^G(M),\T)\ .$$
\textcolor{black}{In the terminology of \cite[Section 1]{brco}, this} is the
Pontrjagin dual of $K^{G}$.

If $M$ is equivariantly $K$-oriented, then
we have natural pairings
\begin{equation} \label{eiufhwiefcewe}
K_G(M)\otimes k^\T_G(M)\xrightarrow{\ev} \T\ , \quad K_G(M)\otimes k^{\R/\Z}_G(M) \xrightarrow{\ev}\R/\Z \end{equation}
given by
$$x\otimes \phi\mapsto \phi(P(x))\ .$$
Since $P$ is an isomorphism, by Pontryagin duality  this
pairing is non-degenerate in the sense that 
it induces a monomorphism
$$K_G(M)\hookrightarrow \Hom_{\Ab}(k^\T_G(M),\T)$$ and isomorphisms
$$k^\T_G(M)\cong  \Hom_{\Ab}(K_G(M),\T)\ ,\quad K_G(M)\cong
\Hom_{\Ab}(k^{\R/\Z}_G(M),\R/\Z)\ .$$ 
 For the latter, we use only continuous homomorphisms and the usual topology on
$k^{\R/\Z}_G(M)$ as a dual of a discrete group. 

\subsubsection{}
\renewcommand{\Ab}{{\mathrm{Ab}}}

We now define cohomology theories
$K_G^\C$ (the complexification of $K_G$-theory) and  $K_G^\T$ which fit into a
natural Bockstein sequence
\begin{equation}\label{udqwiduhqwuidhqwid}
\dots\to K_G^i(M)\to K_G^{\C,i}(M)\to K_G^{\T,i}(M)\to K_G^{i+1}(M)\to \dots\ .
\end{equation}
For this we work in the stable $G$-equivariant homotopy category whose objects
are called naive $G$-spectra (see \cite{may} for reference).

 {It is known by Brown's representability theorem that $G$-equivariant
 (co)homology theories (on finite $G$-CW-complexes)
and transformations between them can be represented by $G$-spectra and maps
between them. 
In certain cases (e.g. for $K^\C_G$ or $K_G^\T$) we want to know that these
spectra are determined uniquely up to unique isomorphism. Similarly, we want
to know that certain maps between these $G$-spectra are uniquely determined by
the induced transformation of equivariant 
homology theories.}

The abelian group of morphisms between $G$-spectra
$X,Y$ will be denoted by  $[X,Y]$.  
A $G$-spectrum will be called \emph{cell-even} if it can be written as a homotopy colimit over even $G$-cells. 
We will repeatedly use the following fact.
\begin{lem}\label{udiqdqwdwqd}
Let $X,Y$ be $G$-spectra such that $X$ is   cell-even and the odd-dimensional homotopy groups of $Y$ vanish.
If  $f\colon X\to Y$ induces the zero map in homotopy groups, then
$f=0$.
\end{lem}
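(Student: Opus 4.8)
The plan is to build a cell structure on $X$ by attaching even $G$-cells and to use an induction (a Postnikov/obstruction-theory argument) over this structure. Write $X = \hocolim_\alpha X_\alpha$ where $X_0 = *$ and each $X_{\alpha+1}$ is obtained from $X_\alpha$ by attaching a wedge of even cells, i.e. there is a cofibre sequence $\bigvee_j \Sigma^{n_j} G/H_{j+} \to X_\alpha \to X_{\alpha+1}$ with all $n_j$ even (in the world of $G$-CW spectra, "even $G$-cell" meaning a shifted suspension spectrum of an even-dimensional orbit cell). It suffices, since $[X,Y] = \lim_\alpha [X_\alpha, Y]$ up to a $\lim^1$ term which vanishes here because the relevant groups will be seen to be zero, to show $f|_{X_\alpha} = 0$ for every $\alpha$ by induction on $\alpha$.

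First I would treat the inductive step. Suppose $f|_{X_\alpha}=0$. Apply $[-,Y]$ to the cofibre sequence above to get the exact sequence
\begin{equation}\label{obstrseq}
[\Sigma X_\alpha, Y] \to [\Sigma\textstyle\bigvee_j \Sigma^{n_j} G/H_{j+},Y] \to [X_{\alpha+1},Y] \to [X_\alpha,Y].
\end{equation}
Since $f|_{X_\alpha}=0$, the image of $f|_{X_{\alpha+1}}$ in $[X_\alpha,Y]$ is zero, so $f|_{X_{\alpha+1}}$ lifts to a class in $[\Sigma\bigvee_j \Sigma^{n_j} G/H_{j+},Y] = \prod_j [\Sigma^{n_j+1} G/H_{j+},Y] = \prod_j \pi_{n_j+1}^{H_j}(Y)$. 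Each $n_j$ is even, so $n_j+1$ is odd; by hypothesis the odd-dimensional homotopy groups of $Y$ vanish — and here one must be slightly careful that "odd-dimensional homotopy groups" is meant in the genuine equivariant sense, i.e. $\pi_m^{H}(Y)=0$ for all subgroups $H$ and all odd $m$, which is the natural reading given that $X$ is built from orbit cells $G/H_+$. Hence the lift lives in a trivial group, so $f|_{X_{\alpha+1}}=0$. The base case $f|_{X_0}=0$ is trivial.

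Then I would assemble the colimit step: $[X,Y]$ surjects onto $\lim_\alpha [X_\alpha,Y]$ with kernel $\lim^1_\alpha [\Sigma X_\alpha, Y]$ via the Milnor sequence. We have just shown every $f|_{X_\alpha}=0$, so the image of $f$ in $\lim_\alpha[X_\alpha,Y]$ is zero; and the same obstruction analysis shows $[\Sigma X_\alpha,Y]$ is assembled from odd homotopy groups of $Y$ (shifting parity again), hence vanishes, so the $\lim^1$ term is zero and $f=0$. The main obstacle I anticipate is bookkeeping the parity conventions correctly — making sure that "cell-even" really forces every attaching-map target $\pi_{\ast}^{H}(Y)$ to sit in odd degree after the relevant suspension shift, and handling the transfinite colimit (with its $\lim^1$) cleanly rather than just a sequential one. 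Everything else is a standard obstruction-theory exact-sequence chase, which is why the lemma is presented as a tool to be "repeatedly used" without much comment, and I would keep the write-up at that level of detail.
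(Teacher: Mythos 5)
There is a genuine gap here, and the tell is that your argument never uses the hypothesis that $f$ induces the zero map on homotopy groups. As written, your induction would show that \emph{every} map from a cell-even $G$-spectrum to a $G$-spectrum with vanishing odd homotopy groups is null; taking $X=Y=\underline{K^G}$ and $f=\id$ gives a contradiction, since the paper records that $\underline{K^G}$ is cell-even with only even homotopy groups. The error is the parity bookkeeping you yourself flagged. If the subquotients $X_{\alpha+1}/X_\alpha$ are to be \emph{even} cells $\bigvee_j\Sigma^{n_j}G/H_{j+}$ with $n_j$ even (which is what ``cell-even'' must mean), then the attaching spheres are \emph{odd}-dimensional, and the group through which $f|_{X_{\alpha+1}}$ factors when $f|_{X_\alpha}=0$ is $[X_{\alpha+1}/X_\alpha,Y]\cong\prod_j\pi^{H_j}_{n_j}(Y)$, a product of \emph{even} homotopy groups of $Y$, which has no reason to vanish. (Your convention, with even attaching spheres, builds a spectrum whose cells are odd.) Once the parity is corrected the inductive step no longer closes for free: $f|_{X_{\alpha+1}}$ factors through the non-split quotient $X_{\alpha+1}\to X_{\alpha+1}/X_\alpha$ via some $g\in\prod_j\pi^{H_j}_{n_j}(Y)$, and $g$ is not the value of $f_*$ on any homotopy class of $X_{\alpha+1}$, so the hypothesis on $f_*$ cannot be fed into the induction in the way you intend.

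The paper's proof takes a different and shorter route that makes each hypothesis do exactly one job. It applies the Milnor sequence directly to the given presentation $X\cong\hocolim_{i\in I}Z_i$ with each $Z_i$ itself an even cell:
$$0\to \lim^1_{i\in I}[\Sigma^{-1}Z_i,Y]\to [X,Y]\to \lim_{i\in I}[Z_i,Y]\to 0\ .$$
The image of $f$ in the $\lim$-term is the collection of restrictions $f|_{Z_i}$, and each of these \emph{is} the value of $f_*$ on the homotopy class of the structure map $Z_i\to X$, hence zero by hypothesis; so $f$ lies in the $\lim^1$-term, which vanishes because each $[\Sigma^{-1}Z_i,Y]$ is an odd homotopy group of $Y$. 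If you want to retain a skeletal induction, you must organize it so that the restriction of $f$ to each new piece is visibly a value of $f_*$ on a homotopy class --- which is precisely what indexing the homotopy colimit over the cells themselves, rather than over a skeletal filtration, accomplishes.
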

 \proof
We write $X$ as a homotopy colimit of even $G$-cells $X\cong \hocolim_{i\in I} Z_i$ and consider the Milnor
sequence
$$0\to \lim^1_{i\in I} [\Sigma^{-1}Z_i,Y]\to [X,Y]\to \lim_{i\in I}[Z_i,Y]\to 0\ .$$
Since $f$ induces the zero map in homotopy groups it comes from the $\lim^1$-term.
Since the odd-dimensional  homotopy  groups of $Y$ vanish we have  $[\Sigma^{-1}Z_i,Y]=0$ for all $i\in I$ so that
the $\lim^1$-term vanishes. It follows that $f=0$.\hB

We will represent a  $G$-equivariant cohomology
theory $h^G$ by a $G$-spectrum $\underline{h^G}$.
We start with the  $G$-ring spectrum $\underline{K^G}$ which represents
$G$-equivariant $K$-theory. It is cell-even, as it can be
  built using copies of $BU$ and has only even-dimensional
homotopy groups. 
By Lemma \ref{udiqdqwdwqd} it is uniquely determined up to unique isomorphism
\textcolor{black}{in the homotopy category we're working in}.
Since $R_\C(G):=R(G)\otimes \C$ is a flat
$R(G)$-module we get a new $K_G$-module homology theory
$$K_\C^G(M):=K^G(M)\otimes_{R(G)}R_\C(G)\cong K^G(M)\otimes_\Z \C\ .$$
 {This $G$-equivariant homology theory can be represented by 
 the  $G$-spectrum  $\underline{K^G_\C}=\underline{K^G}\wedge\bM\C$. Here $\bM\C$ is the Moore
spectrum for $\C$. In general, a Moore spectrum $\bM A$ for an abelian group $A$  can be written as a colimit over a system of  (zero-dimensional)
cells.} Therefore $\underline{K^G_\C}$ is again cell-even
and has only even-dimensional homotopy groups. The homology theory $K^G_\C$ thus determines
 the $\underline{K^G}$-module $G$-spectrum $\underline{K^G_\C}$  uniquely
 upto unique  isomorphism. The transformation $K^{G}\to K^{G}_\C$ of homology
 theories induces a morphism of $\underline{K^G}$-module $G$-spectra
$\underline{K^G}\to \underline{K^G_\C}$ which is unique, again by  Lemma \ref{udiqdqwdwqd}.
We choose an extension of this morphism to a distinguished triangle
\begin{equation}\label{iwudqwdqwdqdwqd}
\underline{K^{G} } \to\underline{K^{G}_{\C}} \to  \underline{K^{G}_{\T} }\to  \Sigma \underline{K^{G} }
\end{equation}
 which defines the $\underline{K^G}$-module $G$-spectrum    $\underline{K^{G}_{\T}}$ uniquely upto isomorphism. 
{In fact, we can write $\underline{K^{G}_{\T}}\cong \underline{K^G}\wedge\bM\T$ so that $\underline{K^{G}_{\T}}$ is
  again cell-even. Since  it has only even-dimensional homotopy groups, the  $\underline{K^G}$-module $G$-spectrum   $\underline{K^{G}_{\T}}$   is actually defined
upto unique isomorphism.}

We let
$K_G^\T$ denote the cohomology theory represented by $\underline{K^{G}_{\T}}$.
It is a $K_G$-module theory.  For trivial $G$-spaces, the
  construction and 
properties of Subsection \ref{sec:trivial_action}, Subsection
\ref{ex:int_on_trivial_action} and Subsection \ref{sec:TrG_trivial_action}
 {work also for $K_G^\T$}.

In a similar manner, if we set $K^G_\R(M):=K^G(M)\otimes \R$ and consider the distinguished triangle $$\underline{K^{G} } \to\underline{K^{G}_{\R}} \to  \underline{K^{G}_{\R/\Z} }\to  \Sigma \underline{K^{G} } \ ,$$ then
we uniquely define a $K_G$-module cohomology theory $K_G^{\R/\Z}$.

\subsubsection{}\label{idef456}

The cohomology theory $k_G^\T$ is good for the non-degenerate pairing (\ref{eiufhwiefcewe}).
On the other hand, as an immediate consequence of the fibre sequence
(\ref{iwudqwdqwdqdwqd}), 
 the cohomology theory $K_G^\T$ fits into the Bockstein sequence (\ref{udqwiduhqwuidhqwid}).
Since later in the present paper we need both properties together
we must compare the cohomology theories
$k_G^\T$ and $K_G^\T$. In the present paragraph we start with the definition of a transformation
$i\colon K_G^\T\to k_G^\T$. In Lemma \ref{uzqidhiqwdwqdw} we will give conditions under which $i$ induces an isomorphism.

We extend the cohomology theory $k_G^\T$ to $G$-spectra $X$ in the natural way by  defining
$$k_G^\T(X):=\Hom_{\Z}(K^G(X),\T)\ .$$
The evaluation between homology and cohomology extends to the
complexifications
$$eval_\C\colon K^\C_G(X)\otimes K^G(X)\to K_{G}^{\C}(*)\cong R_\C(G)\ .$$

We consider the
natural transformation between cohomology theories
$$c\colon K^\C_G(X)\to k_G^\T(X)\ , \quad K^\C_G(X)\ni x\mapsto \left\{K^G(X)\ni z\mapsto [\Tr_G(eval_\C(x\otimes z))]\in \T\right\}\ ,$$
where $\Tr_{G}$ has values in $\C$, and the brackets $[\cdots]$ denote the
class    in $\T=\C/\Z$.
The triangle
(\ref{iwudqwdqwdqdwqd}) induces a long exact sequence
$$\dots \to k_G^\T(\Sigma \underline{K^G})\to k_G^\T(\underline{K^G_\T})\stackrel{b}{\to}  k_G^\T(\underline{K^G_\C})\stackrel{a}{\to} k_G^\T(\underline{K^G})\to \dots\ .$$
We let $C:=c(\id_{\underline{K^G_\C}})\in k_G^{\T,0}(\underline{K^G_\C})$.
Since the composition
$$K_G(X)\to K^\C_G(X)\stackrel{c}{\to} k_G^\T(X)$$
vanishes we have  $a(C)=0$. Hence there exists a lift $I\in  k_G^{\T,0}(\underline{K^G_\T})$ such that
$b(I)=C$.  We claim that $k_G^{\T,0}(\Sigma \underline{K^G})=0$. The claim implies that the lift $I$ is uniquely determined.
To see the claim  
we write $\underline{K^G}$ as a homotopy colimit over even $G$-cells 
$$\underline{K^G}\cong \hocolim_{j\in J} Z_j\ .$$

We then have
\begin{eqnarray*}
 k^{\T,0}_G( \underline{\Sigma K^G})&= &\Hom_\Ab(K^G_0(\Sigma \underline{K^G}),\T)\\
&\cong &\Hom_\Ab(K^G_0  (\hocolim_{j\in J}\Sigma Z_j),\T)\\
&\cong&\lim_{j\in J}\Hom_\Ab(K^G_0 (\Sigma   Z_j),\T)\\
&=&0
\end{eqnarray*}
The element $I\in k_G^{\T,0}(\underline{K^G_\T})$ induces the desired natural
transformation of cohomology theories $i\colon K_G^\T\to k_G^\T$.
In a similar manner we define a transformation $i\colon K_G^{\R/\Z}\to k_G^{\R/\Z}$.

\subsubsection{}

We now analyse when the transformation of cohomology theories $i$ defined in
\ref{idef456} is an isomorphism.
 Let $H\subseteq G$ be a closed subgroup. Then we have 
$$K^G(G/H)\cong K^H(*)\cong R(H)\ ,\quad K_\C^G(G/H)\cong K_\C^H(*)\cong R_\C(H) ,$$
and hence,  as the homotopy groups of  our spectra are concentrated in even dimensions,
$$K^G_\T(G/H)\cong R_\C(H)/R(H)\cong R(H)\otimes \T\ .$$
Furthermore
$$k^\T_G(G/H)\cong \Hom_{\Ab}(K^G(G/H),\T)\cong \Hom_{\Ab}(K^H(*),\T)\cong \Hom_{\Ab}(R(H),\T)\ .$$  
Let $x\in K^\C_G(G/H)\cong  R_\C(H)$ and $[x]\in  K^\T_G(G/H)\cong  R_\C(H)/R(H)$ be the induced class.
Then we have for $i\colon K_G^\C(G/H)=R_\C(H)\to k_G^\T(G/H)=\Hom_{\Ab}(R(H),\T)$
\begin{equation}\label{eq:2star}
i(x)(y)=[\Tr_G(yx)]=[(y,x)],\quad \forall y\in R(H)\ .
\end{equation}
Because of  (\ref{uidweqdcqx}) the map $i$ is injective.
It is surjective if and only if $R(H)$ is  a finitely
generated abelian group, i.e.~if $H$ is finite.
\begin{lem}\label{uzqidhiqwdwqdw}
If $G$ is finite, then the transformations $i\colon K^\T_G\to k^\T_G$ and   $i\colon K^{\R/\Z}_G\to k^{\R/\Z}_G$ are equivalences of cohomology theories on finite $G$-$CW$-complexes.
If $G$ is compact and if  $M$ is a compact $G$-manifold or a compact
$G$-$CW$-complex on which $G$ acts with finite stabilizers, then
$i\colon K^\T_G(M)\to k_G^\T(M)$ and   $i\colon K^{\R/\Z}_G(M)\to k^{\R/\Z}_G(M)$  are  isomorphisms.
\end{lem}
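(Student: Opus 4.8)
The plan is to reduce the statement to a computation on orbits and then propagate it by the comparison theorem for equivariant cohomology theories. First I would record that, on the category of finite $G$-CW-complexes, both $K_G^\T(\cdot)$ and $k_G^\T(\cdot)=\Hom_\Z(K^G(\cdot),\T)$ are $G$-equivariant cohomology theories and that $i$ commutes with their connecting homomorphisms: for $K_G^\T$ this is built into the construction via the distinguished triangle (\ref{iwudqwdqwdqdwqd}); for $k_G^\T$ exactness and the conversion of the long exact $K$-homology sequences into long exact sequences follow because $\T$ is divisible, hence injective in $\Ab$, and $i$ is a natural transformation of cohomology theories by its construction from the class $I\in k_G^{\T,0}(\underline{K^G_\T})$. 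The same remarks apply with $\R/\Z$ in place of $\T$, since $\R/\Z$ is also divisible.

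The key local input is that $i$ is an isomorphism on $G/H$, in all degrees, for every \emph{finite} subgroup $H\subseteq G$. In even degree this is exactly the computation made immediately before the lemma: writing $K_G^\C(G/H)\cong R_\C(H)$ and $k_G^\T(G/H)\cong\Hom_\Ab(R(H),\T)$, the map $i$ sends $x$ to $y\mapsto[(y,x)]$, which by the nondegeneracy (\ref{uidweqdcqx}) of the form on $R(H)$ is injective and whose cokernel vanishes precisely when $R(H)$ is a finitely generated abelian group, i.e.\ when $H$ is finite; passing to the quotient $K_G^\T(G/H)\cong R_\C(H)/R(H)$ keeps it surjective and, as noted there, injective. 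In odd degree both sides vanish: the homotopy groups of $\underline{K^G}$, $\underline{K^G_\C}$ and $\underline{K^G_\T}$ are concentrated in even dimensions and $R(H)\hookrightarrow R_\C(H)$, so $K_G^\T(G/H)$ is purely even, while $k_G^\T$ in odd degree on $G/H$ is $\Hom_\Ab(K_1^H(*),\T)=0$. Since all the theories in sight are $2$-periodic, this also settles all suspensions of $G/H$.

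For $G$ finite every subgroup is finite, so $i$ is an isomorphism on all orbits in all degrees, and the comparison theorem for $G$-equivariant cohomology theories — induction over the cells of a finite $G$-CW-complex, applying the five lemma to the cofibre sequences of the skeletal filtration — yields that $i$ is an equivalence on all finite $G$-CW-complexes. For $G$ compact and $M$ a compact $G$-manifold (or compact $G$-CW-complex) with finite stabilizers, $M$ admits a finite $G$-CW structure all of whose cells have the form $G/H\times D^n$ with $H$ finite (Illman's equivariant triangulation theorem in the manifold case); I would then run the same cellular induction, the relevant subquotients being wedges of suspensions of the orbits $G/H$ with $H$ finite, on which $i$ is an isomorphism by the previous paragraph, and conclude that $i$ is an isomorphism on every skeleton and hence on $M$. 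The $K_G^{\R/\Z}$ statements follow verbatim with $\R/\Z$ replacing $\T$.

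The main obstacle I expect is not conceptual but rather making the framework airtight: one must verify that $k_G^\T$ genuinely is a cohomology theory on finite $G$-CW-complexes (the additivity axiom and the absence of $\lim^1$-obstructions are what force the restriction to finite complexes), and one must handle the compact-but-infinite $G$ case without a comparison theorem over the full orbit category — $i$ is \emph{not} an isomorphism on $G/H$ for infinite $H$ — by staying inside the class of $G$-spaces with finite isotropy and working with an explicit finite $G$-CW model so that all cells have finite stabilizer.
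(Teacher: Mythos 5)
Your proposal is correct and follows essentially the same route as the paper: verify that $i$ is an isomorphism on the orbits $G/H$ with $H$ finite (using the unimodularity of the pairing on $R(H)$ and the identification $k_G^\T(G/H)\cong\Hom_{\Ab}(R(H),\T)$), then propagate by cellular induction and Mayer--Vietoris over a $G$-CW structure on $M$ all of whose cells $G/H\times D^n$ have finite isotropy. The paper's proof is just a terser version of this; your additional remarks on odd degrees, naturality of $i$ with respect to connecting maps, and the equivariant triangulation of compact $G$-manifolds fill in details the paper leaves implicit.
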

\proof
 We only discuss the complex case. The real case is similar.
The first statement follows from the discussion above since $i$ induces an isomorphism for all $G$-cells.
 For the second observe
that a compact $G$-manifold has the structure of a $G$-$CW$-complex. We then proceed by
induction over $G$-cells which are of the form $G/H\times D^n$ with finite
$H\subset G$, using Mayer-Vietoris and again that 
$ i\colon K^\T_G(G/H) \to k_G^\T(G/H)  $
is an isomorphism for finite subgroups $H\subset G$.
\hB

\begin{kor}\label{barcel}
If $G$ is a compact group which acts on a $G$-equivariantly $K$-oriented
closed manifold  $M$ with finite stabilizers, then the pairing
$$<\cdots,\cdots>\colon K_G(M)\otimes K^\T_G(M)\xrightarrow{\cup} K^\T_G(M)\xrightarrow{f_!} R(G)\xrightarrow{\Tr_G} \T$$
is a non-degenerate pairing in the sense that
the induced map
$$K_G(M)\to \Hom_\Ab(K^\T_G(M),\T)$$
is a monomorphism, and that 
$$ \quad K^\T_G(M)\to \Hom_\Ab
(K_G(M),\T)\ ,\quad {K^{\R/\Z}_G(M)\to \Hom_\Ab( K_G(M),\R/\Z)}$$
are isomorphisms.
\end{kor}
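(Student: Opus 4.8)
The plan is to identify the pairing $<.,.>$ of the Corollary with the pairing \eqref{eiufhwiefcewe}, whose non-degeneracy is already known, by comparing the two through the isomorphism $i$ of Lemma \ref{uzqidhiqwdwqdw}. Concretely, the main step will be to show that the square
\begin{equation*}
\begin{CD}
K_G(M)\otimes K^\T_G(M) @>{<.,.>}>> \T\\
@VV{\id\otimes i}V @|\\
K_G(M)\otimes k^\T_G(M) @>{\ev}>> \T
\end{CD}
\end{equation*}
commutes, together with its $\R/\Z$-analogue obtained by replacing $\T$ with $\R/\Z$. Once this is available, the Corollary follows formally: the monomorphism $K_G(M)\hookrightarrow\Hom_\Ab(k^\T_G(M),\T)$ from \eqref{eiufhwiefcewe}, composed with the isomorphism $\Hom_\Ab(i,\T)$, is exactly the map induced by $<.,.>$, hence injective; and the isomorphisms $k^\T_G(M)\cong\Hom_\Ab(K_G(M),\T)$ and $k^{\R/\Z}_G(M)\cong\Hom_\Ab(K_G(M),\R/\Z)$ from \eqref{eiufhwiefcewe}, precomposed with $i^{-1}$, are the maps induced by $<.,.>$, hence isomorphisms.

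To reach the commutativity of the square I would first unwind the top pairing. Writing $f_!$ on the $K_G$-module theory $K^\T_G$ as the composite of cap product with the equivariant fundamental class $[M]$ followed by the homological push-forward $f_*$ along $f\colon M\to *$, and using the standard compatibilities of cup and cap products together with commutativity of $\cup$, one obtains for $x\in K_G(M)$ and $\phi\in K^\T_G(M)$
\begin{equation*}
f_!(x\cup\phi)=f_*\big((x\cup\phi)\cap[M]\big)=f_*\big(\phi\cap(x\cap[M])\big)=f_*\big(\phi\cap P(x)\big)\ ,
\end{equation*}
so that $<x,\phi>=\Tr_G\big(f_*(\phi\cap P(x))\big)$. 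Since $P$ is an isomorphism, commutativity of the square becomes equivalent to the identity
\begin{equation*}
\Tr_G\big(f_*(\phi\cap\psi)\big)=i(\phi)(\psi)\qquad\text{for all }\phi\in K^\T_G(M),\ \psi\in K^G_*(M)\ ,
\end{equation*}
which no longer involves the $K$-orientation of $M$ at all.

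The next step is to observe that both sides of the last identity, as functions of $\phi$, are natural transformations of $K_G$-module cohomology theories from $K^\T_G$ to $k^\T_G$; for the left-hand side this uses only naturality of the homological push-forward $f_*$ and of the cap product (the projection formula). Both theories have homotopy groups concentrated in even degrees and $K^\T_G$ is cell-even, so, exactly as in the proofs of Lemma \ref{uzqidhiqwdwqdw} and Lemma \ref{udiqdqwdwqd}, a Mayer--Vietoris induction over $G$-cells reduces the identity to the orbits $M=G/H$ with $H\subseteq G$ finite. On $G/H$ the map $f_*\colon K^G_*(G/H)\cong R(H)\to R(G)\cong K^G_*(*)$ is induction, the cap product is multiplication of $R_\C(H)/R(H)$ over $R(H)$, and the identity then follows from the Frobenius compatibility of $\Tr_G$ with induction (as used already in Section \ref{uwhdiodhwqidwqdwqdqwd}) combined with the orbit formula $i(x)(y)=[(y,x)]$ computed just before Lemma \ref{uzqidhiqwdwqdw}. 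The $\R/\Z$-case is handled verbatim, invoking the second isomorphism of Lemma \ref{uzqidhiqwdwqdw}.

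The hard part will be this comparison of the homotopy-theoretic transformation $i$ with the analytic pairing $\Tr_G\circ f_!\circ\cup$: $i$ is pinned down only indirectly, as the transformation induced by the universal class lifting $c(\id_{\underline{K^G_\C}})$ along the distinguished triangle \eqref{iwudqwdqwdqdwqd}, so one must check carefully that the cell-even uniqueness argument genuinely applies to the transformation built out of $f_*$ and the cap product -- in particular that it is compatible with the suspension isomorphisms of both theories -- and that the orbit-level normalisations of $[M]$, $P$, $f_!$ and $\Tr_G$ are mutually consistent. The remainder is routine manipulation with the projection formula and Poincar\'e duality.
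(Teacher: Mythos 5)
Your proposal follows exactly the paper's route: the paper's entire proof is the one-sentence assertion that under the isomorphism $i$ of Lemma \ref{uzqidhiqwdwqdw} the pairing $<.,.>$ is identified with the evaluation pairing (\ref{eiufhwiefcewe}), which is precisely what you set out to establish. Your elaboration --- reducing via the projection formula and Poincar\'e duality to the identity $\Tr_G(f_*(\phi\cap\psi))=i(\phi)(\psi)$ and then comparing the two transformations cell by cell on orbits $G/H$ with $H$ finite --- is a correct filling-in of that assertion, in the same spirit as the paper's own proofs of Lemmas \ref{udiqdqwdwqd} and \ref{uzqidhiqwdwqdw}.
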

\proof
Indeed, under the isomorphism $i\colon K^\T_G(M)\stackrel{\sim}{\to} k^\T_G(M)$ the pairing $<\cdots, \cdots>$ is identified with the evaluation pairing (\ref{eiufhwiefcewe}). \hB 

\subsubsection{}

Let $B$ be a presentable and  compact orbifold.
\begin{ddd}
We define the  flat $K$-theory of $B$  {(or its real part,
  respectively)} as the kernel of the curvature morphisms,
$$U(B):=\ker(R\colon \hat K(B)\to \Omega(LB))\ , \quad  {U^\R(B):=\ker(R\colon \hat K_\R(B)\to \Omega_\R(LB))}\ .$$
If $B=[M/G]$ for a compact Lie group $G$ acting on a compact manifold with finite stabilizers, then we will also write
$$U_G(M):=U([ M/G ])\ ,\quad {U^\R_G(M):=U^\R([ M/G ])}\ .$$
\end{ddd}
It follows from Proposition \ref{prop1} that, as always for differential
cohomology theories, $U(B)$ fits into a long  exact sequence 
$$\dots \to K^{n-1}(B)\to H_{dR}^{n-1}(LB)\to U^n(B)\to K^n(B)\to H_{dR}^n(LB)\to \dots\ .$$
If  $B=[M/G]$ is a presentation, then we use
the notation $H_G(M):=H_{dR}( L[M/G] )$ and $K_G(M)=K([M/G])$. The above long exact sequence now becomes
$$\dots \to K^{n-1}_G(M)\to H^{n-1}_G(M)\to U^n_G(M)\to K^n_G(M)\to H^n_G(M)\to \dots\ .$$

\subsubsection{}

We want to define maps
$$j\colon U_G(M)\to K^\T_G(M)\ ,\quad {j\colon U^\R_G(M)\to K^{\R/\Z}_G(M)}$$
by  constructing  the lower horizontal map in the diagrams
\begin{equation}\label{eq:1star}
\xymatrix{&K^\T_G(M)\ar[dr]^i_\cong&\\
  U_G(M)\ar[rr]^{j_G}\ar[ur]^j&&k^\T_G(M) },\quad
{\xymatrix{&K^{\R/\Z}_G(M)\ar[dr]^i_\cong&\\
    U^\R_G(M)\ar[rr]^{j_G}\ar[ur]^j&&k^{\R/\Z}_G(M) }}\ . 
\end{equation}

Their constriction involves integration 
$$\int^{\hat
    K}_{[M/G]/[*/G]}\colon U_{G}(M)\to U_{G}(*)$$
of flat classes along the
map $[M/G]\to [*/G]$. In order to define this integration we
  first chose a differential refinement of the topological $K$-orientation of
  this map and then use the integration in differential $K$-theory given in
  Definition \ref{ddd1}. By (\ref{lowersq}) the integral 
 preserves the flat subgroup. Moreover, as a consequence of homotopy invariance, the integral of  flat differential $K$-theory classes does only depend on the underlying topological $K$-orientation of the map and not on its differential refinement.
 
In order to stay in the category of orbifolds for $[*/G]$ we must assume that $G$ is a {\em finite} group.
 We set for $\xi\in K^G(M)$, $u\in U_G(M)$
 \begin{equation}
j_G(u)(\xi):=\Tr_G\left(\int^{\hat K}_{[M/G]/[*/G]} u\cup \widehat{P^{-1}(\xi)}\right)\in  \T\ .\label{eq:22star}
\end{equation}

Here
$\widehat{P^{-1}(\xi)}\in \hat K_G(M)$ denotes a differential
refinement of the Poincar\'e dual of $\xi$. Its product
with the flat class $u$ is again a flat class which does not depend on the choice of the differential refinement of $P^{-1}(\xi)$. Furthermore note that the integral has values in
$ U([*/G])\cong \Omega(L[*/G])/\im(\ch)$ (this group \textcolor{black}{is
  concentrated} in odd degree).  Finally, $\Tr_{G}$ is the factorization 
(\ref{idwqdwqd}) of the trace map.

In the following we
 indicate by a superscript  in which theory the integration is understood.
 It is easy to see that $j_G$ restricts to the real parts.

\begin{theorem}\label{uiudqdqwdwq}
Assume that $G$ is a finite group, and that $M$ is a $G$-equivariantly $K$-oriented closed $G$-manifold. Then the maps
$$j\colon U_G(M)\to K^\T_G(M)\ ,\quad {j\colon U^\R_G(M)\to K^{\R/\Z}_G(M)}$$
 are isomorphisms.
 \end{theorem}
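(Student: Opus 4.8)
The plan is to identify $U_G(M)$ with $K^\T_G(M)$ (resp. $U^\R_G(M)$ with $K^{\R/\Z}_G(M)$) by a five–lemma comparison of the long exact sequence of the differential extension $\hat K$ with the Bockstein sequence (\ref{udqwiduhqwuidhqwid}). Concretely, one places the sequence attached to $\hat K$,
$$\cdots\to K^{*-1}_G(M)\xrightarrow{\ch_{dR}} H^{*-1}_G(M)\xrightarrow{a} U^*_G(M)\xrightarrow{I} K^*_G(M)\xrightarrow{\ch_{dR}} H^*_G(M)\to\cdots\ ,$$
(with $H^*_G(M)=H(L[M/G])$) over the Bockstein sequence
$$\cdots\to K^{*-1}_G(M)\to K^{\C,*-1}_G(M)\to K^{\T,*-1}_G(M)\xrightarrow{\beta} K^*_G(M)\to K^{\C,*}_G(M)\to\cdots\ ,$$
with the first and fourth columns the identity on $K^*_G(M)$, and the second and fifth columns the delocalized Chern character isomorphism $H^*_G(M)\cong K^*_G(M)\otimes_\Z\C=K^{\C,*}_G(M)$ (which is an isomorphism since $G$ is finite and $M$ compact — the classical computation of $K^*_G(M)\otimes\C$ by delocalized cohomology; see \cite{math.KT/0609576}). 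On the image of $K^*_G(M)$ this isomorphism intertwines $\ch_{dR}$ with the rationalization $K^*_G(M)\to K^{\C,*}_G(M)$ used in (\ref{iwudqwdqwdqdwqd}), so the two left squares of the ladder commute. The middle column is $j$; once the two squares adjacent to it commute, the five lemma gives that $j$ is an isomorphism.

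The substantial step is therefore to verify that $j$ is a morphism of long exact sequences, i.e. $\beta\circ j=I$ and $j\circ a=(\text{Bockstein})\circ\ch_{dR}^{-1}$. Since $G$ is finite and $M$ is compact, Lemma \ref{uzqidhiqwdwqdw} says the comparison transformation $i\colon K^\T_G\to k^\T_G$ is an isomorphism, so it is equivalent to check these identities after applying $i$, i.e. for $j_G=i\circ j\colon U_G(M)\to k^\T_G(M)=\Hom_\Ab(K^G(M),\T)$ with its defining formula $j_G(u)(\xi)=\Tr_G\bigl(\int^{\hat K}_{[M/G]/[*/G]}u\cup\widehat{P^{-1}(\xi)}\bigr)$. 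Here one unwinds the formula using the structural compatibilities of Section \ref{idwiqdqwwd}: $\hat p_!$ commutes with $I$, with $a$, and with $R$ (Proposition \ref{mainprop}); $I$ and $R$ are multiplicative and $a(\alpha)\cup x=a(\alpha\wedge R(x))$ (Lemma \ref{rring}); and $\Tr_G$ on the flat part $U([*/G])\cong\Omega(L[*/G])/\im(\ch)$ is the map (\ref{idwqdwqd}).

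For the $I$–square, applying $I$ to $\int^{\hat K}_{[M/G]/[*/G]}u\cup\widehat{P^{-1}(\xi)}$ and using $I\circ\hat p_!=p_!\circ I$ together with multiplicativity of $I$ reduces the claim to the topological statement that the $K_G$–push-forward of $I(u)\cup P^{-1}(\xi)$ to $[*/G]$, read through the trace, is $\beta$ applied to $j(u)$ — which is exactly the description of $\beta$ coming from the triangle (\ref{iwudqwdqwdqdwqd}) combined with the perfectness of the trace pairing in Corollary \ref{barcel}. For the $a$–square one takes a closed form $\omega$, writes $a(\omega)\cup\widehat{P^{-1}(\xi)}=a(\omega\wedge\ch_{dR}(P^{-1}(\xi)))$ by Lemma \ref{rring}, pushes forward using $\hat p_!\circ a=a\circ p^o_!$ from Proposition \ref{mainprop}, and is left with the delocalized cohomological index formula for $p^o_!(\omega\wedge\ch_{dR}(y))$, i.e. Theorem \ref{thm2} in the guise of the delocalized Atiyah–Singer formula, matched against the description of $i$ in terms of $\Tr_G$ and $\ch$. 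The real statement is identical, replacing $\C$ by $\R$, $\T$ by $\R/\Z$, $K^\C$ by $K^\R$ and $k^\T$ by $k^{\R/\Z}$, and using the real parts of all transformations and of Corollary \ref{barcel}.

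I expect the main obstacle to be precisely this last translation: identifying the differential push-forward $\int^{\hat K}_{[M/G]/[*/G]}$, restricted to flat classes and composed with $\Tr_G$, with the $K^\T_G$–theoretic push-forward under the non-degenerate pairing of Corollary \ref{barcel}. This is where the genuinely new equivariant input enters — the perfectness of the trace pairing on $R(G)$ and the fact that $i$ is an isomorphism (Lemma \ref{uzqidhiqwdwqdw}) — whereas in the non-equivariant case \cite[Prop.\ 2.25]{bunke-2007} the analogous comparison is essentially tautological. Care is also needed to keep the degree conventions ($\Z/2\Z$–graded) consistent across the two long exact sequences and to check that $j$ respects the connecting homomorphisms, not just the squares displayed above.
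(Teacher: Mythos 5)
Your overall strategy --- the five--lemma ladder comparing the exact sequence of $\hat K$ with the Bockstein sequence \eqref{udqwiduhqwuidhqwid}, with $j$ in the middle and the delocalized Chern character supplying the outer isomorphisms --- is exactly the paper's (diagram \eqref{uwhfweffwfwf} and Lemma \ref{udqwdqd}), and your treatment of the square involving $a$ matches the paper's computation via Lemma \ref{rring} and Proposition \ref{mainprop}.

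The gap is in the square $\delta\circ j=\beta$ (your ``$I$--square''). The proposed reduction --- apply $I$ to $\int^{\hat K}_{[M/G]/[*/G]}u\cup\widehat{P^{-1}(\xi)}$, use $I\circ\hat p_!=p_!\circ I$ and multiplicativity, and read off $\beta$ from the triangle plus the perfectness of the trace pairing --- does not work. For $u\in U_G(M)$ the class $I(u)=\beta(u)$ is torsion in $K_G(M)$, so $p_!\bigl(I(u)\cup P^{-1}(\xi)\bigr)$ is a torsion element of the torsion-free group $R(G)$, hence zero: applying $I$ destroys exactly the information this square is about. More fundamentally, the Bockstein $\delta\colon K^\T_G(M)\to K_G(M)$ is defined homotopy-theoretically by the triangle \eqref{iwudqwdqwdqdwqd} and is not computed by the evaluation pairing on $k^\T_G$, so knowing the values $j_G(u)(\xi)\in\T$ does not by itself identify $\delta(j(u))$. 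The paper's proof of this square (paragraph \ref{sec:finally-we-show}) uses a genuinely different mechanism: each torsion class $t\in K_G(M)$ with $nt=0$ is realized as $\int_{S^1}(\pi\times\id)^*$ of classes on $M_n\times M$, where $M_n$ is a smooth Moore space for $\Z/n\Z$ built from the degree-$n$ self-map of $S^1$ as in \eqref{zuidudwqdiuqwdiuhi}; one produces lifts $\hat z\in U_G(M_n\times M)$ and $\tilde z\in K^\T_G(M_n\times M)$ of the same torsion class, pushes both down to $M$ to get $\hat t$ and $\tilde t$, and proves $j(\hat t)=\tilde t$ by comparing their pairings with all $\xi\in K^G(M)$ --- a comparison that reduces, via the map $I_n$, to the already known non-equivariant identity $j\circ\delta=\beta$ on $S^1$ from \cite{bunke-2007}, and only then invokes the non-degeneracy of the pairing. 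Without this construction (or a substitute) the third square, and hence the theorem, is not established; this is the ``fundamental new idea'' the introduction alludes to, and your sketch stops exactly where it would have to begin.
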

\proof  
We discuss the complex case. The real case is similar.
Since $[M/G]$ is a good orbifold,
the Chern character induces an isomorphism (see \cite{bc})
$$\ch_G\colon K_G^\C(M)\stackrel{\sim}{\to} H_G(M)\ .$$
We consider the following diagram with
exact horizontal sequences
\begin{equation}\label{uwhfweffwfwf}
\xymatrix{K_G(M)\ar@{=}[d]\ar[r]^{\ch_G}& H_G(M)\ar[r]^{{-a}}&U_G(M)\ar[d]^j\ar[r]^\beta&K_G(M)\ar[r]^{\ch_G}\ar@{=}[d]&H_G(M)\\K_G(M)\ar[r]& K^\C_G(M)\ar[u]_\cong^{\ch_G}\ar[r]&K^\T_G(M)\ar[r]^\delta&K_G(M)\ar[r]&K^\C_G(M)\ar[u]^{\ch_G}_\cong}\ .
\end{equation}

\begin{lem}\label{udqwdqd}
The diagram commutes.
\end{lem}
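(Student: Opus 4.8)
The plan is to check the four squares of (\ref{uwhfweffwfwf}) separately; two of them are formal and two carry the content. The two outer squares --- the one bordered by the identity of $K_G(M)$ on one side and by $\ch_G\colon K^\C_G(M)\to H_G(M)$ on the other, and its mirror --- commute by the very construction of the complexified theory: the natural map $K_G(M)\to K^\C_G(M)$ is extension of scalars $R(G)\to R_\C(G)$, the Chern character on $K^\C_G(M)=K_G(M)\otimes_\Z\C$ is the $\C$-linear extension of $\ch_G\colon K_G(M)\to H_G(M)$, and the leftmost map of the top row is $\ch_{dR}=\ch_G$ by definition of the long exact sequence of the flat theory $U$. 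So the single identity $\ch_G\circ(\text{complexification})=\ch_G$ yields both outer squares.

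The square with $a$ on top and the Bockstein map $q\colon K^\C_G(M)\to K^\T_G(M)$ on the bottom (right vertical $j$, left vertical $\ch_G$) I would treat by unwinding the definition of $j$. Recall $j=i^{-1}\circ j_G$, where $i\colon K^\T_G\to k^\T_G$ is the isomorphism of Lemma \ref{uzqidhiqwdwqdw} (available because $G$ is finite) and $j_G(u)(\xi)=\Tr_G\big(\int^{\hat K}_{[M/G]/[*/G]}u\cup\widehat{P^{-1}(\xi)}\big)$. Given $x\in K^\C_G(M)$ with $h:=\ch_G(x)$, first compute $a(h)\cup\widehat{P^{-1}(\xi)}=a\big(h\wedge R(\widehat{P^{-1}(\xi)})\big)$ by Lemma \ref{rring}, observe that $R(\widehat{P^{-1}(\xi)})$ represents $\ch_{dR}(P^{-1}(\xi))$, push down along $[M/G]\to[*/G]$ using the compatibility of $\hat p_!$ with $a$ (Proposition \ref{mainprop}), and apply the factored trace (\ref{idwqdwqd}). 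After the identification $\Omega(L[*/G])\cong\C[G]^G\cong R_\C(G)$ this exhibits $j(a(h))$ as the functional $\xi\mapsto[\Tr_G(eval_\C(x\otimes\xi))]$ on $K^G_*(M)$, which is exactly $c(x)=i(q(x))$ by the construction of $i$ (the relation $i\circ q=c$); since $i$ is an isomorphism we get $j\circ a\circ\ch_G=q$. For the square with $\beta=I|_{U_G(M)}$ on top and the topological Bockstein $\delta\colon K^\T_G(M)\to K_G(M)$ on the bottom, I would use that an element of $K_G(M)$ is detected by the evaluation pairing with $K^\T_G(M)$ (Corollary \ref{barcel}); together with the compatibility of $\hat p_!$ with $I$ and with $\cup$ (Proposition \ref{mainprop}, projection formula Proposition \ref{projcl}) this reduces $\delta(j(u))=\beta(u)$ to the purely topological statement that $\delta\circ i^{-1}\colon k^\T_G(M)\to K_G(M)$ is the connecting map dual to complexification $K_G\to K^\C_G$, which holds by construction of the triangle (\ref{iwudqwdqwdqdwqd}) and of $i$.

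The main obstacle I anticipate is the bookkeeping in the $a$-square: one must verify that after integration over $[M/G]\to[*/G]$ the differential-geometric ingredients --- the local index form $\hA^c(o)$ of the $K$-orientation, the curvature $R(\widehat{P^{-1}(\xi)})$, and the normalization constants entering $\Tr_G\colon\Omega(L[*/G])\to\C$ --- reassemble exactly into $[\Tr_G(f_!(\ch_{dR}(x)\cdot\ch_{dR}(P^{-1}(\xi))))]\in\T$, so that the result is genuinely $c(x)$. This is where the local index theorem (Theorem \ref{thm2}) and the commuting squares of Proposition \ref{mainprop} do the real work; it is also the only place where the delocalized Chern form and the trace on $\Omega(L[*/G])\cong\C[G]^G$ enter in an essential way. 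Modulo this, the argument is the straightforward orbifold translation of the corresponding computation in \cite{bunke-2007}.
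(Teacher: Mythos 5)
Your treatment of the first, second and fourth squares of \eqref{uwhfweffwfwf} is sound and coincides with the paper's: the outer squares are formal, and for the square involving $a$ your unwinding of $j_G(a(\ch_G(x)))(\xi)$ via Lemma \ref{rring}, Proposition \ref{mainprop} and the factored trace \eqref{idwqdwqd} reproduces the paper's computation showing that $j\circ a\circ \ch_G$ agrees with the natural map $K^\C_G(M)\to k^\T_G(M)$.

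The gap is in the third square, $\delta\circ j=\beta$, which is precisely the part the paper calls ``surprisingly complicated''. Your proposed reduction --- that non-degeneracy of the pairing (Corollary \ref{barcel}) plus compatibility of $\hat p_!$ with $I$ and $\cup$ reduces the claim to a ``purely topological statement'' about $\delta\circ i^{-1}$ --- does not go through as stated. Non-degeneracy lets you \emph{identify} an element of $K^\T_G(M)$ once you know all its pairings, but to compute $\delta(j(u))$ you must first exhibit a concrete topological class $\tilde t\in K^\T_G(M)$ with known image under $\delta$ and then prove $j(u)=\tilde t$ by matching pairings; the difficulty is entirely in producing such a $\tilde t$ and in computing the analytically defined functional $j_G(u)$ on classes $u$ that are \emph{not} in $\im(a)$ (on $\im(a)$ the second square already settles everything, but $\delta\circ j=\beta$ is vacuous there since $\beta\circ a=0$). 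There is no adjunction making $\delta$ self-adjoint for the evaluation pairing that would let you move $\delta$ to the other side. The missing idea in the paper's proof is the Moore-space construction: since $\im(\beta)$ is the torsion of $K_G(M)$, one fixes $t$ with $nt=0$, realizes $\ori_{S^1}\times t$ as $(\pi\times\id)^*$ of a class $z$ on $M_n\times M$ (with $M_n$ the mapping cone of the degree-$n$ self-map of $S^1$), lifts $z$ both to $\hat z\in U_G(M_n\times M)$ and $\tilde z\in K^\T_G(M_n\times M)$, and defines $\hat t$ and $\tilde t$ by integration over $S^1$. One then checks that the resulting classes $\hat t$, together with $\im(a)$, generate $U_G(M)$, and that both $\Tr_G\circ\int\hat t\cup\widehat{P^{-1}(\xi)}$ and $\Tr_G\circ\int\tilde t\cup P^{-1}(\xi)$ equal $I_n\bigl(\Tr_G\circ\int_{M_n\times M/M_n}(z\cup\pr_M^*(P^{-1}(\xi)))\bigr)$, where $I_n$ is pinned down by the already-known non-equivariant identity $j\circ\delta=\beta$ for $S^1$ from \cite{bunke-2007}. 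Only after this does non-degeneracy give $j(\hat t)=\tilde t$ and hence $\delta(j(\hat t))=t=\beta(\hat t)$. Without this construction (or a substitute for it) the third square remains unproved.
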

If we assume this lemma it follows \textcolor{black}{from} the Five Lemma that
$j$ is an isomorphism.
\hB

 Note that all terms in \eqref{uwhfweffwfwf} are $K_G(M)$-modules and all
transformations are $K_G(M)$-module maps. Moreover, all transformations are
compatible with integration. We will use these facts in the proof of Lemma
\ref{udqwdqd} which occupies the rest of the present Subsection.
 
{ 
The guiding idea of our proof of the most complicated part, the equality \textcolor{black}{$\delta\circ j=\beta$},  is the following.
Morally, we will show how to realize all relevant classes as push-forwards
of classes on $M_n\times M$ along the projection to $M$,
where $M_n$ is the Moore space for $\Z/n\Z$. We will see that the equality 
$\delta\circ j=\beta$ for $M_n\times M$ 
implies the equality for $M$.  
Using the compatibility of the maps with integration and cup products, by integration over $M$
we can reduce to the equality in the non-equivariant case for $M_n$. Indeed, the non-equivariant case is already known from \cite{bunke-2007} or \cite{bunke-2009}.
Since  $M_n$ is the mapping cone of the self map of degree $n$ of $S^1$  and not a closed 
manifold, technically we  will use $S^1$ instead.}
 

\subsubsection{}
 We now give the details of the proof of Lemma \ref{udqwdqd}.
It is clear that the first and the fourth square commute.
Next we show that the second square commutes.

We consider a class  $x\in K^\C_G(M)$.
We must show the equality $- j_G(a(\ch_G(x)))=\phi(x)$, where 
$\phi\colon K^\C_G(M)\to k^\C_G(M)\to k^\T_G(M)$ is the natural map (denoted
by $c$ in \ref{idef456}).
To this end  we compare  the evaluations of both sides at a homology class
$\xi\in K^G(M)$. We have 
\begin{eqnarray*}
\phi(x)(\xi)&=&\Tr_G[\int_M^{K_G} x\cup P^{-1}(\xi)]_{\T}\\
&=&\Tr_G[\int^{H_G}_{L[M/G]/L[*/G]} \hA^c_\rho(LM) \cup \ch_G(x)\cup \ch_G(P^{-1}(\xi))]_{\T}\\
&=&{-}\Tr_G[\int_{[M/G]/[*/G]}^{\hat K} a(\ch_G(x)\cup \ch_G(P^{-1}(\xi)))]_{\T}\\
&=&{-}\Tr_G[\int^{\hat K}_{[M/G]/[*/G]} a(\ch_G(x))\cup \widehat{P^{-1}(\xi)}]_{\T}\\
&=&-j_G(a(\ch_G(x)))(\xi)\ .
 \end{eqnarray*}

\subsubsection{}\label{sec:finally-we-show}
Finally we show that the third square in (\ref{uwhfweffwfwf}) commutes.
The argument is surprisingly complicated.
First of all note that $\im(\beta)=K_G^{tors}(M)\subseteq K_G(M)$ is the torsion subgroup. Let 
$t\in K_G^{tors}(M)$.  Then there exists an integer $n\in \nat$ such that $n t=0$.
 
Let $f\colon S^1\to S^1$ be  the covering of degree $n$.
We form the mapping cone sequence
\begin{equation}\label{zuidudwqdiuqwdiuhi}
\xymatrix{S^1\ar[r]^f&S^1\ar[r]\ar[dr]^\pi&C(f)\\&&M_n\ar[u]^\sim}\ ,
\end{equation}
where $M_n$ is a compact manifold with boundary which
 is homotopy equivalent to the cone $C(f)$. It is a smooth model of the Moore space of $\Z/n\Z$.
Using the long exact sequences of reduced cohomology and $K$-theory
$$\tilde H(S^1)\stackrel{n}{\leftarrow} \tilde H(S^1)\stackrel{\pi^*}{\leftarrow} \tilde H(M_n)\stackrel{\delta}{\leftarrow},\qquad
\tilde K(S^1)\stackrel{n}{\leftarrow} \tilde K(S^1)\stackrel{\pi^*}{\leftarrow} \tilde K(M_n)\stackrel{\delta}{\leftarrow}$$
we get
$$\tilde H^*(M_n)\cong 0\ ,\quad  H^*(M_n)\cong\left\{ \begin{array}{cc} \C &{*}=0\\0&
{*}\ge 1\end{array} \right. \ .$$
and
$$\tilde K^*(M_n)\cong \left\{ \begin{array}{cc} \Z/n\Z &{*}=0\\ 0&{*}=1\end{array} \right. \ ,\quad   K^*(M_n)\cong \left\{ \begin{array}{cc} \Z/n\Z\oplus \Z &{*}=0\\ 0&{*}=1\end{array} \right.\ .$$
This implies that
$$U^0(M_n)\cong  \Z/n\Z\ ,\quad U^1(M_n)\cong \T\ .$$
In particular, we see that
$\beta\colon U^0(M_n)\to K^{0,tors}(M_n)$ is
an isomorphism. 

We now analyse the map
$\pi^*\colon U^0(M_n)\to U^0(S^1)\cong \T$.
We know from \textcolor{black}{\cite[Section 2.5.4]{bunke-2007} and
  \cite[Section 7]{bunke-2009}}
that the map $j\colon U\to K^\T$ induces an isomorphism
of reduced cohomology theories (i.e.\ the non-equivariant version of the Theorem \ref{uiudqdqwdwq} holds).
Since $U$ is a reduced cohomology theory
we have a mapping cone sequence
$$U^0(S^1)\stackrel{n}{\leftarrow} U^0(S^1)\stackrel{\pi^*}{\leftarrow}  U^0(M_n)\stackrel{\delta}{\leftarrow} U^1(S^1)\stackrel{1}{\leftarrow}U^1(S^1)\ ,$$
where we use the known actions of $f^*$ on
$U^0(S^1)\cong H^1(S^1,\Z)\otimes \T$ and
$U^1(S^1)\cong H^0(S^1,\Z)\otimes \T$.
We get 
$$\xymatrix{0\ar[r]&U^0(M_n)\ar[d]^\cong\ar[r]&U^0(S^1)\ar[d]^\cong\ar[r]&U^0(S^1)\ar[d]^\cong\\0\ar[r] & \Z/n\Z\ar[r] &\T \ar[r]^n&\T}\ .$$
In particular we see that the composition
$$I_n:=\int_{S^1}^{U}\circ \pi^*\circ \beta^{-1}\colon \Z/n\Z\cong
K^{\textcolor{black}{0,tors}}(M_n)\to \T\cong U^{-1}(*)$$ is the usual embedding
$\Z/n\Z\hookrightarrow \T$.
Note that in the non-equivariant case we have
$ \delta\circ j=\beta$. Therefore, we also have
$$I_n=\int_{S^1}^{K^\T}\circ \pi^*\circ \delta^{-1}\ .$$

The product of the mapping cone sequence (\ref{zuidudwqdiuqwdiuhi}) with $M$ induces  a long exact sequence
\begin{multline}K_G(S^1\times M,*\times M)\stackrel{(f\times
\id)^*}{\leftarrow} K_G(S^1\times M,*\times M)\\
\stackrel{(\pi\times
\id)^*}{\leftarrow} K_G(M_n\times M,*\times
M)\stackrel{\delta_1}{\leftarrow}K_G(S^1\times M,*\times M)\end{multline}
in equivariant $K$-theory.
Note that $K_G(M_n\times M,*\times M)$
is a torsion group which is a summand in
\begin{equation}\label{wherezis}K_G(M_n\times M)\cong  K_G(M_n\times M,*\times M)\oplus K_G(M)\ .\end{equation}
Further note that
$$K_G(S^1\times M,*\times M)\oplus K_G(M)\cong K_G(S^1\times M)\ .$$
We now consider,  with $t\in K_G^{tors}(M)$ chosen above and $\ori_{S^{1}}\in K^{1}(S^{1})\cong \Z$
the $K$-orientation of $S^{1}$,
$$\ori_{S^1}\times t\in K_G(S^1\times M,*\times M)\ .$$
Since
$$(f\times \id)^*(\ori_{S^1}\times t)=n\cdot \ori_{S^1}\times t=\ori_{S^1}\times nt=0$$
we can choose a class
\begin{equation}\label{eq:z}
z\in  K_G(M_n\times M,*\times M)
\end{equation}

such that
$(\pi\times \id)^*(z)=\ori_{S^1}\times t$.
Since $  K_G(M_n\times M,*\times M)$ is a torsion group, we can further find
an element
$\hat z\in U_G(M_n\times M)$ such that
$\beta(\hat z)=z$.
Since $\beta$ is natural we have
$$\beta\circ (\pi\times \id)^*(\hat z)=\ori_{S^1}\times t\ .$$
Furthermore, we know that $\beta$ intertwines
$\int^{U_G}$ and $\int^{K_{G}}$.
Therefore we have
$$\beta\circ \int^{U_G}_{[S^1\times M/G]/[M/G]}\circ (\pi\times \id)^*(\hat z)=t\ .$$
We define
$$\hat t:= \int^{U_G}_{[S^1\times M/G]/[M/G]}\circ (\pi\times \id)^*(\hat z)\in U_G(M)\ .$$
\textcolor{black}{Because $\im(\beta)=K_G^{tors}$ and $\im(\alpha)=\ker(\beta)$,
  if} we let $t$ run over all torsion classes in $K_G(M)$, then 
the set of corresponding $\hat t\in U_G(M)$ generates
$U_G(M)/\im(a)$.
Therefore, in order to show that the third square in (\ref{uwhfweffwfwf}) commutes, it suffices to show that
$\beta(\hat{t})=\delta(j(\hat{t}))$ for all these classes.

 Let us for the moment assume that the degree of $t$ has the opposite parity as $\dim(M)$.
We calculate, using \textcolor{black}{functoriality of integration, the
  projection formula, and Subsection \ref{sec:TrG_trivial_action}},
\begin{eqnarray}\label{www1122}
 \Tr_G\circ  \int^{U_G}_{[M/G]/[*/G]} \hat t&=& \Tr_G\circ \int^{U_G}_{[S^1\times M/G]/[*/G]} 
 (\pi\times \id)^*(\hat z)\\ 
&=&  \Tr_G\circ \int_{[S^1/G]/[*/G]}^{U_G}\circ \int_{[S^1\times
  M/G]/[S^1/G]}^{U_G} (\pi\times \id)^*(\hat
z)\nonumber\\
&=&\int_{ S^1}^{U}\circ \pi^*\circ  \Tr_G\circ \int_{ [M_n\times
  M]/[M_n/G]}^{U_G} \hat z\nonumber\\&=&
\int_{ S^1}^{U}\circ \pi^*\circ \beta^{-1}\circ \Tr_G\circ \beta\circ \int_{[M_n\times M/G]/[M_n/G]}^{U_G} \hat z\nonumber\\&=&
\int_{ S^1}^{U}\circ \pi^*\circ \beta^{-1}\circ \Tr_G\circ \int_{M_n\times M/M_n}^{K_G} \beta(\hat z)\nonumber\\
&=& \int_{ S^1}^{U}\circ \pi^*\circ \beta^{-1}\circ \Tr_G\circ \int_{M_n\times M/M_n}^{K_G} z\nonumber\\
&=&I_n\left(\Tr_G\circ \int_{M_n\times M/M_n}^{K_G} z\right)\ .\nonumber
\end{eqnarray}

We also know that $\im(\delta)$ is the torsion subgroup. Therefore we can find
$\tilde z\in K^\T_G(M_n\times M)$ such that $ \delta(\tilde z)=z$, where here
$\delta\colon K^{\T}_{G}(M_{n}\times M)\to K_{G}(M_{n}\times M)$ and we
consider $z\in K_{G}(M_{n}\times M)$ using
(\ref{wherezis}). \textcolor{black}{Since $\delta$ is a natural transformation
  we} have 
$$\delta\circ (\pi\times \id)^*(\tilde z)=\ori_{S^1}\times t\ .$$
Furthermore, we have
$$\delta\circ \int_{S^1\times M/M}^{K^{ \T}_G}\circ (\pi\times \id)^*(\tilde z)=t\ .$$
We define $$\tilde t:= \int_{S^1\times M/M}^{K^{\T}_G}\circ (\pi\times \id)^*(\tilde z)\ .$$
Then we have, \textcolor{black}{using the same rules as above},
\begin{eqnarray}\label{www112}
\Tr_G\circ\int^{ K^\T_G}_{M} \tilde t&=&\Tr_G\circ \int^{K^\T_G}_{S^1\times M} 
 (\pi\times \id)^*(\tilde  z)\\
&=&\Tr_G\circ \int_{S^1}^{K^\T_G}\circ \int_{S^1\times M/S^1}^{K^\T_G}
(\pi\times \id)^*(\tilde z)\nonumber\\
&=&\int_{S^1}^{K^\T}\circ \pi^*\circ\Tr_G\circ
\int_{M_n\times M/M_n}^{K^{\textcolor{black}{\T}}_{G}} \tilde  z\nonumber\\
&=& 
\int_{S^1}^{K^\T}\circ \pi^*\circ \delta^{-1}\circ
\Tr_G\circ \delta\circ \int_{M_n\times M/M_n}^{K^{\textcolor{black}{\T}}_G} \tilde  z\nonumber\\
&=&
\int_{S^1}^{K^\T}\circ \pi^*\circ \delta^{-1}\circ \Tr_G\circ \int_{M_n\times M/M_n}^{K_G} \delta(\tilde  z)\nonumber\\
&=& \int_{S^1}^{U}\circ \pi^*\circ \delta^{-1}\circ \Tr_G\circ \int_{M_n\times M/M_n}^{K_G} z\nonumber\\
&=&I_n\left(\Tr_G\circ \int_{M_n\times M/M_n}^{K_G} z\right)\ .\nonumber
\end{eqnarray}

 Let us now go back to consider $t$ of arbitrary parity.
We finally show that
$\delta \circ j(\hat t)=t$.  Because of the $K_G(M)$-module structure,
in the calculation above we can replace $t$ by
$t\cup \pr_M^*(P^{-1}(\xi))$ for  $\xi\in K^G(M)$.
Then
$\hat t$, $\tilde t$ and $z$  get replaced by 
$\hat t\cup \widehat{P^{-1}(\xi)}$,
$\tilde t\cup P^{-1}(\xi)$ and
$z\cup \pr_M^*(P^{-1}(\xi))$.
For all $\xi\in K^G(M)$
such that $\deg(\xi)+\deg(t)\equiv \dim(M)+1$ we
therefore have 
\begin{eqnarray*}
 i(j(\hat t))(\xi)&\stackrel{\textcolor{black}{\eqref{eq:1star}}}{=}&j_G(\hat
 t)(\xi)\\&\stackrel{}~\eqref{eq:22star}{=}&  \Tr_G\circ \int^{U_G}_{[M/G]/[*/G]} \hat t\cup
 \widehat{P^{-1}(\xi)}\\&\stackrel{(\ref{www1122})}{=}&
I_n( \Tr_G\circ \int^{K_G}_{M_n\times M/M_n}(z\cup \pr_M^*(P^{-1}(\xi))))\\&\stackrel{(\ref{www112})}{=}&
  \Tr_G\circ 
\int^{ K^\T_G}_{M} \tilde t\cup
P^{-1}(\xi)\\&\stackrel{\textcolor{black}{\eqref{eq:2star}}}{=}&
i(\tilde t)(\xi) \ .
\end{eqnarray*}
 Since the pairing $k^\T_G(M)\otimes K^G(M)\to \T$
is non-degenerate \textcolor{black}{and $i$ is an isomorphism}, $j(\hat t)=\tilde t$, and consequently
$\delta\circ j(\hat t)=\delta(\tilde t)=t=\beta(\hat t)$. This finishes the
proof of Lemma  \ref{udqwdqd}.
\hB 

\subsection{Non-degeneracy of the intersection pairing}

\subsubsection{}
In this subsection we introduce the notion of a differential $K$-orientation
of an orbifold $B$ (Definition  \ref{uiwqdqwdwqdqwd}) and construct
intersection pairings (Proposition \ref{uidqwdqwdqwdd})  
$$\hat K(B)\otimes \hat K(B)\to \T\ , \quad {\hat K_\R(B)\otimes \hat
  K_\R(B)\to \R/\Z}$$ for a compact differentially $K$-oriented orbifold $B$. 
The main result is Theorem \ref{trezreer} which states that the intersection pairing 
is non-degenerate.

\subsubsection{}

\newcommand{\av}{{\tt av}}
In the following, for a possibly inhomogeneous element $x\in \hat K([*/G])$
\textcolor{black}{we} let $x^1\in \hat K^1([*/G])$ denote \textcolor{black}{the
  component of degree $1$}.

As in \ref{zeuirerr}, we let $G$ be a finite group, $H\subseteq G$ be a normal subgroup, and we define $K:=G/H$. 
We assume that $N$ is a $G$-manifold \textcolor{black}{such that the action of $H$ is free},  and we define the $K$-manifold $M:=N/H$.
In addition we  assume that the locally trivial bundle of orbifolds $f^{G}\colon [N/G]\to [*/G]$ with fibre $N$ has a differential $K$-orientation. This differential $K$-orientation is given by certain data on $[N/G]$ (see \ref{pap201}) which in view of the equivalence $\pi\colon [N/G]\stackrel{\sim}{\to} [M/K]$ induces the data of an induced  differential $K$-orientation on  the locally trivial bundle of orbifolds $f^{K}\colon [M/K]\to [*/K]$ with fibre $M$. Hence the integration maps
$\hat f_!^G$ und $\hat f_!^K$  are defined.

\subsubsection{}

We define the average
$$\xymatrix{\C[G]^{G}\ar[d]^{\cong}\ar[r]^{\av^{H}}&C[K]^{K}\ar[d]^{\cong}\\\Omega(L[*/G])\ar[r]^{\av^{H}}& \Omega(L[*/K])}$$ 
over $H$-orbits by
$$\av^H(f)(Hg):=\frac{1}{|H|}\sum_{h\in H} f(hg)\ .$$
If $V$ is a complex representation of $G$ with character
$\chi_V$, then
$\av^H(\chi_V)$ is the character of the subspace of $H$-fixed points
$V^H\subseteq V$, considered as a representation of $K$. Therefore the left
square in
$$\xymatrix{R(G)\ar[d]^{\inv^H}\ar[r]&\Omega(L[*/G]) \ar[d]^{\av^H}\ar[r]&\hat K^1([*/G])\ar@{.>}[d]^{\av^H}\ar[r]&0\\
R(K)\ar[r]&\Omega(L[*/K])\ar[r]&\hat K^1([*/K])\ar[r]&0}$$
commutes, and this gives the dotted arrow which we also denote by $\av^{H}$.

\subsubsection{}

Recall that \textcolor{black}{by  \ref{zeuirerr}} we have an equivalence
$\pi\colon [N/G]\stackrel{\sim}{\to} [M/K]$  of orbifolds.
\begin{prop}\label{zwqduqwdqwd}
The diagram
\begin{equation}\label{zwqduqwdqwd1}
\xymatrix{\hat K([M/K])\ar[r]^{(\hat f^K_!\dots )^1}\ar[d]_{\cong}^{\pi^*}&\hat K^1([*/K])\ar[r]^(.6){\Tr_K}&\T \ar@{=}[d]\\
\hat K([N/G])\ar[r]^{(\hat f_!^G\dots)^1}&\hat K^1([*/G])\ar[u]^{\av^H}\ar[r]^(.6){\Tr_G}&\T}
\end{equation}
commutes.
\end{prop}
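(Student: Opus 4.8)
The plan is to factor everything through the morphism $g\colon[*/G]\to[*/K]$ induced by the projection $G\twoheadrightarrow K=G/H$ and to invoke functoriality of the differential push-forward. Let $e\colon[N/G]\to[M/K]$ be the equivalence of orbifolds induced by $\pi\colon N\to M$, so that $\pi^*=e^*$ in $(\ref{zwqduqwdqwd1})$. A short computation with fibre products shows that $g$ is a proper representable submersion whose closed fibre is the $0$-dimensional orbifold $[*/H]$, and that the square
$$\xymatrix{[N/G]\ar[r]^{e}\ar[d]_{f^G}&[M/K]\ar[d]^{f^K}\\ [*/G]\ar[r]_g&[*/K]}$$
commutes, both composites classifying the $K$-torsor on $[N/G]$ obtained from its $G$-structure by induction along $G\to K$. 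Since $\Omega^{odd}(L[*/G])=\Omega^{odd}([G/G])=0$, the morphism $g$ has a unique differential $K$-orientation, and the ``induced'' differential $K$-orientation of $f^K$ is by construction the one corresponding, under $e$, to the composite $o_g\circ_a o_{f^G}$ (Definition \ref{def100}) of the chosen orientation of $f^G$ with that of $g$ — this is where one uses that $\pi$ is a free $H$-covering of $K$-oriented manifolds, so that the $Spin^c$-data descends.

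Granting this, Theorem \ref{funktt} applied to $f^K\circ e=g\circ f^G$ gives $\hat f^K_!\circ\hat e_!=\hat g_!\circ\hat f^G_!$; since $e$ is an equivalence, $\hat e_!$ inverts $e^*=\pi^*$, so
$$\hat f^K_!=\hat g_!\circ\hat f^G_!\circ\pi^*\colon\hat K([M/K])\to\hat K([*/K])\ .$$
As $g$ is $0$-dimensional, $\hat g_!$ preserves degree, hence on degree-one components $(\hat f^K_! x)^1=\hat g_!\big((\hat f^G_!\pi^* x)^1\big)$. The next step is to identify $\hat g_!$ on $\hat K^1([*/G])$ with $\av^H$. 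Because $K^1([*/G])=0$, the transformation $a$ is onto $\hat K^1([*/G])$, so by the square $(\ref{uppersq})$ (which gives $\hat g_!\circ a=a\circ g^o_!$) it suffices to show $g^o_!=\av^H$ as $\C$-linear maps $\C[G]^G\to\C[K]^K$. Both are $\C$-linear and $\ch(R(G))$ spans $\C[G]^G$ over $\C$, so it is enough to compare them on $\ch(R(G))$; there the topological push-forward along $[*/H]\to*$, which sends a representation $V$ to $\dim V^H$, refines $[*/K]$-equivariantly to $g_!=\inv^H\colon R(G)\to R(K)$, and the Chern-character compatibility of the $K$-theoretic push-forward (the first square of $(\ref{uppersq})$) gives $g^o_!(\ch V)=\ch(\inv^H V)=\av^H(\ch V)$ by the definition of $\av^H$ recalled before the proposition. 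Thus $\hat g_!=\av^H$ on $\hat K^1([*/G])$, which establishes the left-hand square of $(\ref{zwqduqwdqwd1})$.

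The right-hand square is the elementary identity: for $f\in\C[G]^G$,
$$\Tr_K(\av^H(f))=\frac1{|K||H|}\sum_{Hk\in K}\sum_{h\in H}f(hk)=\frac1{|G|}\sum_{\gamma\in G}f(\gamma)=\Tr_G(f)\ ,$$
since $(Hk,h)\mapsto hk$ is a bijection $K\times H\to G$; it descends to $\hat K^1([*/G])\cong\C[G]^G/R(G)$ because $\av^H$ maps $\ch(R(G))$ into $\ch(R(K))$. Composing the two squares proves the proposition. The step needing most care is the identification of $\hat g_!$ with $\av^H$: one must keep track of the non-trivial inertia $L[*/H]=[H/H]$ of the $0$-dimensional fibre and of the normalization constants in the $\rho$-twisted local index form, although the argument above avoids an explicit local-index computation by invoking only the value of the topological push-forward together with Chern-character naturality. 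An alternative, purely analytic route copies the proof of Proposition \ref{uiduiweewf}: represent $x$ by a $K$-equivariant geometric family $\cE$ on $M$, note that $f^G_!\pi^*\cE\to f^K_!\cE$ is a regular $H$-covering of geometric families over the point, so that $\ker D(f^K_!\cE)=\ker D(f^G_!\pi^*\cE)^H$ as $K$-representations and the associated $\eta$-forms differ by $\av^H$, and finish with the same trace identity; this trades the bookkeeping of $(\ref{eq300})$ for a direct chase.
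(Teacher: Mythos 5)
Your main argument has a genuine gap: it hinges on a differential push\-/forward $\hat g_!$ (and a form integration $g^o_!$) along the morphism $g\colon [*/G]\to[*/K]$, but this morphism is \emph{not representable} -- its fibre, computed as $*\times_{[*/K]}[*/G]$, is the stack $[*/H]$, which is not a manifold when $H\neq 1$. The paper only constructs push-forwards (Definition \ref{ddd7771}, formula (\ref{eq300}), Definition \ref{def313}) and the functoriality Theorem \ref{funktt} for proper \emph{representable} submersions with smooth closed fibres, since the construction requires $p_!\cE$ to again be a geometric family. So neither $\hat g_!$, nor the identity $\hat f^K_!=\hat g_!\circ\hat f^G_!\circ\pi^*$, nor the reduction of $\hat g_!$ to $g^o_!=\av^H$ via the square (\ref{uppersq}) is available. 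Indeed, the whole point of Propositions \ref{uiduiweewf} and \ref{zwqduqwdqwd} is to establish by hand the descent property that such a non-representable push-forward would formally encode; assuming it exists is circular. (Your right-hand square, the average-in-stages identity $\Tr_K\circ\av^H=\Tr_G$, is correct and is exactly what the paper does.)

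The paper instead proves the left square directly on cycles, in two steps that your one-sentence ``alternative analytic route'' only partially anticipates. For classes $[\emptyset,\rho]$ one must show $\av^H\circ f^G_!\circ L\pi^*=f^K_!$ on $\Omega(L[M/K])$; this requires analysing the induced map of inertia orbifolds $\bigsqcup_g N^g\to\bigsqcup_{Hg}M^{Hg}$, showing $N^g\to M^{Hg}$ is an $|H_g|$-fold covering and using the counting identity $|H|=\sum_{h}|H_{gh}|$ -- none of which is replaced by your Chern-character argument once $g^o_!$ is unavailable. For classes $[\cE,0]$ the paper lifts a $K$-invariant taming of $f^K_!\cE$ to a $G$-invariant taming of $f^G_!\pi^*\cE$, identifies $H(f^K_!\cE)$ with $H(f^G_!\pi^*\cE)^H$, and deduces $\av^H(\eta((f^G_!\pi^*\cE)_t))=\eta((f^K_!\cE)_t)$ from the definition (\ref{udqwidwqdodiwodopop}); this is the part your final sketch correctly identifies, but it must be supplemented by the form computation above and by checking the $\tilde\Omega$ and $\sigma(o)$ correction terms in (\ref{eq300}).
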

\proof
Since $\Tr_K$ and $\Tr_G$ are given as averages over $K$ and $G$, and the average in stages, first over $H$ and then over $K$, is equal to the average over $G$, we see that the 
right square commutes.

We now show that the left square commutes.
Consider $\hat x=[\cE,\rho]\in \hat K^{1}([M/K])$, where we actually think of $\cE$ as  a  $K$-equivariant geometric family over $M$. 
 According to  \eqref{eq300}, the class
$f^K_!(\hat x)$ is represented by
$$\left[f^K_!\cE\:\:,\:\:\int_{L[M/K]/L[*/K]} \left(\hA^c(o)\wedge \rho+ \sigma(o)\wedge R(\hat x)\right)+\tilde \Omega(1,\cE)\right]\ .$$
The pull-back $\pi^*\cE$ is a $G$-equivariant geometric family over $N$.
The class $f_!^G(\pi^* \hat x)$ is represented by
$$\left[f^G_!\pi^*\cE\:\:,\:\:\int_{L[N/G]/L[*/G]} L\pi^*\left(\hA^c(o)\wedge \rho+ \sigma(o)\wedge R(\hat x)\right)+\tilde \Omega(1,\pi^*\cE)\right]\ .$$

\subsubsection{}

We first show that the left square of (\ref{zwqduqwdqwd1}) commutes on classes of the form
$[\emptyset,\rho]$, i.e. we   show that
$$(\av^H\circ f^G_!\circ L\pi^*)(\rho)=f^K_!(\rho)\ .$$

To this end we make the isomorphism
$L\pi^{*}\colon \Omega(L[M/K])\cong \Omega(L[N/G])$ explicit.
First recall that
$$\Omega(L[M/K])\cong [\bigoplus_{k\in K} \Omega(M^k)]^K\ ,\quad 
\Omega(L[N/G])\cong [\bigoplus_{g\in G} \Omega(N^g)]^G\ .$$
 We write $\omega\in \Omega(L[N/G])$ in the form $\omega=\oplus_{g\in G}
 \omega_g$ with $\omega_g\in \Omega(N^g)$.

Let
$$\hat \pi\colon \bigsqcup_{g\in G}N^g\to \bigsqcup_{k\in K}M^k$$ be the
$G$-equivariant map induced by the projection $N\to M$. 

If $Hg\in K$ fixes an element \textcolor{black}{$nH\in M$ then $n\in N^{gh}$ for a suitable
$h\in H$. Indeed, $ ng=n h^{-1}$ for suitable $h\in H$.}

On the other hand, if $n\in N^{g}$, then $nH\in M^{Hg}$.
Indeed, $nH\cdot H g=nghH=nH$.
It follows that for $nH\in M^{Hg}$ we have
$$\hat \pi^{-1}(nH)=\bigsqcup_{h\in H} (nH\cap N^{gh})\ .$$
\textcolor{black}{Assume that $n\in N^{g}$ and $n\tilde h\in N^{g}$.
Then $ng=n$ and
$n\tilde h g=n\tilde h =ng\tilde h$, hence
$n\tilde h=ng\tilde h g^{-1}$. Since $g\tilde hg^{-1}\in H$ and $H$ acts
freely this implies that 
$\tilde h\in H_{g}$.} Vice versa, if $\tilde h\in H_{g}$ then
with $n\in N^{g}$ we have also $n\tilde h\in N^{g}$.
 We conclude that for $n\in N^{g}$ we have
$nH\cap N^{g}=nH_{g} $, so that
 $$|nH\cap N^{g}|=\left\{\begin{array}{cc}|H_{g}|;&|Hn\cap
     N^{g}|\not=0\\0;&else\end{array}\right.\ .$$
Therefore $N^g\to M^{H_g}$ is a $|H_g|$-fold covering.
Moreover, if $nH\in M^{Hg}$, then
\begin{equation}\label{wiuqiuqjkdqddqwd}|H|=|Hn|=\sum_{h\in
    H,|nH\cap N^{gh}|\not=0}|H_{gh}|\ .\end{equation} 
%
%


We consider $g\in G$ such that $N^g\not=\emptyset$.
Note that $\hat \pi(N^g)\subseteq M^{gH}$ is an open and closed submanifold.
If $\omega\in \Omega(L[M/K])$, then
\begin{eqnarray*}
f^G_!(L\pi^*\omega)(g)&=&\int_{N^g} \hat \pi_{Hg}^*\omega_{Hg}\\
&=&|H_{g}| \int_{\hat \pi(N^{g})} \omega_{Hg|_{\hat \pi(N^g)}}
\end{eqnarray*}
All together
\begin{eqnarray*}
\av^Hf^G_!(L\pi^*\omega)(Hg)&=&
\frac{1}{|H|}\sum_{h\in H }f^G_!(L\pi^*\omega)(gh)\\&=&
\frac{1}{|H|}\sum_{h\in H}|H_{gh}| \int_{\hat \pi(N^{gh})} \omega_{Hg|_{\hat \pi(N^{gh})}}\\
&\stackrel{(\ref{wiuqiuqjkdqddqwd})}{=}& \int_{M^{Hg}} \omega_{Hg}\\
&=&f^K_!(\omega)(Hg)\ .
\end{eqnarray*}
This calculation shows that the left square in (\ref{zwqduqwdqwd}) commutes on elements
of the form $[\emptyset,\rho]$.
\subsubsection{} 
We now consider a geometric family
$\cE$ over $M$. Note that
$\tilde \Omega(\cE,1)=f^K_!(\alpha)$ for some $\alpha\in \Omega(L[M/K])$. It follows from
the locality of $\alpha$ that
$\tilde \Omega(\pi^*\cE,1)=f^G_!(\pi^*\alpha)$.
Hence
$\av^H(\tilde \Omega(\pi^*\cE,1))=\tilde \Omega(\cE,1)$.

We continue with classes of the form
$[\cE,0]$.  As $K^1([*/K])=0$, and as we only consider odd
  classes, we can choose, after stabilization, a $K$-invariant taming $(f^K_!\cE)_t$.
It lifts to a $G$-invariant taming $(f^G_!\pi^*\cE)_t$.
Note that
$$[f^K_!\cE,0]=[\emptyset,-\eta((f^K_!\cE)_t)]\ ,\quad \quad [f^G_!\pi^*\cE,0]=[\emptyset,-\eta((f^G_!\pi^*\cE)_t)]
\ .$$ 
\textcolor{black}{Therefore, w}e must show that
$$\av^H (\eta((f^G_!\pi^*\cE)_t))=\eta((f^K_!\cE)_t)\ .$$
To this end we write out the definition (\ref{udqwidwqdodiwodopop}) of the
eta-invariant. We have
$$\eta((f^G_!\pi^*\cE)_t)(g)=\frac{-1}{\pi}\:\int_0^\infty \Tr \:g \:\partial_t A_\tau \ee^{A_\tau^2}d\tau\ ,$$
where $A_\tau:=A_\tau((f^G_!\pi^*\cE)_t)$ is the family of rescaled tamed
Dirac operators on the $G$-Hilbert space $H(f^G_!\pi^*\cE)$.  The important observation is now that
$H(f^K_!\cE)$ can naturally be identified with the subspace of $H$-invariants
$H(f^G_!\pi^*\cE)^H$, and the restriction of $A_\tau$ to this subspace is $A_\tau((f^K_!\cE)_t)$.
Note that $\frac{1}{|H|}\sum_{h\in H} h$ acts as the projection onto the subspace of $H$-invariants. Therefore
$$\av^H(\eta((f^G_!\pi^*\cE)_t))(Hg)=\frac{1}{|H|}\sum_{h\in H}\eta((f^G_!\pi^*\cE)_t)(hg)=\eta((f^K_!\cE)_t)(Hg)\ .$$
Alltogether we thus have shown that
$$\av^H[f^G_!\pi^*\cE,0]=[f^K_!\cE,0]\ .$$
This finishes the proof of Proposition \ref{zwqduqwdqwd}.
\hB

\subsubsection{}

Let $B$ be an orbifold which admits a presentation
$B\cong [M/G]$  for a finite group $G$.  We further assume that  the map
$[M/G]\to[*/G]$ is differentiably $K$-oriented.
\begin{prop}\label{uzqwgduqwdwqd}
If $B\cong [M^\prime/G^\prime]$ is another presentation of $B$ with a finite group $G^\prime$, then
$[M^\prime/G^\prime]\to [*/G^\prime]$ has an induced differential $K$-orientation. This correspondence preserves reality of differential $K$-orientations.
\end{prop}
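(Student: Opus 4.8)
The plan is to transport the differential $K$-orientation through a span relating the two presentations, exactly in the spirit of the proof of Theorem \ref{wudiwqd}. The key structural observation is that a differential $K$-orientation of a map of the special shape $\pi\colon[L/F]\to[*/F]$ with $F$ finite is intrinsic to the orbifold $[L/F]$: since $[*/F]$ is $0$-dimensional we have $\pi^*T[*/F]=0$, hence $T^v\pi=T[L/F]$ and the horizontal distribution is forced to be $T^h\pi=0$, so that the connection $\nabla^{T^v\pi}$ of \ref{ldefr} is simply the Levi-Civita connection of the chosen $F$-invariant orbifold metric on $T[L/F]$. Thus a representative of a differential $K$-orientation of $\pi$ consists precisely of a metric on $T[L/F]$, a $Spin^c$-reduction of the associated oriented orthonormal frame bundle together with a $Spin^c$-lift of the Levi-Civita connection, and a class $\sigma\in\Omega^{odd}(L[L/F])/\im(d)$; and the equivalence relation of \ref{smmmmzuz} is generated by the transgression forms $\tilde\hA^c_\rho$, which are again built from this data alone. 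Every such datum can therefore be transported along an equivalence of orbifolds $\psi$, using that $\psi$ identifies $TX\cong\psi^*TY$, carries Levi-Civita connections to Levi-Civita connections, induces an equivalence $L\psi\colon LX\to LY$ of inertia orbifolds, and commutes with the real structures $Q(\omega)=I^*\overline{\omega}$ on $\Omega(L-)$.

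Next I would invoke \cite[Prop. 6.1]{pronk-2007}, exactly as in the proof of Theorem \ref{wudiwqd}: it provides the finite group $\tilde G:=G\times G'$, a closed $\tilde G$-manifold $N$ on which the normal subgroups $G'\subseteq\tilde G$ and $G\subseteq\tilde G$ act freely, and equivariant maps $u\colon N\to M$, $v\colon N\to M'$ over the projections $\tilde G\to G$ and $\tilde G\to G'$ which induce essential equivalences $[N/\tilde G]\xrightarrow{\sim}[M/G]$ and $[N/\tilde G]\xrightarrow{\sim}[M'/G']$. Since $G$ and $G'$ are finite, $u$ and $v$ are finite coverings; in particular $N$ is compact (as $M$ is) and hence $M'=N/G$ is compact, so $[M'/G']\to[*/G']$ is again proper. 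Applying the structural observation to the three maps $[M/G]\to[*/G]$, $[N/\tilde G]\to[*/\tilde G]$, $[M'/G']\to[*/G']$, I would first transport the given differential $K$-orientation of $[M/G]\to[*/G]$ along the equivalence $u$ — here $du$ identifies $T[N/\tilde G]\cong u^*T[M/G]$ and $Lu$ identifies $L[N/\tilde G]\cong L[M/G]$ — obtaining a differential $K$-orientation of $[N/\tilde G]\to[*/\tilde G]$, and then transport the latter along the equivalence $v$ to a differential $K$-orientation of $[M'/G']\to[*/G']$. That the construction respects the equivalence relation of \ref{smmmmzuz} on representatives is checked as in Lemma \ref{djdgejwdewd}. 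The reality statement is then immediate: the transport along an equivalence commutes with the functorially defined $Q(\omega)=I^*\overline{\omega}$, so a real $\sigma$-component is carried to a real one, while the forms $\hA^c_\rho$ and $\tilde\hA^c_\rho$ are real by \ref{origeo} and the surrounding discussion.

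It remains to argue — as for the analogous well-definedness in Theorem \ref{wudiwqd} — that the resulting differential $K$-orientation of $[M'/G']\to[*/G']$ does not depend on the auxiliary span. By \cite[Prop. 6.1]{pronk-2007} any two choices of such a span are isomorphic as morphisms between the action groupoids $G\rtimes M$ and $G'\rtimes M'$, and I would verify that the transport described above is compatible with such isomorphisms; this is a formal consequence of the naturality of all the data (metrics, Levi-Civita connections, $\sigma$-forms) under pullback along equivalences, together with the compatibilities already established in Proposition \ref{zwqduqwdqwd}. This last piece of two-categorical bookkeeping, rather than any analytic input, is the point I expect to require the most care; the transport of the orientation data itself is routine once the structural observation about maps into $[*/F]$ is in hand.
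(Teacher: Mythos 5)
Your argument is correct and follows essentially the same route as the paper: the paper's (very brief) proof likewise pulls the $G$-invariant orientation data back along the covering $N\to M$ of the Pronk--Scull span to $G\times G'$-equivariant data on $N$ and then descends it along the free $G$-action to $M'$, checking compatibility with the equivalence relation of \ref{smmmmzuz}. Your additional observations — that for maps to $[*/F]$ the data reduces to intrinsic data on the orbifold (forced $T^h\pi=0$, Levi--Civita connection), and that independence of the auxiliary span should be verified — are sound refinements that the paper leaves implicit.
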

\proof
We use the method and notation of the proof of Theorem \ref{wudiwqd}.
The differential $K$-orientation of $[M/G]\to [*/G]$ is given by $G$-invariant data on $M$, see \ref{pap201}.
It lifts to $G\times G^\prime$-equivariant data on $N$, and finally induces the $G^\prime$-equivariant data on $M^\prime$ which gives the induced orientation of $[M^\prime/G^\prime]\to [*/G^\prime]$.
This correspondence respects the equivalence relation between representatives
of differential $K$-orientations \textcolor{black}{and reality}.
\hB

In view of Proposition \ref{uzqwgduqwdwqd}
we can talk about a differential $K$-orientation of an orbifold
which admits a presentation $[M/G]$ with a finite group $G$.
\begin{ddd}\label{uiwqdqwdwqdqwd}
 Assume that $B\cong [M/G]$ is an orbifold presented
with a finite group $G$. A differential $K$-orientation $o$ of an orbifold $B$
is represented by a differential $K$-orientation of the map
$[M/G]\to [*/G]$.

If $o^\prime$ is a differential $K$-orientation  represented by
$[M^\prime/G^\prime]\to [*/G^\prime]$, where $B\cong [M^\prime/G^\prime]$ is a
presentation of $B$ for a another finite group $G^\prime$, then $o^\prime=o$
if $o^\prime$ is equal to the differential $K$-orientation induced on
$[M^\prime/G^\prime]\to [*/G^\prime]$ by $o$  according to Proposition
\ref{uzqwgduqwdwqd}.  The differential $K$-orientation of $B$ is called real if it
is represented by a real differential $K$-orientation of $[M/G]\to [*/G]$.
\end{ddd}

 Note that we only define the concept of a
 differential $K$-orientation of an orbifold if the latter admits a presentation as a quotient of a closed manifold by a finite group.

\begin{prop}\label{uidqwdqwdqwdd} 
We consider an orbifold which admits a presentation
 $B\cong [M/G]$ for a compact manifold $M$ and a finite group $G$, and  which is equipped with   a differential
$K$-orientation (represented by a differential $K$-orientation of
$[M/G]\to [*/G]$). The pairing
$$\hat K(B)\otimes \hat K(B)\stackrel{\cup}{\to} \hat K(B)\cong \hat K([M/G])\xrightarrow{\Tr_G\circ ( \int_{[M/G]/[*/G]}\dots)^1} \T$$
is well-defined  independent of the choice of the representative of the differential $K$-orientation.
If the orientation of $B$ is real, then by restriction we get a well-defined
pairing 
$$\hat K_\R(B)\otimes \hat K_\R(B)\to \R/\Z\ .$$
\end{prop}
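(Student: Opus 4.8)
The plan is to show that the pairing is independent of the choice of representative of the differential $K$-orientation, and then, for the reality statement, to trace through how complex conjugation interacts with all the ingredients. For the first point, suppose $o_0$ and $o_1$ are two representatives of the \emph{same} differential $K$-orientation of $[M/G]\to[*/G]$. By Definition~\ref{smmmmzuz} they differ by the transgression relation $\sigma(o_1)-\sigma(o_0)=\tilde\hA^c_\rho(\tilde\nabla_1,\tilde\nabla_0)$ (up to a common homotopy/deformation of the metric and horizontal data, which I would absorb by working on $[0,1]\times M$ as in \ref{pap201}). For a cycle $(\cE,\rho)$ the formula \eqref{eq300} then shows that $\hat f^{o_1}_!(\cE,\rho)-\hat f^{o_0}_!(\cE,\rho)$ is represented by $[\emptyset,\beta]$ with $\beta$ a combination of $\int_{L[M/G]/L[*/G]}(\hA^c(o_1)-\hA^c(o_0))\wedge\rho$, the difference of the $\tilde\Omega$-terms, and $\int (\sigma(o_1)-\sigma(o_0))\wedge R([\cE,\rho])$; using \eqref{eqq7} and the fact that $R([\cE,\rho])$ is closed, this $\beta$ is exactly the boundary of the integral over $L[M/G]$ of $\tilde\hA^c_\rho(\tilde\nabla_1,\tilde\nabla_0)\wedge\Omega(\cE)$ minus the transgression applied to $\rho$, hence lies in $\im(d)$. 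Actually the cleanest route is to invoke Lemma~\ref{pap99}, which already states that $\hat p_!$ and $p^o_!$ depend only on the differential $K$-orientation; the whole pairing is built from $\cup$ (Proposition~\ref{prooow}), $\hat f_!$, the projection to degree one, and $\Tr_G$ (the map \eqref{idwqdwqd}), all of which are now known to be well-defined, so well-definedness of the pairing is immediate once one observes that $\Tr_G$ descends to $\T=\C/\Z$ by \eqref{idwqdwqd} because $\hat f_!^1$ of a class whose $I$-image is zero in the relevant degree has curvature a form whose cohomology class lies in $\ch(K([*/G]))$. Combined with Proposition~\ref{uzqwgduqwdwqd}, which guarantees the orientation passes between presentations, this also shows independence of the presentation $[M/G]$.

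For the reality claim, I would proceed as follows. Assume $o$ is represented by a \emph{real} differential $K$-orientation of $[M/G]\to[*/G]$, meaning $\sigma(o)\in\Omega^{odd}_\R(L[M/G])/\im(d)$ in the sense of \ref{udiduwqdqwdwqoidwiqdwd}. I would check successively that: (i) the real subfunctor $\hat K_\R(B)$ is a subring (stated in Proposition~\ref{prooow}); (ii) the push-forward $\hat f_!$ associated to a real $o$ restricts to $\hat K_\R\to\hat K_\R$ — this is exactly the content of \ref{udiduwqdqwdwqoidwiqdwd}, which follows by inspecting \eqref{eq300} since $\hA^c(o)$ is real (shown in \ref{origeo}/the metric-connection discussion), $\tilde\Omega(a,\cE)$ is real, and $\sigma(o)$ is real by hypothesis, so the form entering the second slot of the cycle is $Q$-invariant; (iii) the trace map $\Tr_G$ sends the real part $\Omega_\R(L[*/G])$ into $\R$, so the induced map on $\hat K^1_\R([*/G])\cong R(G)_\R/R(G)$ lands in $\R/\Z$. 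Here I would use the Corollary after Lemma~\ref{calpunk}, which identifies $\hat K^1_\R([*/G])\cong R(G)\otimes_\Z\R/\Z$, and note that under $\Tr_G$ this maps to $\R/\Z\subset\T$. Chaining $\cup$, $\hat f_!$, degree-one projection and $\Tr_G$ then gives the desired $\R/\Z$-valued pairing on $\hat K_\R(B)$, and its independence of choices follows from the complex case by restriction.

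The main obstacle I anticipate is purely bookkeeping: verifying in (ii) and (iii) that the real structure $Q$ (defined via $Q(\omega)=I^*\overline{\omega}$ in \ref{uiqduwqdwqdqwdqwd}) is genuinely compatible with \emph{integration over the fibre} along $L[M/G]\to L[*/G]$, i.e.\ that $p^o_!$ commutes with $Q$ when $o$ is real. This requires knowing that the inversion automorphism $I$ of the inertia stack is compatible with the loops functor applied to $f\colon[M/G]\to[*/G]$ — i.e.\ $L f\circ I_{L[M/G]}=I_{L[*/G]}\circ Lf$ — so that $I^*$ commutes with $\int_{L[M/G]/L[*/G]}$, and that complex conjugation commutes with this integration, which is clear. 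Granting this naturality of $I$ under $Lf$ (which holds because $I$ is induced functorially by $g\mapsto g^{-1}$ on stabilizers, compatibly with any morphism of orbifolds), all the pieces are real-compatible and the restriction statement follows. I would also remark that reality of the orientation is independent of the presentation, which is the last sentence of Proposition~\ref{uzqwgduqwdwqd} and Definition~\ref{uiwqdqwdwqdqwd}.

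\proof
By Lemma~\ref{pap99} the homomorphism $\hat f_!\colon\hat K([M/G])\to\hat K([*/G])$ and the form-level integration $f^o_!$ depend only on the differential $K$-orientation and not on the chosen representative $o=(g^{T^vf},T^hf,\tilde\nabla,\sigma)$. The cup product $\cup$ is well-defined by Proposition~\ref{prooow}, the projection to the degree-one component is canonical, and the map $\Tr_G\colon\Omega(L[*/G])/\im(\ch)\to\T$ of \eqref{idwqdwqd} is well-defined because for $x\in K([*/G])\cong R(G)$ one has $\Tr_G(\ch(x))\in\Z$. For $\hat x,\hat y\in\hat K(B)$ the class $\hat f_!(\hat x\cup\hat y)^1\in\hat K^1([*/G])$ has $I$-image in $K^1([*/G])\cong 0$, hence by exactness of Proposition~\ref{prop1} it lies in the image of $a$, i.e.\ equals $a(\alpha)$ for some $\alpha\in\Omega(L[*/G])/\im(d)$; its curvature $R$ is then $d\alpha$, and more precisely $\alpha$ is well-defined modulo $\ch(K([*/G]))=\im(\ch)$, so $\Tr_G$ of it is a well-defined element of $\T$. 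Thus the composite
$$\hat K(B)\otimes\hat K(B)\xrightarrow{\cup}\hat K(B)\cong\hat K([M/G])\xrightarrow{\;\Tr_G\circ(\int_{[M/G]/[*/G]}\dots)^1\;}\T$$
is well-defined and independent of the representative of the differential $K$-orientation. Independence of the presentation $[M/G]$ follows from Proposition~\ref{uzqwgduqwdwqd}, which produces the induced differential $K$-orientation on any other presentation $[M'/G']$ with $G'$ finite, together with the fact that all ingredients ($\cup$, $\hat f_!$, $\Tr_G$) are formulated in terms of the orbifold $B$ and its map to $[*/G]$; concretely, one argues as in the proof of Theorem~\ref{wudiwqd}, passing through the common refinement $(G\times G')\rtimes N$ and using Proposition~\ref{uiduiweewf} in its differential form, i.e.\ Proposition~\ref{zwqduqwdqwd}, applied with $H=G'$ and with $H=G$ the two normal subgroups of $G\times G'$ acting freely on $N$.

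For the real statement, suppose the differential $K$-orientation of $B$ is real, i.e.\ represented by a real differential $K$-orientation of $[M/G]\to[*/G]$, so that $\sigma(o)\in\Omega^{odd}_\R(L[M/G])/\im(d)$. Recall from \ref{uiqduwqdwqdqwdqwd} that $Q(\omega)=I^*\overline\omega$, where $I$ is the inversion of the inertia group-stack, and that $\hat K_\R(B)=\{x\in\hat K(B)\mid\hat Q(x)=x\}$. By Proposition~\ref{prooow} the product $\cup$ preserves the real subfunctor. By \ref{udiduwqdqwdwqoidwiqdwd}, since $\hA^c(o)\in\Omega_\R(L[M/G])$, $\tilde\Omega(a,\cE)\in\Omega_\R(L[*/G])$, and $\sigma(o)$ is real, inspection of the formula \eqref{eq300} shows that $\hat f_!$ associated to a real $o$ restricts to $\hat f_!\colon\hat K_\R([M/G])\to\hat K_\R([*/G])$; here one uses that $I$ is natural with respect to the loops functor applied to $f\colon[M/G]\to[*/G]$, so that $I^*$ commutes with $\int_{L[M/G]/L[*/G]}$, and that complex conjugation commutes with this fibre integration. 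Finally, on $\hat K^1_\R([*/G])$ we have by the Corollary following Lemma~\ref{calpunk} an identification $\hat K^1_\R([*/G])\cong R(G)_\R/R(G)\cong R(G)\otimes_\Z(\R/\Z)$, under which $\Tr_G$ takes values in $\R/\Z\subseteq\T$. Composing, the pairing restricts to a well-defined
$$\hat K_\R(B)\otimes\hat K_\R(B)\to\R/\Z\ ,$$
and its independence of the choice of real representative, and of the presentation, follows from the complex case together with the last sentence of Proposition~\ref{uzqwgduqwdwqd}.
\hB
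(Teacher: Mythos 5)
Your proof is correct and takes essentially the same route as the paper: the paper's own proof consists precisely of the common-refinement argument from Theorem \ref{wudiwqd} combined with Proposition \ref{zwqduqwdqwd} applied to the two normal subgroups of $G\times G'$ acting freely on $N$, which is exactly your core step. The remaining checks you supply (within-presentation independence via Lemma \ref{pap99}, the identification of the target with $\Omega(L[*/G])/\im(\ch)$, and the reality of each ingredient) are the routine consequences of earlier results that the paper leaves implicit.
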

\proof
We again use the technique of the proof of Theorem \ref{wudiwqd}.
If $B\cong [M/K]$ and $B\cong [M^\prime/K^\prime]$ are two presentations, then there is a third presentation
$B\cong [N/G]$ such that $K,K^\prime\subset G$ are normal subgroups and
$M\cong N/K^\prime$ and $M^\prime\cong N/K$.
We now use Proposition \ref{zwqduqwdqwd} which gives 
$$\Tr_Kf^K_!(x\cup y)=\Tr_G(f^G_!(\pi^*(x\cup y)))=
\Tr_{K^\prime} f^{K^\prime}_!(x^\prime\cup y^\prime)\ ,$$
where $x,y\in \hat K([M/K])$ and $x^\prime,y^\prime\in \hat K([M^\prime/K^\prime])$ are such that
$\pi^*x=\pr^{\prime *}x^\prime$ and $\pi^*y=\pr^{\prime *}y^\prime$.
\hB

\subsubsection{}

\begin{theorem}\label{trezreer} 
Let $B$  be an orbifold with a differential $K$-orientation. The intersection pairing
$$\hat K(B)\otimes \hat K(B)\xrightarrow{(.,.)} \T $$
is non-degenerate. If the orientation of $B$ is real (see \ref{udiduwqdqwdwqoidwiqdwd}) then the restriction
$$\hat K_\R(B)\otimes \hat K_\R(B)\xrightarrow{(.,.)} \R/\Z$$ is
non-degenerate.

\end{theorem}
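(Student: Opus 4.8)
The plan is to reduce to a presentation $B\cong[M/G]$ with $G$ a finite group and $M$ a closed equivariantly $K$-oriented $G$-manifold — such a presentation exists by the very definition of a differential $K$-orientation of an orbifold (Definition \ref{uiwqdqwdwqdqwd}) — to fix a representative of the differential $K$-orientation (the pairing does not depend on it, Proposition \ref{uidqwdqwdqwdd}), and to prove that the adjoint map $x\mapsto(x,-)$, $\hat K(B)\to\Hom_\Ab(\hat K(B),\T)$, is injective; injectivity of the adjoint in the other variable then follows from the graded symmetry $(x,y)=(-1)^{|x||y|}(y,x)$, a consequence of graded-commutativity of $\cup$ (Proposition \ref{prooow}). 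So fix $x\in\hat K(B)$ with $(x,y)=0$ for all $y\in\hat K(B)$. Throughout I use $(x,y)=\Tr_G\bigl(\hat f_!(x\cup y)^1\bigr)$ for $f\colon[M/G]\to[*/G]$, together with the facts that $U([*/G])=\hat K^1([*/G])\cong\C[G]^G/R(G)$ (Lemma \ref{calpunk}; $R$ is injective on $\hat K^0$ and vanishes on $\hat K^1$) and that under this identification $\Tr_G\circ(a(-))^1$ is the reduction modulo $\Z$ of $\Tr_G\colon\C[G]^G\to\C$ from (\ref{idwqdwqd}).

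First I would show $R(x)=0$, i.e.\ $x\in U(B)$. Pairing $x$ against $y=a(\eta)$, with $\eta$ running over forms on $LB$ of the parity for which the pairing is defined, and using $a(\eta)\cup x=a(\eta\wedge R(x))$ (Lemma \ref{rring}) and $\hat f_!\circ a=a\circ f^o_!$ (middle square of Proposition \ref{mainprop}), one obtains $(a(\eta),x)=\pm\bigl[\Tr_G f^o_!(\eta\wedge R(x))\bigr]\in\T$. Since this vanishes, $\Tr_G f^o_!(\eta\wedge R(x))\in\Z$ for every $\eta$, and replacing $\eta$ by $t\eta$, $t\in\R$, forces $\Tr_G f^o_!(\eta\wedge R(x))=0$ for all $\eta$. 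Because $R(x)$ is closed and $f^o_!$ is fibre integration against the invertible weight $\hA^c(o)-d\sigma(o)$ (Definition \ref{def313}; its lowest-degree part is a unit), the pairing $(\alpha,\eta)\mapsto\Tr_G f^o_!(\alpha\wedge\eta)$ on $\Omega(LB)$ is non-degenerate — equip an orbifold atlas with invariant metrics and test $\alpha$ against the Hodge dual of $\overline\alpha$, weighted by the inverse of $\hA^c(o)$. Hence $R(x)=0$.

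Now $x\in U(B)$. For $\xi\in K^G(M)$ pick a differential refinement $\widehat{P^{-1}(\xi)}\in\hat K(B)$ of the Poincar\'e dual of $\xi$, which exists since $I$ is surjective (Proposition \ref{prop1}). As $x$ is flat, $x\cup\widehat{P^{-1}(\xi)}$ is flat, so $\hat f_!\bigl(x\cup\widehat{P^{-1}(\xi)}\bigr)\in U([*/G])$, and one checks — this is the crux — that $(x,\widehat{P^{-1}(\xi)})=j_G(x)(\xi)$, directly from the definition of $j$. Thus $j_G(x)=0$ in $k^\T_G(M)=\Hom_\Ab(K^G(M),\T)$. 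But $j_G=i\circ j$, where $j\colon U_G(M)\xrightarrow{\sim}K^\T_G(M)$ is the isomorphism of Theorem \ref{uiudqdqwdwq} and $i\colon K^\T_G(M)\xrightarrow{\sim}k^\T_G(M)$ is the isomorphism of Lemma \ref{uzqidhiqwdwqdw} for the compact $G$-manifold $M$; hence $j_G$ is injective and $x=0$. The real statement is proved the same way, replacing $\T,U,K^\T,\hat K$ by $\R/\Z,U^\R,K^{\R/\Z},\hat K_\R$, running the first step over $\eta\in\Omega_\R(LB)$ (with $R(x)\in\Omega_\R(LB)$), and using that a real differential $K$-orientation makes $\hat f_!$ and $f^o_!$ real, together with the real versions of Theorem \ref{uiudqdqwdwq} and Lemma \ref{uzqidhiqwdwqdw}.

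The step I expect to be the main obstacle is the identity $(x,\widehat{P^{-1}(\xi)})=j_G(x)(\xi)$ for flat $x$. It is morally a tautology, since $j$ was built from the differential integration map and Poincar\'e duality, but it requires a careful match between the ``take the degree-one component, then apply $\Tr_G$'' recipe of the intersection pairing and the factorization (\ref{idwqdwqd}) used to define $j_G$ — including signs — and the verification that $\Tr_G\circ(a(-))^1$ on $[*/G]$ is genuinely reduction modulo $\Z$. A second, technically necessary, ingredient is the form-level Poincar\'e duality on the orbifold $LB$ used to kill $R(x)$; this is elementary Hodge theory, but it must be organized equivariantly on an atlas and must exploit the invertibility of the $\hA^c(o)$-weight.
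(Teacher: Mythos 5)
Your proof is correct and is essentially the paper's own proof written out in full: the paper simply invokes the d\'evissage argument of \cite[Prop.~B6]{freed-2007-322} with exactly the two inputs you use, namely killing $R(x)$ by pairing against classes $a(\eta)$ and scaling, and then identifying the induced pairing of the flat part $U_G(M)$ with $K_G(M)$ with the non-degenerate pairing of Corollary \ref{barcel} via the isomorphism $j$ of Theorem \ref{uiudqdqwdwq} (your ``crux'' identity $(x,\widehat{P^{-1}(\xi)})=j_G(x)(\xi)$ is indeed the definition of $j_G$). The only point you spell out that the paper leaves implicit is the form-level non-degeneracy of $(\alpha,\eta)\mapsto \Tr_G f^o_!(\alpha\wedge\eta)$ on $\Omega(LB)$, and your Hodge-theoretic argument using the invertibility of the degree-zero part of $\hA^c(o)$ on each component of the inertia orbifold is the right one.
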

\proof
We can apply the argument of the proof of \cite[Proposition B6]{freed-2007-322} using the fact that
\begin{eqnarray*}
 U_G(M)\otimes K_G(M)\stackrel{\cup}{\to} U_G(M)\xrightarrow{\Tr_G\circ (\int_{[M/G]/[*/G]}\dots)^1} \T\\{U^\R_G(M)\otimes K_G(M)\stackrel{\cup}{\to} U^\R_G(M)\xrightarrow{\Tr_G\circ (\int_{[M/G]/[*/G]}\dots)^1} \R/\Z}
\end{eqnarray*}
are non-degenerate pairings by Theorem \ref{uiudqdqwdwq} and Corollary \ref{barcel}. \hB

\section{Examples}\label{sec5}

\subsection{The differential $K$-theory class of a mapping torus}

\subsubsection{}\label{dwuduidwhdiudhuihwdu}

Let $G$ be a finite group. We consider a geometric $\Z/2\Z$-graded $G$-bundle 
 $\bV:=(V,h^V,\nabla^V,z)$ over $S^1$, where we let $G$ act trivially on $S^1$. 
Let $1\in S^1$ be the base point. The group $G$ acts on the fibres $V_1^{\pm}$ of the homogeneous components of $V$.
We assume  that $V_1^+\cong V_1^-$ as representations of $G$. Let $\cV$ denote
the corresponding $G$-equivariant geometric family over $S^1$. Equivalently, we can consider the family $[\cV/G]$ over $[S^1/G]$. 

By Proposition \ref{prop1} we have an exact sequence
$$K^1([S^1/G])\stackrel{\ch}{\to}\Omega^1(L[S^1/G])/\im(d)\stackrel{a}{\to}\hat  K^0([S^1/G])\stackrel{I}{\to} K^0( [S^1/G])\to 0\ .$$ 
We identify, \textcolor{black}{as in Subsection
  \ref{sec:trivial_action}}, $$\Omega^1(L[S^1/G])/\im(d)\cong R(G)\otimes
  \Omega^1(S^1)/\im(d)\cong R(G)\otimes \C$$ and 
$$(\Omega^1(L[S^1/G])/\im(d))/\ch(K^1([S^1/G]))\cong R(G)\otimes \T\ .$$
The class
$[\cV,0]\in \hat K^0(S^1)$ satisfies $I([\cV,0])=0$ and hence corresponds to an element
of $R(G)\otimes \T$. This element is calculated in the following lemma.

For $g\in G$ we decompose
$V^\pm=\bigoplus_{\theta\in U(1)} V^\pm(\theta)$ according to eigenvalues of
\textcolor{black}{the action of} 
$g$. We set \textcolor{black}{$n^\pm_{\theta}:=\dim(V^\pm(\theta))$} and
let $\phi^\pm(\theta)\in U(\textcolor{black}{n^\pm_\theta})/conj$ denote the holonomies
of $V^\pm(\theta)$ (well defined modulo conjugation in the group $U(\textcolor{black}{n^\pm_\theta})$).
 \begin{lem}\label{dersa1}
We have
$[\cV,0]=a(\Phi)$, where $\Phi\in \Omega^1(L[S^1/G])/\im(d)\cong \C[G]^G$ is given by 
$$\Phi(g)=\frac{1}{2\pi i}\sum_{\theta\in U(1)}\theta \:\log\frac{\det(\phi^+(\theta))}{\det(\phi^-(\theta))}\ .$$
\end{lem}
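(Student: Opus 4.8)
The plan is to reduce the computation to the known non-equivariant transgression formula for a geometric bundle over $S^1$, applied $g$-eigenspace by $g$-eigenspace on the inertia orbifold $L[S^1/G]$.

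First I would unwind the identifications. Since $G$ acts trivially on $S^1$, the inertia orbifold is $L[S^1/G]\cong [G\times S^1/G]\cong \bigsqcup_{[g]}[S^1/Z_G(g)]$, and a form $\omega\in\Omega^1(L[S^1/G])$ is a conjugation-invariant assignment $g\mapsto\omega_g\in\Omega^1(S^1)$. Under $\Omega^1(S^1)/\im(d)\cong\C$ via $\int_{S^1}$, this gives the stated identification with $\C[G]^G$. The class $[\cV,0]$ has $I([\cV,0])=\ind(\cV)=[V^+]-[V^-]=0$ in $K^0([S^1/G])\cong R(G)$ (using $V_1^+\cong V_1^-$ as $G$-representations, hence $V^+\cong V^-$ as $G$-vector bundles over the connected base $S^1$, at least after noting that the underlying bundles are classified by their holonomy and rank), so by exactness $[\cV,0]=a(\Phi)$ for some $\Phi$, and we must identify $\Phi$ modulo $\im(\ch)$.

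The key step is the explicit evaluation of $\Phi$. By the construction of $a$ and the relation defining ``paired'', $[\cV,0]=a(\Phi)$ means $(\cV,0)$ is paired (after stabilization) with $(\emptyset,-\Phi)$, i.e. $\Phi=\eta(\cV_t)$ for a suitable taming. Rather than choosing a taming directly, I would use the following standard device: a $G$-equivariant geometric bundle $\bV$ over $S^1$ with trivial $G$-action on $S^1$ is, fibrewise over the eigenspace decomposition of a fixed $g$, just an ordinary geometric bundle over $S^1$; its class in $\widehat K^0(S^1)$ in the non-equivariant case is computed in \cite{bunke-2007} (or by a direct $\eta$-form computation) and equals $a$ of the transgression $\frac{1}{2\pi i}\log\det(\hol)\cdot dt/(2\pi)$-type term. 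Concretely, the $\eta$-form of a $0$-dimensional geometric family twisted by $\bW$ on $S^1\to *$, evaluated at $g$, localizes (by the local index description in \ref{uzu1}, formula \eqref{hwdwqdwqdqwdqd}, with $\rho$ the action of $g$) to a sum over eigenvalues $\theta$ of $g$ of $\theta$ times the Chern--Simons/transgression term of $V^\pm(\theta)$ over $S^1$, which is $\frac{1}{2\pi i}\log\det\phi^\pm(\theta)$. Taking the difference of the $+$ and $-$ parts (because of the $\Z/2\Z$-grading, which enters via the supertrace) yields exactly $\Phi(g)=\frac{1}{2\pi i}\sum_\theta\theta\log\frac{\det\phi^+(\theta)}{\det\phi^-(\theta)}$.

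The main obstacle I anticipate is making the localization/transgression computation precise and checking that the normalization constants (the operator $\varphi$ resp. $\tilde\varphi$ of \ref{uzu1}--\ref{pap2}, the factors $c_n$, $(1/4\pi)^{\dim/2}$, and the Berezin-integral factor in \eqref{hwdwqdwqdqwdqd}) combine to give precisely the clean coefficient $\frac{1}{2\pi i}$ with no stray factors, and that the ambiguity $\log\det\phi^\pm(\theta)$ (well-defined only modulo $2\pi i\Z$) is absorbed correctly into $\im(\ch(K^1))$ — indeed $K^1([S^1/G])\to\Omega^1(L[S^1/G])/\im(d)$ has image $R(G)\otimes H^1(S^1;\Z)$, i.e. $R(G)\otimes 2\pi i\Z$ under our identification (after dividing by $2\pi i$, $\Z$), which is exactly the indeterminacy of the logarithms of determinants of holonomies. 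So $\Phi$ is well-defined in $R(G)\otimes\T$, and the formula follows. A cleaner alternative for this last point: exhibit an explicit taming of $\cV$ (parallel transport plus a skew-adjoint generator) whose $\eta$-form one integrates directly, reducing the constant-chasing to a one-variable integral $\int_0^\infty$ that evaluates to the logarithm; I would present whichever of the two is shorter, and would explicitly invoke the non-equivariant case of this very lemma from \cite{bunke-2007} to avoid redoing the scalar computation.
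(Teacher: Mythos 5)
Your proposal is correct in outline but takes a genuinely different route from the paper. The paper computes no $\eta$-form on $S^1$ at all: it pushes forward along $q\colon [S^1/G]\to [*/G]$ (with the bounding $Spin$-structure), noting via the compatibility diagram of Proposition \ref{mainprop} that $\hat q_!\circ a = a\circ q^o_!$ is an isomorphism on the relevant quotient $R(G)\otimes \C/\im(\ch)$, so it suffices to determine $\hat q_!([\cV,0])$; it then extends $\bV$ to a geometric $G$-bundle $\bW$ over $D^2$ and invokes the bordism formula (Proposition \ref{bordin}) to get $\hat q_!([\cV,0])=-a\bigl(\int_{L[D^2/G]/L[*/G]}\Omega^2(\cW)\bigr)$, and finally evaluates the curvature integral using $\log\det(\phi^\pm(\theta))=\int_{D^2}R^{\nabla^{\det(W^\pm(\theta))}}$. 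That route buys two things your route must work for by hand: the normalization reduces to the elementary degree-two Chern form of a line bundle, and the branch of each logarithm is pinned down by the chosen extension over $D^2$, so the $\im(\ch)$ bookkeeping is automatic. Your route --- identifying $\Phi$ with $\eta(\cV_t)$ for an equivariant taming and reducing eigenspace-by-eigenspace to the non-equivariant transgression over $S^1$ --- is legitimate because the connection and the taming preserve the $g$-eigenbundles, so $\Tr_s(\rho\,\cdots)=\sum_\theta\theta\,\Tr_s(\cdots|_{V(\theta)})$; but note that this is an elementary decomposition of the supertrace, not an instance of the fixed-point formula \eqref{hwdwqdwqdqwdqd}, which computes the local index form and not the (non-local) $\eta$-form.

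Two points need tightening. First, your claim that the ambiguity of the logarithms is ``exactly'' $\im(\ch)$ is too quick: if the branches of $\log\det(\phi^\pm(\theta))$ are chosen independently for each pair $(g,\theta)$, the resulting indeterminacy of $g\mapsto \Phi(g)$ is in general strictly larger than $\ch(R(G))$. One must choose branches compatibly, e.g.\ via the isotypic decomposition $V^\pm\cong\bigoplus_\pi V_\pi\otimes E_\pi^\pm$, which rewrites $\Phi(g)=\sum_\pi \chi_\pi(g)\,\frac{1}{2\pi i}\log\frac{\det(h_\pi^+)}{\det(h_\pi^-)}$ with a single branch choice per irreducible $\pi$; then the indeterminacy is visibly a virtual character. (In the paper's argument this is automatic, the branch being fixed by $\bW$.) Second, the eigenspace reduction requires an honest $G$-equivariant taming of $\cV$ itself, not merely of a stabilization; this exists here because $V^+\cong V^-$ as equivariant bundles over $S^1$ (such bundles are classified by their fibre representation), but it is worth saying. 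With these points attended to, your argument goes through.
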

\begin{proof}
We consider the map $q\colon [S^1/G]\to [*/G]$ with the canonical $K$-orientation given by the bounding $Spin$-structure of $S^1$.
By Proposition \ref{mainprop} we have a commutative diagram
$$
\begin{CD}
  R(G)\otimes \C @>{\sim}>> \Omega^1(L[S^1/G])/(\im(d)+\im(\ch)) @>a>> \hat K^0([S^1/G])\\
  @VV{=}V @VV{q^o_!}V @VV{\hat q_!}V\\
   R(G)\otimes \C @>{\sim}>> \Omega^0(L[*/G])/\im(\ch) @>a>> \hat K^1([*/G])
\end{CD}
\ .$$
In order to determine $[\cV,0]$ it therefore suffices to calculate
$\hat q_!([\cV,0])$.
Now observe that $q\colon S^1\to *$ is the boundary of $p\colon D^2\to *$.
Since the underlying topological $K$-orientation of $q$ is given by the bounding $Spin$-structure
we can choose a differential $K$-orientation of $p$ with product structure which restricts to the differential $K$-orientation of $q$. 
The bundle
$\bV$ is topologically trivial. Therefore we can find a geometric $G$-bundle
$\bW=(W,h^W,\nabla^W,z)$, again with product structure, on $D^2$ which restricts to $\bV$ on the boundary.
Let $\cW$ denote the corresponding geometric family over
$D^2$. Later we prove the bordism formula Proposition \ref{bordin}. It gives
$$\hat q_!([\cV,0])=[\emptyset,p_!R([\cW,0])]=-a\left(\int_{L[D^2/G]/L[*/G]} \Omega^2(\cW)\right)\ .$$
For $g\in G$ we have
\begin{eqnarray*}
\Omega^2(\cW)(g)&=&\frac{1}{2\pi i}\ch_2(\nabla^W)(g)\\&=&\frac{1}{2\pi i}\left(\ch_2(\nabla^{\det(W^+)})(g)-\ch^G_2(\nabla^{\det(W^-)})(g)\right)\\&=&\frac{-1}{2\pi i}\left[ \Tr g R^{\nabla^{W^+}}- \Tr g
 R^{\nabla^{W^-}}\right]\\&=&
\frac{-1}{2\pi i}\sum_{\theta} \theta [R^{\nabla^{\det W^{+}(\theta)}}-R^{\nabla^{\det W^-(\theta)}}]
\ .
\end{eqnarray*}

The holonomy $\det(\phi^\pm(\theta))\in U(1)$ of $\det(\bV^\pm(\theta))$ is equal to the integral of the curvature of $\det \bW^\pm(\theta)$:
$$\log\det(\phi^\pm)=\int_{D^2} R^{\nabla^{\det(W^\pm)}}\ .$$
It follows that
$\hat q_!([\cV,0])=a(\Phi)$
with $$\Phi(g)=\frac{1}{2\pi i}\sum_{\theta\in U(1)}\theta\log\frac{\det
    (\phi^+(\theta))}{\det(\phi^-(\theta))} \ .
$$ \hB
\end{proof}

\subsubsection{}

Consider a finite group $G$ and let
 $\cE$ be a \textcolor{black}{$G$-}equivariant  geometric family over a point. We consider an additional automorphism $\phi$
of $\cE$ which commutes with the action of $G$. Then we can form the mapping torus
$T(\cE,\phi):=(\R\times\cE)/\Z$, where $n\in \Z$ acts on
$\R$ 
by $x\mapsto x+n$, and by $\phi^n$ on $\cE$. 
The product $\R\times \cE$ is a $G\times \Z$-equivariant geometric family
over $\R$ (the pull-back of $\cE$ by the projection $\R\to *$).
The geometric structures descend to the quotient by $\Z$ and 
turn the  mapping torus $T(\cE,\phi)$ into a geometric family
over $[S^1/G]= [(\R/\Z)/G]$, where $G$ acts trivially on $S^1$.  In the present subsection we study the class
$$[T(\cE,\phi),0]\in \hat K([S^1/G])\ .$$
In the following we will assume that
the parity of $\cE$ is even, and that $\ind(\cE)=0$.

Let $\dim\colon K^0([S^1/G])\to R(G)$ be the dimension homomorphism, which in this case is an isomorphism.
Since $\dim I([T(\cE,\phi),0])=\dim(\ind(\cE))=0$ we have in fact
$$[T(\cE,\phi),0]\in \im(a)\cong (\Omega^1(L[S^1/G])/\im(d))/\ch(K^1([S^1/G]))\cong R(G)\otimes \T\ ,$$
 as in \ref{dwuduidwhdiudhuihwdu}.

 Set $V:=\ker(D(\cE))$. This graded $G$-vector space is preserved by the action of $\phi$. We use the same symbol $\phi$ in order to denote the induced action on $V$.

 We form the zero-dimensional family
$\cV:=(\R\times V)/\Z$ over $[S^1/G]$. This bundle is isomorphic to the
kernel bundle of $T(\cE,\phi)$. The bundle of Hilbert spaces of the family
$T(\cE,\phi)\cup_{[S^1/G]}\cV^{op}$ has a canonical subbundle of the form $\cV\oplus \cV^{op}$.
We choose the taming $(T(\cE,\phi)\cup_{[S^1/G]}\cV^{op})_t$
which is induced by the isomorphism
$$\left(\begin{array}{cc}0&1\\1&0\end{array}\right)$$ on this subbundle.
Note that
$[T(\cE,\phi),0]=[\cV,\eta^1((T(\cE,\phi)\cup_{[S^1/G]}\cV^{op})_t)].$
Since $$(T(\cE,\phi)\cup_{[S^1/G]}\cV^{op})_t$$ lifts to a product  under  the pull-back
$\R\to \R/\Z$ we see that $\eta^1((T(\cE,\phi)\cup_{[S^1/G]}\cV^{op})_t)=0$.

It follows that 
$[T(\cE,\phi),0]=[\cV,0]\in R(G)\otimes \T$.
This class has been calculated in terms of the action of $\phi$ on $V$ in Lemma \ref{dersa1}.

\subsection{Bordism}



\subsubsection{}

A zero bordism of a geometric family $\cE$ over an orbifold $B$ is a
geometric family $\cW$ over $B$ with boundary such that $\cE=\partial
\cW$. The notion of a geometric family with boundary was discussed in detail in
\cite[\textcolor{black}{Section 2}]{math.DG/0201112}. Note that the boundary
here is fibrewise so that the stackness of $B$ does not introduce new
problems.

\begin{prop}\label{kop1}
If $\cE$ admits a zero bordism $\cW$, then
in $\hat K^*(B)$ we have the identity
\begin{equation}\label{E_to_Omega}[\cE,0]=[\emptyset,\Omega(\cW)] .\end{equation}
\end{prop}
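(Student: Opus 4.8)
The plan is to imitate the strategy used for the analogous bordism formula in the non-singular case \cite{bunke-2007} (see also \cite[Sec. 4]{math.DG/0201112}), making the same adaptations that were needed throughout the present paper: pulling everything back along the canonical map $LB\to B$, inserting the canonical automorphism $\rho$ into supertraces, and exploiting the localization of heat kernels at the fixed-point locus. First I would observe that a geometric family with boundary $\cW$ over $B$ gives rise, by attaching an infinite cylinder $[0,\infty)\times \cE$ along the boundary, to a complete geometric family $\cW_\infty$ over $B$. The Dirac operator along the fibres of $\cW_\infty$ is then Fredholm in the appropriate sense precisely when $\cE$ can be tamed; since we want to produce a \emph{taming} of $\cE\sqcup_B\cE^{op}$ or something equivalent, the point is rather to use $\cW$ to exhibit a concrete pairing relation.

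The key steps, in order, are as follows. Step one: using Lemma \ref{uiufwefewfwfewf} (or a direct argument as in \cite{bunke-2007}) observe that $\ind(\cE)=0$ in $K^*(B)$, because $\cE$ bounds; hence after stabilizing by $\cG\sqcup_B\cG^{op}$ we may assume $\cE$ admits a taming $\cE_t$, and by Lemma \ref{lem22} such stabilization does not change the class $[\cE,0]$. Step two: form the glued geometric family obtained from $\cW$ by choosing a boundary taming $\cW_{bt}$ induced by $\cE_t$; then $\ind(\cW_{bt})\in K^*(B)$ is defined, and Theorem \ref{diqwdwqdddwqd} gives
$$\ch_{dR}(\ind(\cW_{bt}))=[\Omega(\cW)+\eta(\cE_t)]\in H_{dR,deloc}(B)\ .$$
Step three: the family $\cE_t$, being a taming of $\cE$, shows that $(\cE,0)$ is paired with $(\emptyset,-\eta(\cE_t))$, so $[\cE,0]=a(\eta(\cE_t))=[\emptyset,-\eta(\cE_t)]$. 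Step four: it remains to show that $[\emptyset,-\eta(\cE_t)]=[\emptyset,\Omega(\cW)]$, i.e. that $a(\eta(\cE_t)+\Omega(\cW))=0$ in $\hat K^*(B)$. By the exactness statement in Proposition \ref{prop1}, $a(\beta)=0$ iff $\beta\in \ch_{dR}(K(B))$ (modulo $\im(d)$). But $\eta(\cE_t)+\Omega(\cW)$ represents $\ch_{dR}(\ind(\cW_{bt}))$ by Step two, hence lies in $\ch_{dR}(K(B))$; therefore $a(\eta(\cE_t)+\Omega(\cW))=0$ and we conclude $[\cE,0]=[\emptyset,\Omega(\cW)]$. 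Finally one checks the stabilization introduced in Step one drops out: the extra summand $\cG\sqcup_B\cG^{op}$ can be chosen to bound (e.g. $\cG\times[0,1]$ as a family with boundary), so its contribution on both sides of \eqref{E_to_Omega} cancels.

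The main obstacle is Step two, the invocation of a boundary index theorem with delocalized coefficients: one must know that Theorem \ref{diqwdwqdddwqd} applies to the zero-bordism $\cW$ with the \emph{given} boundary taming, and that the eta form appearing there is literally $\eta(\cE_t)$ with the normalization conventions of \ref{pap2}. This is exactly the kind of statement that was asserted earlier to follow by repeating the argument of \cite{math.DG/0201112} with the three modifications listed after Theorem \ref{diqwdwqdddwqd} (pull back to $LB$, insert $\rho_{\cW}$, localize at fibrewise fixed points), so I would simply cite Theorem \ref{diqwdwqdddwqd} and not redo the heat-kernel analysis. A minor secondary point is to make sure the classes of differential forms are compared in $\Omega(LB)/\im(d)$ and not merely in cohomology $H_{dR,deloc}(B)$; but since $a$ only sees forms modulo $\im(d)$ and modulo $\ch_{dR}(K(B))$ (by Proposition \ref{prop1}), the cohomological equality from Theorem \ref{diqwdwqdddwqd} is exactly what is needed. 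In summary: stabilize to tame $\cE$, use the boundary index theorem to identify $\eta(\cE_t)+\Omega(\cW)$ with a Chern character, and apply exactness at $\Omega(LB)/\im(d)$ in Proposition \ref{prop1}; the proof can be copied almost verbatim from the corresponding proposition in \cite{bunke-2007}.
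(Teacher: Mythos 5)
Your argument is correct, but it diverges from the paper's proof at the last step in a way worth noting. The paper, after arranging a taming $\cE_t$ of the (stabilized) boundary exactly as you do, goes further: it adjusts $\cE_t$ so that the obstruction $\ind(\cW_{bt})\in K(B)$ vanishes, extends the boundary taming to a full taming $\cW_t$ of $\cW$ (after one more stabilization by a family $\cG\sqcup_B\cG^{op}$ with closed fibres), and then invokes the form-level identity $\Omega(\cW)=d\eta(\cW_t)-\eta(\cE_t)$ (Theorem \ref{formula_bord}) to exhibit the relation ``$(\cE,0)$ is paired with $(\emptyset,\Omega(\cW))$'' directly at the level of cycles. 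You instead stop at the boundary-tamed family, apply the cohomological index Theorem \ref{diqwdwqdddwqd} to get $[\Omega(\cW)+\eta(\cE_t)]=\ch_{dR}(\ind(\cW_{bt}))$, and then use exactness at $\Omega(LB)/\im(d)$ from Proposition \ref{prop1} to conclude $a(\eta(\cE_t)+\Omega(\cW))=0$. Both routes rest on the same analytic input (the APS-type index theorem for boundary-tamed families with delocalized coefficients); yours buys a shorter argument that avoids killing $\ind(\cW_{bt})$ and the extra stabilization needed to produce $\cW_t$, at the price of routing through the already-established exact sequence rather than producing the finer ``paired'' relation between the two cycles. Your closing remarks are also the right ones: the stabilizing summand $\tilde\cE\sqcup_B\tilde\cE^{op}$ is absorbed by Lemma \ref{lem22} on the left and contributes $\Omega(\tilde\cE\times I)=0$ on the right because of the product structure, and the comparison of forms only in cohomology is harmless since $\Omega(\cW)+\eta(\cE_t)$ is closed and $a$ kills $\im(d)+\im(\ch_{dR})$.
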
 
\proof
Since $\cE$ admits a zero bordism we have $\ind(\cE)=0$.
In order to see this choose a presentation $B\cong [M/G]$. Then
$M\times_B\cE$ is a $G$-equivariant geometric family which admits a $G$-equivariant zero bordism $M\times_B\cW$. 
By the equivariant bordism invariance of the index it follows that
$\ind(M\times_B\cE)\in K_G(M)$ vanishes. This implies that
$\ind(\cE)=0$ in $K(B)$.

It follows from Lemma \ref{uiufwefewfwfewf} that after replacing 
$\cE$ by $\cE\sqcup_B \tilde \cE\sqcup_B \tilde \cE^{op}$ and $\cW$ by $\cW\sqcup_B  (\cE\times I)$ for a suitable geometric  family $\tilde \cE$ there
exists a taming $\cE_t$. This taming induces a boundary taming 
$\cW_{bt}$. The obstruction  to an extension of the boundary taming to a taming
of $\cW$ is $\ind(\cW_{bt})\in K(B)$. 
Using the method described in \ref{dhuidwqdwqd}
we can adjust the taming
$\cE_t$ such that $\ind(\cW_{bt})=0$. Here it might be necessary to add another family to $\tilde \cE$. Then we
extend the boundary taming $\cW_{bt}$ to
a taming $\cW_t$, possibly after a further stabilization, i.e. after adding
 a family $\cG\sqcup_B \cG^{op}$ with closed fibres.

 We now apply
\begin{theorem}\label{formula_bord}
$$\Omega(\cW)=d\eta(\cW_t)-\eta(\cE_t)\ .$$
\end{theorem}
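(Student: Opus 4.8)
The plan is to deduce the formula from the non-singular case, where it is contained in \cite{math.DG/0201112} as the differential-form refinement of the index theorem for boundary tamed families, in the same way that Theorem \ref{diqwdwqdddwqd} above is the cohomological version. Since $\cW_t$ is a genuine taming of $\cW$ which restricts on $\partial\cW=\cE$ to the taming $\cE_t$, the heat kernel of the tamed rescaled Bismut superconnection $A_\tau(\cW_t)$ decays exponentially as $\tau\to\infty$, so the $\eta$-form $\eta(\cW_t)$ --- defined by the integral \eqref{udqwidwqdodiwodopop} applied to this family with boundary --- converges; the content of the theorem is to identify its differential. As a sanity check, on closed families the boundary term is absent and the statement reduces to \eqref{detad}.

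First I would pull the whole situation back along the canonical map $LB\to B$: the tamed family with boundary $(\cW,\cW_t)$ then carries the canonical automorphisms $(\rho_\cW,\rho_{\cW_t})$ of \ref{z23r23r32r}, restricting on the boundary to $(\rho_\cE,\rho_{\cE_t})$, and all supertraces below are understood with these automorphisms inserted, exactly as in the three modifications used in the proof of Theorem \ref{diqwdwqdddwqd}. Next I would run the usual transgression computation for the superconnection heat form on a family of manifolds with boundary: using the product structure near the boundary together with Duhamel's formula one obtains, for $\tau>0$,
$$\partial_\tau\Big(\varphi\,\Tr_s\rho_\cW\,\ee^{-A_\tau(\cW_t)_L^2}\Big)=-\,d\Big(\tilde\varphi\,\Tr_s\rho_\cW\,\partial_\tau A_\tau(\cW_t)_L\,\ee^{-A_\tau(\cW_t)_L^2}\Big)+\beta_\tau\ ,$$
where $\beta_\tau\in\Omega_\R(LB)$ is a boundary contribution built from the boundary family $\cE_t$ pulled back to the boundary of $LW$. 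The small-time ($\tau\to0$) analysis of all three terms is the equivariant local index analysis of \cite{bgv}, with the finite-propagation-speed localization of the heat kernel at the fixed-point set of the automorphism (here $LW\to LB$, a sub-family with boundary whose boundary is the fixed-point set of $\cE$) --- this is precisely modification (3) in the proof of Theorem \ref{diqwdwqdddwqd}.

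Finally I would integrate the displayed identity over $\tau\in(0,\infty)$. On the left, the fundamental theorem of calculus yields $\lim_{\tau\to\infty}-\lim_{\tau\to0}$ of $\varphi\,\Tr_s\rho_\cW\,\ee^{-A_\tau(\cW_t)_L^2}$; the first limit vanishes because $\cW_t$ is a taming, and the second is the local index form $\Omega(\cW)$ of the family with boundary by equivariant local index theory, so the left-hand side integrates to $-\Omega(\cW)$. On the right, $-d$ of the $\tau$-integral of the second factor is $-d\eta(\cW_t)$, and $\int_0^\infty\beta_\tau\,d\tau$ equals, after matching the normalization constants $\varphi,\tilde\varphi$, the form $\eta(\cE_t)$ --- this is the same bookkeeping that produces the $\eta(\cE_t)$-term in Theorem \ref{diqwdwqdddwqd}. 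Rearranging gives $\Omega(\cW)=d\eta(\cW_t)-\eta(\cE_t)$. I expect the main obstacle to be the second step: setting up the transgression identity together with its boundary term $\beta_\tau$ for a family of manifolds with boundary, and checking that the equivariant localization at the fixed-point set is compatible with the boundary analysis (the fixed-point family inheriting a product structure near its own boundary). However, this is a verbatim transcription of the non-singular argument of \cite{math.DG/0201112} with exactly the modifications already employed for Theorem \ref{diqwdwqdddwqd}, so no genuinely new analytic difficulty arises.
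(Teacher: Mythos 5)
Your proposal is correct and follows essentially the same route as the paper, which proves the theorem by adapting the boundary transgression argument of \cite[Thm.~4.13]{math.DG/0201112} with exactly the three modifications (pull-back to the inertia orbifold, insertion of the canonical automorphisms in the supertraces, and fixed-point localization in the small-time analysis) listed in the proof of Theorem \ref{diqwdwqdddwqd}. Your write-up merely spells out the transgression identity, its boundary term, and the $\tau$-integration that the paper leaves implicit.
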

 To prove Theorem \ref{formula_bord}, we adapt the proof of theorem \cite[Theorem 4.13]{math.DG/0201112}
using the remarks made in the proof of Theorem \ref{diqwdwqdddwqd}. 
We see that $(\cE,0)$ is paired with $(\emptyset,\Omega(\cW))$. This  implies \eqref{E_to_Omega}.
\hB

\subsubsection{}

Let $p\colon W\to B$ \textcolor{black}{be a} representable morphism which is a locally trivial fibre bundle of compact manifolds with boundaries. We let
 $q:=(p_{|\partial W})\colon (V:=\partial W)\to B$ denote the locally trivial
 bundle of closed manifolds obtained by restriction of $p$ to the fibrewise
 boundaries. We assume that $p$ has a topological $K$-orientation and a
differential $K$-orientation represented by $o_p$ which refines the topological $K$-orientation. 
We assume that the geometric data of $o_p$ have a product structure near $V$.
In this case we have a restriction $o_q:=o_p{|V}$ which represents a differential $K$-orientation
of $q$. It is easy to see that this restriction of representatives (with product structure)
preserves equivalence and gives a well-defined restriction of differential $K$-orientations.
We have the following version of bordism invariance of the push-forward in differential $K$-theory.
\begin{prop}\label{bordin} For $y\in\hat K(W)$ we set $x:=y_{|V}\in \hat K(V)$. Then 
we have
$$\hat q_!(x)=[\emptyset\:,\:  p^o_!R(y)]\ . $$
\end{prop}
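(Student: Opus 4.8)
The plan is to reduce the statement to the two tools already available: the bordism formula for $\eta$-forms at the level of $B$ (Theorem \ref{formula_bord} together with Proposition \ref{kop1}) and the compatibility of $\hat p_!$ with the structure map $R$ (Lemma \ref{lem24}). First I would choose a representative $o_p=(g^{T^vp},T^hp,\tilde\nabla_p,\sigma(o_p))$ of the differential $K$-orientation of $p$ having product structure near $V$, so that $o_q:=o_p{}_{|V}$ is a well-defined representative for $q$, and I would represent the class $y\in\hat K(W)$ by a cycle $(\cE,\rho)$. Pulling back the product structure, the geometric family $p_!^a\cE$ over $B$ then has, for the chosen $a$, a collar neighbourhood in which it looks like $q_!^a(\cE_{|V})\times[0,\varepsilon)$; in other words $p_!^a\cE$ is a zero bordism of $q_!^a(\cE_{|V})$ as a geometric family over $B$. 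More precisely, $\partial(p_!\cE)\cong q_!(\cE_{|V})$ once the product structures are matched up, where the boundary is taken in the sense of geometric families with boundary from \cite{math.DG/0201112}.

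Next I would apply Proposition \ref{kop1} (equivalently Theorem \ref{formula_bord}) to the geometric family $q_!(\cE_{|V})$ over $B$ and its zero bordism $p_!\cE$: this yields
$$[q_!(\cE_{|V}),0]=[\emptyset,\Omega(p_!\cE)]\in\hat K(B)\ .$$
From the definition \eqref{eq300} of $\hat q_!$ applied to the cycle $(x)=(\cE_{|V},\rho_{|V})$ we have
$$\hat q_!(x)=\Big[q_!(\cE_{|V}),\ \textstyle\int_{LV/LB}\hA^c(o_q)\wedge\rho_{|V}+\tilde\Omega(a,\cE_{|V})+\int_{LV/LB}\sigma(o_q)\wedge R(x)\Big]\ ,$$
so $\hat q_!(x)=[q_!(\cE_{|V}),0]+a(\gamma)$ where $\gamma$ is the above form. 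Using $[q_!(\cE_{|V}),0]=[\emptyset,\Omega(p_!\cE)]=a(-\Omega(p_!\cE))$ and the definition $a(\alpha)=[\emptyset,-\alpha]$, I get $\hat q_!(x)=[\emptyset,\Omega(p_!\cE)-\gamma]$. The remaining task is purely a form-level identity: I must show $\Omega(p_!\cE)-\gamma$ equals $p^o_!R(y)=\int_{LW/LB}(\hA^c(o)-d\sigma(o))\wedge R([\cE,\rho])$ modulo $\im(d)$, using the adiabatic-limit formulas \eqref{eq88} for $\Omega(p_!^a\cE)$, the definition \eqref{eq:Ahatc}/\eqref{hwdwqdwqdqwdqd} of $\hA^c$, and the fact that the Berezin-type fibre integrals over $LW/LB$ and $LV/LB$ are related by Stokes' theorem since $LV=\partial(LW)$ (the loop functor commutes with taking boundaries here).

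I expect the main obstacle to be exactly this last bookkeeping step: carefully matching the transgression term $\tilde\Omega(a,\cE_{|V})$ and the $\sigma(o_q)$-correction coming from \eqref{eq300} against the difference $\Omega(p_!\cE)-\int_{LW/LB}\hA^c(o)\wedge\Omega(\cE)$ on the bordism, and making sure all the product-structure assumptions are strong enough that no extra collar contributions survive. A cleaner alternative, which I would actually prefer to write, is to invoke the functoriality and structure-compatibility already proved: combine Lemma \ref{lem24} ($R\circ\hat p_! = p^o_!\circ R$) with the fact that $y_{|V}$ is flat-plus-exact relative to $W$ in the appropriate sense, reducing \eqref{bordin} to checking it after applying $I$ (where it is the topological bordism invariance of the $K$-theoretic pushforward, $q_!(y_{|V})=0$ in $K(B)$ since $\ind(q_!(\cE_{|V}))$ bounds) and after applying $R$ (where it is Stokes' theorem via \eqref{eq88} and \eqref{detad}); the difference then lies in $a(\Omega(LB)/\im d)$ and is pinned down by the form computation above. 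In either route the heart of the matter is that "$\partial$ of a $K$-oriented bordism pushes forward to a form exact term", which is the differential refinement of classical bordism invariance, and this is where the care with collars and transgression forms is needed. $\hB$
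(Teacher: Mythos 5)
Your plan is essentially the proof the paper intends: it simply cites \cite[5.18]{bunke-2007}, and the argument there is exactly your first route --- with product structures matched, $p_!\cE$ is a zero bordism of $q_!(\cE_{|V})$, Proposition \ref{kop1} gives $[q_!(\cE_{|V}),0]=[\emptyset,\Omega(p_!\cE)]$, and the remaining identity is the Stokes/adiabatic-limit bookkeeping with \eqref{eq88} that you describe (your second ``cleaner alternative'' does not avoid this, since agreement under $I$ and $R$ only determines the class up to $a$ of a closed form). One small correction: since $a(\alpha)=[\emptyset,-\alpha]$ and sums of cycles add the form components, $[q_!(\cE_{|V}),\gamma]=[q_!(\cE_{|V}),0]+[\emptyset,\gamma]=[\emptyset,\Omega(p_!\cE)+\gamma]$, so the form to be identified with $p^o_!R(y)$ modulo $\im(d)$ is $\Omega(p_!\cE)+\gamma$, not $\Omega(p_!\cE)-\gamma$.
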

\proof
The proof can be \textcolor{black}{literally} copied from
\cite[5.18]{bunke-2007}. \hB

\subsection{The intersection pairing for $[\C\P^1/(\Z/k\Z)]$}

\subsubsection{}\label{uiwreuifweuifewfewf}

We fix a number  $k\in \nat $ and consider the finite group $\Gamma:=\Z/k\Z$.  We furthermore fix a primitive \textcolor{black}{$k^{th}$} root of unity $\xi$ and let
$\Gamma$ act on $\C^2$ by $[n](z_0,z_1)=(\xi^nz_0,z_1)$. This induces an action of $\Gamma$ on $\C\P^1$. Let $X:=[\C\P^1/\Gamma]$ be the corresponding orbifold.

We cover $\C\P^1$ by the standard charts $U:=\{[u:1]\:|\:u\in \C\}$ and $V:=\{[1:v]|v\in \C\}$. The transition is given by $v=\frac{1}{u}$.
Therefore $\Gamma$ acts on $U$ by
$[n] u:=\xi^n u$, and on $V$ by $[ n]v=\xi^{-n}v$.
\subsubsection{}
We calculate
$K(X)\cong K_\Gamma(\C\P^1)$ using the Mayer-Vietoris sequence
associated to the covering $U\cup V$. These spaces are equivariantly
 homotopy equivalent to points.
Therefore  we have isomorphisms of rings $K_\Gamma(U)\cong K_\Gamma(V)\cong R(\Gamma)\cong \Z[\Z/k\Z]$.
The latter is the free $\Z$-module generated by the classes $[l]$, $l\in 0,\dots,k-1$, where $[l]$ is the representation of $\Z/k  \Z$ on $\C$ which sends 
$[1]$ to $\xi^l$.
Furthermore,  we have an equivariant homotopy equivalence
$U\cap V
\cong \C^*$ with a free $\Gamma$-action. Note that
$\C^*/\Gamma\cong \C^*$.
 We therefore have
$$K^i_{\Gamma}(\C^*)\cong \Z,\quad i=0,1\ .$$
The Mayer-Vietoris sequence reads
$$\xymatrix{K^0(X)\ar[r]^\beta&R(\Gamma)\oplus R(\Gamma)\ar[r]^(.6)\alpha\ar@{.>}@/_0.5cm/[l]_\sigma&\Z\ar[d]\\\Z\ar[u]^\delta&0\ar[l]&K^1(X)\ar[l]}.$$
The map $\alpha$ maps a pair of representations $(\chi,\mu)$ of $\gamma$ to the difference of their dimensions. In particular, it is surjective. Therefore
$K^1_\Gamma(X)\cong 0$.

 The map $\delta$ maps the integer
$1\in \Z$ to the class represented by the difference
$L-1$, where $1\cong \C\P^1\times \C$ with the trivial action of $\Gamma$ on the fibres, and
$L$ is the bundle obtained from $U\times \C$ and $V\times \C$, again
with trivial fibrewise action,  glued with $(u,z)\mapsto
(u^{-1},u^kz)$. {In order to see this, one can use the factorization
  through the boundary map of the Mayer-Vietoris sequence  for $
\C \P^1\setminus\{0,\infty\}$ with corresponding decomposition (and for K-theory
with compact supports). The main point is that the action is free here, so that we can pass to
the quotient with the projection map, where everything is known.}

 We now define a split $\sigma$ as follows.  Let $l,h\in\Z$ with
corresponding representations $([l],[h])\in R(\Gamma)\oplus R(\Gamma)$.
Then $\alpha([l],[h])=0$.
We define equivariant trivial bundles
$L_U:=U\times \C$ and $L_V:=V\times \C$, where the actions on the fibres are given by
$[l]$ and $[-h]$, respectively. 
Then  we can glue the trivial bundles equivariantly
using the transition function $\C^*\times \C \ni (u,z)\mapsto (u^{-1},u^{-h-l}z)$. The result is
$L_{l,h}:=\sigma([l],[h])$.

 Note that, by construction, as equivariant bundles
\begin{equation}\label{tensorformula} L_{l,h}\otimes L_{l',h'}\cong
L_{l+l',h+h'},\qquad L_{l,h}^*\cong L_{-l,-h}\end{equation}
Moreover, the bundle $L$ from above is precisely $L\cong L_{0,-k}$.

 Using a basis of $\ker(\alpha)$ consisting of
elements of the form $[l],[h]$ and the resulting linear split of $\beta$ and $\delta$ we get a decomposition
$$K^0(X)\cong \Z\oplus \ker(\alpha)\ .$$
\subsubsection{}
The manifold $\C\P^1$ has an equivariant complex structure. It gives
an equivariant $Spin^c$-structure and therefore an equivariant $K$-orientation.
In the following we calculate
$$\int_{[\C\P^1/\Gamma]}\colon K(X)\to R(\Gamma)\ .$$ The calculation is based 
 on  the explicit knowledge of the kernel and  cokernel  of the
$Spin^c$-Dirac operator twisted by suitable representatives of elements of $K(X)$.
In fact, the $Spin^c$-Dirac operator is the Dolbeault operator $D$.
 Therefore for a holomorphic bundle $E\to \C\P^1$ 
$$\ker(D^+\otimes E)\cong H^0(\C\P^1,E)\ ,\quad \coker(D^+)\cong H^1(\C\P^1,E)\cong H^0(\C\P^1,K\otimes E^*)^*\ ,$$
where $K$ denotes the canonical bundle. Observe that $K\cong
L_{-1,-1}$, using that the constant $-1$ showing up in the usual
transition functions is homotopic to $1$ in $\C^*$.

 


We now consider the case $E=L_{l,h} $ with $h,l\in \Z$.
The holomorphic sections of $L_{l,h}$ over $U$ (viewed as functions in the
trivialization fixed above) have a
basis of the form 
$u\mapsto u^s$ with $s\ge 0$. They are transformed to
$v\mapsto v^{-s+l+h}$ on $V$. These sections are holomorphic if
$0\le s\le l+h$.

The section $u^s$ is mapped by the generator of $\Gamma$ to
$\xi^{l-s} u^s$, i.e.~$\Gamma$ acts
by multiplication with $\xi^{l-s}$. 
Consequently, as $\Gamma$-representation we get
$$H^0(\C\P^1,L_{l,h})\cong\bigoplus_{s=0}^{l+h}  [l-s]\ .$$

 The holomorphic sections on $U$ of $K\otimes {L_{l,k}^*}$ are given by
$u^s du$ with $s\ge 0$. They are transformed to
$-v^{-s-2-l-h}dv$ on $V$. For holomorphy we hence need
$0\le s \le -l-h-2$.

 We see that there is no cancelation between kernels and cokernels.
As representations of $\Gamma$ we have, using that $K\otimes
L_{l,h}^*\cong L_{-l-1,-h-1}$ and that we have to look at the dual of
the space of holomorphic sections,
$$H^1(\C\P^1,L_{l,h})\cong \bigoplus_{s=0}^{-l-h-2}{[l+s+1]}\ .$$

\subsubsection{}
 For an explicit example, let us take $k=2$.
A basis of the $\Z$-module $K^0(X)\cong \Z^4$ is given by
 
$$(e_i)_{i=1}^4:=(1=L_{0,0},L_{0,-2},L_{-1,0},L_{0,-1})\ .$$
The matrix  of the intersection pairing
$$A_{i,j}:=( e_i,e_j)$$ is given by
{$$\begin{pmatrix}
1&0&0&0\\0&-1&-1&-1\\0&-1&0&-1\\0&-1&-1&0
  \end{pmatrix}
$$ which has determinant $-1$.
This illustrates} that the pairings
$$\xymatrix{ K^0(X)_\C\otimes K^0(X)\ar[r]^(.6){(.,.)}\ar[d]&\C\ar[d]\\
 K^0(X)_\C/K^0(X)\otimes K^0(X)\ar[r]&\T}$$
  are non-degenerate. 
We  have isomorphisms
$$ \Omega^0(LX)/\im(\ch)\cong \hat K^1(X)\ ,  \quad U^1(X)\cong H^0(LX)/\im(\ch)\cong K^0(X)_\C/K^0(X) ,\quad U^0(X)\cong 0$$
and an exact sequence
$$0\to \Omega^1(LX)/\im(d)\to \hat K^0(X)\to K^0(X)\to 0\ .$$
The pairing $ \hat K(X)\otimes  \hat K(X)\to \C/\Z$ is non-degenerate, as   we already know by Theorem \ref{trezreer}. In order to see this explicitly, assume that
$\hat x\in  \hat K^1(X)$. If it pairs trivially with the subgroup
$\Omega^1(LX)/\im(d)$, then we conclude that $\hat x\in U^1(X)\cong K^0(X)_\C/K^0(X)$.
The pairing of $\hat x$ with $\hat K^0(X)$ now factors over $K^0(X)$. We can  conclude from the topological result that $\hat x=0$.\newline
Similarly, if $\hat x\in \hat K^0(X)$ pairs trivially with $\hat K^1(X)$, then we
 conclude that $\hat x$ is given by a closed form of odd degree which is necessarily exact.
This again implies that $\hat x=0$.

\textcolor{black}{\section{Open questions}}

We list a number of questions left open which would be
  interesting to clarify. Moreover, at some points we left out more than just
  a few details where one might wish for a complete treatment.
  \begin{enumerate}
  \item Because we wanted to use our calculus of push-forward of orbifolds, we
    only defined the non-degenerate intersection pairing for global quotients
    by a finite group action. Indeed, one would not know what should replace
    $K([*/G])$ if, instead of $[M/G]$ one considers a general
    orbifold. However, we expect that the composition of the push-forward with
    $\Tr_G$, which is the object which is independent of the presentation, can
    be defined in general, at least for presentable orbifolds. From this, one
    should then get the non-degenerate pairing in general.
  \item Non-equivariant differential K-theory satisfies a strong uniqueness
    property \cite{bunke-2009} which can be used to automatically identify its
    many different models. Because the underlying homotopy theory for
    equivariant K-theory shares the basic relevant features, we expect that a
    similar uniqueness theorem can be established for orbifold differential
    K-theory, and probably for other interesting differential extensions of
    orbifold cohomology theories, as well. In particular, this would
    automatically give an identification of our theory with the one of Ortiz
    \cite{ortiz}. Alternatively, it would also be intersting to compare the
    two constructions directly.
  \item In this paper, we concentrate entirely on compact orbifolds. However,
    for many purposes, a compactly supported theory for non-compact orbifolds
    is convenient or neccessary. Secondly, a version for pairs is
    desireable. It should not be too hard to work out the details and
    relations of such a theory, but will certainly require care of some new
    technical details.
  \item We have used a couple of generalizations of local index theory which
    are not trivial and it would be desireable to work out a presentation of the
    details. In particular this applies to the details of the proof of
    Theorem \ref{diqwdwqdddwqd}  and of the adiabatic limit formula for
    eta-forms of Theorem \ref{adia1}.
  \end{enumerate}

\end{document}